\author{David Wallauch}
\address{B\^atiment des Math\'ematiques, EPFL,
Station 8, CH-1015 Lausanne, Switzerland }
\email{david.wallauch@epfl.ch}
\thanks{This work was supported by the Austrian Science Fund FWF,
  Project J4875: ``On optimal blowup stability for supercritical wave equations''.
}
\title{On Strichartz estimates and optimal blowup stability of supercritical wave equations}
\DeclarePairedDelimiter\ceil{\lceil}{\rceil}
\DeclarePairedDelimiter\floor{\lfloor}{\rfloor}
\numberwithin{equation}{section}
\newcommand{\C}{\mathbb{C}}
\newcommand{\R}{\mathbb{R}}
\newtheorem{thm}{Theorem}[section]
\newtheorem{defi}{Definition}[section]
\newtheorem{rem}{Remark}[section]
\newtheorem{prop}{Proposition}[section]
\newtheorem{lem}{Lemma}[section]
\newcommand{\N}{\textup{\textbf{N}}}
\newcommand{\Cf}{\textup{\textbf{C}}}
\newcommand{\X}{\mathcal{X}}
\newcommand{\Nf}{\textup{\textbf{N}}}
\newcommand{\Lf}{\textup{\textbf{L}}}
\newcommand{\Af}{\textup{\textbf{A}}}
\newcommand{\Ef}{\textup{\textbf{E}}}
\newcommand{\hf}{\textup{\textbf{h}}}
\newcommand{\K}{\textup{\textbf{K}}}
\newcommand{\Sf}{\textup{\textbf{S}}}
\newcommand{\I}{\textup{\textbf{I}}}
\newcommand{\Uf}{\textup{\textbf{U}}}
\newcommand{\uf}{\textup{\textbf{u}}}
\newcommand{\vf}{\textup{\textbf{v}}}
\newcommand{\gf}{\textup{\textbf{g}}}
\newcommand{\ff}{\textup{\textbf{f}}}
\newcommand{\Pf}{\textup{\textbf{P}}}
\newcommand{\Qf}{\textup{\textbf{Q}}}
\renewcommand{\Re}{\operatorname{Re}}
\renewcommand{\Im}{\operatorname{Im}}
\newcommand{\B}{\mathbb{B}}
\renewcommand{\H}{\mathcal{H}}
\newcommand{\Rm}{\mathcal{R}}
\renewcommand{\O}{\mathcal{O}}
\begin{document}

     \begin{abstract}
We establish Strichartz estimates, including estimates involving spatial derivatives, for radial wave equations with potentials in similarity variables. This is accomplished for all spatial dimensions $d\geq 3$ and almost all regularities above energy and below the threshold $\frac d2$. These estimates provide a unified framework that allows one to derive optimal blowup stability result for a wide range of energy supercritical nonlinear wave equations. To showcase their usefulness, an optimal blowup stability result for the quintic nonlinear wave equation is also obtained.
      \end{abstract}
      \maketitle
\section{Introduction}
The present work focuses on energy supercritical wave equations. As a motivating example, we consider the Cauchy problem
\begin{equation}\label{Eq:startingeq}
\begin{cases}\left(\partial_t^2-\Delta_x\right)u(t,x)=u(t,x)^5
\\
u[0]=(f,g)
\end{cases}
\end{equation}
for a fixed dimension $d\geq 3$, where $u:I\times \R^{d}\to \R$ for some interval $0\in I\subset\R$ and $u[t]=(u(t,.),\partial_t u(t,.))$. Notably, solutions to this equation conserve the energy
\begin{align*}
E(u)(t):=\tfrac{1}{2}\|u(t,.)\|_{\dot{H}^1(\R^d)}^2+\tfrac{1}{2}\|\partial_t u(t,.)\|_{L^2(\R^d)}^2- \tfrac 16 \|u(t,.)\|_{L^6(\R^d)}^6
\end{align*}
and, moreover, for any solution $u$ and any $\lambda>0$ the rescaling 
$$
u(t,x) \mapsto u_\lambda(t,x):= \lambda^{-\frac{1}{2}} u(\tfrac{t}{\lambda},\tfrac{x}{\lambda})
$$
produces another solution.
This scaling symmetry provides a lower bound on the regularity of $u[0]$ in order to have a useful local well-posedness theory. In particular, for a pair of $L^2$ based homogeneous Sobolev spaces $\dot H^s\times \dot H^{s-1}(\R^d), $ the scaling invariant space is given by  $s= \frac{d-1}{2}$, hence, for a meaningful well-posedness in a tuple of inhomogeneous Sobolev spaces $ H^s\times H^{s-1}(\R^d), $ one should enforce $s\geq \frac{d-1}{2}$. This regularity level is of course not high enough to generally produce classical solutions and therefore one instead resorts to the Duhamel formulation
\begin{align} \label{notion solution}
u(t,.)=\cos(t|\nabla|)f+\frac{\sin(t|\nabla|)}{|\nabla|}g+\int_0^t\frac{\sin((t-s)|\nabla|)}{|\nabla|}u(s,.)^5 ds
\end{align}
where $\cos(t|\nabla|)$ and $\frac{\sin(t|\nabla|)}{|\nabla|}$ are the standard wave propagators. 
This is a sensible expression for all $(f,g)\in H^s\times H^{s-1}(\R^d)$ and $s\geq \frac{d-1}{2}$. Furthermore, it is known that Eq.~\eqref{Eq:startingeq} is locally well-posed for such regularities (see \cite{LinSog95}). The difficulty to establish local well-posedness of Eq.~\eqref{Eq:startingeq} ''at scaling``, i.e. for $s=\frac{d-1}{2}$ stems from the fact that the nonlinearity cannot solely be controlled by soft arguments such as Sobolev embedding. One gets around this problem by employing Strichartz estimates. These are spacetime estimates of the form
\begin{align*}
\left\|\frac{\sin(t|\nabla|)}{|\nabla|}g \right\|_{L^{p}_t(\R)L^{q}(\R^d)}\lesssim \|g\|_{\dot H^{\frac{d-3}{2}}(\R^d)}
\end{align*}
for pairs $(p,q)$ with $p\in(2,\infty]$ and $q\in [2d,\infty)$ that satisfy the scaling relation $\frac{1}{p}+\frac d q =1$ and variants thereof. The improved integrability provided by spacetime estimates of this type allows one to perform a fixed point argument with the expression \eqref{notion solution}. Loosely, the goal of this paper is to establish related dispersive estimates for wave equations with certain potentials that enable one to prove optimal stability results of self-similar solutions to nonlinear wave equations.
\\
Sticking to the wave equation with a focusing quintic power nonlinearity as our motivating example, we infer that this equation has an explicit blowup solution given by
$$
u^T(t):=\left(\frac{3}{4}\right)^{\frac{1}{4}}(T-t)^{-\frac{1}{2}}
$$
for any $T>0$. This example nicely illustrates that solutions starting from smooth initial data might nevertheless break down in finite time. On top of that, the finite speed of propagation property allows us to smoothly cut off $u^T[0]$ far out, which produces solutions starting from smooth and compactly supported data and which look like $u^T$ inside the ball of some radius around the origin, hence also exhibit finite time blowup. The existence of this explicit blowup solution naturally compels one to study whether it plays a role in generic evolutions of solutions to Eq. \eqref{Eq:startingeq}. Since $u^T[0]$ does not belong to any of the spaces $H^s\times H^{s-1}(\R^d)$, the study of its properties is best done in a local setting. Hence, for a stability analysis of $u^T$ one  is lead to restrict to backwards light cones of the form
$$
\Gamma_T:=\{(t,x)\in [0,\infty)\times \R^d: t\in [0,T), |x| \leq [0, T-t]\}.$$
From now on, we only consider radial initial data for which Eq.~\eqref{Eq:startingeq} reads as
\begin{equation}\label{Eq:startingeq2}
\begin{cases}\left(\partial_t^2-\partial_r^2-\frac{d-1}{r}\partial_r\right) \widetilde u(t,r)= \widetilde u(t,r)^{5}
\\
\widetilde u[0]=(\widetilde f,\widetilde g),
\end{cases}
\end{equation} 
for $r=|x|$ and where $\widetilde u,\widetilde f, $ and $ \widetilde g$ are the radial representative of $u,f$, and $g,$ respectively.
To study the stability properties of $u^T$, one linearises the nonlinearity around this solution, leading to the potential $\frac{15}{4} (T-t)^{-2}$. Hence, in order to obtain a stability result of $u^T$ at the optimal regularity $s=\frac{d-1}{2}$ the need for Strichartz estimates for wave equations with potentials arises naturally.
To digress further into this, we introduce the similarity coordinates
\begin{align*}
\tau:=-\log(T-t)+\log(T),\qquad \rho:=\frac{r}{T-t}
\end{align*}
which map the radial light cone onto the infinite cylinder $(\tau,\rho)\in [0,\infty) \times (\overline{\B^d_1})$.
Setting  $\psi(\tau,\rho)=(Te^{-\tau})^{\frac{1}{2}}u(T-Te^{-\tau}, Te^{-\tau}\rho)$
and
\[\psi_1(\tau,\rho)=\psi(\tau,\rho),\qquad \psi_2(\tau,\rho)=\partial_\tau\psi(\tau,\rho)+\rho\partial_\rho\psi(\tau,\rho)+\frac{d-2}{2}\psi(\tau,\rho) \]
yields an abstract evolution equation of the form
 \[\partial_\tau \Psi(\tau)=\widehat{\Lf}_{5} \Psi(\tau)+ \Nf(\Psi(\tau)),\]
 where $\widehat{\Lf}_{5}$ is a linear spatial differential operator.
 More precisely,
 $\widehat{\Lf}_{5}$ is formally given by
 \begin{align*}
\widehat{\Lf}_{5} \begin{pmatrix}
f_1\\
f_2
\end{pmatrix}(\rho)
=\begin{pmatrix}
-\rho f_1'(\rho)-\frac{1}{2}f_1(\rho)+f_2(\rho)
\\
f_1''(\rho)+\frac{2}{\rho}f_1'(\rho)-\rho f_2'(\rho)-\frac{3}{2}f_2(\rho)
\end{pmatrix}+
\begin{pmatrix}
0\\
\frac{15}{4}f_1(\rho)
\
\end{pmatrix},
\end{align*}
The first summand on the right hand side is just the free radial wave equation in our new coordinates and where the second corresponds to the potential. As we are interested in more general equations than the quintic, we study the more general operator 
 \begin{align}\label{def: operator L}
\widehat{\Lf} \begin{pmatrix}
f_1\\
f_2
\end{pmatrix}(\rho)
=\begin{pmatrix}
-\rho f_1'(\rho)-\frac{d-2s}{2}f_1(\rho)+f_2(\rho)
\\
f_1''(\rho)+\frac{d-1}{\rho}f_1'(\rho)-\rho f_2'(\rho)-\frac{d-2s+2}{2}f_2(\rho)
\end{pmatrix}+
\begin{pmatrix}
0\\
V(\rho)f_1(\rho)
\
\end{pmatrix}
\end{align}
for a fixed spatial dimension $d\geq 3$, a fixed regularity $s\in \mathbb \R $ with $ 1 \leq s < \frac{d}{2}$ and $\lceil s\rceil  \leq \frac{d}{2}$, (here, $\lceil s\rceil$ denotes the smallest natural number $k$ that satisfies $s\leq k$), and a fixed smooth radial potential $V\in C^\infty (\overline{\B^d_1})$.
This more general operator arises by adapting the transformations to the $H^s$ critical radial wave equation in $d$ dimensions.
Before we can state our first theorem, we denote by $L^2_{rad}(\B^d_1)$ the space $\{f\in L^2(\B^d_1):f \text{ radial}\}$. The analogous definitions hold for $C^s_{rad}(\overline{\B^d_1})$ as well as $L^2$ based Sobolev spaces $H^s_{rad}(\B^d_1)$ or $L^p$ based ones, denoted by $W^{s,p}_{rad}(\B^d_1)$. Lastly, we denote by $\|f\|_{\dot{W}^{n,p}(\B^d_1)}$ the $L^p$ based seminorm of $f$, i.~e.
$$
\|f\|_{\dot{W}^{n,p}(\B^d_1)}^p:=\sum_{\alpha_1+\dots\alpha_d=n}\int_{\B^d_1}|\partial_{x_1}^{\alpha_1}\cdots\partial_{x_d}^{\alpha_d} f(x)|^p dx.
$$
Our main theorem now reads as follows.
\begin{thm}\label{thm:strichartz}
Let $3\leq d \in \mathbb N$ and $1 \leq s<\frac d2$ be two fixed numbers with $\ceil s \leq \frac{d}{2}$. Further, let $V \in C^\infty_{rad}(\overline{\B^d_1})$ be fixed and define $D(\widehat{\Lf}):= C^{\infty}_{rad}\times C^{\infty}_{rad}(\overline{\B^d_1})$. Then the operator $\widehat{\Lf} :D(\widehat{\Lf})\to H^{s}_{rad}\times H^{s-1}_{rad}(\B^d_1)$ as defined in \eqref{def: operator L} is closable and its closure $\Lf$ generates a one-parameter semigroup $\{\Sf(\tau):\tau\geq 0\}$ of bounded linear operators on $H^s_{rad}\times H^{s-1}_{rad}(\B^d_1)$ such that the following holds. There exists a finite dimensional subspace $\Uf \subset H^{s}_{rad}\times H^{s-1}_{rad}(\B^d_1) $ and a bounded linear operator $ \Pf:H^{s}_{rad}\times H^{s-1}_{rad}(\B^d_1)\to \Uf $ such that
\begin{align*}
\|\Sf(\tau)(\I-\Pf)\ff\|_{L^\infty_\tau(\R_+)H^s\times H^{s-1}(\B^d_1)}\lesssim \|(\I-\Pf)\ff\|_{H^s\times H^{s-1}(\B^d_1)}.
\end{align*}
In addition, the Strichartz estimates
\begin{align*}
\|[\Sf(\tau)(\I-\Pf)\ff]_1\|_{L^p_\tau(\R_+)\dot{W}^{n,q}(\B^d_1)}\lesssim\|(\I-\Pf)\ff\|_{H^s\times H^{s-1}(\B^d_1)}
\end{align*}
hold for all $n\in \mathbb{N}_0$ with $0\leq n\leq s-1$, $\ff \in H^{s}_{rad}\times H^{s-1}_{rad}(\B^d_1)$, and $p,q\in [2,\infty]$, that satisfy the scaling condition
$$\frac{1}{p}+\frac{d}{q}=\frac{d}{2}-s+n.$$
Furthermore, in case $s\notin \mathbb{N}$ the estimates
\begin{align*}
\|[\Sf(\tau)(\I-\Pf)\ff]_1\|_{L^p_\tau(\R_+)\dot{W}^{\floor s ,q}(\B^d_1)}\lesssim\|(\I-\Pf)\ff\|_{H^s\times H^{s-1}(\B^d_1)}
\end{align*}
hold for all $p\in [\frac{2}{s-\floor s},\infty]$ and $q\in [2,\infty]$ that satisfy the scaling condition
$$\frac{1}{p}+\frac{d}{q}=\frac{d}{2}-s+\floor s.$$

Moreover, the inhomogeneous estimate 
\begin{align*}
\left\|\int_0^\tau[\Sf(\tau-\sigma)(\I-\Pf)\hf(\sigma,.)]_1 d \sigma\right\|_{L^p_\tau((0,\tau_0))\dot{W}^{n,q}(\B^d_1)}\lesssim \|(\I-\Pf)\hf(\tau,.)\|_{L^1_\tau((0,\tau_0))H^s\times H^{s-1}(\B^d_1)}
\end{align*}
holds for the same numbers $n,p,q$, all $\hf \in C([0,\infty),H^s_{rad}\times H^{s-1}_{rad}(\B^d_1))\cap L^1([0,\infty),H^s_{rad}\times H^{s-1}_{rad}(\B^d_1))$ and all $\tau_0>0$. 
\end{thm}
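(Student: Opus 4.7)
The plan is to write $\widehat{\Lf}=\widehat{\Lf}_0+\Vf$, where $\widehat{\Lf}_0$ is the operator in \eqref{def: operator L} with $V\equiv 0$ and $\Vf(f_1,f_2)^T=(0,Vf_1)^T$. Because $V\in C^\infty_{rad}(\overline{\B^d_1})$, the map $\Vf$ is bounded on $H^s_{rad}\times H^{s-1}_{rad}(\B^d_1)$, so closability and semigroup generation for $\widehat{\Lf}$ will reduce, via the Bounded Perturbation Theorem, to the corresponding statements for $\widehat{\Lf}_0$. For the free operator I would either invoke a Lumer--Phillips argument on $H^s_{rad}\times H^{s-1}_{rad}(\B^d_1)$ equipped with an inner product adapted to the regularity $s$, or transfer the standard radial free wave semigroup through the similarity change of variables, using finite speed of propagation to extend radial functions on $\B^d_1$ to all of $\R^d$. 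Either path yields a $C_0$-semigroup $\Sf_0(\tau)$ together with the optimal growth bound.

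The next step is to locate and remove the unstable part of $\sigma(\Lf)$. Since multiplication by the smooth potential $V$ gains regularity, $\Vf$ maps $D(\Lf)$ compactly into $H^s_{rad}\times H^{s-1}_{rad}(\B^d_1)$ by Rellich's theorem, so the spectrum of $\Lf$ in $\{\Re\lambda\geq 0\}$ consists of at most finitely many eigenvalues of finite algebraic multiplicity. Define $\Uf$ as the sum of the associated generalised eigenspaces and $\Pf$ as the corresponding Riesz projection; both $\Uf$ and $\ker\Pf$ are invariant under $\Sf(\tau)$. A resolvent estimate for $\Lf$ on the vertical line $\{\Re\lambda=0\}$, obtained by a Fredholm/ODE analysis on $[0,1]$, combined with the Gearhart--Pr\"uss theorem then gives
\begin{align*}
\|\Sf(\tau)(\I-\Pf)\ff\|_{H^s\times H^{s-1}(\B^d_1)}\lesssim \|(\I-\Pf)\ff\|_{H^s\times H^{s-1}(\B^d_1)}
\end{align*}
uniformly in $\tau\geq 0$, which is the first assertion of the theorem.

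With the stable semigroup under control, the homogeneous Strichartz estimates are obtained by a Duhamel perturbation argument. Setting $\Phi(\tau):=\Sf(\tau)(\I-\Pf)\ff$, the identity
\begin{align*}
\Phi(\tau)=\Sf_0(\tau)(\I-\Pf)\ff+\int_0^\tau \Sf_0(\tau-\sigma)\Vf\Phi(\sigma)\,d\sigma
\end{align*}
holds. Applying the free Strichartz bounds to the first term and to each slice of the Duhamel integral, then using Minkowski's inequality in $\sigma$ together with the uniform estimate $\|\Vf\Phi(\sigma)\|_{H^s\times H^{s-1}(\B^d_1)}\lesssim \|(\I-\Pf)\ff\|_{H^s\times H^{s-1}(\B^d_1)}$ just established, produces the desired bound for every integer $0\leq n\leq s-1$ at the scaling $\tfrac{1}{p}+\tfrac{d}{q}=\tfrac{d}{2}-s+n$. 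The borderline statement at the derivative order $\lfloor s\rfloor$ for non-integer $s$ is handled by the same perturbation argument applied to the corresponding endpoint free estimate; the lower bound $p\geq 2/(s-\lfloor s\rfloor)$ originates from a Sobolev embedding in time trading the missing $s-\lfloor s\rfloor$ spatial derivatives against temporal integrability.

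Finally, the inhomogeneous estimate is a consequence of the homogeneous one: writing the Duhamel solution $\int_0^\tau \Sf(\tau-\sigma)(\I-\Pf)\hf(\sigma)\,d\sigma$, applying Minkowski in $\sigma$ and then the homogeneous Strichartz bound pointwise in $\sigma$, makes the $L^1_\tau H^s\times H^{s-1}$ norm of $\hf$ appear on the right. The principal obstacle in this program is the construction of the free Strichartz estimates for $\Sf_0$, and in particular the fractional and borderline variants needed to cover the full range of admissible $(n,p,q)$; once those are in hand, the inclusion of the smooth potential $V$ and the removal of the finite-dimensional unstable space $\Uf$ are largely semigroup-theoretic bookkeeping.
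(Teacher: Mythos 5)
Your Duhamel perturbation argument for the homogeneous Strichartz bounds has a fatal divergence. Writing
\begin{align*}
\Phi(\tau)=\Sf_0(\tau)(\I-\Pf)\ff+\int_0^\tau \Sf_0(\tau-\sigma)\Vf\Phi(\sigma)\,d\sigma
\end{align*}
and applying the free inhomogeneous Strichartz bound together with Minkowski, the Duhamel term is controlled by $\int_0^\infty \|\Vf\Phi(\sigma)\|_{H^s\times H^{s-1}(\B^d_1)}\,d\sigma$. Your uniform semigroup estimate only gives $\|\Vf\Phi(\sigma)\|_{H^s\times H^{s-1}(\B^d_1)}\lesssim \|(\I-\Pf)\ff\|_{H^s\times H^{s-1}(\B^d_1)}$ \emph{pointwise} in $\sigma$, and the integral over $\R_+$ of a constant is infinite. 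To make this integral finite you would need exponential decay of $\Sf(\tau)(\I-\Pf)$ on $H^s\times H^{s-1}(\B^d_1)$, but no such decay is available: the essential spectrum of $\Lf$ in this space touches the imaginary axis, which is exactly why the paper works instead in $\mathcal H^{\lceil s\rceil}$ (or $\mathcal H^{s+\tfrac{1}{100}}$ in the integer case), where the essential spectrum sits strictly in the open left half-plane. This spectral observation also undercuts the Gearhart--Pr\"uss step you invoke: on $\mathcal H^s$ you do not get a uniform resolvent bound on $\{\Re\lambda=0\}$ in the first place.

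The paper's actual route is fundamentally different and is needed precisely because the soft perturbation argument fails. Rather than iterating Duhamel, the paper constructs the resolvent $(\lambda-\Lf)^{-1}$ explicitly on the ball via a Liouville--Green transformation and Bessel asymptotics, writes $\Sf(\tau)(\I-\Pf)\ff$ as the inverse Laplace transform \eqref{eq:laprepi}, and estimates the resulting oscillatory integrals for the \emph{difference} $\Sf(\tau)-\Sf_0(\tau)$ in the two ``offset'' regularities $\mathcal H^{\lceil s\rceil}$ and $\mathcal H^{\lfloor s\rfloor}$, where exponential weights $e^{\pm(\lceil s\rceil-s)\tau}$ can be absorbed. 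The $H^s$-level Strichartz bounds are then produced by complex interpolation of these two weighted families (Proposition \ref{prop:interpolation}) against the scale-invariant choice $\theta=s-\lfloor s\rfloor$; this interpolation is also what produces the nonstandard lower bound $p\geq 2/(s-\lfloor s\rfloor)$ for the $\dot W^{\lfloor s\rfloor,q}$ estimates, which in your write-up is attributed to a temporal Sobolev embedding but in fact comes from the endpoint of the interpolation range. In short: the ingredients you marked as ``semigroup-theoretic bookkeeping'' constitute essentially the entire technical content of the proof.
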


Before we come to our second theorem, we would like to make the following remarks.
\begin{itemize}
\item
The finite dimensional subspace $\Uf$ corresponds to the unstable eigenspace introduced through the potential $V$. In particular, if all such unstable eigenvalues have positive real part, $\Uf$ is simply the union of all the associated generalized eigenspaces and $\Pf$ is the associated spectral projection. However, it could also happen that $V$ produces eigenvalues which lie on the boundary of the essential spectrum of the unperturbed operator $\Lf_0$. In this case, $\Pf$ can be thought of as the bounded part of the spectral projection of all newly appearing eigenvalues and $\Uf$ as its image. This will be made more precise later on.\smallskip
\item 
Prominent examples of potentials $V$ arise for instance from linearising around blowup solutions such as the wave maps blowup
\begin{align*}
\psi^T(t, x)=2\arctan\left(\frac{|x|}{\sqrt{d}(T-t)}\right)
\end{align*}
or the 
ODE blowup
\[
u^T(t):=c_{d,s}(T-t)^{\frac{2s-d}{2}}
\quad c_{d,s}=\left(\frac{(d-2s)(d-2s+2)}{4}\right)^{\frac{d-2s}{4}}
\]
and transforming the resulting potential to similarity coordinates.\smallskip
\item
Note also, that admissible potentials are allowed to exhibit singular behaviour at the tip of the light cone in standard Cartesian coordinates. For instance, the   potential corresponding to the ODE blowup $u^T$ is constant in similarity coordinates but reads as
\begin{align*}
c (T-t)^{-2}
\end{align*}
for some constant $c\in \R $ in Cartesian coordinates.
\smallskip
\item 
We would like to mention, that aside from the restriction for derivatives close to the prescribed regularity, our result covers the same range of homogeneous Strichartz estimates as the free radial wave equation. In particular, we also obtain $L^p L^\infty$ type endpoint estimates in case $s=\frac{d}{2}-\frac{1}{p}$. Moreover, a simple interpolation argument also yields estimates of the form 
\begin{align*}
\|[\Sf(\tau)(\I-\Pf)\ff]_1\|_{L^p_\tau(\R_+)W^{r,q}(\B^d_1)}\lesssim\|(\I-\Pf)\ff\|_{H^s\times H^{s-1}(\B^d_1)}
\end{align*}
with $r \notin \mathbb{N}$.
\smallskip
\item
Finally, we comment on the restriction $\ceil s \leq \frac d2$. This restriction only excludes regularities $s>\frac{d-1}{2}$ in case $d$ is odd. It is in place, as we arrive at the claimed estimates by deriving estimates at the $\ceil s$ and $\floor s$ level and interpolating between these. However, for this strategy to work in the excluded cases, we would need to derive estimates at the regularity $\frac{d+1}{2}$, which appears to not be possible.\smallskip
\item 
It is also instructive to record some explicit admissible values of $p,q,n$ for later usage. 
If $1\leq s-n\leq \frac{d-1}{2}$, then the $(p,q)=(\infty, \frac{2d}{d-2s+2n})$ and $(p,q)=(2,\frac{2d}{d-2s-1+2n})$ are admissible. Additionally, the pair $(\infty, \frac{2d}{d-2s+2n})$ is always admissible.  
\end{itemize}
To also provide a nice straightforward application of estimates derived in Theorem \eqref{thm:strichartz}, we revisit the quintic wave equation 
\begin{equation}\label{Eq:startingeq3}
\begin{cases}
\left(\partial_t^2-\partial_r^2-\frac{d-1}{r}\partial_r\right)u(t,r)=u(t,r)^{5}
\\
u[0]=(f,g)\in H^{\frac{d-1}{2}}_{rad}\times H^{\frac{d-3}{2}}_{rad}(\B^d_{1+\delta})
\end{cases}
\end{equation}
and prove the following result
\begin{thm}\label{thm: stability}
There exist constants $M>1$ and $\delta_0>0$ such that for $\delta \in (0,\delta_0)$ the following holds. Let $(f,g)\in H^{\frac{d-1}{2}}_{rad}\times H^{\frac{d-3}{2}}_{rad}(\B^d_{1+\delta})$ be such that
\begin{align*}
\|(f,g)-u^1[0]\|_{H^{\frac{d-1}{2}}\times H^{\frac{d-3}{2}}(\B^d_{1+\delta})}\leq \frac{\delta}{M}.
\end{align*} 
Then, there exists a $T$ in $[1-\delta, 1+\delta]$ and a unique solution $$u:\Gamma^T:=\{(t,x)\in [0,\infty)\times \R^d: |x|\leq T-t\}\to \C $$ to Eq.~\eqref{Eq:startingeq2} such that
\begin{align} \label{esti:bu1}
 \int_0^T\|u(t,.)-u^T(t,.)\|_{L^{\infty}(\B^d_{T-t})}^{2} dt\leq \delta^2
\end{align}
and
\begin{align} \label{esti:bu2}
\int_0^T\|u(t,.)\|_{W^{n,\frac{d}{n}}(\B^d_{T-t})}^2 dt\leq \delta^2
\end{align}
 for all $1\leq n\leq \floor s-1$.
\end{thm}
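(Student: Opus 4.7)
The strategy is to recast \eqref{Eq:startingeq3} as a perturbation of $u^T$ in similarity coordinates and to close a Lyapunov--Perron style fixed point argument in the functional setting provided by Theorem \ref{thm:strichartz}, with the blowup time $T$ modulated to absorb the finite-dimensional instability.

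First I would fix $T$ close to $1$ and pass to the coordinates $\tau=\log T-\log(T-t)$, $\rho=r/(T-t)$, with $\psi(\tau,\rho)=(T-t)^{1/2}u(t,r)$, so that $u^T$ transforms into the constant profile $\psi_\infty:=(3/4)^{1/4}$. Writing $\Psi=(\psi,\partial_\tau\psi+\rho\partial_\rho\psi+\tfrac{d-2}{2}\psi)$ and $\Phi=\Psi-(\psi_\infty,\tfrac{d-2}{2}\psi_\infty)$, one obtains $\partial_\tau\Phi=\Lf\Phi+\Nf(\Phi)$, where $\Lf$ is the operator of Theorem \ref{thm:strichartz} with $s=\tfrac{d-1}{2}$ and $V=\tfrac{15}{4}$, the nonlinearity has second component $\Nf(\Phi)_2=\sum_{k=2}^{5}c_k\Phi_1^k$, and the initial datum $\Phi(0)\in H^s\times H^{s-1}(\B^d_1)$ depends smoothly on $T$ and satisfies $\|\Phi(0)\|_{H^s\times H^{s-1}}\lesssim\delta$ by the smallness assumption on $(f,g)$. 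I would then work in the Banach space $X$ of $\Phi\in C([0,\infty),H^s\times H^{s-1}(\B_1^d))$ equipped with
\begin{align*}
\|\Phi\|_X=\|\Phi\|_{L^\infty_\tau(H^s\times H^{s-1})}+\|\Phi_1\|_{L^2_\tau L^\infty_\rho}+\sum_{n=1}^{\floor s-1}\|\Phi_1\|_{L^2_\tau W^{n,d/n}(\B^d_1)},
\end{align*}
which is exactly the combination of Strichartz norms supplied by Theorem \ref{thm:strichartz} (note that the endpoint $L^2_\tau L^\infty_\rho$ satisfies the scaling $\tfrac12+0=\tfrac d2 - s$) and which translates, under the inverse change of variables $t=T-Te^{-\tau}$, $r=(T-t)\rho$, into precisely the Cartesian norms of \eqref{esti:bu1}--\eqref{esti:bu2}, with the Jacobian $dt=(T-t)\,d\tau$ exactly absorbed by the weights introduced by the rescaling $\psi=(T-t)^{1/2}u$.

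Using Theorem \ref{thm:strichartz}, I would solve the modified integral equation
\begin{align*}
\Phi(\tau)=\Sf(\tau)\Phi(0)+\int_0^\tau\Sf(\tau-\sigma)\Nf(\Phi(\sigma))\,d\sigma-\Cf(T,\Phi)(\tau),
\end{align*}
where the correction $\Cf(T,\Phi)$ takes values in the finite-dimensional subspace $\Uf$ and is designed to cancel the unstable growth on $\Pf$; this reduces the obstruction to a single algebraic equation $\Cf(T,\Phi)(0)=0$. On a small ball $\{\|\Phi\|_X\leq C\delta\}\subset X$, the homogeneous bound in Theorem \ref{thm:strichartz} controls the first term by $\|\Phi(0)\|_{H^s\times H^{s-1}}\lesssim\delta$, while the inhomogeneous Strichartz estimate reduces the Duhamel term to $\|\Nf(\Phi)\|_{L^1_\tau(H^s\times H^{s-1})}$. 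Contraction of the whole map then boils down to the bilinear estimate
\begin{align*}
\|\Nf(\Phi)-\Nf(\widetilde\Phi)\|_{L^1_\tau(H^s\times H^{s-1})}\lesssim(\|\Phi\|_X+\|\widetilde\Phi\|_X)\|\Phi-\widetilde\Phi\|_X.
\end{align*}

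The hard part is exactly this nonlinear estimate: since $s=\tfrac{d-1}{2}<\tfrac d2$, no Sobolev embedding into $L^\infty$ is available, so each $\|\Phi_1^k\|_{L^1_\tau H^s}$ must be decomposed via a Moser--type fractional product rule into one $L^\infty_\rho$ factor and one $H^s$ factor (redistributing derivatives onto $W^{n,d/n}$ factors when $k$ is large) and closed by H\"older in $\tau$ across the full family of Strichartz pairs listed in Theorem \ref{thm:strichartz}, including in particular the endpoint $L^p L^\infty$ bounds and, when $s\notin\mathbb N$, the fractional level $\floor s$ estimate. This is precisely where the broad range of admissible pairs in Theorem \ref{thm:strichartz} is indispensable. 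Once the contraction yields $\Phi_T\in X$ depending continuously on $T$, the condition $\Cf(T,\Phi_T)(0)=0$ defines a finite-dimensional equation whose linearization at $T=1$ is invertible thanks to the transversality of the one-parameter family $\{u^T\}$ along the unstable direction; an intermediate value / Brouwer fixed point argument then produces $T^*\in[1-\delta,1+\delta]$ solving it. For this $T^*$, $\Phi_{T^*}$ is a genuine mild solution of \eqref{Eq:startingeq3}, and unfolding the similarity transformation turns $\|\Phi_{T^*}\|_X\lesssim\delta$ into the Cartesian bounds \eqref{esti:bu1}--\eqref{esti:bu2}.
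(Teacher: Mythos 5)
Your proposal follows essentially the same route as the paper: transform to similarity variables, linearize about the constant ODE profile $\psi_\infty=(3/4)^{1/4}$, set up a Lyapunov--Perron fixed point with a finite-dimensional correction taking values in $\Uf$, close the contraction in a mixed Strichartz/Sobolev ball using Theorem \ref{thm:strichartz}, and modulate $T$ to kill the correction. The Banach space you define (with the endpoints $L^\infty_\tau H^s$, $L^2_\tau L^\infty_\rho$, $L^2_\tau W^{n,d/n}$) is a priori smaller than the paper's $\X$, which takes a supremum over the entire admissible set $A$ including intermediate $L^p_\tau$ exponents; however since those intermediate norms are recovered by interpolating your endpoints and using $H^{\frac{d-1}{2}}\hookrightarrow W^{n,\frac{2d}{1+2n}}$, the two norms are equivalent, so this is a presentational rather than a substantive difference. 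Two small points worth flagging. First, the coefficient $\frac{d-2}{2}$ you use in $\Psi_2=\partial_\tau\psi+\rho\partial_\rho\psi+\tfrac{d-2}{2}\psi$ (and in $\Psi_*$) should be $\tfrac12=\tfrac{d-2s}{2}$ for $s=\tfrac{d-1}{2}$; compare the paper's Definition \ref{def:solutionstrichartz}, which uses $(\tfrac12+\partial_\tau+\rho\partial_\rho)\psi$. Second, you leave the key nonlinear estimate $\|\Nf(\Phi)\|_{L^1_\tau\H}\lesssim\|\Phi\|_X^2+\|\Phi\|_X^5$ as a sketch invoking a Moser fractional Leibniz rule; the paper closes it slightly differently by first using the Sobolev embedding $H^{\frac{d-3}{2}}\subset W^{\lceil\frac{d-3}{2}\rceil,\frac{2d}{d+1}}$ (for $d$ even) to reduce to integer-order derivatives and then applying the classical Leibniz rule plus H\"older with one $L^\infty_\tau W^{|\alpha_1|,\frac{2d}{1+2|\alpha_1|}}$ factor and four $L^4_\tau W^{|\alpha_j|,\frac{4d}{1+4|\alpha_j|}}$ factors. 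Both routes are viable, but the embedding trick avoids fractional product estimates entirely, which is arguably cleaner given that the paper's Theorem \ref{thm:strichartz} only supplies Strichartz estimates for integer numbers of derivatives. The variation-of-$T$ step is correctly identified as a one-dimensional transversality condition matching the simple eigenvalue $1$ with eigenfunction $(2,3)$, which the paper resolves by referring to the implicit-function/Brouwer argument of Lemma 5.6--5.7 of \cite{Wal22}.
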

Once more, we would like to make some remarks.
\begin{itemize}
\item
A simple computation shows that
$$
 \|u^T(t)\|_{L^{\infty}(\B^d_{T-t})}^2\simeq (T-t)^{-1}
 $$
and one sees that in order for the estimates \eqref{esti:bu1} to hold, $u$ is forced to exhibit the same blowup behavior as $u^T$ in a Strichartz space sense.  Consequently, Theorem \ref{thm: stability} states that there is an open ball around $u^1[0]$ in the optimal topology such that data inside that ball leads to the ODE type blowup. Observe, however, that the actual blowup time gets slightly shifted in general. This shift is a consequence of the time translation symmetry of Eq. \eqref{Eq:startingeq2}. Additionally, \eqref{esti:bu2} ensures certain control and smallness of the derivatives of $u$.\smallskip
\item
We also shortly digress into our notation of a solution. Loosely, a solution, when transformed to similarity variables, is a fixed point of the Duhamel formula
\begin{align*}
[\Sf(\tau)\ff]_1+\int_0^\tau [\Sf(\tau-\sigma)\Nf((\phi(\sigma),0))]_1 d\sigma,
\end{align*}
which lies in an appropriate Strichartz space. For the precise definition we refer to Definition \ref{def:solutionstrichartz}).
It is of interest to note that said fixed point is an element of $H^s(\B^d_1)$ for all $\tau$ for which it exists. Hence, our concept of a solution is naturally compatible with other rough notions of solutions that can be found in the literature.
Further, we want to emphasize that, should the prescribed data be smooth, the associated solution will be as well. This follows by standard Gronwall arguments. \smallskip
\item
The topology used in which we construct solutions is optimal in terms of $L^2$ based inhomogeneous Sobolev spaces, in that the number of derivatives required cannot be lowered.
\item
Naturally, our proof can be easily modified to other nonlinearities, provided one can obtain the required spectral information. We just used the quintic nonlinear wave equation as a nice model to illustrate the useful of our estimates. \smallskip
\item
As a final remark, we want to slightly delve into the literature on Strichartz estimates in similarity variables and optimal blowup stability results for nonlinear wave equations. The first results in that direction were proven by Donninger, who established such Strichartz estimates and the optimal stability of the ODE blowup at energy \cite{Don17} in three spatial dimensions. The methods used in this work were then extended to the five dimensional problem \cite{DonRao20} and later also to small even dimensions \cite{Wal22}. Since these three works are confined to the energy topology, the next key step in the development of this theory was to advance it to higher derivatives. The first step in that direction was the derivation of Strichartz estimates in similarity variables on the $H^2$ level and employing these to prove the optimal blowup stability of the wave maps blowup in $4$ dimensions \cite{DonWal22a}. Later on, this was also accomplished in $3$ dimensions \cite{DonWal23}, which required an extension of the theory to Sobolev spaces of half integer order. This current work can therefore naturally be viewed as the next step, as it extends the existing framework developed in all these works to derive estimates for a very large range of (in general non-integer!) regularities.
\end{itemize}
\subsection{Outline of the proof of Theorem \ref{thm:strichartz}}
Given that the majority of this work is concerned with the derivation of Theorem \ref{thm:strichartz}, we provide a short nontechnical outline of its proof. As mentioned, this work builds on the framework developed in the earlier work 
 \cite{Don17,DonRao20,DonWal22a,Wal22} and starts with considering the radial linear wave equation with potential as an abstract evolution equation in the similarity variables
$\tau=-\log(T-t)$ and $\rho=\frac{r}{T-t}$.
Then, by Theorem 2.1 in \cite{Ost24} and the Bounded Perturbation Theorem, the associated operator $\Lf$ generates a semigroup $\Sf$ which satisfies
\begin{align*}
\|\Sf(\tau) (\I-\Pf)\|_{\mathcal{H}^s}\leq C_\varepsilon e^{\varepsilon\tau}
\end{align*}
for any $\varepsilon>0$
where $\H^s:=H^s_{rad}\times H^{s-1}_{rad}(\B^d_1)$ and where $\Pf$ is a bounded linear operator with finite rank. Furthermore, in the non-integer case, $\Sf$  decays exponentially on $\H^{\ceil s}$, up to finite many directions.
To move on, we asymptotically construct the resolvent of $\Lf$. This is accomplished by making use of the fact that the resolvent equation reduces to a second order ODE which is then analysed with by means of a Liouville Green Transformation and Bessel asymptotics. The resulting solutions, which, due to singular behaviors, need to be constructed separately near the poles $\rho=0$ and $\rho =1$, are then glued together and used to construct $(\lambda-\Lf)^{-1}$. 
While this follows the above mentioned framework, it is technically far more involved than the corresponding resolvent construction in previous works, as we allow $\Lf$ to live on $H^{\lceil s\rceil} \times H^{\lceil s\rceil-1}(\B^d_1)$ for high $s$ and dimension $d$. Hence, it needs also need fundamentally new ideas.
With this at hand, we employ the Laplace
representation
\begin{equation}\label{eq:laprepi} \Sf(\tau)(\I-\Pf)\ff=\frac{1}{2\pi
    i}\lim_{N\to\infty}\int_{\epsilon-iN}^{\epsilon+iN}e^{\lambda\tau}(\lambda-\Lf)^{-1}(\I-\Pf)\ff\,d\lambda
\end{equation}
and show that our delicate construction allows us to take large enough number of $\rho$ derivative inside the integral. 
Then, a careful study of the resulting oscillatory integrals leads to desired estimates. For this, it is necessary to view $\Lf$ and $\Sf$ in the $ H^{\lceil s\rceil} \times H^{\lceil s\rceil-1}(\B^d_1) $ topology.
Then, as Strichartz estimates for $\Sf_{0}$ (the semigroup corresponding to the free radial wave equation without potential) follow from the standard ones in Cartesian coordinates and a simple scaling argument, our aim is to proof weighted Strichartz estimates, which are essentially of the form 
\begin{align*}
\|[e^{-(\lceil s\rceil -s) \tau} (\Sf(\tau)-\Sf_{0}(\tau))(\I-\Pf) \ff]_1 \|_{L^p(\R_+) W^{n,q} (\B^d_1)}&\lesssim \|\ff\|_{H^{\lceil s\rceil} \times H^{\lceil s\rceil-1}(\B^d_1) }
\end{align*}
and 
\begin{align*}
\|[e^{(s-\floor s) \tau} (\Sf(\tau)-\Sf_{0}(\tau))(\I-\Pf) \ff]_1 \|_{L^p(\R_+) W^{n,q} (\B^d_1)}&\lesssim \|\ff\|_{H^{\lceil s\rceil} \times H^{\floor s-1}(\B^d_1) }
\end{align*}
for $\ff$ smooth (to be be completely precise, the estimates are slightly more involved, but are in spirit of this form). These bounds are obtained by careful study of the oscillatory integral on the right side of \eqref{eq:laprepi}. The desired estimates then follow from interpolating the above estimates.

\subsection{Related results}
Even though the most closely related results have already been mentioned in the introduction, we feel compelled to also  highlight several other recent works. For works on Strichartz estimates for wave equations with potentials, we refer the reader to \cite{DAnFan08,MetTar07,AiIfrTat24,DonGlo19} and the reference within these works. The study of blowup stability for energy supercritical wave equations was started with the works \cite{DonSch14,Don11, Don14} while extensive work on the subcritical problem was done by Merle and Zaag in the series of works \cite{MerZaa03,MerZaa05,MerZaa07,MerZaa08,MerZaa12a,MerZaa12b,MerZaa15} and further studied by Alexakis and Shao \cite{AleSha17} and Azaiez \cite{Aza15} and also Donninger and Schörkhuber \cite{DonSch12}. An extension of the work \cite{DonSch14} in which blowup stability for the energy supercritical radial wave equation was shown, were established in \cite{DonSch17} which generalises their result to all odd dimensions and \cite{DonSch16} which extends their work in three dimensions to the nonradial setting. More recently, Glogić and Schörkhuber investigated the blowup stability of a different explicit blowup for the supercritical cubic, and together with Csobo, quadratic wave equation \cite{GloScho21,CsoGloSch21}. We also want to bring up an extension of the three dimensional optimal blowup stability result of Donninger to randomized initial data \cite{Bri20}. Lastly and only loosely connected, but still definitely worth mentioning, Strichartz estimates and optimal blowup stability have been established for the slightly mass supercritical nonlinear Schrödinger equation \cite{Li23}.

\section{Similarity coordinates and Semigroup Theory}
To introduce the right functional set-up, we let $d\geq3 $ be fixed and consider the free
radial wave equation 
\begin{equation}\label{Eq:freeradial}
\left(\partial_t^2-\partial_r^2-\frac{d-1}{r}\partial_r\right)u(t,r)=0
\end{equation}
with $t,r \in \Gamma^T$ and where $d\geq 3$ is some fixed natural number. Then, for $s\in \R$ with  $2 \leq s < \frac{d}{2}$ fixed, we define the similarity coordinates $\tau, \rho$ as
\begin{align*}
\tau=-\log(T-t)+\log(T),\qquad \rho=\frac{r}{T-t}
\end{align*}
and set $\psi(\tau,\rho)=(Te^{-\tau})^{\frac{d-2s}{2}}u(T-Te^{-\tau},Te^{-\tau}\rho)$. This transforms  Eq.~\eqref{Eq:freeradial} into \begin{equation}\label{Eq: sim coordinates}
\begin{split}
\bigg[\partial_\tau^2 &+(d-2s+1)\partial_\tau+2\rho\partial_\tau\partial_\rho -(1-\rho^2)\partial_\rho^2 -\frac{d-1}{\rho}\partial_\rho+(d-2s+2)\rho\partial_\rho\bigg]\psi(\tau,\rho)
\\
&+\frac{(d-2s)(d-2s+2)}{4}\psi(\tau,\rho)
=0.
\end{split}
\end{equation}
Let now
\begin{align*}
\psi_1(\tau,\rho)&=\psi(\tau,\rho)
\\
\psi_2(\tau,\rho)&=\partial_\tau\psi(\tau,\rho)+\rho\partial_\rho\psi(\tau,\rho)+\frac{d-2s}{2}\psi(\tau,\rho).
\end{align*}
Then Eq. \eqref{Eq: sim coordinates} turns into the system
\begin{equation} \label{Eq:intermediate system2}
\begin{split}
\partial_\tau \psi_1(\tau,\rho)&=-\rho\partial_\rho\psi_1(\tau,\rho)-\frac{d-2s}{2}\psi_1(\tau,\rho)+\psi_2(\tau,\rho)
\\
\partial_\tau \psi_2(\tau,\rho)&=\partial_\rho^2\psi_1(\tau,\rho)+\frac{d-1}{\rho}\partial_\rho \psi_1(\tau,\rho)-\rho\partial_\rho \psi_2(\tau,\rho)-\frac{d-2s+2}{2}\psi_2(\tau,\rho),
\end{split}
\end{equation}
Next, we define $\H^r:=\{\uf \in H^r \times H^{r-1}(\B^d_1):\uf \text{ radial}\} $ for any $r\geq 1$ and denote by  $\|.\|_{\mathcal{H}^r}$ the radial $H^r\times H^{r-1}(\B^d_1)$ norm. 
Motivated by the above system, we define the operator \\$\widetilde{\Lf}:D(\widetilde{\Lf})\subset  \H^{\floor s} \to \H^{\floor s}$, as
\begin{align*}
\widetilde{\Lf} \ff(\rho)=\begin{pmatrix}
-\rho f_1'(\rho)-\frac{d-2s}{2}f_1(\rho)+f_2(\rho)\\
f_1''(\rho)+\frac{d-1}{\rho}f_1'(\rho)-\rho f_2'(\rho)-\frac{d-2s+2}{2}f_2(\rho)
\end{pmatrix},
\end{align*}
where $D(\widetilde{\Lf}):=\{\ff\in C^\infty\times C^\infty(\overline{\B_1^d}):\ff \text{ radial} \}$.
To proceed, we note that if we set $\theta$ to equal $s-\floor s$. Then, $\theta $ is chosen such that $\theta \ceil s +(1-\theta) \floor s=s$.
Therefore, by Theorem 2.1 in \cite{Ost24} (and an elementary interpolation argument) we know that $\widetilde \Lf$ is closable and its closure, denoted by $\Lf_0$, generates a strongly continuous semigroup $\Sf_0$ of bounded linear operators on $\mathcal{H}^{ \floor s}$.
Moreover, we know the following bounds:
\begin{enumerate}
\item
\begin{equation}\label{eq:semibound1}
\ceil s < \frac d2 \implies \begin{cases} 
\|\Sf_0(\tau) \ff\|_{\mathcal{H}^{ \ceil s}} &\lesssim  e^{ (-\ceil s +s)\tau}\|\ff\|_{\mathcal{H}^{\ceil s}}= e^{-(1-\theta)\tau}\|\ff\|_{\mathcal{H}^{\ceil s}}
\\
\|\Sf_0(\tau) \ff\|_{\mathcal{H}^{ \floor s}} &\lesssim  e^{ (s-\floor s)\tau}\|\ff\|_{\mathcal{H}^{\floor s}}= e^{\theta \tau}\|\ff\|_{\mathcal{H}^{\floor s}}
\\
\|\Sf_0(\tau) \ff\|_{\mathcal{H}^{ s}} &\lesssim \|\ff\|_{\mathcal{H}^{s}}
\end{cases}
\end{equation}
for all $\tau \geq 0$ and all $\ff\in \mathcal{H}^{\ceil s}$.
\item
\begin{equation}\label{semibound2}
\ceil s = \frac d2 \implies
\begin{cases} 
\|\Sf_0(\tau) \ff\|_{\mathcal{H}^{ \ceil s}} &\lesssim_{\varepsilon} e^{(s-\ceil s +\varepsilon)\tau}\|\ff\|_{\mathcal{H}^{\ceil s}}
\\
\|\Sf_0(\tau) \ff\|_{\mathcal{H}^{ \floor s}} &\lesssim_{\varepsilon}  e^{ (s-\floor s)\tau}\|\ff\|_{\mathcal{H}^{\floor s}}= e^{\theta \tau}\|\ff\|_{\mathcal{H}^{\floor s}}
\\
\|\Sf_0(\tau) \ff\|_{\mathcal{H}^{ s}} &\lesssim_{\varepsilon} e^{\varepsilon \tau} \|\ff\|_{\mathcal{H}^{s}}
\end{cases}
\end{equation}
for any $\varepsilon>0 $ fixed and
all $\tau \geq 0$ as well as all $\ff\in \mathcal{H}^{\ceil s}$.
\end{enumerate}
From now on, whenever we reference any of the operators $\Lf_0$ or $\Sf_0$ (or any related / derived operator), we always consider the one living in $\mathcal{H}^{\ceil s}$  in case $s$ is not an integer and $\mathcal{H}^{s+\frac{1}{100}}$ in case it is, unless specified otherwise. Using the space $\mathcal{H}^{s+\frac{1}{100}}$ might, understandably, seem odd. The point here is that it in this space, the essential spectrum of $\Lf$ is to the left of the imaginary axis and does not include it. This lies in contrast to the space $\mathcal H^s$.  
Moreover, going forward, we confine $T$ to the interval $\left[\frac{1}{2},\frac{3}{2}\right]$. This restriction of $T$ leads to no loss of generality as we only care for $T$ close to $1$ anyway. 
\begin{lem}\label{Strichart}
Let $d\geq 3$ and $1\leq s<\frac{d}{2}$ with $1\leq \ceil s\leq \frac{d}{2}$ be fixed. Then, the bounds
\begin{align*}
\|[\Sf_0(\tau)\ff]_1\|_{L^p_\tau(\R^+)\dot{W}^{n,q}(\B^d_1)}\lesssim \|(\I-\Pf)\ff\|_{\H^s}
\end{align*}
hold for all $n\in 
\mathbb{N}_0$ with  $0\leq n\leq s$, $\ff \in \H^s$, and $p,q\in [2,\infty]$  that satisfy both the scaling condition
$$\frac{1}{p}+\frac{d}{q}=\frac{d}{2}-s+n$$
as well as the wave admissibility condition
\begin{align*}
\frac{1}{p}+\frac{d-1}{2q}&\leq \frac{d-1}{4}.
\end{align*}
Additionally, the inhomogeneous estimate 
\begin{align*}
\left\|\int_0^\tau[\Sf_0(\tau-\sigma)\hf(\sigma,.)]_1 d \sigma\right\|_{L^p_\tau(I)\dot{W}^{n,q}(\B^d_1)}\lesssim \|(\I-\Pf)\hf(\tau,.)\|_{L^1_\tau(I)\H}
\end{align*}
holds for all such $n,p,q,$ and $\hf \in C([0,\infty),\H^s)\cap L^1([0,\infty),\H^s)$ as well as all $\tau_0>0$.
\end{lem}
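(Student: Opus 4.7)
The strategy is to reduce the estimates to the classical Strichartz bounds for the free radial wave equation on $\R^d$, restricted to the backward light cone, and then pull the result back via the similarity change of coordinates introduced at the beginning of this section. The key observation is that by the very construction of $\Lf_0$ one has
\begin{equation*}
[\Sf_0(\tau)\ff]_1(\rho) = (Te^{-\tau})^{(d-2s)/2}\, u(T - Te^{-\tau},\, Te^{-\tau}\rho), \qquad \rho \in \overline{\B^d_1},
\end{equation*}
where $u$ solves the free radial wave equation on $\R^d$ with radial data $(f,g)$ obtained from $\ff$ by a bounded Sobolev extension $\H^s\to H^s_{rad}\times H^{s-1}_{rad}(\R^d)$ followed by a harmless $T$-rescaling, which is uniform in $T\in [\tfrac 12,\tfrac 32]$.

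The homogeneous estimate then proceeds in three steps. First, I invoke the classical wave Strichartz bounds (Keel--Tao at the endpoint, Ginibre--Velo elsewhere) on $\R^d$ to obtain
\begin{equation*}
\|u\|_{L^p_t(\R)\dot{W}^{n,q}(\R^d)}\lesssim \|(f,g)\|_{\dot H^s(\R^d)\times \dot H^{s-1}(\R^d)}
\end{equation*}
for every pair $(p,q)$ satisfying both the scaling and the wave-admissibility conditions listed in the lemma. Second, I restrict the spatial integration to the ball $\B^d_{T-t}$, which only decreases the norm. Third, I change variables via $t = T - Te^{-\tau}$, $x = Te^{-\tau}\rho$: a direct Jacobian computation shows
\begin{equation*}
\|\psi(\tau,\cdot)\|_{\dot W^{n,q}(\B^d_1)} = (Te^{-\tau})^{(d-2s)/2 + n - d/q}\|u(t,\cdot)\|_{\dot W^{n,q}(\B^d_{T-t})},
\end{equation*}
and, using $d\tau = dt/(T-t)$, the resulting power of $(T-t)$ in the $L^p_\tau$ integral is exactly $p[(d-2s)/2+n-d/q]-1$, which vanishes precisely when $\tfrac{1}{p}+\tfrac{d}{q}=\tfrac{d}{2}-s+n$.

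For the inhomogeneous bound, I apply Minkowski's integral inequality: the homogeneous estimate applied to $\Sf_0(\tau-\sigma)\hf(\sigma)$ viewed as a function of $\tau$ (for each fixed $\sigma$) gives an $L^p_\tau\dot W^{n,q}$-bound by $\|\hf(\sigma)\|_{\H^s}$, and integrating over $\sigma$ yields the desired $L^1_\tau\H^s$ control without loss. Alternatively one may pull the entire Duhamel expression back to Cartesian coordinates and appeal directly to the inhomogeneous wave Strichartz estimate on $\R^d$.

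The main obstacle is essentially bookkeeping: one must produce a bounded radial extension operator on $\H^s$ uniformly in the full (possibly non-integer) range $1\leq s<d/2$, and track the powers of $(Te^{-\tau})$ through the coordinate conversion carefully enough that no spurious $\tau$-weights survive. The factor $(\I-\Pf)$ on the right-hand side plays no role at this purely free level --- it may be taken to be the identity, since on $\H^s$ the free semigroup $\Sf_0$ has no genuine growing directions (the $e^{\varepsilon\tau}$ loss in \eqref{semibound2} is absorbed by a standard epsilon-reduction before integrating in $\tau$).
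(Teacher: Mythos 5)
Your proof follows essentially the same route as the paper: both rest on the identity $[\Sf_0(\tau)\ff]_1(\rho) = (Te^{-\tau})^{(d-2s)/2}u(T-Te^{-\tau}, Te^{-\tau}\rho)$, a family of Sobolev extensions uniformly bounded in $T\in[\tfrac12,\tfrac32]$, the classical wave Strichartz estimates on $\R^d$, and the observation that under the scaling condition $\tfrac1p+\tfrac dq = \tfrac d2 - s + n$ the power of $(T-t)$ arising from the Jacobian of the coordinate change vanishes identically after using $d\tau = dt/(T-t)$; the inhomogeneous estimate is obtained by Minkowski's integral inequality in both. One peripheral explanation is off: the $(\I-\Pf)$ factor is not dispensed with by ``absorbing'' the $e^{\varepsilon\tau}$ loss of \eqref{semibound2} through an $\varepsilon$-reduction (that loss is not $L^p_\tau$-integrable on $\R_+$ for any $\varepsilon>0$); rather, the whole derivation here bypasses the semigroup growth bound and goes directly through the Cartesian Strichartz estimates, so no projection and no exponential loss ever enter.
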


\begin{proof}
Let $T\in \left[\frac{1}{2},\frac{3}{2}\right]$, $\ff \in C^\infty \times C^\infty (\overline{\B^d_1})$, and 
let $\Ef_T:H^s\times H^{s-1}(\B^d_T) \to H^s\times H^{s-1}(\R^d)$ be a family of Sobolev extensions which are uniformly bounded in $T$.
We define the scaling operator $\Af_T:H_{rad}^s\times H_{rad}^{s-1}(\B^d_T)\to \mathcal{H}$ by
 \begin{align*}
\Af_T \ff =( Tf_1(T.),T^2 f_2(T.)).
\end{align*}
In view of the coordinate transformations performed at the beginning of Section 2, the evolution $\Sf_0(.)\ff$
is given by the solution $u\in C^\infty(\R_+\times \R^d)$ of the equation
\begin{align*}
\begin{cases}
\left(\partial_t^2-\partial_r^2-\frac{d-1}{r}\partial_r
\right)u(t,r)=0\\
(u(0,.),\partial_0 u(0,.))=\Ef_T\Af_T^{-1}\ff
\end{cases}
\end{align*}
restricted to the light cone $\Gamma^T$.
Therefore, $$
\left[\Sf_0(\tau)\ff\right]_1(\rho)=(Te^{-\tau})^{\frac{d-2s}{2}}u(T-Te^{-\tau},Te^{-\tau}\rho).
$$
Let now $r\in \mathbb{N}$ be fixed and let $p,q$ be the upper endpoint pair (i.e. the unique tuple that satisfies both the scaling and the wave admissibility condition for said fixed $r$, which is chosen such that all other such admissible tuples ($\widetilde{p},\widetilde{q})$ satisfy $\widetilde{p}>p, \widetilde{q}<q$).
Then, $q$ satisfies
\begin{align*}
q= \frac{2pd}{pd-2sp+2rs-2}
\end{align*}
and we compute
\begin{align*}
\|[\Sf_0(\tau)\ff]_1\|_{L^p_\tau(\R_+)\dot{W}^{r,q}(\B^d_1)}&\leq\left\|(Te^{-\tau})^{\frac{d-2s}{2}} u(T-Te^{-\tau},Te^{-\tau}\rho)\right\|_{L^p_\tau(\R_+)\dot{W}^{r,\frac{2dp}{pd-2-2ps+2rp}}_\rho(\B^d_1)}
\\
&\lesssim \|e^{-\frac{\tau}{p}}u(T-Te^{-\tau},.)\|_{L^p_\tau(\R_+)\dot{W}^{r,\frac{2dp}{dp-2-2sp+2rp}}(\R^d_1)}
\\
&\lesssim \|u\|_{L^2((0,2))\dot{W}^{r,\frac{2dp}{dp-2-2sp+2rp}}(\R^d_1)} \lesssim \|u[0]\|_{H^s\times H^{s-1}(\R^d_1)}
\\
&= \|\Ef_T\Af_T^{-1}\ff\|_{H^s\times H^{s-1}(\R^d_1)} \lesssim \|\ff\|_{H^s  \times H^{s-1}(\B^d_1)}
\end{align*}
due to the classical Strichartz estimates in Cartesian coordinates.
The other endpoint estimate as well as the higher energy estimate 
\begin{align*}
\|[\Sf_0(\tau)\ff]_1\|_{L^\infty_\tau(\R^+)\dot H^s(\B^d_1)}\lesssim \|(\I-\Pf)\ff\|_{\H^s}
\end{align*}
follow likewise after which the general ones are a consequence of interpolation. The inhomogeneous ones are then derived by employing Minkowski's inequality as in the proof of Lemma 3.7 in \cite{DonWal22a}.
\end{proof}
With this at hand, we turn to the full perturbed equation. For this, we let $V$ be a radial smooth potential and
define $\Lf':C^\infty_{rad}\times C^\infty_{rad} (\overline{\B^d_1})$ to $ C^\infty_{rad}\times C^\infty_{rad} (\overline{\B^d_1})$ as
\begin{align*}
\Lf' \ff:=\begin{pmatrix}
0
\\
V f_1
\end{pmatrix}
\end{align*}
Moreover, we set $\Lf:=\overline{\Lf_0+\Lf'}$.
\begin{lem}
For every $\varepsilon>0$ there exist finitely many $\lambda_1,\dots, \lambda_n \in \C$ with $\Re \lambda_i>-s+\floor s+\delta$ such that $$\sigma(\Lf)\subset\{z \in \C:\Re z\leq s-\ceil s +\varepsilon\} \cup \{\lambda_1,\dots \lambda_n\}$$ where each of the $\lambda_i$ is an eigenvalue of finite algebraic multiplicity. 
Likewise, in case $s\in \mathbb{N}$, one has that 
$$\sigma(\Lf)\subset\{z \in \C:\Re z\leq s-\frac{1}{400}\} \cup \{\lambda_1,\dots \lambda_n\}$$
where each of the $\lambda_i$ is again an eigenvalue of finite algebraic multiplicity. 
\end{lem}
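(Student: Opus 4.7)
My approach is to view $\Lf = \Lf_0 + \Lf'$ as a compact perturbation of $\Lf_0$ and run a standard analytic-Fredholm argument. The first ingredient is a spectral bound for the unperturbed generator: the semigroup estimates \eqref{eq:semibound1}--\eqref{semibound2}, applied on the ambient space ($\mathcal H^{\ceil s}$ in the non-integer case, $\mathcal H^{s+\frac{1}{100}}$ in the integer case), yield $\|\Sf_0(\tau)\| \lesssim e^{(s-\ceil s+\varepsilon)\tau}$ (and the analogous small growth rate in the integer case). Since the spectral bound of a generator is dominated by the growth bound of its semigroup, $\sigma(\Lf_0)$ is contained in the claimed half-plane.

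Next I will verify that $\Lf'$ is compact on the ambient space. Because $V \in C^\infty_{rad}(\overline{\B^d_1})$, multiplication by $V$ acts boundedly on $H^{\ceil s}_{rad}(\B^d_1)$, so $\ff \mapsto (0,Vf_1)$ maps the ambient space continuously into $H^{\ceil s}_{rad}\times H^{\ceil s}_{rad}(\B^d_1)$; composition with the Rellich compact embedding $H^{\ceil s}_{rad}(\B^d_1)\hookrightarrow H^{\ceil s-1}_{rad}(\B^d_1)$ then renders $\Lf'$ compact as an operator on the ambient space.

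With these two ingredients in hand, on the open set $\Omega := \{\Re\lambda > s-\ceil s + \varepsilon\}$ the factorization
\begin{align*}
\lambda - \Lf = (\lambda-\Lf_0)\bigl(\I - (\lambda-\Lf_0)^{-1}\Lf'\bigr)
\end{align*}
holds and $\K(\lambda) := (\lambda-\Lf_0)^{-1}\Lf'$ is an analytic family of compact operators on $\Omega$. For $\Re\lambda$ large the Neumann series converges, so $\I - \K(\lambda)$ is invertible on a non-empty open subset of the connected set $\Omega$. The analytic Fredholm theorem therefore produces a meromorphic extension of $(\I-\K(\lambda))^{-1}$ to all of $\Omega$, whose poles coincide with the eigenvalues of $\Lf$ in $\Omega$, each of finite algebraic multiplicity. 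An entirely analogous argument handles the integer case after replacing $s-\ceil s+\varepsilon$ by the slightly larger threshold $s-\tfrac{1}{400}$ dictated by the choice of ambient space.

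To upgrade discreteness of this exceptional set to \emph{finiteness}, I will combine two observations. The Bounded Perturbation Theorem applied to $\Lf_0 + \Lf'$ supplies an upper bound $\omega_0$ with $\sigma(\Lf)\subset\{\Re\lambda\leq \omega_0\}$, confining the eigenvalues in $\Omega$ to a vertical strip. To rule out accumulation at $|\Im\lambda|\to\infty$, I will exploit that along such vertical lines $\|(\lambda-\Lf_0)^{-1}\|\to 0$, so that $\|\K(\lambda)\|<1$ outside a bounded region; the eigenvalues in $\Omega$ therefore lie in a compact set, and discreteness closes the argument. The hard part is precisely this last decay statement for the free resolvent, which demands the detailed Liouville--Green and Bessel asymptotics that form the technical backbone of the paper; all other steps are routine applications of the Weyl / analytic-Fredholm circle of ideas.
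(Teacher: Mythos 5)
Your reconstruction is correct up to the finiteness step, but that last step relies on an ingredient that is neither automatic nor available at this point in the paper, and is in fact unnecessary. The claim that $\|(\lambda-\Lf_0)^{-1}\|\to 0$ as $|\Im\lambda|\to\infty$ along a vertical line $\Re\lambda = c > s-\ceil s$ does \emph{not} follow from the semigroup growth bound: for a general $C_0$-semigroup on a Hilbert space, the Laplace-transform representation of the resolvent only yields $\|(\lambda-\Lf_0)^{-1}\|\lesssim (\Re\lambda-\omega_0)^{-1}$, which is constant along vertical lines. Establishing genuine decay in $|\Im\lambda|$ is exactly the content of the Liouville--Green and Bessel analysis carried out in the body of the paper, and none of that is in place when this lemma is stated. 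You are correct that this is ``the hard part,'' but the point is that you should avoid it here rather than appeal to it.

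The paper sidesteps this entirely by citing Theorem B.1 of \cite{Glo22b}, which is a semigroup-level compact-perturbation result: the Duhamel formula
\begin{align*}
\Sf(\tau) = \Sf_0(\tau) + \int_0^\tau \Sf_0(\tau-\sigma)\,\Lf'\,\Sf(\sigma)\,d\sigma
\end{align*}
shows that $\Sf(1)-\Sf_0(1)$ is compact (since $\Lf'$ is), hence $\Sf(1)$ and $\Sf_0(1)$ share the same essential spectrum, whose radius is controlled by $e^{s-\ceil s+\varepsilon}$. Consequently $\Sf(1)$ has only finitely many spectral points outside the disk of radius $e^{s-\ceil s+\varepsilon}$, each an eigenvalue of finite multiplicity; since eigenvectors of $\Lf$ for distinct $\lambda$ are eigenvectors of $\Sf(1)$ for $e^\lambda$ and the corresponding eigenspaces are finite-dimensional, this forces the set of eigenvalues of $\Lf$ with $\Re\lambda > s-\ceil s+\varepsilon$ to be finite. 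This argument needs only the growth bound and compactness of $\Lf'$, both of which you have correctly established. So your steps 1--3 (spectral bound for $\Lf_0$, compactness of $\Lf'$ via Rellich, analytic Fredholm) are sound, and your compactness verification matches what the paper uses implicitly, but you should replace the resolvent-decay route to finiteness with the essential-spectral-radius argument at the level of $\Sf(1)$.
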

\begin{proof}
The inclusion
$$\sigma(\Lf)\subset\{z \in \C:\Re z\leq s-\ceil s)+\varepsilon\} \cup \{\lambda_1,\dots \lambda_n\}$$ 
follows immediately from Theorem B.1 in \cite{Glo22b} since $\Sf_0$ satisfies the growth bound
\begin{align*}
\|\Sf(\tau)\ff\|_{H^{\ceil s}\times H^{\ceil s-1}(\B^d_1)}&\lesssim_\varepsilon e^{(s-\ceil s+\varepsilon)\tau} \|\ff\|_{H^{\ceil s}\times H^{\ceil s-1}(\B^d_1)}
\end{align*}
for all $\varepsilon >0$, all $\tau \geq 0$, and all $\ff \in H^{\ceil s}\times H^{\ceil s-1}(\B^d_1)$, and the fact that $\Lf'$ is a compact operator.
To prove the claim in the integer case, one notes that by once more employing Theorem 2.1 in \cite{Ost24}, one obtains the bounds
\begin{align*}
\|\Sf_0(\tau) \ff\|_{\mathcal{H}^{s+1}}&\lesssim e^{-\frac{1}{3}\tau}\|\ff\|_{\mathcal{H}^{s+1}}
\\
\|\Sf_0(\tau) \ff\|_{\mathcal{H}^{s}} &\lesssim\|\ff\|_{\mathcal{H}^{s}}.
\end{align*}
These imply
\begin{align*}
\|\Sf_0(\tau) \ff\|_{\mathcal{H}^{s+\frac{1}{100}}} &\lesssim e^{-\frac{1}{300} \tau}\|\ff\|_{\mathcal{H}^{s+\frac{1}{100}}}
\end{align*}
and the claim follows.
\end{proof}
To continue, we remark that the equation
$(\lambda-\Lf)\ff=\gf$ reads as
\begin{align*}
\lambda f_1(\rho)+\rho f_1'(\rho)+\frac{d-2s}{2}f_1(\rho)-f_2(\rho)&=g_1(\rho)\\
\lambda f_2(\rho)-f_1''(\rho)-\frac{d-1}{\rho}f_1'(\rho)+\rho f_2'(\rho)+\frac{d-2s+2}{2}f_2(\rho)&=g_2(\rho)
\end{align*}
and the first of the above equations implies

\begin{align*}
f_2(\rho)=\rho f_1'(\rho)+\frac{d-2s+2\lambda}{2}f_1(\rho)-g_1(\rho).
\end{align*}
Plugging this into the second results in the ODE
\begin{align}\label{eq:genspec0}
(\rho^2-1)f_1''(\rho)&+\left((d-2s+2\lambda+2)\rho-\frac{d-1}{\rho}\right)f_1'(\rho)
\\
&
+\frac{d-2s+2\lambda}{4}(d-2s+2\lambda+2)f_1(\rho)+V(\rho)f_1(\rho)=G_\lambda(\rho)
\end{align} 
with
$G_\lambda(\rho)= (\lambda +\frac{d}{2}-s+1)g_1(\rho)+\rho  g_1'(\rho)+
g_2(\rho)
$.
With this at hand, we come to the last result of this section.
\begin{lem}\label{reg:eigenfunctions}
Let $\lambda \in \sigma_p(\Lf)$ with $\Re \lambda>s-\ceil s$ and let $U_\lambda$ be the associated finite dimensional generalised eigenspace. Then $\ff\in U_\lambda\implies \ff\in C^k\times C^{k-1}([0,1]).$ Furthermore, the first component of $\ff$, denoted by $f_1$ satisfies 
$$
f_1 \in W^{n,p} (\B^d_1)
$$
for all $1\leq n\leq k-1$ and $p=\frac{2d}{d+2n-2s-1}$. 
\end{lem}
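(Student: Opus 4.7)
The plan is to exploit the fact that elements of $U_\lambda$ solve a linear ODE with regular singular points at $\rho=0$ and $\rho=1$ and with smooth coefficients in between. By finite dimensionality of $U_\lambda$ there is an integer $m\geq 1$ with $(\Lf-\lambda)^m\ff=0$, and I would induct on $m$.

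In the base case $m=1$, the first component $f_1$ satisfies the homogeneous version of \eqref{eq:genspec0}. Standard ODE theory gives $f_1\in C^\infty((0,1))$ away from the singular points. Near $\rho=0$ the indicial equation $r(r-1)+(d-1)r=0$ has roots $r=0$ and $r=2-d$; since $\ff\in\H^{\ceil{s}}\hookrightarrow L^2_{rad}\times L^2_{rad}(\B^d_1)$ and $d\geq 3$, the $\rho^{2-d}$ branch (and any logarithmic corrections in exceptional dimensions) is excluded by integrability, so $f_1$ extends smoothly through $\rho=0$. Near $\rho=1$ the indicial roots are $0$ and an explicit $\lambda$-dependent exponent $\alpha(\lambda)$; I would argue that the spectral hypothesis $\Re\lambda>s-\ceil{s}$ combined with $f_1\in H^{\ceil{s}}$ forces $f_1$ to lie in the smooth branch, possibly after handling a finite list of exceptional resonance values of $\lambda$. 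The second component $f_2$ is then smooth by the algebraic relation $f_2=\rho f_1'+\tfrac{d-2s+2\lambda}{2}f_1$ derived before \eqref{eq:genspec0}.

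For the inductive step, let $\gf:=(\Lf-\lambda)\ff$, which is a generalised eigenvector of order $m-1$ and hence smooth by the inductive hypothesis. Then $f_1$ satisfies the inhomogeneous ODE \eqref{eq:genspec0} with smooth right hand side $G_\lambda$. Variation of parameters against the two Frobenius branches identified in the base case, combined with the same integrability/regularity arguments to select the admissible branch at each endpoint, yields $f_1\in C^\infty([0,1])$, and $f_2$ is again recovered algebraically.

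Once $f_1\in C^\infty([0,1])$ is radial and bounded together with all its derivatives, the Sobolev bound reduces to
\begin{align*}
\|f_1\|_{W^{n,p}(\B^d_1)}^p\lesssim \sum_{j=0}^n\int_0^1|\partial_\rho^j f_1(\rho)|^p\rho^{d-1}\,d\rho<\infty
\end{align*}
for any $p<\infty$, and in particular for the specific radial admissibility exponent $p=\tfrac{2d}{d+2n-2s-1}$ that will be needed in the subsequent Strichartz analysis. The main obstacle is the Frobenius step at $\rho=1$: since $\alpha(\lambda)$ depends on $\lambda$, ruling out the non-smooth branch requires carefully matching the $H^{\ceil{s}}$ regularity of $\ff$ against the weight $(1-\rho)^{\alpha(\lambda)}$ and using the spectral assumption $\Re\lambda>s-\ceil{s}$ honestly, rather than relying on a single soft embedding argument.
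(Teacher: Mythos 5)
Your base case matches the paper's opening step: identify the Frobenius indices $(0,2-d)$ at $\rho=0$ and $(0,s-\tfrac12-\lambda)$ at $\rho=1$ and use the $H^{\lceil s\rceil}$ membership of $f_1$ together with $\Re\lambda>s-\lceil s\rceil$ to exclude the singular branches. The inductive step, however, glosses over exactly the difficulty the paper's proof is built around. For a generalised eigenvector $\gf$ with $(\lambda-\Lf)\gf=\ff$, the variation-of-constants formula produces the term $\widetilde f_1(\rho)\int_0^\rho \tfrac{t^{d-1}F(t)f_1(t)}{(1-t)^{s-\lambda-1/2}}\,dt$ whose $k$-th $\rho$-derivative carries a factor behaving like $(1-\rho)^{\Re\mu-k}$ with $\mu=s-\tfrac12-\lambda$; this cannot be removed by simply dropping a homogeneous branch, because $\widetilde c_2\widetilde f_1$ is already forced to vanish. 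One is left with a potentially non-$L^2$ boundary contribution multiplied by the scalar $\int_0^1 \tfrac{\partial_t^{k-1}[t^{d-1}F(t)f_1(t)]}{(1-t)^{s-\lceil s\rceil+\frac12-\lambda}}\,dt$. The paper's proof derives the vanishing of this scalar from the a priori information $g_1\in H^k(\B^d_1)$, and it is this orthogonality condition, not a clean ``select the admissible branch'' step, that delivers $g_1\in C^k$. Your phrase ``the same integrability/regularity arguments'' sweeps this under the rug. Relatedly, you claim $f_1\in C^\infty([0,1])$, which is strictly stronger than the lemma's $C^k$; nothing in your sketch (nor in the paper's argument) delivers smoothness past order $k$ at $\rho=1$ for higher-rank generalised eigenvectors, so the $C^\infty$ claim is unjustified as stated.

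The Sobolev bound at the end contains a genuine error of direction. You write $\|f_1\|_{W^{n,p}(\B^d_1)}^p\lesssim \sum_{j\leq n}\int_0^1|\partial_\rho^j f_1|^p\rho^{d-1}\,d\rho$, but the Cartesian derivatives of a radial function carry inverse powers of $\rho$: a generic $n$-th order Cartesian derivative contains terms $\partial_\rho^m f_1(\rho)\,\rho^{m-n}$ for $1\leq m\leq n$, whose $L^p$ norms involve the weight $\rho^{(m-n)p+d-1}$, which is more singular than $\rho^{d-1}$. Thus finiteness of the right-hand side does not imply finiteness of the left, and in particular the assertion that the bound holds ``for any $p<\infty$'' is false -- the specific exponent $p=\tfrac{2d}{d+2n-2s-1}$ is not arbitrary. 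The paper instead reduces the worst term to showing $|\cdot|^{1-n}\in L^p(\B^d_1)$ for exactly this $p$, which is a short but necessary computation using $s<\tfrac d2$. You should replace your final estimate with a check of that form.
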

\begin{proof}
Let $\lambda$ be as in the statement of the Lemma and let $\ff$ be an associated eigenfunction. Recall, that this implies that $f_1$ is a $H^k(\B^d_1)$ solution to the equation \eqref{eq:genspec0} with $G_\lambda=0.$
As the Frobenius indices of this equation are given by $(0,2-d)$ at $\rho=0$ and $(0,s-\frac{1}{2}-\lambda)$ at $\rho=1$, one sees that $f_1\in C^k([0,1])$. 
Next, we denote by $\widetilde f_1$ the unique solution to this equation which satisfies
$$
W(f_1,\widetilde f_1)=\rho^{1-d}(1-\rho)^{s-\frac{3}{2}-\lambda}.
$$
Let now $\gf \in \mathcal{H}$ be a first generalised eigenfunction, i.~e.~$(\lambda-\Lf)\gf=\ff.$
Then, by the variation of constants formula, its first component is necessarily of the form
\begin{align*}
g_1(\rho)=c_1 f_1(\rho)+\widetilde c_2\widetilde f_1(\rho)+ \widetilde f_1(\rho)\int_0^\rho\frac{t^{d-1}{F(t)}f_1(t) }{(1-t)^{s-\lambda-\frac{1}{2}}} dt +f_1(\rho)\int_\rho^{1}\frac{t^{d-1}F(t)\widetilde f_1(t) }{(1-t)^{s-\lambda-\frac{1}{2}}} dt
\end{align*}
where $F(\rho)= (\lambda +\frac{d}{2}-s+1)f_1(\rho)+\rho  f_1'(\rho)+
f_2(\rho)
$ and $c_1,\widetilde c_1\in \mathbb C$.
By scaling, one readily checks that
\begin{align*}
f_1(\rho)\int_\rho^{1}\frac{t^{d-1}F(t)\widetilde f_1(t) }{(1-t)^{s-\lambda-\frac{1}{2}}} dt \in C^k([0,1])
\end{align*}
as well as 
\begin{align*}
f_2(\rho)\int_0^\rho\frac{t^{d-1}F(t)\widetilde f_1(t) }{(1-t)^{s-\lambda-\frac{1}{2}}} dt\in C^k([0,\frac12]).
\end{align*}
Consequently, $\widetilde c_1$ needs to vanish.
For $\rho>\frac{1}{2}$, we compute that
\begin{align*}
\partial_\rho\left[ \widetilde f_1(\rho)\int_0^\rho\frac{t^{d-1}F(t) f_1(t) }{(1-t)^{s-\lambda-\frac{1}{2}}} dt\right]
&=\widetilde f_1'(\rho)\int_0^\rho\frac{t^{d-1}F(t) f_1(t) }{(1-t)^{s-\lambda-\frac{1}{2}}} dt+ \widetilde f_1(\rho)\frac{\rho^{d-1}F(\rho) f_1(\rho) }{(1-\rho)^{s-\lambda-\frac{1}{2}}}
\\
&= -\frac{\widetilde f_1'(\rho)}{(1-\rho)^{s-\lambda-\frac{3}{2}}}\frac{\rho^{d-1}F(\rho) f_1(\rho) }{s-\lambda-\frac12}
\\
&+ \frac{\widetilde f_1'(\rho)}{s-\lambda-\frac12}\int_0^\rho\frac{\partial_t[t^{d-1}F(t)f_1(t)] }{(1-t)^{s-\lambda-\frac{3}{2}}} dt
+
\widetilde f_1(\rho)\frac{\rho^{d-1}F(\rho) f_1(\rho) }{(1-\rho)^{s-\lambda-\frac{1}{2}}}.
\end{align*}
By iterating this scheme, one concludes that 
\begin{align*}
\partial_\rho^k \left[ \widetilde f_1(\rho)\int_0^\rho\frac{t^{d-1}F(t) f_1(t) }{(1-t)^{s-\lambda-\frac{1}{2}}} dt\right]= h(\rho) + c_\lambda f_2^{(k)}(\rho)\int_0^\rho \frac{\partial_t^{k-1}[t^{d-1}F(t)f_1(t)]}{(1-t)^{s-\ceil s +\frac{1}{2}-\lambda}} dt
\end{align*} 
for some $0\neq c_\lambda\in \C$ and $h\in C([0,\frac{1}{2}])$.
Consequently, one observes that 
$$g_1\in H^k(\B^d_1) \iff \int_0^1 \frac{\partial_t^{k-1}[t^{d-1}F(t)f_1(t)]}{(1-t)^{s-\ceil s +\frac{1}{2}-\lambda}} dt=0.$$
Thus, we obtain 
\begin{align*}
f_2^{(k)}(\rho)\int_0^\rho \frac{\partial_t^{k-1}[t^{d-1}F(t)f_1(t)]}{(1-t)^{s-\ceil s +\frac{1}{2}-\lambda}} dt=-f_2^{(k)}(\rho)\int_\rho^1 \frac{\partial_t^{k-1}[t^{d-1}F(t)f_1(t)]}{(1-t)^{s-\ceil s +\frac{1}{2}-\lambda}} dt\in C([\frac{1}{2},1]).
\end{align*}
Therefore, $g_1\in C^k([0,1])$ which implies $\gf \in C^k\times C^{(k-1)}([0,1]).$ 
By iterating this procedure, the first claim follows. To finish the Lemma, it suffices to show that
\begin{align*}
|.|^{-n+1}\in L^{p}(\B^d_1) 
\end{align*}
with $p$ as in the Lemma.  However, this is immediate, as
\begin{align*}
(-n+1)\frac{2d}{d-2s+2n-1}+d=\frac{d+d^2-2ds}{d-2s+2n-1}\geq \frac{d}{d-2s+2n-1},
\end{align*}
which implies that 
\begin{align*}
\||.|^{-n+1}\|_{ L^{p}(\B^d_1)}^p=\int_0^1 \rho^{(-n+1)p} \rho^{d-1}d\rho\leq\int_0^1 \rho^{\frac{d}{d-2s+2n-1}-1}d\rho<\infty.
\end{align*}
\end{proof}
As we aim to establish Strichartz estimates on $\Sf$, by recasting it as
an oscillatory integral of the resolvent of $\Lf$, our next step is a detailed analysis of the generalised spectral equation
\begin{equation}\label{eq:spectral equation}
  \begin{split}
&(\rho^2-1)u''(\rho)+\left((d-2s+2\lambda+2)\rho-\frac{d-1}{\rho}\right)u'(\rho)
\\
&\quad +\frac{d-2s+2\lambda}{4}(2\lambda +d-2s+2)u(\rho)+V(\rho)u(\rho)=f(\rho)
\end{split}
\end{equation}
with $f\in C^\infty_{rad} (\overline{\B^d_1})$.
\section{Analysis of  Eq.~\eqref{eq:spectral equation}}
To have a precise notation at hand, we will now heavily use symbol notation. For this we define functions of symbol type as follows. Let $I\subset \R$, $\rho_0\in \R \setminus I$, and $\alpha \in \R$. We say that a smooth function $f:I \to \C$ is of symbol type and write $f(\rho)=\O((\rho_0-\rho)^{\alpha})$ if
\begin{align*}
|\partial_\rho^n f(\rho)|\lesssim_n |\rho_0-\rho|^{\alpha-n}
\end{align*}
 for all $\rho \in I$ and all $n\in \mathbb{N}_0$. Likewise, let $g:U \subset \C\to \R$, then $g=\O(\langle\omega\rangle^{\alpha})$, provided that
 \begin{align*}
|\partial_\omega^n f(\varepsilon+i\omega)|\lesssim_n \langle\omega\rangle^{\alpha-n}
\end{align*}
where $\langle\omega\rangle$ denotes the Japanese bracket $\sqrt{1+|.|^2}$.
Analogously, 
$$
h(\rho,\lambda)=\O((\rho-\rho_0)^{\alpha} \langle\omega\rangle^{\beta}) \quad \text{ if } \quad |\partial_\rho^n\partial_\omega^k h(\rho,\lambda)|\lesssim_{n,k} |\rho_0-\rho|^{\alpha-n}\langle\omega\rangle^{\beta-k}
$$
for all $\ell,k\in \mathbb N$ and $\alpha,\beta \in \R$.
Moreover, we will from now on always assume that $\lambda=\varepsilon+i \omega \in S$ where
$$
S:= \{z \in i\R \times (s- \ceil s , s-\floor s)\}$$
in case $s\notin \mathbb N$ and 
$$
S:=\{z\in i\R \times (-\frac{1}{300},\frac{1}{300})\}$$
for $s\in \mathbb{N}$.
For such $\lambda$ we transform Eq.~\eqref{eq:spectral equation} by setting $$v(\rho)=\rho^{\frac{d-1}{2}}(1-\rho^2)^{\frac{3}{4}+\frac{\lambda}{2}-\frac{s}{2}}u(\rho).$$ For $f=0$ this yields the equation
\begin{equation}\label{eq:nofirst order}
  \begin{split}
v''(\rho)+\left(\frac{3-4\lambda-4\lambda^2+4s+8\lambda s-4s^2}{4(1-\rho^2)^2}+\frac{-3+4d-d^2}{4\rho^2(1-\rho^2)}
\right)&  v(\rho)
\\
-\frac{V(\rho)}{1-\rho^2}v(\rho)= 0.
\end{split}
\end{equation}

To construct solutions to this equation, we make one more definition.
\begin{defi}\label{defi:1}
Let $r>1$, $\rho_0\in [0,1),$ and $\lambda\in S$. Then, we define the function $\rho_\lambda$ as a smooth version of the function $\min\{\frac{r}{|s-\frac{1}{2}-\lambda|},\rho_0\}$.
\end{defi}
\begin{lem} \label{lem: fundi hom}
There exist $r>1$ and $\rho_0\in [0,1)$ such that for $\rho \in [\rho_\lambda,1),$  with $\rho_\lambda$ as defined in Definition \ref{defi:1} and $\lambda\in  S$, the equation
\begin{align} \label{eq: equation for h}
h''(\rho)+\left(\frac{3-4\lambda-4\lambda^2+4s+8\lambda s-4s^2}{4(1-\rho^2)^2}+\frac{-3+4d-d^2}{4\rho^2(1-\rho^2)}
\right) h(\rho)=0
\end{align}
has a fundamental system of solutions given by
\begin{align*}
h_1(\rho,\lambda)&:=\frac{\sqrt{1-\rho^2}}{\sqrt{2s-2\lambda-1}}\left(\frac{1-\rho}{1+\rho}\right)^{\frac{s}{2}-\frac{\lambda}{2}-\frac{1}{4}}[1+e_1(\rho,\lambda)]
\\
h_2(\rho,\lambda)&:=\frac{\sqrt{1-\rho^2}}{\sqrt{2s-2\lambda-1}}\left(\frac{1-\rho}{1+\rho}\right)^{\frac{\lambda}{2}-\frac{s}{2}+\frac{1}{4}}[1+e_2(\rho,\lambda)]
\end{align*}
where $e_j(\rho,\lambda)=(1-\rho)\O(\rho^{-1}\langle\omega\rangle^{-1})$ for $j=1,2$.
\end{lem}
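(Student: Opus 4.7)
The plan is a Liouville--Green variation-of-constants argument: isolate the leading singular part of the potential, solve it exactly, and treat the remainder perturbatively via a Volterra integral equation. I proceed in four steps.

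First, I identify a fundamental system of the reduced ODE $h''+q_0 h=0$, where $q_0(\rho,\lambda):=\frac{3-4\lambda-4\lambda^2+4s+8\lambda s-4s^2}{4(1-\rho^2)^2}$ is the first summand in \eqref{eq: equation for h}. A direct computation shows that $(h_j^{(0)})''/h_j^{(0)}=(1-(2s-2\lambda-1)^2/4)/(1-\rho^2)^2$, which rearranges to $-q_0$; hence the profile functions $h_1^{(0)}$ and $h_2^{(0)}$ of the lemma (before the brackets $[1+e_j]$) solve the reduced equation exactly, and the prefactor $1/\sqrt{2s-2\lambda-1}$ is arranged so that their Wronskian equals $1$.

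Second, I convert the full equation \eqref{eq: equation for h} into a Volterra integral equation. Writing $V_2(\rho):=\frac{-3+4d-d^2}{4\rho^2(1-\rho^2)}$ for the dropped term, seeking $h_j=h_j^{(0)}[1+e_j]$ and using variation of constants with $W=1$ yields
\begin{equation*}
e_1(\rho,\lambda)=\int_\rho^1 K_1(\rho,t,\lambda)\,V_2(t)\,[1+e_1(t,\lambda)]\,dt,
\end{equation*}
where the kernel (obtained from simplifying $[h_1^{(0)}(\rho)h_2^{(0)}(t)-h_2^{(0)}(\rho)h_1^{(0)}(t)]\cdot h_1^{(0)}(t)/h_1^{(0)}(\rho)$) takes the explicit form
\begin{equation*}
K_1(\rho,t,\lambda)=\frac{1-t^2}{2s-2\lambda-1}\left[1-\Bigl(\tfrac{(1-t)(1+\rho)}{(1+t)(1-\rho)}\Bigr)^{\!s-\lambda-1/2}\right].
\end{equation*}
An analogous equation governs $e_2$.

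Third, I bound the kernel. For $\lambda\in S$ one has $\Re(s-\lambda-1/2)>0$, and the ratio $\tfrac{(1-t)(1+\rho)}{(1+t)(1-\rho)}$ lies in $[0,1]$ for $\rho\leq t<1$, so the bracketed factor in $K_1$ is bounded by $2$. Combined with $|2s-2\lambda-1|\gtrsim\langle\omega\rangle$ and $|V_2(t)|\lesssim t^{-2}(1-t)^{-1}$, this yields the pointwise bound $|K_1(\rho,t,\lambda)V_2(t)|\lesssim\langle\omega\rangle^{-1}t^{-2}$. Integration over $[\rho,1]$ then produces the factor $\int_\rho^1 t^{-2}\,dt=(1-\rho)/\rho$, matching the claimed shape $(1-\rho)\rho^{-1}\langle\omega\rangle^{-1}$. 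On the cutoff region $\rho\in[\rho_\lambda,1)$ this bound is small uniformly in $\lambda\in S$ provided $r>1$ is chosen sufficiently large (to control the large-$|\omega|$ regime, where $\rho_\lambda\sim r/|\omega|$) and $\rho_0$ sufficiently close to $1$ (for bounded $\omega$). A contraction-mapping argument in the weighted sup-norm $\sup_{\rho\in[\rho_\lambda,1)}\tfrac{\rho\langle\omega\rangle}{1-\rho}|e_j(\rho,\lambda)|$ then produces a unique fixed point with the quantitative pointwise bound.

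Finally, the symbol-type bounds on $\partial_\rho^n\partial_\omega^k e_j$ follow by differentiating the Volterra equation and inducting on the order of differentiation. The main obstacle is keeping the $\langle\omega\rangle^{-1}$ factor through $\omega$-differentiations: each $\partial_\omega$ applied to the bracket $[1-(\cdots)^{s-\lambda-1/2}]$ produces a logarithmic factor $\log\tfrac{(1-t)(1+\rho)}{(1+t)(1-\rho)}$, and one must verify that the cancellation responsible for absorbing one power of $\langle\omega\rangle$ against the $1/(2s-2\lambda-1)$ prefactor survives these derivatives. Careful bookkeeping through the iterated Volterra scheme, using that the logarithm is bounded where the bracket is of size one and controls the bracket's decay otherwise, delivers the full symbol estimates.
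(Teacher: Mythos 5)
Your setup (exact profile solutions $h_j^{(0)}$ of the reduced equation with unit Wronskian, Volterra equation for the correction, pointwise kernel bound $\lesssim\langle\omega\rangle^{-1}t^{-2}$, contraction in a weighted sup-norm) matches the paper's opening moves and yields existence and the zeroth-order pointwise estimate without difficulty. The gap is in the symbol estimates, which are the actual content of the lemma. Your final paragraph acknowledges the right obstacle --- $\omega$-derivatives landing on the factor $\bigl(\tfrac{(1-t)(1+\rho)}{(1+t)(1-\rho)}\bigr)^{s-\lambda-1/2}$ bring down logarithms --- but ``careful bookkeeping through the iterated Volterra scheme'' does not address it. In particular, you never explain why $e_1/(1-\rho)$ remains bounded together with all its $\rho$-derivatives up to $\rho=1$: naively, every $\partial_\rho$ that hits a factor $(1-\rho)^{s-\lambda-1/2-j}$ inside the kernel loses a power of $(1-\rho)$, so the symbol structure at $\rho=1$ is not automatic. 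The paper's proof devotes most of its effort precisely to this: after the first iteration, it substitutes $t=y(1-\rho)+1$ to exhibit $\int_\rho^1 K\,dt$ as a manifestly smooth function of $\rho$ up to $\rho=1$, then applies the integration-by-parts identity \eqref{eq:indentity} to recover the extra powers of $\langle\omega\rangle^{-1}$ lost under $\partial_\rho$, and finally performs the change of variable $x=\varphi(y;\rho)$ followed by the rescaling $x\mapsto x/\omega$ to control $\partial_\omega^k$. None of this is recovered by the argument you describe.

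A second, smaller gap concerns $h_2$. You propose to run ``an analogous Volterra equation.'' But the base point and kernel for the $h_2$-equation are not symmetric to those for $h_1$: since $h_2^{(0)}$ blows up as $\rho\to 1$ (the exponent $\tfrac34-\tfrac{s}{2}+\tfrac{\lambda}{2}$ is negative in the regime of interest), a naive Volterra iteration anchored at $\rho=1$ diverges. The paper instead constructs $h_2$ by reduction of order from the already-built $h_1$, i.e.\ $h_2=c(\lambda)h_1+h_1\int_{\rho_\lambda}^\rho h_1^{-2}$, and then performs $\lceil s\rceil$ integrations by parts to exhibit the remainder as $w_2\,\O(\rho^{-1}\langle\omega\rangle^{-1})$. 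You would need to either reproduce this or argue carefully that a Volterra equation anchored at $\rho_\lambda$ (rather than $1$) delivers the same symbol estimates; as written, ``analogous'' is not a proof.
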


\begin{proof}
Given that the term $\frac{-3+4d-d^2}{4\rho^2(1-\rho^2)} h(\rho)$ only has pole of first order at $\rho=1$ we treat it perturbatively and remark that \begin{align*}
w_1(\rho,\lambda)&:=\frac{\sqrt{1-\rho^2}}{\sqrt{2s-2\lambda-1}}\left(\frac{1-\rho}{1+\rho}\right)^{\frac{s}{2}-\frac{\lambda}{2}-\frac{1}{4}}
\\
w_2(\rho,\lambda)&:=\frac{\sqrt{1-\rho^2}}{\sqrt{2s-2\lambda-1}}\left(\frac{1-\rho}{1+\rho}\right)^{\frac{\lambda}{2}-\frac{s}{2}+\frac{1}{4}}
\end{align*}
are two linearly independent solutions to the equation
\begin{align}\label{eq:w}
w''(\rho)+\frac{3-4\lambda-4\lambda^2+4s+8\lambda s-4s^2}{4(1-\rho^2)^2}w(\rho)=0.
\end{align}
Moreover, their Wronskian $W(w_1(.,\lambda),w_2(.,\lambda))$ satisfies
\begin{align*}
W(w_1(.,\lambda),w_2(.,\lambda))=1.
\end{align*}
As a consequence, the variation of constants formula suggests the Volterra equation
\begin{align*}
h(\rho,\lambda)&=w_1(\rho,\lambda)+w_1(\rho,\lambda)\int_\rho^{\rho_1} \frac{w_2(t,\lambda)(3-4d+d^2)}{4t^2(1-t^2)}h(t,\lambda) dt
\\
&\quad-w_2(\rho,\lambda)\int_\rho^{\rho_1} \frac{w_1(t,\lambda)(3-4d+d^2)}{4t^2(1-t^2)}h(t,\lambda) dt
\end{align*}
for $\rho_1\leq 1$ to be chosen.
To continue, we divide this whole equation by $w_1$ and set $\widetilde{h}=\frac{h}{w_1}$ to arrive at the equation
\begin{align*}
\widetilde{h}(\rho,\lambda)&=1+\int_\rho^{\rho_1} \frac{w_2(t,\lambda)w_1(t,\lambda)(3-4d+d^2)}{4t^2(1-t^2)}\widetilde{h}(t,\lambda) dt
\\
&\quad-\frac{w_2(\rho,\lambda)}{w_1(\rho,\lambda)}\int_\rho^{\rho_1} \frac{w_1(t,\lambda)^2(3-4d+d^2)}{4t^2(1-t^2)}\widetilde{h}(t,\lambda) dt
\\
&=1+\int_\rho^{\rho_1} \frac{(3-4d+d^2)\left[1-\left( \frac{1-t}{1+t}\frac{1+\rho}{1-\rho}\right)^{s-\lambda-\frac{1}{2}}\right]}{4(2s-2\lambda-1)t^2}\widetilde{h}(t,\lambda) dt
\\
&=: 1+\int_\rho^{\rho_1} K(\rho,t,\lambda) \widetilde{h}(t,\lambda) dt.
\end{align*}
Furthermore, one estimates
\begin{align*}
\int_{|s-\frac{1}{2}-\lambda|}^{\rho_1}\sup_{|s-\frac{1}{2}-\lambda|^{-1}\leq \rho\leq t} |K(\rho,t,\lambda) |dt\lesssim \int_{|\lambda|^{-1}}^{\rho_1}\frac{1}{|2s-2\lambda-1|t^2} dt \lesssim 1.
\end{align*}
So, we can choose $\rho_1=1$ and infer the existence of a unique solution $\widetilde{h}(\rho,\lambda)$ to the equation 
\begin{align*}
\widetilde{h}(\rho,\lambda)&= 1+\int_\rho^1 K(\rho,t,\lambda) \widetilde{h}(t,\lambda) dt
\end{align*}
on the interval $[|s-\frac{1}{2}-\lambda|,1]$.
Moreover, from the arguments in the proof of Lemma  4.1 in \cite{DonWal22a} it follows that
$\widetilde{h}$ is of the form
\begin{align*}
\widetilde{h}(\rho,\lambda)&= 1+(1-\rho)\O(\langle \omega\rangle^{-1})+\O(\rho^{-1}(1-\rho)^2 \langle \omega\rangle^{-1}),
\end{align*}
where the inessential dependence on $\varepsilon=\Re \lambda $ is suppressed. 
However, for us this expansion of $\widetilde{h}$ is not good enough in terms of its regularity at the endpoint $\rho=1$. To derive better estimates, we compute that
\begin{align*}
\int_\rho^1 K(\rho,t,\lambda) dt &=\int_{\rho-1}^0 K(\rho,t+1,\lambda) dt=(1-\rho)\int_{-1}^0 K(\rho,y(1-\rho)+1,\lambda) dy
\\
&=  (1-\rho)\int_{-1}^0\frac{(3-4d+d^2)\left[1-\left(- \frac{y(1+\rho)}{2+ y(1-\rho)}\right)^{s-\lambda-\frac{1}{2}}\right]}{4(2s-2\lambda-1)(y(1-\rho)+1)^2} dy \\
&=  -(1-\rho)\int_{0}^1\frac{(3-4d+d^2)\left[1-\left( \frac{y(1+\rho)}{2-y(1-\rho)}\right)^{s-\lambda-\frac{1}{2}}\right]}{4(2s-2\lambda-1)(1-y(1-\rho))^2} dy
\end{align*}
which is a smooth expression up to $\rho=1$. As a consequence, one concludes that $\widetilde h(\rho,\lambda)$ is indeed smooth on the closed interval $[|s-\frac{1}{2}-\lambda|,1]$.
Further, we compute that
\begin{align*}
\partial_\rho \left( \frac{y(1+\rho)}{2- y(1-\rho)}\right)^{s-\lambda-\frac{1}{2}}&=(s-\lambda-\tfrac{1}{2})\left( \frac{y(1+\rho)}{2- y(1-\rho)}\right)^{s-\lambda-\frac{3}{2}}\frac{2y(1-y)}{(y(\rho-1)+2)^2}
\\
&=(s-\lambda-\tfrac{1}{2})y(1-y) \left( \frac{y(1+\rho)}{2- y(1-\rho)}\right)^{s-\lambda-\frac{3}{2}}\frac{2}{(y(\rho-1)+2)^2}
\end{align*}
as well as 
\begin{align}\label{eq:indentity}
 \frac{2(s-\lambda-\tfrac{1}{2})}{y[y(1-\rho)-2]} \left(\frac{y}{2- y(1-\rho)}\right)^{s-\lambda-\frac{3}{2}}=\partial_y \left(\frac{y}{2- y(1-\rho)}\right)^{s-\lambda-\frac{1}{2}}.
\end{align}
We keep this in mind and note that for any $\ell \in \mathbb{N}$ fixed on computes
\begin{align*}
\partial_\rho^\ell \int_\rho^1 K(\rho,t,\lambda) dt & = \sum_{j=0}^\ell \O(\langle\omega\rangle^{-1+j}) \int_0^1 y^{j}(y-1)^j \left( \frac{y(1+\rho)}{2- y(1-\rho)}\right)^{s-\lambda-\frac{1}{2}-j}f_j^\ell(\rho,y) dy
\\
&\quad+ \int_0^1 \O(\langle \omega \rangle^{-1})
f_\ell(\rho,y) dy 
\end{align*}
for smooth functions $f_j^\ell$ and $f_\ell$.
Hence, we can use the identity \eqref{eq:indentity} and a number of integrations by parts to infer that for $\rho$ in $[1-\delta,1]$, the estimate
$$
\left|\partial_\rho^\ell \int_\rho^1 K(\rho,t,\lambda) dt\right|\lesssim \langle\omega\rangle^{-1}
$$
holds, provided that $\delta$ is chosen sufficiently small.
Lastly, to estimate 
$$
\partial_\rho^\ell \partial_\omega^k \int_\rho^1 K(\rho,t,\lambda) dt
$$
near $\rho=1$, we compute that 
\begin{align*}
\partial_\omega^k \partial_\rho^\ell \int_\rho^1 K(\rho,t,\lambda) dt&= \partial_\omega^k\left[ \O(\langle\omega\rangle^{-1}) \int_0^1 \left( \frac{y(1+\rho)}{2- y(1-\rho)}\right)^{s-\lambda-\frac{1}{2}}f_{\ell}(\rho,y) dy
\right]
\\
&\quad 
+ \int_0^1 \O(\langle \omega \rangle^{-1-k})
g_{\ell}(\rho,y) dy 
\end{align*}
for smooth functions $f_{\ell}$ and $g_{\ell}$.
Moreover, for $|\Im \lambda|=|\omega|<1$, one estimates 
\begin{align*}
&\quad\left|\quad\partial_\omega^k  \int_0^1 \left( \frac{y(1+\rho)}{2- y(1-\rho)}\right)^{s-\lambda-\frac{1}{2}}f_{\ell}(\rho,y) dy\right| 
\\
&=  \left|\partial_\omega^k \int_0^1 \exp\left[(s-\lambda-\tfrac{1}{2})\log \left( \frac{y(1+\rho)}{2- y(1-\rho)}\right)\right]f_{\ell}(\rho,y) dy \right|\lesssim 1.
\end{align*}
Hence, we can assume that $|\omega>1|$ and set 
$$
\varphi(y;\rho)=\log \left( \frac{y(1+\rho)}{2- y(1-\rho)}\right).
$$
Then,
$$\varphi^{-1}(x;\rho)=\frac{2e^x}{e^x(1-\rho)+1+\rho}$$ and
\begin{align*}
&\quad\partial_\omega^k \int_0^1 \exp\left[(s-\lambda-\tfrac{1}{2})\log \left( \frac{y(1+\rho)}{2- y(1-\rho)}\right)\right]f_\ell(\rho,y) dy 
\\
&=\partial_\omega^k \int_0^1 \exp\left[(s-\lambda-\tfrac{1}{2})\varphi(y;\rho)\right]f_\ell(\rho,\varphi^{-1}(\varphi(y;\rho);\rho)) dy
\\
&= \partial_\omega^k \int_{-\infty}^{0} \exp\left[(s-\lambda-\tfrac{1}{2})x\right]f_\ell(\rho,\varphi^{-1}(x;\rho))(\varphi^{-1})'(x;\rho) dx
\\
&= \partial_\omega^k \int_{-\infty}^0\left(\frac{1}{\omega}\exp\left[(s-\lambda-\tfrac{1}{2})x\omega^{-1}\right]f_\ell(\rho,\varphi^{-1}(x \omega^{-1};\rho))(\varphi^{-1})'(\omega^{-1} x;\rho) dx \right)
\end{align*}
and one readily establishes 
\begin{align*}
\left|\partial_\omega^k \int_{-\infty}^0\left(\frac{1}{\omega}\exp\left[(s-\lambda-\tfrac{1}{2})x\omega^{-1}\right]f_\ell(\rho,\varphi^{-1}(x \omega^{-1};\rho))(\varphi^{-1})'(\omega^{-1} x;\rho) dx \right)\right|&\lesssim |\omega|^{-k}.
\end{align*}
So, by putting all of our bounds on $\widetilde h$, together we conclude that
\begin{align*}
\widetilde h(\rho,\lambda)=(1-\rho)\O(\rho^{-1}\langle \omega\rangle^{-1})
\end{align*}
and, upon setting $e_1=\widetilde h-1$, the first of the claimed solutions has been constructed.
For the second, we pick a $r>1$ and $\rho_0>0$ large enough such that $h_1$ is nonvanishing on $[\rho_\lambda,1]$ and note that, since 
$$\widehat{w}(\rho,\lambda):=w_1(\rho,\lambda)\int_{\rho_\lambda}^\rho w_1(t,\lambda)^{-2} dt$$ 
 is also a solution to Eq. \eqref{eq:w}, we can find functions $c_1(\lambda)$, $c_2(\lambda)$ such that
\begin{align*}
 w_2(\rho,\lambda)= c_1(\lambda) w_1(\rho,\lambda)+c_2(\lambda)\widehat{w}(\rho,\lambda).
\end{align*}
Explicitly, these are given by
\begin{align*}
c_1(\lambda)&=\frac{W(w_2(.,\lambda),\widehat{w}(.,\lambda))}{W(w_1(.,\lambda),\widehat{w}(.,\lambda))}=W(w_2(.,\lambda),\widehat{w}(.,\lambda))(\rho_\lambda)=\O(\langle\omega\rangle^0),
\\
c_2(\lambda)&=\frac{W(w_1(.,\lambda),w_2(.,\lambda))}{W(w_1(.,\lambda),\widehat w(.,\lambda))} =1.
\end{align*}
So, we make the ansatz
\begin{align*}
h_2(\rho,\lambda)&=(c_1(\lambda)+c(\lambda))h_1(\rho,\lambda)+h_1(\rho,\lambda)\int_{\rho_\lambda}^\rho h_1(t,\lambda)^{-2} dt
\\
&=c(\lambda) h_1(\rho,\lambda)+w_2(\rho,\lambda)[1+e_1(\rho,\lambda)]
+h_1(\rho,\lambda)\int_{\rho_\lambda}^\rho h_1(t,\lambda)^{-2}-b_1(t,\lambda)^{-2} dt
\\
&=
c(\lambda)h_1(\rho,\lambda)+w_2(\rho,\lambda)[1+e_1(\rho,\lambda)]
-h_1(\rho,\lambda)\int_{\rho_\lambda}^\rho \frac{e_1(t,\lambda)^2+2e_1(t,\lambda)}{w_1(t,\lambda)^2[1+e_1(t,\lambda)]^2} dt
\end{align*}
for some function $c(\lambda)$.
Now,
\begin{align*}
&\quad \int_{\rho_\lambda}^\rho \frac{e_1(t,\lambda)^2+2e_1(t,\lambda)}{w_1(t,\lambda)^2[1+e_1(t,\lambda)]^2} dt
\\
&= (2s-2\lambda-1)\int_{\rho_\lambda}^\rho (1+t)^{s-\lambda-\frac{3}{2}} (1-t)^{-s+\lambda-\frac{1}{2}} \frac{2e_1(t,\lambda)+e_1(t,\lambda)^2}{[1+e_1(t,\lambda)]^2}  dt.
\end{align*}
Hence, $\ceil s$ integrations by parts show that
\begin{align*}
&\quad- h_1(\rho,\lambda)\int_{\rho_\lambda}^\rho \frac{2e_1(t,\lambda)+e_1(t,\lambda)^2}{w_1(t,\lambda)^2[1+e_1(t,\lambda)]^2} dt \\
&= c_2(\lambda) h_1(\rho,\lambda) 
\\
&\quad + h_1(\rho,\lambda) \sum_{j=0}^{\ceil s-1}\O(\langle\omega\rangle^{-1-j})(1-\rho)^{-s+\lambda +\frac{1}{2}+j} \partial_\rho^j \left[(1+\rho)^{s-\lambda-\frac{3}{2}}  \frac{2e_1(\rho,\lambda)
+e_1(\rho,\lambda)^2}{[1+e_1(\rho,\lambda)]^2}\right]
\\
&\quad
+ h_1(\rho,\lambda)\O(\langle\omega\rangle^{-\ceil s})\int_{\rho_\lambda}^\rho (1-t)^{\ceil s-s-\lambda -\frac{1}{2}} \partial_t^{\ceil s} \left[(1+t)^{s-\lambda-\frac{3}{2}}  \frac{2e_1(t,\lambda)
+e_1(t,\lambda)^2}{[1+e_1(t,\lambda)]^2}\right] dt
\\
&=  c_3(\lambda) h_1(\rho,\lambda) 
\\
&\quad+ h_1(\rho,\lambda) \sum_{j=0}^{\ceil s-1}\O(\langle\omega\rangle^{-1-j})(1-\rho)^{-s+\lambda +\frac{1}{2}+j} \partial_\rho^j \left[(1+\rho)^{s-\lambda-\frac{3}{2}}  \frac{2e_1(\rho,\lambda)
+e_1(\rho,\lambda)^2}{[1+e_1(\rho,\lambda)]^2}\right]
\\
&\quad
+ h_1(\rho,\lambda)\O(\langle\omega\rangle^{-\ceil s})\int_\rho^1 (1-t)^{\ceil s -s-\lambda -\frac{1}{2}} \partial_t^{\ceil s} \left[(1+t)^{s-\lambda-\frac{3}{2}}  \frac{2e_1(t,\lambda)
+e_1(t,\lambda)^2}{[1+e_1(t,\lambda)]^2}\right] dt
\end{align*}
for appropriate function $c_2(\lambda)$ and $c_3(\lambda)$. Furthermore, we observe that
\begin{align*}
&\quad\sum_{j=0}^{\ceil s-1}\O(\langle\omega\rangle^{-1-j})(1-\rho)^{-s+\lambda +\frac{1}{2}+j} \partial_\rho^j \left[(1+\rho)^{s-\lambda-\frac{3}{2}}  \frac{2e_1(\rho,\lambda)
+e_1(\rho,\lambda)^2}{[1+e_1(\rho,\lambda)]^2}\right]
\\
&=(1+\rho)^{-\lambda}\sum_{j=0}^{\ceil s-1}(1-\rho)^{-s+\lambda+\frac12+j}\O(\langle\omega\rangle^{-1-j}\rho^{-1-j})
\end{align*}
for $\rho \in [\rho_\lambda,1]$. This implies that
\begin{align*}
&\quad h_1(\rho,\lambda)\sum_{j=0}^{\ceil s-1}\O(\langle\omega\rangle^{-1-j})(1-\rho)^{-s+\lambda +\frac{1}{2}+j} \partial_\rho^j \left[(1+\rho)^{s-\lambda-\frac{3}{2}}  \frac{2e_1(\rho,\lambda)
+e_1(\rho,\lambda)^2}{[1+e_1(\rho,\lambda)]^2}\right]
\\
&=w_2(\rho,\lambda)\O(\langle\omega\rangle^{-1}\rho^{-1}).
\end{align*}
Furthermore, 
\begin{align*}
&\quad h_1(\rho,\lambda)\O(\langle\omega\rangle^{-\ceil s})\int_\rho^1 (1-t)^{\ceil s-s-\lambda -\frac{1}{2}} \partial_t^{\ceil s} \left[(1+t)^{s-\lambda-\frac{3}{2}}  \frac{2e_1(t,\lambda)
+e_1(t,\lambda)^2}{[1+e_1(t,\lambda)]^2}\right] dt
\\
&=
w_2(\rho,\lambda)[1+e_1(\rho,\lambda)] \left(\frac{1-\rho}{1+\rho}\right)^{s-\lambda-\frac{1}{2}}\O(\langle\omega\rangle^{-\ceil s})
\\
&\quad \times \int_\rho^1 (1-t)^{\ceil s-s-\lambda -\frac{1}{2}} \partial_t^{\ceil s} \left[(1+t)^{s-\lambda-\frac{3}{2}}  \frac{2e_1(t,\lambda)
+e_1(t,\lambda)^2}{[1+e_1(t,\lambda)]^2}\right] dt
\\
&=w_2(\rho,\lambda)\O(\langle\omega\rangle^{-1}\rho^{-1})
\end{align*}
where the last step follows from similar considerations as in the construction of $h_1$.
Hence, by setting $c_2=-c$, we arrive at the desired solution and conclude this proof.
\end{proof}
Proceeding, we  define the diffeomorphism $\varphi:(0,1)\to (0,\infty)$ as
 $$
 \varphi(\rho):=\frac{1}{2}(\log(1+\rho)-\log(1-\rho))
 $$
and compute that
 $$
 \varphi'(\rho)=\frac{1}{(1-\rho^2)}.
 $$
Further, the associated Liouville-Green Potential $Q_{\varphi}$, which is defined as
 $$
 Q_\varphi(\rho):=-\frac{3}{4}\frac{\varphi''(\rho)^2}{\varphi'(\rho)^2}+\frac{1}{2}\frac{\varphi'''(\rho)}{\varphi'(\rho)^2},
 $$ is given by
 $$
 Q_{\varphi}(\rho )=\frac{1}{(1-\rho^2)^2}.
 $$ 
 This leads us to rewriting equation \eqref{eq:nofirst order} as
\begin{equation*}
  \begin{split}
&\quad v''(\rho)+\left(\frac{-1-4\lambda-4\lambda^2+4s+8\lambda s-4s^2}{4(1-\rho^2)^2}+\frac{-3+4d-d^2}{4\varphi(\rho)^2(1-\rho^2)^2}
\right) v(\rho)+Q_\varphi(\rho)v(\rho)
\\
&=\left(\frac{V(\rho)}{(1-\rho^2)}+\frac{-3+4d-d^2}{4\varphi(\rho)^2(1-\rho^2)^2}-\frac{-3+4d-d^2}{4\rho^2(1-\rho^2)}\right) v(\rho).
\end{split}
\end{equation*}
Now, we perform a Liouville-Green transformation. For this, we set  $w(\varphi(\rho))= \varphi'(\rho)^{\frac{1}{2}}v(\rho)$ which transforms
\begin{equation}\label{eq:beforebessel}
\begin{split}
v''(\rho)&+\left(\frac{-1-4\lambda-4\lambda^2+4s+8\lambda s-4s^2}{4(1-\rho^2)^2}+\frac{-3+4d-d^2}{4\varphi(\rho)^2(1-\rho^2)^2}
\right) v(\rho)
\\
&+
Q_\varphi(\rho)v(\rho)
=0
\end{split}
\end{equation}
into the equation
\begin{equation}\label{eq:bessel}
w''(\varphi(\rho))-\left(s-\frac{1}{2}-\lambda\right)^2w(\varphi(\rho))+\frac{-3+4d-d^2}{4\varphi(\rho)^2}w(\varphi(\rho))=0.
\end{equation}
Let $a(\lambda)=i\left(s-\frac{1}{2}-\lambda\right). $ Then, given that Eq.~ \eqref{eq:bessel} is a Bessel equation, it has a fundamental system given by
\begin{align*}
&\sqrt{\varphi(\rho)}J_{\frac{d-2}{2}}(a(\lambda)\varphi(\rho))\\
&\sqrt{\varphi(\rho)}Y_{\frac{d-2}{2}}(a(\lambda)\varphi(\rho)),
\end{align*}
where $J_\nu $ and $Y_\nu$ denote the Bessel functions of the first and second kind, respectively.
Consequently, a fundamental system for Eq.~\eqref{eq:beforebessel} his given by
\begin{align*}
b_{1}(\rho,\lambda)&= \sqrt{(1-\rho^2)\varphi(\rho)}J_{\frac{d-2}{2}}(a(\lambda)\varphi(\rho))\\
b_{2}(\rho,\lambda)&=\sqrt{(1-\rho^2)\varphi(\rho)}Y_{\frac{d-2}{2}}(a(\lambda)\varphi(\rho)).
\end{align*}
We can also enlarge $r>$ and $\rho_0$ such that neither $h_1(.,\lambda)$ nor $h_2(.,\lambda)$ are vanishing on the interval $(\rho_\lambda,1).$ 
Further, in addition to $\rho_\lambda$, we define  $\widehat{\rho}_\lambda:=\min \{4\frac{r}{|a(\lambda)|},\frac{1}{2}(1+\rho_0)\}$ (Strictly, we again need to use smooth version of this expression).
\begin{lem}\label{lem:fundi near 0}
Let $\rho\in (0,\widehat{\rho}_\lambda)$. Then there exists a fundamental system of solutions for Eq.~\eqref{eq:nofirst order} given by \begin{align*}
\psi_1(\rho,\lambda)&=b_1(\rho,\lambda)[1+\rho^2 e_3(\rho,\lambda)] 
\\
&=\sqrt{(1-\rho^2)\varphi(\rho)}J_{\frac{d-2}{2}}( a(\lambda)\varphi(\rho))[1+\rho^2 e_3(\rho,\lambda)]
\\
\psi_2(\rho,\lambda)&= b_2(\rho,\lambda)[1+\rho^2 e_3(\rho,\lambda)]+ \psi_1(\rho,\lambda)e_4(\rho,\lambda).
\\
&=\sqrt{(1-\rho^2)\varphi(\rho)}Y_\frac{d-2}{2}( a(\lambda)\varphi(\rho))[1+\rho^2e_3(\rho,\lambda)]+ \psi_1(\rho,\lambda)e_4(\rho,\lambda)
\end{align*}
where $e_3$ satisfies
\begin{align*}
e_3(\rho,\lambda)=\widehat{e}_3(\rho,\lambda)+\O(\rho^{d-\frac{5}{2}}\langle\omega\rangle^{d-\frac{5}{2}})
\end{align*}
with
\begin{align}\label{esti:e1}
\partial_\rho^m\partial_\omega^n \widehat{e}_3(\rho,\lambda)\lesssim_{m,n} \langle\omega\rangle^{m-n}
\end{align}
and with
\begin{align*}
e_4(\rho,\lambda)=\O(\rho^{-1}\langle\omega\rangle^{-3})+\sum_{j=0}^{d-5}\rho^{4-d+j} \O(\langle\omega\rangle^{2-d+j})\widetilde{e_j}(\rho,\lambda)
\end{align*}
where all of the $\widetilde{e}_j$ also satisfy the estimate \eqref{esti:e1}.
\end{lem}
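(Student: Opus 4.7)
The plan is to treat Eq.~\eqref{eq:nofirst order} as a perturbation of Eq.~\eqref{eq:beforebessel}, for which $\{b_1, b_2\}$ is an explicit fundamental system. The Wronskian evaluates to the constant $W(b_1, b_2) = 2/\pi$, using the identity $W(J_\nu, Y_\nu)(z) = 2/(\pi z)$ together with $\varphi'(\rho)(1-\rho^2) = 1$. The perturbation to be absorbed is
\[
F(\rho, \lambda) := \frac{V(\rho)}{1-\rho^2} + \frac{-3+4d-d^2}{4\varphi(\rho)^2(1-\rho^2)^2} - \frac{-3+4d-d^2}{4\rho^2(1-\rho^2)},
\]
which, since $\varphi(\rho) = \rho + \O(\rho^3)$, is smooth and bounded on the relevant interval: the $1/\rho^2$ singularities in the last two terms cancel exactly, leaving a remainder that is analytic in $\rho$ on $[0, \tfrac{1}{2}(1+\rho_0)]$.

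For $\psi_1$, I would set up the Volterra equation
\[
\psi_1(\rho, \lambda) = b_1(\rho, \lambda) - \tfrac{\pi}{2} \int_0^\rho \bigl(b_1(t, \lambda)\, b_2(\rho, \lambda) - b_1(\rho, \lambda)\, b_2(t, \lambda)\bigr) F(t, \lambda)\, \psi_1(t, \lambda)\, dt
\]
and divide through by $b_1(\rho, \lambda)$. The crucial inputs are the small-argument Bessel asymptotics $J_\nu(z) \sim z^\nu$, $Y_\nu(z) \sim z^{-\nu}$ for $\nu = (d-2)/2$, which are valid on the whole interval $(0, \widehat{\rho}_\lambda)$ precisely because the definition of $\widehat{\rho}_\lambda$ enforces $|a(\lambda)\varphi(\rho)| \lesssim 1$ there. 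With these one checks that the kernel, after division by $b_1(\rho)$, is controlled by $\rho^{2-d} t^{d-1} |F(t)|$ plus harmless subleading terms, so integration from $0$ to $\rho$ yields an $\O(\rho^2)$ gain per iteration. A standard contraction argument then produces $\psi_1 = b_1(1 + \rho^2 e_3)$. The refined splitting $e_3 = \widehat{e}_3 + \O(\rho^{d-5/2}\langle\omega\rangle^{d-5/2})$ is obtained by separating, inside the kernel, the analytic $J_\nu$-contribution (which feeds into the smooth part $\widehat{e}_3$) from the $Y_\nu$-contribution, whose leading singular power at $\rho = 0$ yields the claimed polynomial-in-$\rho$ remainder once the $\sqrt{\varphi(\rho)}$ prefactor is reinstated.

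For $\psi_2$, I would write the second solution as $\psi_2 = b_2(1 + \rho^2 e_3) + \psi_1\, e_4$ and run the analogous Volterra iteration starting from $b_2$, then reorganize the output: the "diagonal" correction factor $(1 + \rho^2 e_3)$ factors out exactly as for $\psi_1$, while the residual $b_1$-component accumulated in each Volterra iterate is collected into $\psi_1\, e_4$. The stated form
\[
e_4(\rho,\lambda)=\O(\rho^{-1}\langle\omega\rangle^{-3})+\sum_{j=0}^{d-5}\rho^{4-d+j}\,\O(\langle\omega\rangle^{2-d+j})\,\widetilde{e_j}(\rho,\lambda)
\]
mirrors term-by-term the finite polynomial part $\sum_{k=0}^{\nu-1}\frac{(\nu-k-1)!}{k!}(z/2)^{2k-\nu}$ of $Y_\nu$ for integer $\nu$ (with logarithmic contributions, appearing when $d$ is even, subsumed into the power-law bounds); the $\omega$-weights track the scaling $z \leftrightarrow a(\lambda)\varphi(\rho)$ and the $\sqrt{\varphi}$ prefactor.

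All the $\omega$-symbol estimates follow by differentiating the Volterra iterates under the integral sign: $\partial_\omega$ acting on $a(\lambda)^{\pm\nu}$ costs a factor $\langle\omega\rangle^{-1}$, while $\partial_\omega$ on $J_\nu(a\varphi), Y_\nu(a\varphi)$ produces a factor of $\varphi \lesssim \langle\omega\rangle^{-1}$ in the relevant range. The main obstacle I anticipate is the careful bookkeeping at $\rho = 0$, where the dimension $d$ simultaneously controls the Bessel index and the degree of singularity of $Y_\nu$: in particular, integer versus half-integer $\nu$ produce qualitatively different expansions (the integer case carrying logarithmic terms) which must be reconciled with the single uniform statement of the lemma, and it is this dimension-dependent pole structure of $Y_\nu$ that dictates both the explicit finite sum in $e_4$ and the separation of $\widehat{e}_3$ from its lower-order $\rho^{d-5/2}$ counterpart.
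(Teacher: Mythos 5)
Your construction of $\psi_1$ matches the paper's: the Volterra equation based at $0$, division by $b_1$ (using that $J_\nu$ has only real zeros for $\nu>-1$ while $\Im a(\lambda)\neq 0$), and a contraction argument, all as in the paper's treatment. The note about the $1/\rho^2$ singularities in $\widetilde V$ cancelling is also correct and used in the paper.

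The construction of $\psi_2$, however, has a genuine gap, in two respects. First, you cannot "run the analogous Volterra iteration starting from $b_2$": the kernel requires integrability of $b_2(t)^2\widetilde V(t)$ near $t=0$, and since $b_2(t)\sim t^{(3-d)/2}$ the integrand behaves like $t^{3-d}$, which is non-integrable at $0$ for every $d\geq 5$. Even if one shifts the base point to $\widehat\rho_\lambda$, the resulting correction to $b_2$ is of size $\rho^{(5-d)/2}$ rather than $\O(\rho^2)$, so the claimed form $b_2(1+\rho^2 e_3)+\psi_1 e_4$ is not what the iteration produces. Second, and more importantly, the assertion that "the diagonal correction factor $(1+\rho^2 e_3)$ factors out exactly as for $\psi_1$" is precisely what needs proving, and it does not follow from a separate Volterra iteration for $b_2$. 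The paper obtains it structurally via reduction of order: writing $\widetilde\psi_1 = \psi_1\int_\rho^{\widetilde\rho_\lambda}\psi_1^{-2}$, determining the connection coefficients $c_1,c_2$ from Wronskians so that $b_2 = c_1 b_1 + c_2 b_1\int_\rho^{\widetilde\rho_\lambda} b_1^{-2}$, and then
\begin{align*}
\psi_2 = c_1\psi_1 + c_2\widetilde\psi_1 = (1+\rho^2 e_3)\Bigl(c_1 b_1 + c_2 b_1\int_\rho^{\widetilde\rho_\lambda} b_1^{-2}\Bigr) + c_2\psi_1\int_\rho^{\widetilde\rho_\lambda}\bigl[\psi_1^{-2}-b_1^{-2}\bigr],
\end{align*}
which gives both the common factor $(1+\rho^2 e_3)$ and the residual term $\psi_1 e_4$ for free.

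Related to this, your explanation of the specific form of $e_4$, namely the finite sum $\sum_{j=0}^{d-5}\rho^{4-d+j}\O(\langle\omega\rangle^{2-d+j})\widetilde e_j$, is a heuristic (matching terms of the $Y_\nu$ series) rather than an argument. In the paper this sum arises from $d-4$ successive integrations by parts applied to $\int_\rho^{\widetilde\rho_\lambda} t^{3-d}(\cdots)\,dt$, each step producing one boundary term $\rho^{4-d+j}\O(\langle\omega\rangle^{2-d+j})\widetilde e_j$ and reducing the power of $t$ by one, with the remainder after $d-4$ steps absorbed into $\O(\rho^{-1}\langle\omega\rangle^{-3})$. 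That mechanism is what fixes the $\rho$- and $\omega$-weights and the length of the sum, and it is not recovered by appealing to the Laurent expansion of $Y_\nu$.
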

\begin{proof}
Given that $b_1$ and $b_2$ form a fundamental system of solutions for the equation
\begin{align*}
&\quad v''(\rho)+\left(\frac{-1-4\lambda-4\lambda^2+4s+8\lambda s-4s^2}{4(1-\rho^2)^2}+\frac{-3+4d-d^2}{4\varphi(\rho)^2(1-\rho^2)^2}
\right) v(\rho)+Q_\varphi(\rho)v(\rho)
&=0
\end{align*}
with Wronskian 
$$
W(b_1(.,\lambda),b_2(.,\lambda))=\frac{2}{\pi},$$
we need to construct a solution to the fixed point equation
\begin{align}\label{Ansatz1}
b(\rho,\lambda)= b_1(\rho,\lambda)&-\frac{\pi}{2}b_1(\rho,\lambda)\int_{0}^{\rho} b_2(t,\lambda)\widetilde{V}(s)b(t,\lambda)d t\\
&+\frac{\pi}{2}b_2(\rho,\lambda)\int_{0}^{\rho} b_1(t,\lambda)\widetilde{V}(t,\lambda)b(t,\lambda) d t \nonumber
\end{align}
where
$$
\widetilde{V}(\rho)= \frac{V(\rho)}{(1-\rho^2)}+\frac{-3+4d-d^2}{4\varphi(\rho)^2(1-\rho^2)^2}-\frac{-3+4d-d^2}{4\rho^2(1-\rho^2)} \in C^\infty ([0,1)).
$$
To accomplish this we expand $b_1$ and $b_2$ around $0$ to conclude that
\begin{align*}
b_1(\rho,\lambda)=\varphi(\rho)^{\frac{d-1}{2}}\O(\langle\omega\rangle^{\frac{d-2}{2}})[1+\widehat{b}_1(\rho,\lambda)] 
\end{align*}
where $\widehat{b}_1$ satisfies the estimates
\begin{align*}
\partial_\rho^m\partial_\omega^n \widehat{b}_1(\rho,\lambda)\lesssim_{m,n} \langle\omega\rangle^{m-n}
\end{align*}
for all $\rho\in (0,\widehat{\rho}_\lambda)$.
Similarly, for $d$ even, we conclude that
\begin{align*}
b_2(\rho,\lambda)=\varphi(\rho)^{\frac{3-d}{2}}\O(\langle \omega\rangle^{\frac{2-d}{2}}) [1+\widehat{b}_2(\rho,\lambda)]+\O(\rho^{\frac{d-2}{2}}\langle \omega\rangle^{\frac{d-3}{2}})
\end{align*}
where $\widehat{b}_2$ satisfies the same estimate as $\widehat{b}_1$,
while for $d$ odd one obtains a simpler form of $b_2$ by using the explicit formulae
\begin{align*}
J_{\frac{d}{2}-1}(z)&=(-1)^{\frac{d-3}2}\sqrt{\frac{2}{\pi}} z^{\frac{d-2}{2}}\left(\frac{1}{z}\frac{d}{dz}\right)^{\frac{d-3}{2}} \frac{\sin(z)}{z}
\\
Y_{\frac{d}{2}-1}(z)&=-(-1)^{\frac{d-3}2}\sqrt{\frac{2}{\pi}} z^{\frac{d-2}{2}}\left(\frac{1}{z}\frac{d}{dz}\right)^{\frac{d-3}{2}} \frac{\cos(z)}{z}.
\end{align*}
Since the odd dimensional case is technically slightly less involved, we only demonstrate the construction of $\psi_1$ and $\psi_2$ for $d$ even. 
From  $\Im a(\lambda)\neq 0$ for all $\lambda\in S$, we can infer that $b_1$ is nonvanishing on $(0,\widehat{\rho}_\lambda]$, as all zeros of $J_\nu$ are real provided that $\nu>-1$ (see \cite{Olv97}, p. 244 Theorem 6.2). As a consequence, we are able to divide the whole equation~ \eqref{Ansatz1} by $b_1$, which, upon setting $e=\frac{b}{b_1}$,
yields the integral equation
\begin{equation}\label{eq:int0}
e(\rho,\lambda)=1+\int_{0}^{\rho}K(\rho,t,\lambda)e(t,\lambda) dt,
\end{equation}
where
$$
K(\rho,t,\lambda)=\frac{\pi\widetilde{V}(t)}{2}\left( \frac{b_2(\rho,\lambda)}{b_1(\rho,\lambda)}b_1(t,\lambda)^2-b_2(t,\lambda)b_1(t,\lambda)\right).
$$
By plugging in the symbol forms of $b_1$ and $b_2$, we deduce that
\begin{align*}
\frac{b_2(\rho,\lambda)}{b_1(\rho,\lambda)}b_1(t,\lambda)^2=\varphi(\rho)^{2-d}\O(\langle\omega\rangle^{0})
\varphi(t)^{d-1}\frac{[1+\widehat{b}_2(\rho,\lambda)][1+\widehat{b}_1(t,\lambda)]^2}{[1+\widehat{b}_1(\rho,\lambda)]}+\O(\rho^{-\frac{1}{2}}t^{d-1}\langle\omega \rangle^{d-\frac{5}{2}})
\end{align*}
and
\begin{align*}
b_1(t,\lambda)b_2(t,\lambda)=\varphi(t)\O(\langle\omega\rangle^{0})[1+\widehat{b}_1(t,\lambda)][1+\widehat{b}_2(t,\lambda)]+ \O(t^{d-\frac{3}{2}}\langle\omega\rangle^{d-\frac{5}{2}}).
\end{align*}
Thus,
\begin{align*}
\int_0^{\widehat{\rho}_\lambda} \sup_{\rho\in [t,\widehat\rho_\lambda]}|K(\rho,t,\lambda)|dt \lesssim \langle\omega\rangle^{-2}
\end{align*}
and we obtain the existence of a unique  solution $\widetilde e(\rho,\lambda)$ to Eq.~\eqref{eq:int0} with
\begin{align*}
\widetilde e(\rho,\lambda)= 1+O(\rho^2 \langle\omega\rangle^0).
\end{align*}
Moreover, from the explicit form of $K$, the desired estimates on $\widetilde e$ follow from a repeated usage of the identity
\begin{align*}
\widetilde e(\rho,\lambda)=1+\int_0^\rho  K(\rho,t,\lambda) dt+\int_0^\rho \int_0^t K(\rho,y,\lambda) \widetilde e(y,\lambda) dt dy.
\end{align*}
Thus, we set $e_3=\widetilde e-1 $ and arrive at the desired form of $\psi_1$.
For the second solution, we pick a $\widehat \rho \in (0,1]$ such that $\psi_1$ does not vanish for $\rho\leq\min\{\widehat \rho,\widehat{\rho}_\lambda\}=:\widetilde{\rho}_\lambda$ for any $\lambda \in S.$ Next, as
$$\widetilde{b}_1(\rho,\lambda):=b_1(\rho,\lambda)\int_{\rho}^{\widetilde{\rho}_\lambda} b_1(t,\lambda)^{-2} dt$$  also solves Eq.~\eqref{eq:beforebessel}, there exist functions $c_1(\lambda),c_2(\lambda)$ such that
\begin{align*}
b_2(\rho,\lambda)=c_1(\lambda) b_1(\rho,\lambda)+ c_2(\lambda)\widetilde{b}_1(\rho,\lambda).
\end{align*}
Moreover, we have the explicit formula
\begin{align*}
c_1(\lambda)&=\frac{W(b_2(.,\lambda),\widetilde{b}_1(.,\lambda))}{W(b_1(.,\lambda),\widetilde{b}_1(.,\lambda))}\\
c_2(\lambda)&=-\frac{W(b_2(.,\lambda),b_1(.,\lambda))}{W(b_1(.,\lambda),\widetilde{b}_1(.,\lambda))}.
\end{align*}
Using that
$W(b_2(.,\lambda),b_1(.,\lambda))=-\frac{2}{\pi}$ and $ W(b_1(.,\lambda),\widetilde{b}_1(.,\lambda))=-1$, we infer that
$c_2=-\frac{2}{\pi}$ and $c_1(\lambda)=-W(b_2(.,\lambda),\widetilde{b_1}(.,\lambda))$.
Evaluating $W(b_2(.,\lambda),\widetilde{b}_1(.,\lambda))$ at $\widetilde{\rho}_\lambda$ yields
\[
W(b_2(.,\lambda),\widetilde{b_1}(.,\lambda))=-b_2(\widetilde{\rho}_\lambda,\lambda)b_1(\widetilde{\rho}_\lambda,\lambda)^{-1}=\O(\langle\omega\rangle^{0}).
\]
Keeping these facts in mind, we turn our attention to $\psi_2$ and remark that a second solution of Eq.~\eqref{eq:nofirst order} is given by
$\widetilde{\psi}_1(\rho,\lambda)=\psi_1(\rho,\lambda)\int_{\rho}^{\widetilde{\rho}_\lambda} \psi_1(t,\lambda)^{-2} dt$.
Considering this, we calculate
\begin{align*}
\psi_2(\rho,\lambda):&=c_1(\lambda)\psi_1(\rho,\lambda)+ c_2\psi_1(\rho,\lambda)\int_{\rho}^{\widetilde{\rho}_\lambda} \psi_1(t,\lambda)^{-2}dt
\\
&=c_1(\lambda)\psi_1(\rho,\lambda)+
   c_2\psi_1(\rho,\lambda)\int_{\rho}^{\widetilde{\rho}_\lambda}
   b_1(t,\lambda)^{-2}dt \\
&\quad +c_2\psi_1(\rho,\lambda)\int_{\rho}^{\widetilde{\rho}_\lambda}\left [
   \psi_1(t,\lambda)^{-2}-b_1(t,\lambda)^{-2}\right ] dt
\\
&=b_2(\rho,\lambda)[1+\rho^2 e_3(\rho,\lambda)] +\frac{2}{\pi}\psi_1(\rho,\lambda)\int_{\rho}^{\widetilde{\rho}_\lambda} \frac{2t^2 e_3(t,\lambda)+t^4 e_3(t,\lambda)^2}{b_1(t,\lambda)^2[1+t^2 e_3(t,\lambda)]^2} dt.
\end{align*}
Note that
\begin{align*}
\int_{\rho}^{\widetilde{\rho}_\lambda} \frac{2t^2 e_3(t,\lambda)+t^4 e_3(t,\lambda)^2}{b_1(t,\lambda)^2[1+t^2 e_3(t,\lambda)]^2} dt&=\int_{\rho}^{\widetilde{\rho}_\lambda} \frac{\O(t^{d-\frac{1}{2}}\langle\omega\rangle^{d-\frac{5}{2}})}{b_1(t,\lambda)^2[1+t^2 e_3(t,\lambda)]^2} dt 
\\
&\quad+\int_{\rho}^{\widetilde{\rho}_\lambda} \frac{2t^2 \widehat e_3(t,\lambda)+t^4  \widehat e_3(t,\lambda)^2}{b_1(t,\lambda)^2[1+t^2 e_3(t,\lambda)]^2} dt.
\end{align*}
Moreover,
\begin{align*}
\int_{\rho}^{\widetilde{\rho}_\lambda} \frac{\O(t^{d-\frac{1}{2}}\langle\omega\rangle^{d-\frac{5}{2}})}{b_1(t,\lambda)^2[1+t^2 e_3(t,\lambda)]^2} dt &=\int_{\rho}^{\widetilde{\rho}_\lambda} \O(t^{\frac{1}{2}}\langle\omega\rangle^{-\frac{1}{2}}) dt
= \O(\rho^{\frac{3}{2}}\langle\omega\rangle^{-\frac{1}{2}})+\O(\langle\omega\rangle^{-2})
\\
&=\O(\rho^{-1}\langle\omega\rangle^{-3})
\end{align*}
for $\rho \in (0,\widehat{\rho}_\lambda).$
Hence, we only have to focus on
\begin{align*}
I(\rho,\lambda):=\int_{\rho}^{\widetilde{\rho}_\lambda} \frac{2t^2 \widehat e_3(t,\lambda)+t^4  \widehat e_3(t,\lambda)^2}{b_1(t,\lambda)^2[1+t^2 e_3(t,\lambda)]^2} dt
=\int_{\rho}^{\widetilde{\rho}_\lambda} \frac{\O(\langle\omega\rangle^{2-d})t^{3-d} [2  \widehat e_3(t,\lambda)+t^2 \widehat e_3(t,\lambda)^2]}{[1+\widehat b_1(t,\lambda)]^2[1+t^2 e_3(t,\lambda)]^2} dt.
\end{align*}
Now,
\begin{align*}
 \frac{1}{[1+t^2 e_3(t,\lambda)]^2} &=  \frac{1}{[1+t^2 \widehat e_3(t,\lambda)+ \O(t^{d-\frac12}\langle\omega
 \rangle^{d-\frac{5}{2}})]^2} 
 \\
 &=  \frac{1}{[1+t^2 \widehat e_3(t,\lambda)+ \O(t^{d-\frac12}\langle\omega
 \rangle^{d-\frac{5}{2}})]^2} - \frac{1}{[1+t^2 \widehat e_3(t,\lambda)]^2}
 + \frac{1}{[1+t^2 \widehat e_3(t,\lambda)]^2} 
 \\
 &=\O(t^{d-\frac12}\langle\omega
 \rangle^{d-\frac{5}{2}})+ \frac{1}{[1+t^2 \widehat e_3(t,\lambda)]^2}.
\end{align*}
Therefore,
\begin{align*}
I(\rho,\lambda)
=\int_{\rho}^{\widetilde{\rho}_\lambda} \frac{\O(\langle\omega\rangle^{2-d})t^{3-d} [2  \widehat e_3(t,\lambda)+t^2 \widehat e_3(t,\lambda)^2]}{[1+\widehat b_1(t,\lambda)]^2[1+t^2\widehat e_3(t,\lambda)]^2} + \O(t^{\frac{5}{2}}\langle \omega\rangle^{-\frac 12}) dt
\end{align*}
and for $d\geq 5$ integrating by parts yields 
\begin{align*}
&\quad\int_{\rho}^{\widetilde{\rho}_\lambda} t^{3-d} \frac{2 \widehat e_3(t,\lambda)+t^2 \widehat e_3(t,\lambda)^2}{[1+\widehat b_1(t,\lambda)]^2[1+t^2 \widehat e_3(t,\lambda)]^2} dst
\\
&= \frac{\widetilde{\rho}_\lambda^{4-d}}{4-d} \frac{2 \widehat e_3(\widetilde{\rho}_\lambda,\lambda)+\rho^2 \widehat e_3(\widetilde{\rho}_\lambda,\lambda)^2}{[1+\widehat b_1(s,\lambda)]^2[1+\widetilde{\rho}_\lambda^2 \widehat e_3(\widetilde{\rho}_\lambda,\lambda)]^2}
\\
&\quad- \frac{\rho^{4-d}}{4-d} \frac{2 \widehat e_3(\rho,\lambda)+\rho^2 \widehat e_3+(\rho,\lambda)^2}{[1+\widehat b_1(s,\lambda)]^2[1+\rho^2 \widehat e_3(\rho,\lambda)]^2} 
\\
&\quad -\int_{\rho}^{\widetilde{\rho}_\lambda} \frac{t^{4-d}}{4-d} \partial_t\left[\frac{2 \widehat e_3(t,\lambda)+t^2\widehat  e_3(t,\lambda)^2}{[1+\widehat b_1(t,\lambda)]^2[1+\rho^2 \widehat e_3(t,\lambda)]^2}\right] dt.
  \end{align*}
Moreover, one has that
\begin{align*}
\widetilde{e}_0(\rho,\lambda):=\frac{2  \widehat e_3(\rho,\lambda)+\rho^2 \widehat e_3(\rho,\lambda)^2}{[1+\widehat b_1(\rho,\lambda)]^2[1+\rho^2 \widehat e_3(\rho,\lambda)]^2} 
\end{align*}
satisfies
\begin{align} \label{esti:e0}
\partial_\rho^m\partial_\omega^n \widetilde{e}_0(\rho,\lambda)\lesssim_{m,n} \langle\omega\rangle^{m-n}
\end{align}
and
\begin{align}\label{esti:e}
\widetilde{e}_0(\widehat{\rho}_\lambda,\lambda)=\O(\langle\omega\rangle^0).
\end{align}
Hence, 
\begin{align*}
I(\rho,\lambda)&=\rho^{4-d}\O(\langle\omega\rangle^{2-d})\widetilde{e}_0(\rho,\lambda)+\O(\langle\omega\rangle^{-2})
\\
&\quad+\O(\langle\omega\rangle^{2-d})\int_{\rho}^{\widetilde{\rho}_\lambda} \frac{t^{4-d}}{4-d} \partial_t\left[\frac{2\widehat e_3(t,\lambda)+t^2 \widehat e_3(t,\lambda)^2}{[1+\widehat b_1(t,\lambda)]^2[1+t^2 \widehat e_3(t,\lambda)]^2}\right] dt.
\end{align*}
Now, for $d\geq 6$, integrating by parts once more yields
\begin{align*}
&\quad \O(\langle\omega\rangle^{2-d})\int_{\rho}^{\widetilde{\rho}_\lambda}t^{4-d} \partial_t\left[\frac{2 \widehat e_3(t,\lambda)+t^2 \widehat e_3(t,\lambda)^2}{[1+\widehat b_1(t,\lambda)]^2[1+\rho^2 \widehat e_3(t,\lambda)]^2}\right] dt
\\
&=\rho^{5-d}\O(\langle\omega\rangle^{2-d})\partial_\rho\widetilde{e}_0(\rho,\lambda)+\O(\langle\omega\rangle^{-2})
\\
&\quad\O(\langle\omega\rangle^{2-d})\int_{\rho}^{\widetilde{\rho}_\lambda} t^{5-d} \partial_t^2\left[\frac{2 \widehat e_3(t,\lambda)+t^2\widehat e(t,\lambda)^2}{[1+\widehat b_1(t,\lambda)]^2[1+\rho^2 \widehat e_3(t,\lambda)]^2}\right] dt.
\end{align*}
By continuing this iteratively and setting
$$
\widetilde{e}_j(\rho,\lambda)=\langle\omega\rangle^{-j} \partial_\rho^j \widetilde{e}_0(\rho,\lambda),
$$
we arrive at
\begin{align*}
I(\rho,\lambda)&=\O(\langle\omega\rangle^{2-d})\int_{\rho}^{\widetilde{\rho}_\lambda} t^{-2} \partial_t^{d-5}\left[\frac{2 \widehat e_3(t,\lambda)+t^2 \widehat e_3(t,\lambda)^2}{[1+\widehat b_1(t,\lambda)]^2[1+\rho^2 \widehat e_3(t,\lambda)]^2}\right] dt
\\
&\quad+\O(\langle\omega\rangle^{-2})+\sum_{j=0}^{d-5}\rho^{4-d+j} \O(\langle\omega\rangle^{2-d+j})\widetilde{e_j}(\rho,\lambda)
\end{align*}
where all the $\widetilde{e}_j$ satisfy estimates \eqref{esti:e0} and \eqref{esti:e}. Finally,
\begin{align*}
\int_{\rho}^{\widetilde{\rho}_\lambda} t^{-2} \partial_t^{d-5}\left[\frac{2 \widehat e_3(t,\lambda)+s^2 \widehat e(t,\lambda)^2}{[1+\widehat b_1(t,\lambda)]^2[1+\rho^2 \widehat e_3(t,\lambda)]^2}\right] dt=\O(\rho^{-1}\langle\omega\rangle^{-3})+\O(\langle\omega\rangle^{-2})=\O(\rho^{-1}\langle\omega\rangle^{-3})
\end{align*}
and we conclude that 
\begin{align*}
I(\rho,\lambda)&=\O(\rho^{-1}\langle\omega\rangle^{-3})+\sum_{j=0}^{d-5}\rho^{4-d+j} \O(\langle\omega\rangle^{2-d+j})\widetilde{e_j}(\rho,\lambda)
\end{align*}
for $\rho\in (0,\widetilde{\rho}_\lambda)$.
However, since for $|\lambda|$ large enough,
$\widetilde{\rho}_\lambda=\widehat{\rho}_\lambda$, we can safely
assume that $\widetilde{\rho}_\lambda=\widehat{\rho}_\lambda$.
\end{proof}
Lastly, one more Volterra iteration and similar considerations yield the following result.
\begin{lem}\label{lem: fundi near 1}
There exists a fundamental system
for Eq.~\eqref{eq:nofirst order} of the form
\begin{align*}
\psi_3(\rho,\lambda)&= h_1(\rho,\lambda)[1+r_1(\rho,\lambda)]
\\
\psi_4(\rho,\lambda)&=h_2(\rho,\lambda)[1+r_2(\rho,\lambda)]
\end{align*}
for all $\rho\geq \rho_\lambda$, where $r_j(\rho,\lambda)=(1-\rho)\O(\rho^0\langle\omega\rangle^{-1})$ for $j=1,2$.
\end{lem}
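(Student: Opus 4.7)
The plan is to set up a Volterra integral equation analogous to the one in the proof of Lemma \ref{lem: fundi hom}, but now treating the potential term as the perturbation. Since Eq.~\eqref{eq:nofirst order} differs from Eq.~\eqref{eq: equation for h} exactly by the term $-\frac{V(\rho)}{1-\rho^2} v(\rho)$, and since the fundamental system $\{h_1, h_2\}$ constructed in Lemma \ref{lem: fundi hom} satisfies $W(h_1(\cdot,\lambda), h_2(\cdot,\lambda)) = 1$ (by Abel's identity applied to the equation without first-order term, with the value read off at $\rho \to 1^-$ where the error terms $e_j$ vanish), variation of parameters suggests the ansatz
\begin{align*}
\psi_3(\rho,\lambda) = h_1(\rho,\lambda) + h_1(\rho,\lambda)\int_\rho^1 \frac{h_2(t,\lambda) V(t)}{1-t^2} \psi_3(t,\lambda)\,dt - h_2(\rho,\lambda)\int_\rho^1 \frac{h_1(t,\lambda) V(t)}{1-t^2} \psi_3(t,\lambda)\,dt.
\end{align*}
Dividing by $h_1(\rho,\lambda)$, which is nonvanishing on $[\rho_\lambda, 1]$ by our choice of $r, \rho_0$, and setting $\tilde r = \psi_3/h_1$ turns this into a Volterra equation $\tilde r = 1 + \int_\rho^1 K(\rho,t,\lambda)\tilde r(t,\lambda)\,dt$.

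The heart of the estimate is to show that the kernel $K$ produces the claimed $(1-\rho)\mathcal{O}(\rho^0 \langle\omega\rangle^{-1})$ correction. Plugging in the formulas from Lemma \ref{lem: fundi hom}, the first summand of $K$ becomes $\frac{V(t)}{2s-2\lambda-1}[1+e_1(t,\lambda)][1+e_2(t,\lambda)]$, which is smooth, bounded by $\langle\omega\rangle^{-1}$, and yields an overall $(1-\rho)\langle\omega\rangle^{-1}$ contribution after integration from $\rho$ to $1$. The second summand has an additional factor $\left(\tfrac{1-\rho}{1+\rho}\right)^{\lambda-s+1/2}\left(\tfrac{1-t}{1+t}\right)^{s-\lambda-1/2}$; since $\Re(s-\lambda-\tfrac{1}{2}) \geq \lfloor s \rfloor - \tfrac{1}{2} \geq 0$ on $S$, the $t$-integral is absolutely convergent and contributes $(1-\rho)^{s-\lambda+1/2}\langle\omega\rangle^{-1}$, which combines with the prefactor to again give $(1-\rho)\langle\omega\rangle^{-1}$. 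A standard Volterra iteration then delivers a unique solution $\tilde r(\rho,\lambda) = 1 + r_1(\rho,\lambda)$ with $r_1(\rho,\lambda) = (1-\rho)\mathcal{O}(\rho^0\langle\omega\rangle^{-1})$ in the pointwise sense on $[\rho_\lambda, 1]$.

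For the symbol-type estimates (derivatives in $\rho$ and especially in $\omega$), the argument follows the same pattern as in Lemma \ref{lem: fundi hom}: one uses the substitution $t = 1 - (1-\rho)y$ to absorb the endpoint singularity into a smooth integral over $[0,1]$, and for $\omega$-derivatives one writes the power $\left(\tfrac{1-t}{1+t}\right)^{s-\lambda-1/2}$ in exponential form, $\exp[(s-\lambda-\tfrac12)\log(\cdot)]$, rescales by $\omega^{-1}$, and integrates by parts in the rescaled variable to gain the required $\langle\omega\rangle^{-k}$ decay. Iteration of the Volterra equation, combined with the symbol bounds on $e_1, e_2$ inherited from Lemma \ref{lem: fundi hom}, propagates these estimates to all orders.

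For $\psi_4$, one repeats the same procedure with the roles of $h_1$ and $h_2$ exchanged (possible after enlarging $r$ and shrinking $\rho_0$ so that also $h_2(\cdot,\lambda)$ is nonvanishing on $[\rho_\lambda,1]$). Alternatively, once $\psi_3$ is constructed, reduction of order produces a second linearly independent solution $\psi_3(\rho,\lambda)\int_\rho^1 \psi_3(t,\lambda)^{-2}\,dt$, and a computation analogous to the one that extracted $h_2$ from $\widehat{w}$ in the proof of Lemma \ref{lem: fundi hom} identifies this with $h_2(\rho,\lambda)[1+r_2(\rho,\lambda)]$ up to a multiple of $\psi_3$. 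The main obstacle is purely technical: verifying the symbol estimates for the $\omega$-derivatives of $r_j$, where one must be careful that the integration-by-parts scheme preserves the $\langle\omega\rangle^{-1}$ decay uniformly in $\rho \in [\rho_\lambda, 1]$; the rest is a direct transcription of the techniques already displayed in Lemma \ref{lem: fundi hom}.
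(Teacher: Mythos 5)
Your approach is exactly the one the paper intends: the text of the paper's own ``proof'' of Lemma~\ref{lem: fundi near 1} is the single sentence ``one more Volterra iteration and similar considerations,'' and your Volterra ansatz based on the fundamental system $\{h_1,h_2\}$ of the unperturbed equation, treating $-\frac{V}{1-\rho^2}$ perturbatively, is precisely that iteration. The kernel analysis (first summand $\frac{V(t)}{2s-2\lambda-1}[1+e_1][1+e_2]$ contributing $(1-\rho)\O(\langle\omega\rangle^{-1})$ after integration; second summand controlled via the extra $(1-\rho)$--power and additional $\langle\omega\rangle^{-1}$ decay from $(s-\lambda+\frac12)^{-1}$) and the symbol bookkeeping (substitution $t=1-(1-\rho)y$, rescaling by $\omega^{-1}$, integration by parts) all mirror Lemma~\ref{lem: fundi hom} as they should.

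Two corrections to the construction of $\psi_4$, however, are needed. First, the suggestion to ``repeat the same procedure with the roles of $h_1$ and $h_2$ exchanged'' does not work: for $s\geq 2$ (and in fact for a range of $\lambda\in S$ already when $s$ is close to $\frac32$) the exponent $\Re(\lambda-s+\frac12)$ is $\leq -1$, so $h_2(t)^2\frac{V(t)}{1-t^2}$ is not integrable on $(\rho,1)$ and the Volterra equation with base point $1$ and base solution $h_2$ is ill-posed. Second, in the reduction-of-order alternative, the integral $\int_\rho^1\psi_3(t,\lambda)^{-2}\,dt$ diverges for the same reason: $\psi_3^{-2}\sim(1-t)^{\lambda-s-\frac12}$ near $t=1$ and $\Re(\lambda-s-\frac12)\leq-1$ on $S$. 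The correct reduction-of-order solution is $\psi_3(\rho,\lambda)\int_{\rho_\lambda}^{\rho}\psi_3(t,\lambda)^{-2}\,dt$, with base point $\rho_\lambda$, exactly as in the paper's own construction of $h_2$ from $\widehat w$ in the proof of Lemma~\ref{lem: fundi hom}. Finally, your justification of $W(h_1,h_2)=1$ by ``reading off the value at $\rho\to1^-$'' is delicate because $h_1'$ and $h_2'$ have non-integrable singularities at $\rho=1$; the cleaner argument is that $h_2$ is built by reduction of order from $h_1$, so $W(h_1,h_2)$ equals $W(h_1,h_1\int_{\rho_\lambda}^{\cdot}h_1^{-2})=1$ by direct computation, with the $c(\lambda)h_1$ correction contributing nothing to the Wronskian.
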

The final task to obtain satisfactory solutions to Eq.~\eqref{eq:nofirst order} consists of patching together the solutions constructed on the two different regimes. 
\begin{lem}\label{lem:connectioncoeff}
For $\rho\in[\rho_\lambda,\widehat{\rho}_\lambda]$ the solutions $\psi_3$ and $\psi_4$ have the representations
\begin{align*}
\psi_3(\rho,\lambda) &= c_{1,3}(\lambda)\psi_1(\rho,\lambda)+ c_{2,3}(\lambda)\psi_2(\rho,\lambda)\\
\psi_4(\rho,\lambda) &= c_{1,4}(\lambda)\psi_1(\rho,\lambda)+ c_{2,4}(\lambda)\psi_2(\rho,\lambda),
\end{align*}
with
\begin{align*}
c_{1,3}(\lambda)&= \frac{\pi W(h_1(.,\lambda),b_2(.,\lambda))(\rho_\lambda)}{2}+\O(\langle\omega\rangle^{-1})
\\
c_{2,3}(\lambda)&= -\frac{\pi W(h_1(.,\lambda),b_1(.,\lambda))(\rho_\lambda)}{2}+\O(\langle\omega\rangle^{-1})
\end{align*}
and
\begin{align*}
	c_{1,4}(\lambda)&= \frac{\pi W(h_2(.,\lambda),b_2(.,\lambda))(\rho_\lambda)}{2}+\O(\langle\omega\rangle^{-1})
	\\
	c_{2,4}(\lambda)&=-\frac{\pi W(h_2(.,\lambda),b_1(.,\lambda))(\rho_\lambda)}{2}+\O(\langle\omega\rangle^{-1}).
\end{align*}
\end{lem}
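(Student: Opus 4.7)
The plan is to obtain the connection coefficients by Cramer's rule applied to the linear system expressing $\psi_3,\psi_4$ in the basis $\{\psi_1,\psi_2\}$, and then to evaluate the resulting constant Wronskians at the endpoint $\rho_\lambda$ of the overlap region, where the error terms are explicitly small. Since Eq.~\eqref{eq:nofirst order} has no first-order derivative term, Abel's identity guarantees that every Wronskian of two solutions is constant in $\rho$; this is the key structural feature that makes the computation feasible.

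First I would fix the denominator. Tracing the construction of $\psi_2$ in the proof of Lemma \ref{lem:fundi near 0}, one sees that $\psi_2$ is obtained from $\psi_1$ by reduction of order with $c_2=-\tfrac{2}{\pi}$, and the standard identity $W(\psi_1,\psi_1\int\psi_1^{-2})=-1$ yields $W(\psi_1,\psi_2)=\tfrac{2}{\pi}$. Consequently Cramer's rule gives the clean formulas
\begin{align*}
c_{1,3}(\lambda)&=\tfrac{\pi}{2}\,W(\psi_3,\psi_2),\qquad c_{2,3}(\lambda)=-\tfrac{\pi}{2}\,W(\psi_3,\psi_1),\\
c_{1,4}(\lambda)&=\tfrac{\pi}{2}\,W(\psi_4,\psi_2),\qquad c_{2,4}(\lambda)=-\tfrac{\pi}{2}\,W(\psi_4,\psi_1),
\end{align*}
and, since each Wronskian is $\rho$-independent, I may evaluate the right-hand sides at the convenient point $\rho_\lambda$.

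Next I would substitute the explicit forms from Lemmas \ref{lem: fundi hom}--\ref{lem: fundi near 1}: $\psi_1=b_1(1+\rho^2 e_3)$, $\psi_2=b_2(1+\rho^2 e_3)+\psi_1 e_4$, $\psi_3=h_1(1+r_1)$, $\psi_4=h_2(1+r_2)$. Using bilinearity of the Wronskian and $W(\psi_1,\psi_1 e_4)\equiv 0$ only up to derivatives of $e_4$ (which must be handled separately), one expands, for instance,
\[
W(\psi_3,\psi_2)=W(h_1,b_2)+W(h_1 r_1,b_2)+W(h_1(1+r_1),b_2\rho^2 e_3)+W(\psi_3,\psi_1 e_4),
\]
and analogously for the other three Wronskians. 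At $\rho=\rho_\lambda$ one checks that the factors $r_j$, $\rho_\lambda^2 e_3$ and $e_4$ are all $O(\langle\omega\rangle^{-1})$ or better (using that $\rho_\lambda\sim|s-\tfrac12-\lambda|^{-1}$), while $h_1,h_2,b_1,b_2$ and their derivatives at $\rho_\lambda$ are of bounded size in $\lambda$ up to powers of $\langle\omega\rangle$ that cancel against the symbol-type gains of the error terms. Each correction Wronskian is therefore $O(\langle\omega\rangle^{-1})$, producing the stated remainder.

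The main obstacle will be bookkeeping of the $\omega$-derivatives inside these Wronskians. A derivative $\partial_\rho r_1$ or $\partial_\rho e_3$ is, by the symbol definition, only controlled by a factor $|1-\rho|^{-1}$ or $\rho^{-1}$; at the scale $\rho_\lambda\sim\langle\omega\rangle^{-1}$ such a factor can cost one power of $\langle\omega\rangle$, which then has to be reabsorbed using the explicit $\langle\omega\rangle^{-1}$ improvements from Lemmas \ref{lem:fundi near 0} and \ref{lem: fundi near 1}. I would therefore carry out the expansion term by term, combine the prefactors $h_j^{(k)}(\rho_\lambda)$ and $b_j^{(k)}(\rho_\lambda)$ coming from differentiation with the symbol estimates for $r_j,e_3,e_4$, and verify in each case that the net gain is at least $\langle\omega\rangle^{-1}$. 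Once this is done, the four displayed identities follow directly, and smoothness in $\omega$ of the remainders is inherited from the symbol-type estimates on $r_j,e_3,e_4$ and the smooth cut-off entering $\rho_\lambda$.
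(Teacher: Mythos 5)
Your approach is correct and almost certainly matches the (uncited in detail) argument in Lemma~3.4 of \cite{Wal22}: the paper itself only points to that reference, and the Wronskian/Cramer route is the natural one given that the connection coefficients in the statement are written as Wronskians evaluated at $\rho_\lambda$.

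Two remarks to tighten it. First, the normalization $W(\psi_1,\psi_2)=\tfrac{2}{\pi}$ is exactly right: from the reduction-of-order construction in Lemma~\ref{lem:fundi near 0} one has $\psi_2 = c_1\psi_1 - \tfrac{2}{\pi}\,\psi_1\int_{\rho}^{\widetilde\rho_\lambda}\psi_1^{-2}$ and $W\bigl(\psi_1,\psi_1\int\psi_1^{-2}\bigr)=-1$, so $W(\psi_1,\psi_2)=\tfrac{2}{\pi}$, giving the clean $c_{i,j}=\pm\tfrac{\pi}{2}W(\cdot,\cdot)$ formulas you wrote. Second, the bookkeeping worry you raise is genuine but does resolve: while $\partial_\rho r_1(\rho_\lambda)$ can indeed be $\O(1)$ (since the symbol loss $\rho^{-1}$ eats a full power of $\langle\omega\rangle$ at $\rho_\lambda\sim\langle\omega\rangle^{-1}$), every appearance of $r_1'$ in the expanded Wronskian comes multiplied by a product of the form $h_j(\rho_\lambda)\,b_\ell(\rho_\lambda)$, and the Bessel/Liouville--Green asymptotics give $h_j(\rho_\lambda),\,b_\ell(\rho_\lambda)=\O(\langle\omega\rangle^{-1/2})$, so each such term is $\O(\langle\omega\rangle^{-1})$ overall. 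The same cancellation mechanism handles $\partial_\rho(\rho^2e_3)$ and $e_4'$, and the $\omega$-regularity of the remainder follows from the smooth cut-off defining $\rho_\lambda$ together with the symbol-type bounds on $r_j,e_3,e_4$, exactly as you anticipate.
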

\begin{proof}
This follows as Lemma 3.4 in \cite{Wal22}
\end{proof}
Naturally, the whole construction also works for $V=0$. In this special case we assign all derived object an additional subscript ${\mathrm{f}
}$ (for instance $\psi_{\mathrm{f}_j}$ or $c_{\mathrm{f}_{1,2}})$.
We also note that $\psi_{\mathrm{f}_3}=h_1$ and $\psi_{\mathrm{f}_4}=h_2$. \\                                       
Next, consider a smooth cutoff function $\chi: [0,1]\times S\to
[0,1]$, $\chi_\lambda(\rho):=\chi(\rho,\lambda)$, that satisfies
$\chi_\lambda(\rho)=1$ for $\rho \in [0,\rho_\lambda]$, $
\chi_\lambda(\rho)=0$ for $\rho \in [\widehat{\rho}_\lambda,1] $, and
$|\partial_\rho^k\partial_\omega^\ell \chi_\lambda(\rho)|\leq
C_{k,\ell}\langle\omega\rangle^{k-\ell}$ for $k,\ell\in\mathbb N_0$. 
Then, two linearly independent solutions of Eq.~\eqref{eq:nofirst order} are given by
\begin{align*}
v_1(\rho,\lambda):=&\chi_\lambda(\rho)[c_{1,4}(\lambda)\psi_1(\rho,\lambda)+c_{2,4}(\lambda)\psi_2(\rho,\lambda)]
+\left(1-\chi_\lambda(\rho)\right)\psi_4(\rho,\lambda)\\
v_2(\rho,\lambda):=& \chi_\lambda(\rho)[c_{1,3}(\lambda)\psi_1(\rho,\lambda)+c_{2,3}(\lambda)\psi_2(\rho,\lambda)]
+\left(1-\chi_\lambda(\rho)\right)\psi_3(\rho,\lambda)
\end{align*}
for all $\rho\in (0,1)$.
Further, an evaluation at $\rho=1$ yields 
$$
W(v_1(.,\lambda),v_2(.,\lambda))=W(\psi_4(.,\lambda),\psi_3(.,\lambda))=-1.
$$
With this remark we return to the full equation~\eqref{eq:spectral equation}.
\section{The resolvent construction}
To obtain solutions to the homogeneous version of our original equation \eqref{eq:spectral equation}, we set $u_j(\rho,\lambda)=\rho^{-\frac{d-1}{2}}(1-\rho^2)^{\frac{s}{2}-\frac{3}{4}-\frac{\lambda}{2}}v_j(\rho,\lambda)$ for $j=1,2$. 
Observe (from Lemma \ref{lem: form of solutions} below), that neither of the above solutions is well behaved at $\rho=0$. To remedy this, we would like to define 
\begin{align*}
u_0(\rho,\lambda):= u_2(\rho,\lambda)-\frac{c_{2,3}(\lambda)}{c_{2,4}(\lambda)} u_1(\rho,\lambda).
\end{align*}
Of course, this only makes sense provided $c_{2,4}$ does not vanish, which necessitates the following Lemma.
\begin{lem}
Any $\lambda \in S$ is an eigenvalue of $\Lf$ if and only if $c_{2,4}(\lambda)=0$.
\end{lem}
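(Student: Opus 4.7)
The plan is to translate the eigenvalue equation $(\lambda-\Lf)\ff=0$ into the homogeneous version of the ODE \eqref{eq:genspec0} and then exploit the explicit endpoint behavior of the global fundamental system $\{u_1,u_2\}$ provided by Section 3. First I would check that every $\ff=(f_1,f_2)\in D(\Lf)$ satisfying $\Lf\ff=\lambda\ff$ has a first component $f_1$ that solves the homogeneous version of \eqref{eq:genspec0} in the distributional sense on $(0,1)$, with $f_2=\rho f_1'+\tfrac{d-2s+2\lambda}{2}f_1$. Because the coefficients of this second order ODE are smooth on $(0,1)$, any such $f_1\in L^2_{loc}(0,1)$ is automatically classical there. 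So the existence of an eigenfunction is equivalent to producing a nontrivial classical solution of the homogeneous ODE lying in $H^{\ceil s}_{rad}(\B^d_1)$ (respectively $H^{s+\frac{1}{100}}_{rad}(\B^d_1)$ in the integer case).

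Next I would write $f_1=A\,u_1(\cdot,\lambda)+B\,u_2(\cdot,\lambda)$ and analyse the behavior at $\rho=1$. By Lemmas \ref{lem: fundi hom} and \ref{lem: fundi near 1} together with the explicit prefactor $\rho^{-\frac{d-1}{2}}(1-\rho^2)^{\frac{s}{2}-\frac{3}{4}-\frac{\lambda}{2}}$ in the definition of $u_j$, one computes near $\rho=1$ that $u_1$ is analytic (corresponding to the Frobenius index $0$), whereas $u_2\sim (1-\rho)^{s-\frac{1}{2}-\lambda}$ times a smooth non-vanishing factor (the index $s-\frac{1}{2}-\lambda$). For $\lambda\in S$ one has $\Re(s-\tfrac{1}{2}-\lambda)\in(\floor s-\tfrac{1}{2},\ceil s-\tfrac{1}{2})$ in the non-integer case and lies in a small neighbourhood of $s-\tfrac{1}{2}$ in the integer case; in both cases the exponent is strictly below the Sobolev threshold $\ceil s-\tfrac{1}{2}$ (resp. $s+\tfrac{1}{100}-\tfrac{1}{2}$). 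Hence $u_2\notin H^{\ceil s}_{rad}(\B^d_1)$ (resp. $H^{s+\frac{1}{100}}_{rad}(\B^d_1)$), which forces $B=0$.

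Third, with $f_1=A\,u_1$, I would extract the behavior at $\rho=0$. Since $\chi_\lambda\equiv 1$ on $[0,\rho_\lambda]$, the definition of $v_1$ gives $v_1=c_{1,4}(\lambda)\psi_1+c_{2,4}(\lambda)\psi_2$ there. Using Lemma \ref{lem:fundi near 0} and the standard small-argument asymptotics $J_{\frac{d-2}{2}}(z)\sim z^{\frac{d-2}{2}}$, $Y_{\frac{d-2}{2}}(z)\sim z^{-\frac{d-2}{2}}$, one finds that the $\psi_1$-contribution to $u_1$ extends analytically across $\rho=0$, while the $\psi_2$-contribution behaves like $c_{2,4}(\lambda)\cdot \text{const}\cdot \rho^{2-d}$ near $\rho=0$ (with an irrelevant subleading $\rho^{\frac{d-1}{2}}\log\rho$ correction in even dimensions). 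The radial function $\rho^{2-d}$ is the $d$-dimensional Newtonian kernel and fails to lie in $H^1_{rad}(\B^d_1)$ for any $d\geq 3$, hence \emph{a fortiori} not in $H^{\ceil s}_{rad}(\B^d_1)$ (since $\ceil s\geq 1$). Therefore $A\,u_1$ is an admissible eigenfunction if and only if $A\cdot c_{2,4}(\lambda)=0$. Since $v_1\not\equiv 0$ forces $c_{1,4}(\lambda)$ and $c_{2,4}(\lambda)$ not to vanish simultaneously, a nontrivial $A$ exists exactly when $c_{2,4}(\lambda)=0$, which also produces the smooth eigenfunction witnessing the converse direction.

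The main obstacle is the careful bookkeeping of the endpoint exponents: one has to verify that the two Sobolev thresholds coming from the integer and non-integer regimes are strictly violated by the $u_2$-branch for \emph{every} $\lambda\in S$ (the most delicate case being $\Re\lambda\to s-\ceil s$ in the non-integer setting and $\Re\lambda\to \pm\tfrac{1}{300}$ in the integer setting), and that the leading $\rho^{2-d}$ singularity of the $\psi_2$-contribution is not cancelled by the prefactor $\rho^{-\frac{d-1}{2}}(1-\rho^2)^{\frac{s}{2}-\frac{3}{4}-\frac{\lambda}{2}}$ nor subsumed by the even-dimensional log correction. Both verifications rest solely on the expansions already established in Section 3 and on classical Bessel asymptotics, so no new analytic input is required.
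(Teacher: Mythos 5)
Your strategy --- reduce the eigenvalue equation to the homogeneous ODE, read off the Frobenius exponents $(0,2-d)$ at $\rho=0$ and $(0,s-\tfrac12-\lambda)$ at $\rho=1$ from the explicit endpoint forms of $u_1,u_2$, and translate regularity into the vanishing of $c_{2,4}(\lambda)$ --- is the right one, and is exactly the route of Lemma 4.4 in \cite{DonWal22a} which the paper cites. The identification of the $\rho^{2-d}$ singularity in the $\psi_2$-branch near $\rho=0$ as the dominant obstruction, and the use of the Wronskian to forbid simultaneous vanishing of $c_{1,4}$ and $c_{2,4}$, are both correct.

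There is, however, one genuine gap in your step two that you do not flag. You rule out the $u_2$-branch near $\rho=1$ by a Sobolev threshold argument using $\Re(s-\tfrac12-\lambda)<\ceil s-\tfrac12$, but this is decisive only when $s-\tfrac12-\lambda$ is not a nonnegative integer. For $s\notin\mathbb{N}$ the range $(\floor s-\tfrac12,\ceil s-\tfrac12)$ contains the integer $\floor s$, attained exactly at the real interior point $\lambda=\theta-\tfrac12\in S$. There $(1-\rho)^{s-\frac12-\lambda}=(1-\rho)^{\floor s}$ is a polynomial, so $u_2$ is not excluded from $H^{\ceil s}$ near $\rho=1$; worse, since the indices $0$ and $\floor s$ now differ by a positive integer, Frobenius theory places any potential logarithm in the \emph{smaller}-index branch $u_1$, not in $u_2$, so your parenthetical claim that $u_1$ is analytic (index $0$) and $u_2$ sub-threshold is exactly backwards at this $\lambda$ and $B=0$ is not forced. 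You name $\Re\lambda\to s-\ceil s$ and $\Re\lambda\to\pm\tfrac1{300}$ as the delicate cases, but those regimes are fine; the true exceptional value is this interior real point. A complete proof must treat it separately: either show the log coefficient in $u_1$ is nonzero (forcing $A=0$ as well, after which matching the $u_2$-branch at $\rho=0$ yields $c_{2,3}=0$ rather than $c_{2,4}=0$, so an extra argument is needed to reconcile this with the asserted equivalence), or verify that the expansions of Lemma \ref{lem: fundi hom} genuinely exclude the log at this $\lambda$, or dispatch the single isolated value by an analyticity/continuity argument. In the integer case no nonnegative integer lies in the width-$\tfrac2{300}$ window about the half-integer $s-\tfrac12$, so the issue does not arise and your argument there is complete.
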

\begin{proof}
This follows in the same way as Lemma 4.4 in \cite{DonWal22a}.
\end{proof}
To continue, we let $\varrho(\Lf)$ be the resolvent set of $\Lf$ and define $\widehat S:=S \cap \varrho(\Lf)$. Then, for any $\lambda\in \widehat{S}_\delta$, $u_0$ is well defined and satisfies
\begin{align*}
W(u_0(.,\lambda),u_1(.,\lambda))(\rho)= \rho^{1-d} (1-\rho^2)^{s-\lambda-\frac{3}{2}}.
\end{align*}
We also record the explicit forms of the solutions for convenience.
\begin{lem} \label{lem: form of solutions}
Let $\lambda\in \widehat{S}_\delta$ and set $a(\lambda)=i(s-\frac{1}{2}-\lambda)$. Then,
on the support of $\chi_\lambda$, one has that
\begin{align*}
u_0(\rho,\lambda)&= \rho^{\frac{1-d}{2}}(1-\rho^2)^{\frac{s}{2}-\frac{3}{4}-\frac{\lambda}{2}}\left[c_{1,3}(\lambda)-\frac{c_{2,3}(\lambda)}{c_{2,4}(\lambda)}c_{1,4}(\lambda)\right] \psi_1(\rho,\lambda)
\\
u_1(\rho,\lambda)&=\rho^{\frac{1-d}{2}}(1-\rho^2)^{\frac{s}{2}-\frac{3}{4}-\frac{\lambda}{2}}[c_{1,4}(\lambda)\psi_1(\rho,\lambda)+c_{2,4}(\lambda)\psi_2(\rho,\lambda)]
\\
u_2(\rho,\lambda)&=\rho^{\frac{1-d}{2}}(1-\rho^2)^{\frac{s}{2}-\frac{3}{4}-\frac{\lambda}{2}}[c_{1,3}(\lambda)\psi_1(\rho,\lambda)+c_{2,3}(\lambda)\psi_2(\rho,\lambda)]
\end{align*}
with $c_{i,j}(\lambda)=\O(\langle\omega\rangle^0)$
for $i=1,2$, $j=3,4$
and
\begin{align*}
\psi_1(\rho,\lambda)&=b_1(\rho,\lambda)[1+\rho^2 e_3(\rho,\lambda)] 
\\
&=\sqrt{(1-\rho^2)\varphi(\rho)}J_{\frac{d-2}{2}}( a(\lambda)\varphi(\rho))[1+\rho^2 e_3(\rho,\lambda)]
\\
\psi_2(\rho,\lambda)&= b_2(\rho,\lambda)[1+\rho^2 e_3(\rho,\lambda)]+ \psi_1(\rho,\lambda)e_4(\rho,\lambda).
\\
&=\sqrt{(1-\rho^2)\varphi(\rho)}Y_\frac{d-2}{2}( a(\lambda)\varphi(\rho))[1+\rho^2e_3(\rho,\lambda)]+ \psi_1(\rho,\lambda)e_4(\rho,\lambda)
\end{align*}
where $e_3$ satisfies
\begin{align*}
e_3(\rho,\lambda)=\widehat{e}_3(\rho,\lambda)+\O(\rho^{d-\frac{5}{2}}\langle\omega\rangle^{d-\frac{5}{2}})
\end{align*}
with
\begin{align}\label{esti:e12}
\partial_\rho^m\partial_\omega^n \widehat{e}_3(\rho,\lambda)\lesssim_{m,n} \langle\omega\rangle^{m-n}
\end{align}
and with
\begin{align*}
e_4(\rho,\lambda)=\O(\rho^{-1}\langle\omega\rangle^{-3})+\sum_{j=0}^{d-5}\rho^{4-d+j} \O(\langle\omega\rangle^{2-d+j})\widetilde{e_j}(\rho,\lambda)
\end{align*}
where all of the $\widetilde{e}_j$ also satisfy the estimate \eqref{esti:e12}.
Moreover, on the support of $(1-\chi_\lambda)$ one has that
\begin{align*}
u_0(\rho,\lambda)&=u_2(\rho,\lambda)-\frac{c_{2,3}(\lambda)}{c_{2,4}(\lambda)}u_1(\rho,\lambda)
\\
u_1(\rho,\lambda)&=
\rho^{\frac{1-d}{2}}(1-\rho^2)^{\frac{s}{2}-\frac{3}{4}-\frac{\lambda}{2}} \psi_4(\rho,\lambda)
\\
u_2(\rho,\lambda)&=\rho^{\frac{1-d}{2}}(1-\rho^2)^{\frac{s}{2}-\frac{3}{4}-\frac{\lambda}{2}}\psi_3(\rho,\lambda)
\end{align*}
with
\begin{align*}
\rho^{\frac{1-d}{2}}(1-\rho^2)^{\frac{s}{2}-\frac{3}{4}-\frac{\lambda}{2}}\psi_4(\rho,\lambda)&= \rho^{\frac{1-d}{2}}(1-\rho^2)^{\frac{s}{2}-\frac{3}{4}-\frac{\lambda}{2}}h_2(\rho,\lambda)[1+r_2(\rho,\lambda)]
\\
&=
\frac{\rho^{\frac{1-d}{2}}(1+\rho)^{s-\lambda-\frac{1}{2}}}{\sqrt{2s-2\lambda-1}}[1+e_2(\rho,\lambda)][1+r_2(\rho,\lambda)]
\\
\rho^{\frac{1-d}{2}}(1-\rho^2)^{\frac{s}{2}-\frac{3}{4}-\frac{\lambda}{2}}\psi_3(\rho,\lambda)&= \rho^{\frac{1-d}{2}}(1-\rho^2)^{\frac{s}{2}-\frac{3}{4}-\frac{\lambda}{2}}h_1(\rho,\lambda)[1+r_1(\rho,\lambda)]
\\
&=
\frac{ \rho^{\frac{1-d}{2}}(1-\rho)^{s-\lambda-\frac{1}{2}}}{\sqrt{2s-2\lambda-1}}[1+e_1(\rho,\lambda)][1+r_1(\rho,\lambda)]
\end{align*}
with 
$$
r_j(\rho,\lambda)=(1-\rho)\O(\rho^{0}\langle\omega\rangle^{-1}), \text{ and } e_j(\rho,\lambda)=(1-\rho)\O(\rho^{-1}\langle\omega\rangle^{-1})
$$
for $j=1,2$.
Moreover, in case $V=0$, one has that $r_1=r_2=0$.
\end{lem}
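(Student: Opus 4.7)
The statement is essentially a bookkeeping lemma: it records the shape of $u_0,u_1,u_2$ on the two coordinate regions into which $\chi_\lambda$ partitions $[0,1)$, in terms of the fundamental systems built in Lemmas~\ref{lem: fundi hom}, \ref{lem:fundi near 0}, \ref{lem: fundi near 1} and the connection coefficients of Lemma~\ref{lem:connectioncoeff}. My plan is to verify it by direct substitution, region by region.

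First, on the support of $\chi_\lambda$ (i.e.\ $\rho\in[0,\rho_\lambda]$), the definitions give $v_1(\rho,\lambda)=c_{1,4}(\lambda)\psi_1(\rho,\lambda)+c_{2,4}(\lambda)\psi_2(\rho,\lambda)$ and $v_2(\rho,\lambda)=c_{1,3}(\lambda)\psi_1(\rho,\lambda)+c_{2,3}(\lambda)\psi_2(\rho,\lambda)$, and the prefactor $\rho^{\frac{1-d}{2}}(1-\rho^2)^{\frac{s}{2}-\frac{3}{4}-\frac{\lambda}{2}}$ is inherited from the substitution $u_j=\rho^{-\frac{d-1}{2}}(1-\rho^2)^{\frac{s}{2}-\frac{3}{4}-\frac{\lambda}{2}}v_j$. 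The representations and error-term estimates for $\psi_1,\psi_2$ (including the form of $e_3,e_4,\widehat e_3,\widetilde e_j$) are exactly those proved in Lemma~\ref{lem:fundi near 0}, and the $\O(\langle\omega\rangle^0)$ bounds on $c_{i,j}$ come from Lemma~\ref{lem:connectioncoeff} together with Lemma~\ref{lem: fundi hom} (the leading Wronskian terms $W(h_i,b_j)(\rho_\lambda)$ are $\O(\langle\omega\rangle^0)$).

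On the support of $1-\chi_\lambda$ (i.e.\ $\rho\in[\widehat\rho_\lambda,1)$), one has $v_1=\psi_4$ and $v_2=\psi_3$, so by Lemma~\ref{lem: fundi near 1}
\begin{align*}
\rho^{\frac{1-d}{2}}(1-\rho^2)^{\frac{s}{2}-\frac{3}{4}-\frac{\lambda}{2}}\psi_4(\rho,\lambda)
 &= \rho^{\frac{1-d}{2}}(1-\rho^2)^{\frac{s}{2}-\frac{3}{4}-\frac{\lambda}{2}} h_2(\rho,\lambda)[1+r_2(\rho,\lambda)],
\end{align*}
and analogously for $\psi_3$. Plugging in the explicit expressions for $h_1,h_2$ from Lemma~\ref{lem: fundi hom}, a routine exponent count shows
\begin{align*}
(1-\rho^2)^{\frac{s}{2}-\frac{3}{4}-\frac{\lambda}{2}}\cdot\sqrt{1-\rho^2}\cdot \Bigl(\tfrac{1-\rho}{1+\rho}\Bigr)^{\frac{\lambda}{2}-\frac{s}{2}+\frac{1}{4}}
 = (1+\rho)^{s-\lambda-\frac{1}{2}},
\end{align*}
which produces exactly the claimed $\rho^{\frac{1-d}{2}}(1+\rho)^{s-\lambda-\frac{1}{2}}/\sqrt{2s-2\lambda-1}\cdot[1+e_2][1+r_2]$; the corresponding identity for $\psi_3$, with $(1-\rho)^{s-\lambda-\frac{1}{2}}$ in place of $(1+\rho)^{s-\lambda-\frac{1}{2}}$, is identical.

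Finally, $u_0=u_2-\tfrac{c_{2,3}}{c_{2,4}}u_1$ by definition, and the statement on this quantity on the support of $\chi_\lambda$ follows because the $\psi_2$ contributions cancel by construction of the ratio; the formula on the support of $1-\chi_\lambda$ is simply the definition. When $V\equiv 0$, the integral equations of Lemma~\ref{lem: fundi near 1} have zero right-hand side in the perturbative term, so $r_1=r_2=0$ and $\psi_{3}=h_1$, $\psi_{4}=h_2$ directly. The only step that is not purely substitutional is the exponent combination just displayed, and even this is bookkeeping — I do not anticipate any genuine obstacle, as all analytic content has already been invested in the construction of $\psi_1,\dots,\psi_4$ and in Lemma~\ref{lem:connectioncoeff}.
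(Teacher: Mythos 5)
Your proposal is correct, and it matches the paper's intent: the paper does not give a proof for this lemma at all (it introduces it with ``We also record the explicit forms of the solutions for convenience''), treating it exactly as you do, as a direct substitution of Lemmas~\ref{lem: fundi hom}, \ref{lem:fundi near 0}, \ref{lem: fundi near 1}, and \ref{lem:connectioncoeff} into the definitions $u_j=\rho^{-\frac{d-1}{2}}(1-\rho^2)^{\frac{s}{2}-\frac{3}{4}-\frac{\lambda}{2}}v_j$. One small imprecision: the support of $\chi_\lambda$ is $[0,\widehat\rho_\lambda]$ (not $[0,\rho_\lambda]$) and the support of $1-\chi_\lambda$ is $[\rho_\lambda,1)$ (not $[\widehat\rho_\lambda,1)$), so the two regions overlap on $[\rho_\lambda,\widehat\rho_\lambda]$; on that overlap both asserted forms are valid simultaneously because of the connection formulas $\psi_3=c_{1,3}\psi_1+c_{2,3}\psi_2$ and $\psi_4=c_{1,4}\psi_1+c_{2,4}\psi_2$ from Lemma~\ref{lem:connectioncoeff}, which is precisely what makes the definitions of $v_1,v_2$ via the cutoff consistent. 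Your exponent computation and the cancellation of the $\psi_2$ contribution in $u_0=u_2-\tfrac{c_{2,3}}{c_{2,4}}u_1$ are both correct.
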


The goal, now, is to construct a solution $u(.,\lambda)\in H^k(\B^d_1)$ to the equation
\begin{equation}\label{eq:spectral equationagain}
  \begin{split}
&(\rho^2-1)u''(\rho)\left((d-2s+2\lambda+2)\rho-\frac{d-1}{\rho}\right)u'(\rho)
\\
&\quad +\frac{d-2s+2\lambda}{4}(2\lambda +d-2s+2)u(\rho)+V(\rho)u(\rho)=f(\rho),
\end{split}
\end{equation}
where $k=\ceil s$.
The first ansatz one would like to try is of course given by
\begin{align*}
u(\rho,\lambda)=-u_0(\rho,\lambda)\int_\rho^{1}\frac{u_1(t,\lambda)t^{d-1}f(t)}{(1-t^2)^{s-\lambda-\frac12}}dt-u_1(\rho,\lambda)\int_0^\rho\frac{u_0(t,\lambda)t^{d-1}f(t)}{(1-t^2 )^{s-\lambda-\frac12}} dt.
\end{align*}
However, as is easily visible, this is not a meaningful expression unless $s-\Re\lambda<\frac32$.
Thus, for $s>1$ we slightly modify the ansatz and instead start with 
\begin{align*}
u(\rho,\lambda)=c u_0(\rho,\lambda)-u_0(\rho,\lambda)\int_\rho^{\rho_1}\frac{u_1(t,\lambda)t^{d-1}f(t)}{(1-t^2)^{s-\lambda-\frac12}}dt-u_1(\rho,\lambda)\int_0^\rho\frac{u_0(t,\lambda)t^{d-1}f(t)}{(1-t^2 )^{s-\lambda-\frac12}} dt
\end{align*}
for $c\in \C$ and $\rho_1\in (0,1)$.
Let now 
\begin{align*}
U_{\ell,1}(\rho,\lambda)=\int_0^\rho \frac{u_{\ell}(t,\lambda)t^{d-1}}{(1-t^2)^{s-\lambda-\frac12}} dt
\end{align*} 
for $\ell=0,1,2$.
Integrating by parts in both integrals yields
\begin{align*}
u(\rho,\lambda)&=u_0(\rho,\lambda)\bigg[c+U_{1,1}(\rho,\lambda)f(\rho)-U_{1,1}(\rho_1,\lambda) f(\rho_1)+\int_\rho^{\rho_1} U_{1,1}(t,\lambda) f'(t) dt\bigg]
\\
&\quad+ u_1(\rho,\lambda)\bigg[-U_{0,1}(\rho,\lambda)f(\rho)+\int_0^{\rho} U_{0,1}(t,\lambda) f'(t) dt\bigg].
\end{align*}
Therefore, by setting $c=c_1+U_1(\rho_1,\lambda)f(\rho_1)$ with $c_1 \in \C$, we infer that
\begin{align*}
u(\rho,\lambda)&=u_0(\rho,\lambda)\bigg[c_1+U_{1,1}(\rho,\lambda)f(\rho)+\int_\rho^{\rho_1} U_{1,1}(t,\lambda) f'(t) dt\bigg]
\\
&\quad+ u_1(\rho,\lambda)\bigg[-U_{0,1}(\rho,\lambda)f(\rho)+\int_0^{\rho} U_{0,1}(t,\lambda) f'(t) dt\bigg].
\end{align*}
Iterating this procedure and letting $\rho_1$ tend to $1$ leads to
\begin{align*}
u(\rho,\lambda)&=u_0(\rho,\lambda)\bigg[c_k+\sum_{j=1}^{k-1} (-1)^{j+1}U_{1,j}(\rho,\lambda) f^{(j-1)}(\rho)
\\
&\quad+ (-1)^{k}\int_\rho^{1}\int_0^{t_1}\int_0^{t_2}\dots \int_0^{t_{k-1}}\frac{u_1(t_k,\lambda)t_k^{d-1}}{(1-t_k^2)^{s-\lambda-\frac12}}dt_k \dots d t_3 dt_2 f^{(k-1)}(t_1) dt_1\bigg]
\\
&\quad+u_1(\rho,\lambda)\bigg[\sum_{j=1}^{k-1}(-1)^{j} U_{0,j}(\rho,\lambda) f^{(j-1)}(\rho)
\\
&\quad+ (-1)^{k}\int_0^\rho\int_0^{t_1}\int_0^{t_2}\dots \int_0^{t_{k-1}}\frac{u_0(t_k,\lambda)t_k^{d-1}}{(1-t_k^2)^{s-\lambda-\frac12}}dt_k \dots d t_3 dt_2 f^{(k-1)}(t_1) dt_1\bigg]
\end{align*}
with $k=\ceil s$,  $c_k\in \C$ to be determined, and
\begin{align*}
U_{\ell,j}(\rho,\lambda)=\int_0^\rho\dots\int_0^{t_{j-1}} \frac{u_{\ell}(t_j,\lambda)t_j^{d-1}}{(1-t_j^2)^{s-\lambda-\frac12}} dt_j \dots dt_1
\end{align*} 
for $\ell=0,1,2$. The next three Lemmas will help us determine the right choice of $c_k$.
\begin{lem}
For $f\in C^\infty(\overline{\B^d_1})$ define $\kappa_1(f)$ as
\begin{align*}
\kappa_1(f)(\lambda):& =\frac{f(1)}{\sqrt{2s-2\lambda-1}}\int_0^1 \frac{(1-t)^{\ceil s -s- \frac12+\lambda}}{\prod_{j=1}^{\floor s}(\lambda+\frac{1}{2}+j-s)} 
\partial_t^{\ceil s-1}\left(\frac{t^{d-1}u_1(t,\lambda)}{(1+t)^{s-\lambda-\frac12}}\right) dt 
\\
&\quad+\frac{f(1)}{\sqrt{2s-2\lambda-1}}\sum_{j=1}^{\ceil s-2}\lim_{\rho \to 0}\partial_\rho^j\left(\frac{\rho^{d-1}u_1(\rho,\lambda)}{(1+\rho)^{s-\lambda-\frac12}}\right) \prod_{\ell=1}^{j+1}\frac{1}{\lambda+\frac{1}{2}+\ell-s}.
\end{align*}
Then $u_2(\rho,\lambda) U_{1,1}(\rho,\lambda) f(\rho)-u_2(\rho,\lambda)\kappa_1(f)(\lambda)\in H  ^{\ceil s}  ((0,1])$ for all $\lambda \in \widehat S$.
\end{lem}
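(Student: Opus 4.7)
The plan is to perform $\ceil s - 1$ iterated integrations by parts on
$$U_{1,1}(\rho,\lambda) = \int_0^\rho \frac{u_1(t,\lambda) t^{d-1}}{(1-t^2)^{s-\lambda-1/2}}\, dt = \int_0^\rho (1-t)^{-\beta} g(t,\lambda)\, dt,$$
where $\beta = s - \lambda - 1/2$ and $g(t,\lambda) = t^{d-1} u_1(t,\lambda)/(1+t)^{s-\lambda-1/2}$. First, using Lemma \ref{lem: form of solutions}, I would verify that $g(\cdot,\lambda)$ is smooth on $[0,1]$: near $\rho = 1$ the $(1+\rho)^{s-\lambda-1/2}$ factor of $u_1$ cancels the denominator, while near $\rho = 0$ the $t^{d-1}$ prefactor absorbs the singular $t^{2-d}$ component of $u_1$ coming from $c_{2,4}\psi_2$, and in particular $g(0,\lambda) = 0$.

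Each integration by parts improves the singular exponent of $(1-t)$ by one at the cost of an extra derivative on $g$ and produces boundary contributions at $t = \rho$ of the form $(1-\rho)^{j+1-\beta} g^{(j)}(\rho,\lambda)/\prod_{k=1}^{j+1}(\beta-k)$, together with constants at $t = 0$ involving $g^{(j)}(0,\lambda)$. After $\ceil s - 1$ iterations the remaining integrand $(1-t)^{\ceil s - 1 - \beta} g^{(\ceil s - 1)}(t,\lambda)$ is absolutely integrable on $[0,1]$ because $\Re(\ceil s - 1 - \beta) > -1/2$ on $S$. This yields a decomposition $U_{1,1}(\rho,\lambda) = A(\rho,\lambda) + B(\lambda) + C(\rho,\lambda)$, in which the constant $B(\lambda)$ runs over $j = 1,\ldots, \ceil s - 2$ (the $j = 0$ term vanishing because $g(0,\lambda) = 0$) and $C(\rho,\lambda)$ tends to a limit $C(1,\lambda)$ as $\rho \to 1$. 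Matching indices via $\ell - \beta = \lambda + 1/2 + \ell - s$ and $\ceil s - 1 = \floor s$ for $s \notin \mathbb{N}$, one checks that the expression defining $\kappa_1(f)(\lambda)$ coincides (modulo the overall normalization $1/\sqrt{2s-2\lambda-1}$, which reflects the leading value of $u_2(\rho,\lambda)/(1-\rho)^\beta$ at $\rho = 1$) with $f(1)$ times the renormalized finite part $B(\lambda) + C(1,\lambda)$.

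To finish, I would split
$$u_2 U_{1,1} f - u_2 \kappa_1(f) = u_2 A(\rho,\lambda) f(\rho) + u_2 (B + C(\rho,\lambda))(f(\rho) - f(1)) + u_2 f(1)(C(\rho,\lambda) - C(1,\lambda))$$
and exploit the representation $u_2(\rho,\lambda) = \rho^{(1-d)/2}(1-\rho)^{\beta}[1+e_1(\rho,\lambda)][1+r_1(\rho,\lambda)]/\sqrt{2s-2\lambda-1}$ near $\rho = 1$ from Lemma \ref{lem: form of solutions}. The crucial algebraic fact is the exponent cancellation $(1-\rho)^{\beta}(1-\rho)^{j+1-\beta} = (1-\rho)^{j+1}$, which turns each term of $u_2 A(\rho,\lambda) f(\rho)$ into $\rho^{(1-d)/2}$ times a $C^\infty$ function of $\rho$ near $\rho = 1$. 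The middle term, using $f(\rho) - f(1) = \O(1-\rho)$ by Taylor, is pointwise $\O((1-\rho)^{\Re \beta + 1})$, which lies in $H^{\ceil s}$ near $\rho = 1$ precisely because $\Re\beta > \ceil s - 3/2$ on $S$. Finally, substituting $u = 1 - t$ in the tail integral gives $C(\rho,\lambda) - C(1,\lambda) = \O((1-\rho)^{\ceil s - \beta})$, so the last piece is $\O((1-\rho)^{\ceil s})$ near $\rho = 1$ and hence in $H^{\ceil s}$.

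The main obstacle is the careful bookkeeping of the iterated integration by parts so that the boundary contributions at $t = 0$ in the decomposition of $U_{1,1}(\rho,\lambda)$ line up exactly with the combinatorial expression $g^{(j)}(0,\lambda)\prod_{\ell=1}^{j+1}(\lambda+1/2+\ell-s)^{-1}$ appearing in $\kappa_1(f)$, and that the resulting tail integral reproduces the explicit integral in $\kappa_1(f)$ with its $(1-t)^{\ceil s - s - 1/2 + \lambda}$ weight. A secondary difficulty is that $g(\cdot,\lambda)$ is only given piecewise through the cutoff $\chi_\lambda$, so one needs to verify that the iterated derivatives $g^{(j)}(\cdot,\lambda)$ are well-defined consistently across the patching region.
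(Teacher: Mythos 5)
Your proof follows essentially the same strategy as the paper's: iterated integration by parts on $U_{1,1}$, identification of $\kappa_1(f)$ with the boundary constants at $t=0$ together with the limit of the tail integral, and control of the remainder via the exponent cancellation $(1-\rho)^{\beta}\cdot(1-\rho)^{j+1-\beta}=(1-\rho)^{j+1}$ and the lower bound $\Re\beta>\ceil s-\tfrac32$ on $S$. Your $A+B+C$ decomposition and three-way splitting merely make explicit what the paper compresses into ``iterate this scheme'' and ``a simple scaling argument,'' and in fact your conclusion ($H^{\ceil s}$ rather than the paper's written $C^{\ceil s}$, which cannot hold when $\Re\lambda$ is close to $s-\floor s$ since the $\ceil s$-th derivative of the middle term behaves like $(1-\rho)^{\Re\beta+1-\ceil s}$ with exponent possibly negative) is the one that matches the statement.
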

\begin{proof}
Recall that
\begin{align*}
U_{1,1}(\rho,\lambda) &= \frac{1}{\sqrt{2s-2\lambda-1}}
\int_0^\rho \frac{ t^{d-1}u_1(t,\lambda)}{(1+t)^{s-\lambda-\frac12}}\frac{1}{(1-t)^{s-\lambda-\frac12}} dt
\end{align*}
which we integrate by parts to infer that 
\begin{align*}
U_{1,1}(\rho,\lambda) &=\frac{1}{\sqrt{2s-2\lambda-1}}
\frac{ \rho^{d-1}u_1(\rho,\lambda)}{(s-\lambda-\frac32)(1+\rho)^{s-\lambda-\frac12}}\frac{1}{(1-\rho)^{s-\lambda-\frac32}}
\\
&\quad
+\frac{1}{\sqrt{2s-2\lambda-1}}\int_0^\rho \frac{1}{(1-t)^{s-\lambda-\frac32}} 
\partial_t\left(\frac{t^{d-1}u_1(t,\lambda)}{(\lambda+\frac32-s)(1+t)^{s-\lambda-\frac12}}\right) dt 
\end{align*}
Now,
as
\begin{align*}
u_1(\rho,\lambda)&=\chi_\lambda(\rho)\rho^{\frac{1-d}{2}}(1-\rho^2)^{\frac{3}{4}-\frac \lambda 2 +\frac s 2}[ c_{1,4}(\lambda)\psi_1(\rho,\lambda)+c_{2,4}(\lambda)\psi_2(\rho,\lambda)]
\\
&\quad + (1-\chi_\lambda(\rho))
\frac{\rho^{\frac{1-d}{2}}(1+\rho)^{s-\lambda-\frac{1}{2}}}{\sqrt{2s-2\lambda-1}}[1+e_2(\rho,\lambda)][1+r_2(\rho,\lambda)]
\\
u_2(\rho,\lambda)&=\chi_\lambda(\rho)\rho^{\frac{1-d}{2}}(1-\rho^2)^{\frac{3}{4}-\frac \lambda 2 +\frac s 2}[ c_{1,3}(\lambda)\psi_1(\rho,\lambda)+c_{2,3}(\lambda)\psi_2(\rho,\lambda)]
\\
&\quad + (1-\chi_\lambda(\rho))
\frac{\rho^{\frac{1-d}{2}}(1-\rho)^{s-\lambda-\frac{1}{2}}}{\sqrt{2s-2\lambda-1}}[1+e_1(\rho,\lambda)][1+r_1(\rho,\lambda)]
\end{align*}
with $\psi_j$, $e_j$, and $r_j$ from Lemmas \ref{lem: form of solutions} and \ref{lem: fundi near 1}. Now, one readily checks that
\begin{align*}
\frac{u_2(\rho,\lambda)}{(1-\rho)^{s-\lambda-\frac{1}{2}}}\in C^k((0,1]).
\end{align*}
Therefore, we iterate this scheme to conclude that
\begin{align*}
u_2(\rho,\lambda)U_{1,1}(\rho,\lambda) &=
\frac{u_2(\rho,\lambda)}{\sqrt{2s-2\lambda-1}}\int_0^\rho \frac{(1-t)^{\ceil s-s-\frac12+\lambda}}{\prod_{j=1}^{\ceil s-1}(\lambda+\frac{1}{2}+j-s)} 
\partial_t^{\ceil s-1}\left(\frac{t^{d-1}u_1(t,\lambda)}{(1+t)^{s-\lambda-\frac12}}\right) dt 
\\
&\quad +\frac{u_2(\rho,\lambda)}{\sqrt{2s-2\lambda-1}}\sum_{j=1}^{\ceil s-2}\lim_{\rho \to 0}\partial_\rho^j\left(\frac{\rho^{d-1}u_1(\rho,\lambda)}{(1+\rho)^{s-\lambda-\frac12}}\right) \prod_{\ell=1}^{j+1}\frac{1}{\lambda+\frac{1}{2}+\ell-s}
\\
&\quad + r(\rho,\lambda)
\end{align*}
where $r(.,\lambda)$ is smooth at $\rho=1$ for all $\lambda\in \widehat{S}_\delta$.
From this, one can easily infer that
\begin{align*}
&u_2(\rho,\lambda) U_{1,1}(\rho,\lambda) f(\rho)- u_2(\rho,\lambda)\kappa_1(f)(\lambda)
\end{align*}
is an element of $C^{\ceil s}((0,1])$, by using a simple scaling argument.
\end{proof}
\begin{lem}
For $f\in C^\infty(\overline{\B^d_1})$ define $\kappa_2(f)$ as
\begin{align*}
\kappa_2(f)(\lambda)&:= \frac{f'(1)}{\sqrt{2s-2\lambda-1}}
\int_0^1 \int_0^{t_1} \frac{(1-t_2)^{\ceil s-s- \frac12+\lambda}}{\prod_{j=1}^{\ceil s-1}( \lambda+\frac{1}{2}+j-s)} 
\partial_{t_1}^{\ceil s-1}\left(\frac{t_2^{d-1}u_1(t_2,\lambda)}{(1+t_2)^{s-\lambda-\frac12}}\right) dt_1 dt_2
\\
&\quad -\frac{f'(1)}{\sqrt{2s-2\lambda-1}}(\ceil s-1)
\int_0^1 \frac{(1-t)^{\ceil s-s -\frac12+\lambda}}{\prod_{j=1}^{\ceil s-1}( \lambda+\frac{1}{2}+j-s)} 
\partial_t^{\ceil s-2}\left(\frac{t^{d-1}u_1(t,\lambda)}{(1+t)^{s-\lambda-\frac12}}\right) dt 
\\
&\quad + \frac{f'(1)}{\sqrt{2s-2\lambda-1}}\sum_{j=1}^{\ceil s-2}(j-1)\lim_{\rho \to 0}\partial_\rho^j\left(\frac{\rho^{d-1}u_1(\rho,\lambda)}{(1+\rho)^{s-\lambda-\frac12}}\right) \prod_{\ell=1}^{j+2}\frac{1}{\lambda+\frac{1}{2}+\ell-s}
\end{align*}
Then $u_2(\rho,\lambda) U_{1,2}(\rho,\lambda) f'(\rho)-u_2(\rho,\lambda)\kappa_2(f)  \in H  ^{\ceil s}  ((0,1])$ for all $\lambda \in \widehat S$.
\end{lem}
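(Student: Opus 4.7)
The plan is to follow the template established in the proof of the previous lemma, now applied to the double integral
$$U_{1,2}(\rho,\lambda) = \int_0^\rho U_{1,1}(t_1,\lambda)\,dt_1.$$
Since the behaviour of $U_{1,1}(t_1,\lambda)$ as $t_1\to 1$ has already been unpacked, the natural first step is to insert the representation obtained there, namely that after $\ceil s - 1$ integrations by parts in the inner variable, $U_{1,1}(t_1,\lambda)$ decomposes as (i) a convergent single integral from $0$ to $t_1$ with integrand $(1-t_2)^{\ceil s - s - 1/2 + \lambda}\partial_{t_2}^{\ceil s - 1} G(t_2,\lambda)$, where $G(t,\lambda)=t^{d-1}u_1(t,\lambda)/(1+t)^{s-\lambda-1/2}$; (ii) a finite sum of boundary pieces of the form $(1-t_1)^{j + 1/2 - s + \lambda} \partial_{t_1}^j G(t_1,\lambda)$ for $j = 0,\dots, \ceil s - 2$; and (iii) a purely $\lambda$-dependent constant contributed by the $t_2\to 0$ limits.

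First I would integrate the convergent single integral in $t_1$, producing the iterated integral that appears as the first summand of $\kappa_2(f)(\lambda)$ (evaluated at the upper limit $\rho=1$), plus a remainder that vanishes at $\rho=1$ to sufficiently high order. Next I would apply one further integration by parts to each boundary term $\int_0^\rho (1-t_1)^{j+1/2-s+\lambda}\partial_{t_1}^j G(t_1,\lambda)\,dt_1$, which raises the exponent on $(1-t_1)$ so that the resulting boundary piece at $t_1=\rho$, multiplied by $u_2(\rho,\lambda)\sim (1-\rho)^{s-\lambda-1/2}$, is smooth up to $\rho=1$. This step produces the second summand of $\kappa_2(f)(\lambda)$, and the overall factor $-(\ceil s - 1)$ is what arises from differentiating $\partial_{t_1}^{j}G$ one more time against the shifted power. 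Finally, the constant contributions from (iii) integrate to a linear factor of $\rho$, and after Taylor-expanding $f'(\rho)$ at $\rho=1$ these produce the third summand of $\kappa_2(f)(\lambda)$ (the sum over $j$ involving the limits at $\rho=0$).

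Then I would multiply by $f'(\rho)$ and $u_2(\rho,\lambda)$ and collect. Writing $f'(\rho)=f'(1)+(\rho-1)\widetilde{f}(\rho)$ with $\widetilde f\in C^\infty([0,1])$, the $f'(1)$ portion yields exactly $u_2(\rho,\lambda)\kappa_2(f)(\lambda)$ modulo a $C^{\ceil s}([0,1])$ remainder, using that $u_2(\rho,\lambda)/(1-\rho)^{s-\lambda-1/2}$ is smooth on $(0,1]$ and cancels all residual singular factors from the boundary pieces. The $(\rho-1)\widetilde f(\rho)$ portion contributes an additional $(1-\rho)$ factor which, when multiplied against the boundary terms of order $(1-\rho)^{j+1/2-s+\lambda}$, upgrades their regularity to land in $H^{\ceil s}((0,1])$. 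Regularity away from the endpoint $\rho=1$ is automatic from the explicit representation given in Lemma \ref{lem: form of solutions}.

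The main obstacle will be the precise bookkeeping of the coefficients: one must verify that the prefactor $-(\ceil s - 1)$ and the index shifts in the products $\prod_{\ell=1}^{j+2}(\lambda+1/2+\ell-s)^{-1}$ appearing in $\kappa_2(f)$ come out exactly right from the interplay of the inner $\ceil s - 1$ integrations by parts and the outer one. Once that is done, the Sobolev estimate near $\rho=1$ reduces to the same scaling argument used at the end of the previous lemma's proof.
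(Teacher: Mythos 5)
Your high-level plan matches the paper's: integrate by parts on the inner variable of the double integral to generate (I) a remainder double integral with $\partial_{t_2}^{\ceil s -1}G$, (II) boundary pieces carrying $\partial_{t_1}^jG$ with enhanced $(1-t_1)$ powers, and (III) constant contributions from $t_2\to 0$; then further manipulate (II) and (III) so that subtracting the $\rho=1$ value (that is, $\kappa_2(f)$) leaves something in $H^{\ceil s}((0,1])$. That is exactly the route the paper takes.

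However, two of your specific claims would fail if executed literally. First, you say each boundary term $\int_0^\rho (1-t_1)^{j+\frac32-s+\lambda}\partial_{t_1}^jG(t_1,\lambda)\,dt_1$ (your exponent is also off by one) needs \emph{one} further integration by parts to become harmless once multiplied by $u_2(\rho,\lambda)\sim (1-\rho)^{s-\lambda-\frac12}$. In fact, for the $j$th piece one must integrate by parts $\ceil s -2-j$ more times to raise both the derivative order on $G$ to $\ceil s - 2$ and the exponent on $(1-t)$ to $\ceil s -s-\frac12+\lambda$; a single pass leaves, for small $j$, a product that is only in $C^{2+j}$ near $\rho=1$, not in $H^{\ceil s}$. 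Second, you attribute the prefactor $-(\ceil s -1)$ in the second summand of $\kappa_2$ to ``differentiating $\partial_{t_1}^{j}G$ one more time,'' but that is not where it comes from. After the full chain of integrations by parts, each index $j=0,\dots,\ceil s -2$ contributes the \emph{same} integral $\int_0^1\frac{(1-t)^{\ceil s -s-\frac12+\lambda}}{\prod_{\ell=1}^{\ceil s -1}(\lambda+\frac12+\ell-s)}\partial_t^{\ceil s -2}G\,dt$ (the denominator products from the inner and outer IBPs combine to the common $\prod_{\ell=1}^{\ceil s -1}$), so the factor $\ceil s -1$ is simply the number of summands, not a derivative count. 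You do flag ``bookkeeping'' as the main obstacle, which is fair, but these two points are not cosmetic — they are precisely what would need correcting before the proof closes.
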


\begin{proof}
Since 
\begin{align*}
U_{1,2}(\rho,\lambda) &=\frac{1}{\sqrt{2s-2\lambda-1}}
\int_0^\rho \int_0^{t_1}\frac{ t_2^{\frac{d-1}{2}}u_1(t_2,\lambda)}{(1+t_2)^{s-\lambda-\frac12}}\frac{1}{(1-t_2)^{s-\lambda-\frac12}} dt_2 dt_1,
\end{align*}
we integrate by parts a number of times to arrive at
\begin{align*}
&\quad \int_0^\rho \int_0^{t_1}\frac{ t_2^{\frac{d-1}{2}}u_1(t_2,\lambda)}{(1+t_2)^{s-\lambda-\frac12}}\frac{1}{(1-t_2)^{s-\lambda-\frac12}} dt_2 dt_1
\\
&=
\int_0^\rho \int_0^{t_1} \frac{(1-t_2)^{\ceil s-s-\frac12+\lambda}}{\prod_{j=1}^{\ceil s-1}( \lambda+\frac{1}{2}+j-s)} 
\partial_{t_2}^{\ceil s -1}\left(\frac{t_2^{d-1}u_1(t_1,\lambda)}{(1+t_2)^{s-\lambda-\frac12}}\right) dt_2 dt_1
\\
&
\quad-\sum_{j=0}^{\ceil s-2} \int_0^\rho \partial_t^j\left( \frac{ t^{\frac{d-1}{2}}u_1(t,\lambda)}{(1+t)^{s-\lambda-\frac12}}\right)\frac{1}{\prod_{\ell=1}^{j+1}( \lambda+\frac{1}{2}+\ell-s)(1-t)^{s-\lambda-\frac32-j}} dt
\\
&
\quad+\sum_{j=1}^{\ceil s-2} \int_0^\rho \lim_{t\to 0} \partial_t^j\left( \frac{ t^{\frac{d-1}{2}}u_1(t,\lambda)}{(1+t)^{s-\lambda-\frac12}}\right)\frac{1}{\prod_{\ell=1}^{j+1}( \lambda+\frac{1}{2}+\ell-s)(1-t)^{s-\lambda-\frac32-j}} dt.
\end{align*}
Further,
\begin{align*}
&\quad-u_2(\rho,\lambda)\int_0^\rho \partial_\rho^j\left( \frac{ t^{\frac{d-1}{2}}u_1(t,\lambda)}{(1+t)^{s-\lambda-\frac12}}\right)\frac{1}{\prod_{\ell=1}^{j+1}( \lambda+\frac{1}{2}+\ell-s)(1-t)^{s-\lambda-\frac32-j}} dt
\\
&= 
-u_2(\rho,\lambda)
  \int_0^\rho \frac{(1-t)^{\ceil s-s -\frac12+\lambda}}{\prod_{j=1}^{\ceil s}( s-j-\lambda-\frac{1}{2})} 
\partial_t^{\ceil s-2}\left(\frac{t^{\frac{d-1}{2}}u_1(t,\lambda)}{(1+t)^{s-\lambda-\frac12}}\right) dt 
\\
&\quad -u_2(\rho,\lambda)\sum_{\ell=j}^{\ceil s -3}\lim_{\rho \to 0}\partial_\rho^\ell\left(\frac{\rho^{d-1}u_1(\rho,\lambda)}{(1+\rho)^{s-\lambda-\frac12}}\right) \prod_{m=1}^{\ell+2}\frac{1}{\lambda+\frac{1}{2}+m-s}+ r(\rho,\lambda)
\end{align*}
where $r$ is smooth at $\rho=1$.
Consequently, by choosing $\kappa_2(f)$ as stated in the Lemma the desired conclusion follows.
\end{proof}
Of course, the same procedure can be applied to all of the $U_{1,j}$.
\begin{lem}
For $f\in C^\infty(\overline{\B^d_1})$ and $1\leq j\leq \ceil s-1$ define $\kappa_j(f)$ as
\begin{align*}
\kappa_j(f)(\lambda)&:= 
f^{(j-1)}(1)\sum_{\ell=1}^j a_{j,\ell}
\int_0^1 \dots \int_0^s \frac{(1-t_{\ell})^{\floor s-s+\frac12+\lambda}}{\prod_{j=1}^{k}( k-j-\lambda-\frac{1}{2})} 
\\
&\quad \times \partial_{t_{\ell}}^{k-j+\ell}\left(\frac{t_{\ell}^{\frac{d-1}{2}}u_1(t_{\ell},\lambda)}{(1+t_{\ell})^{s-\lambda-\frac12}}\right) dt_{\ell} dt_{\ell-1}\dots dt_1
\\
&\quad
+\sum_{\ell=1}^{\floor s-j}b_{j,\ell}\lim_{\rho \to 0}\partial_\rho^\ell\left(\frac{u_1(\rho,\lambda)}{(1+\rho)^{s-\lambda-\frac12}}\right) \prod_{m=1}^{\ell+j}\frac{1}{\lambda+\frac{1}{2}+m-s}
\end{align*}
for appropriately chosen constants $a_{j,\ell}, b_{j,\ell} \in\mathbb{Z}$.
Then $$u_2(\rho,\lambda) U_{1,j}(\rho,\lambda) f^{(j)}(\rho)-u_2(\rho,\lambda)\kappa_j(f) H  ^{\ceil s}  ((0,1])$$ for all $\lambda \in \widehat S$.
Furthermore, for $\Re \lambda>\frac{1}{2}+s-\ceil s$, we can rewrite $\kappa_{\ceil s-1}$
 as 
\begin{align*}
\kappa_{\ceil s-1}(\lambda)= f^{(\ceil s -2)}(1)U_{1,\ceil s-1}(1,\lambda).
\end{align*}
\end{lem}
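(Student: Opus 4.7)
The plan is to follow the pattern of the two preceding lemmas and proceed by induction on $j$, systematically applying iterated integration by parts in order to trade singularities at $t=1$ for three less problematic contributions: boundary terms at $t=0$ (which furnish the $\lim_{\rho\to 0}$ pieces appearing in $\kappa_j$), boundary terms at upper endpoints (which, multiplied by $u_2(\rho,\lambda)$, are smooth at $\rho=1$ thanks to the fact that $u_2(\rho,\lambda)(1-\rho)^{-(s-\lambda-\tfrac12)}$ is smooth there on the support of $(1-\chi_\lambda)$), and a residual integral whose singular exponent has been raised by one.

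The base cases $j=1$ and $j=2$ are exactly the two preceding lemmas. For the induction step I exploit the identity
\[
U_{1,j}(\rho,\lambda)=\int_0^\rho U_{1,j-1}(t_1,\lambda)\,dt_1,
\]
integrate by parts once in $t_1$ against $f^{(j-1)}(\rho)$, and absorb the upper boundary term into the $j-1$ case by the inductive hypothesis. In parallel, inside the innermost integral I would perform $\ceil s-j$ further integrations by parts of the singular factor $(1-t_j)^{-(s-\lambda-\tfrac12)}$ against its antiderivatives. Each such step produces a boundary contribution at $t_j=0$ weighted by $\prod_{m=1}^{\ell+j}(\lambda+\tfrac12+m-s)^{-1}$, a boundary contribution at the outer variable, and a residual integral whose integrand is less singular by one power of $(1-t_j)$. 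After $\ceil s-1$ integrations by parts in total, the remaining integrand has the form $(1-t)^{\ceil s-s-\tfrac12+\lambda}$ times a smooth function, which is integrable at $t=1$, and a scaling argument of the type used in the proof of the first lemma shows that the whole surviving expression belongs to $C^{\ceil s}((0,1])$. The subtraction of $u_2(\rho,\lambda)\kappa_j(f)(\lambda)$ precisely cancels the term carrying the factor $(1-\rho)^{-(s-\lambda-\tfrac12)}$ in the asymptotic expansion near $\rho=1$, leaving an element of $H^{\ceil s}((0,1])$. The coefficients $a_{j,\ell}$ and $b_{j,\ell}$ emerge by tracking, via a generalized Leibniz identity, the number of ways in which each boundary contribution can be generated during the induction.

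For the concluding identification, the condition $\Re\lambda>\tfrac12+s-\ceil s$ is precisely the threshold at which the $(\ceil s-1)$-fold iterated integral $U_{1,\ceil s-1}(1,\lambda)$ is finite without any integration by parts, because the outermost integrand then behaves like $(1-t_1)^{\ceil s-2-(s-\lambda-\tfrac12)}$, which is integrable at $1$ under this condition. Under this assumption every boundary term at an upper endpoint that was produced in the integration-by-parts procedure carries a strictly positive power of $(1-t)$ and therefore vanishes when the upper endpoint is taken to $1$. Reversing all integrations by parts (now legitimate since every intermediate expression is finite at $1$) then collapses the formula for $\kappa_{\ceil s-1}(f)(\lambda)$ to $f^{(\ceil s-2)}(1)\,U_{1,\ceil s-1}(1,\lambda)$.

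The main technical obstacle is the combinatorial bookkeeping for the constants $a_{j,\ell}$ and $b_{j,\ell}$: tracking how boundary contributions arising at different nested endpoints accumulate through the induction, and verifying that they indeed combine into the stated closed form. This amounts to a Pascal-type recursion that I would verify by pushing one variable of integration at a time from the innermost level outwards, using the explicit factor $(\lambda+\tfrac12+j-s)^{-1}$ picked up at each step.
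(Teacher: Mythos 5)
Your proposal follows the same approach the paper intends: the paper gives no proof for this lemma, merely noting ``Of course, the same procedure can be applied to all of the $U_{1,j}$,'' and your plan of iterating the integration-by-parts scheme from the $\kappa_1$ and $\kappa_2$ lemmas is exactly that procedure, here organized as an induction on $j$. Your verification that $\Re\lambda>\tfrac12+s-\ceil s$ is the convergence threshold for $U_{1,\ceil s-1}(1,\lambda)$, and that undoing the integrations by parts under this condition collapses $\kappa_{\ceil s-1}(f)(\lambda)$ to $f^{(\ceil s-2)}(1)\,U_{1,\ceil s-1}(1,\lambda)$, is also correct.
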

\begin{lem}\label{lem:reslambda}
Let $f\in C^\infty(\overline{\B^d_1})$ and $\lambda\in \widehat S$. Then, the unique solution $\mathcal{R}(f)(.,\lambda)\in H^{\ceil s}(\B^{d}_1)$ of the equation
\begin{equation}\label{eq:specodeagain}
  \begin{split}
&(\rho^2-1)u''(\rho)\left((d-2s+2\lambda+2)\rho-\frac{d-1}{\rho}\right)u'(\rho)
\\
&\quad +\frac{d-2s+2\lambda}{4}(2\lambda +d-2s+2)u(\rho)+V(\rho)u(\rho)=f(\rho)
\end{split}
\end{equation}
 is given by
\begin{align*}
\mathcal{R}(f)(\rho,\lambda)&:=u_0(\rho,\lambda)\bigg[\kappa(f)(\lambda)+\sum_{j=1}^{k-1} (-1)^{j+1}U_{1,j}(\rho,\lambda) f^{(j-1)}(\rho)
\\
&\quad+ (-1)^{k}\int_\rho^{1}\int_0^{t_1}\int_0^{t_2}\dots \int_0^{t_{k-1}}\frac{u_1(t_k,\lambda)t_k^{d-1}}{(1-t_k^2)^{s-\lambda-\frac12}}dt_k \dots d t_3 dt_2 f^{(k-1)}(t_1) dt_1\bigg]
\\
&\quad+u_1(\rho,\lambda)\bigg[\sum_{j=1}^{k-1}(-1)^j U_{0,j}(\rho,\lambda) f^{(j-1)}(\rho)
\\
&\quad+ (-1)^{k}\int_0^\rho\int_0^{t_1}\int_0^{t_2}\dots \int_0^{t_{k-1}}\frac{u_0(t_k,\lambda)t_k^{d-1}}{(1-t_k^2)^{s-\lambda-\frac12}}dt_k \dots d t_3 dt_2 f^{(k-1)}(t_1) dt_1\bigg]
\end{align*}
where $k=\ceil s$, $\kappa(f):=\sum_{j=1}^{k-1} (-1)^{j+1}\kappa_j(f)$, and
\begin{align*}
U_{\ell,j}(\rho,\lambda)=\int_0^\rho\dots\int_0^{t_{j-1}} \frac{u_{\ell}(t_j,\lambda)t_j^{d-1}}{(1-t_j^2)^{s-\lambda-\frac12}} dt_j \dots dt_1
\end{align*}
for $\ell=0,1,2$.
\end{lem}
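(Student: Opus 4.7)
The plan is to derive $\mathcal{R}(f)$ by starting from the naive variation of constants ansatz and then performing $\ceil s -1$ integrations by parts in order to trade a singular weight for derivatives of $f$; after this, the three preceding lemmas let us verify the $H^{\ceil s}$-regularity, and uniqueness will follow from $\lambda \in \varrho(\Lf)$. Concretely, I would first observe that the pair $(u_0,u_1)$ forms a fundamental system for the homogeneous version of \eqref{eq:specodeagain} with Wronskian
\begin{align*}
W(u_0(.,\lambda),u_1(.,\lambda))(\rho)=\rho^{1-d}(1-\rho^2)^{s-\lambda-\frac{3}{2}},
\end{align*}
so that the standard variation of constants formula with the natural boundary conditions (integrate $u_1$ from $\rho$ to $1$, and $u_0$ from $0$ to $\rho$) solves \eqref{eq:specodeagain}. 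This ansatz only makes sense for $s-\Re\lambda<\tfrac{3}{2}$; for general $s$ one integrates by parts inside each of the two integrals, generating boundary contributions at $\rho=0$ (vanishing thanks to regularity of $u_0$ near the origin) and at $\rho=1$ (producing the terms $U_{\ell,j}(\rho,\lambda)f^{(j-1)}(\rho)$ and various $\rho_1\to 1$ limits). Repeating this $\ceil s-1$ times and choosing the free constant $c_k$ so as to absorb the accumulated $\rho\to 1$ boundary values leads exactly to the formula displayed in the lemma, with $\kappa(f)=\sum_{j=1}^{k-1}(-1)^{j+1}\kappa_j(f)$ playing the role of that remaining constant.

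The core step is to verify $\mathcal{R}(f)(.,\lambda)\in H^{\ceil s}(\B^d_1)$. Near $\rho=0$, the coefficient multiplying $u_1$ vanishes to sufficient order because each $U_{0,j}(\rho,\lambda)$ starts at $\rho=0$ and the iterated integral in front is a Taylor-like remainder; combined with $u_1(\rho,\lambda)\sim\rho^{(1-d)/2}$ and the fact that $u_0$ is \emph{regular} at the origin (the $\psi_2$-part cancels by construction of $u_0$), the potentially bad $\rho^{(1-d)/2}$ contributions disappear. Near $\rho=1$ the key point is that $u_0=u_2-\tfrac{c_{2,3}}{c_{2,4}}u_1$, so
\begin{align*}
u_0(\rho,\lambda)\bigl[\kappa(f)(\lambda)+\sum_{j=1}^{k-1}(-1)^{j+1}U_{1,j}(\rho,\lambda)f^{(j-1)}(\rho)\bigr]
\end{align*}
splits into a $u_1$-contribution (harmless, since $u_1$ is smooth at $\rho=1$ up to the explicit factor $(1+\rho)^{s-\lambda-1/2}$) and the obstruction $\sum_j(-1)^{j+1}u_2(\rho,\lambda)[U_{1,j}(\rho,\lambda)f^{(j-1)}(\rho)-\kappa_j(f)(\lambda)]$. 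By the three preceding lemmas each summand lies in $H^{\ceil s}((0,1])$, so the whole bracket is absorbed. The remaining terms involving the iterated integrals $\int_\rho^1\cdots\int_0^{t_{k-1}}$ and $\int_0^\rho\cdots\int_0^{t_{k-1}}$ together with the $(k-1)$-st derivative of $f$ contain no more singular weights and are handled by a straightforward Hardy/scaling estimate near the two endpoints.

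For uniqueness I would invoke that $\lambda\in\widehat S\subset\varrho(\Lf)$: any two $H^{\ceil s}$-solutions of \eqref{eq:specodeagain} differ by a homogeneous solution in $H^{\ceil s}$, but such a nonzero element would yield a corresponding $\ff\in\ker(\lambda-\Lf)\subset\H^{\ceil s}$, contradicting $\lambda\in\varrho(\Lf)$. The main obstacle I anticipate is bookkeeping the integration by parts in Step~2 and matching the emerging boundary constants with the precise definitions of $\kappa_j(f)$ in the preceding lemmas — in particular tracking the combinatorial coefficients $a_{j,\ell},b_{j,\ell}$ and verifying that the choice $\kappa(f)=\sum(-1)^{j+1}\kappa_j(f)$ exactly cancels all singular $u_2$-pieces, rather than leaving a residual divergence at $\rho=1$.
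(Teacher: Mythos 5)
Your proposal is correct and follows essentially the same route as the paper: verify $C^k$ regularity away from the endpoints, use the explicit behaviour of $u_0$ (regular, $\psi_1$-only) and $u_1$ near $\rho=0$ to kill the $\rho^{2-d}$-type singularities, and near $\rho=1$ swap $u_0=u_2-\tfrac{c_{2,3}}{c_{2,4}}u_1$ so that the three preceding $\kappa_j$-lemmas absorb the $(1-\rho)^{s-\lambda-\frac12}$-type singular pieces, with the remaining iterated integrals treated likewise. The only organisational difference is that the paper performs the integration-by-parts derivation of the formula in the text preceding the lemma and reserves the lemma's proof for the endpoint regularity check, whereas you fold the derivation and the (implicit in the paper) uniqueness argument via $\lambda\in\varrho(\Lf)$ into the proof itself.
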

\begin{proof}
Clearly $\mathcal{R}(f)\in C^k(\B^d_1\setminus \{0\})$ and so we only have to care about the endpoints.
We start with the endpoint $\rho=0$. By plugging in the explicit form of $u_0$ and $u_1$ near $\rho=0$ one easily concludes that
\begin{align*}
u_0(\rho,\lambda) U_{1,j}(\rho,\lambda)\in H^k(\B^d_{\frac{1}{2}})
\end{align*}
and
\begin{align*}
u_1(\rho,\lambda) U_{0,j}(\rho,\lambda)\in H^k(\B^d_{\frac{1}{2}})
\end{align*}
for all $1\leq j\leq k-1$.
Similarly, one infers that the remaining integral terms are fine at $\rho=0$, as well. Thus, we turn to the other endpoint $\rho=1$  and rewrite
\begin{align*}
-u_0(\rho,\lambda)U_{1,j}(\rho,\lambda) 
+u_1(\rho,\lambda)U_{0,j}(\rho,\lambda) = 
-u_2(\rho,\lambda)U_{1,j}(\rho,\lambda) 
+u_1(\rho,\lambda)U_{2,j}(\rho,\lambda).
\end{align*}
Then, by our choice of $\kappa(f)$ we infer that
\begin{align*}
u_2(\rho,\lambda)\left[\kappa(f)(\lambda)+\sum_{j=1}^{k-1} (-1)^{j+1}U_{1,j}(\rho,\lambda) f^{(j-1)}(\rho)\right]\in H^k((0,1]).
\end{align*}
Further, one easily computes that
$u_1(\rho,\lambda)U_{2,j}(\rho,\lambda) \in H^k((0,1])$.
Thus, we turn to the remaining integral terms which we rewrite as
\begin{align*}
&u_0(\rho,\lambda)(-1)^{k-1}\int_\rho^{1}\int_0^{t_1}\int_0^{t_2}\dots \int_0^{t_{k-1}}\frac{u_1(t_k,\lambda)t_k^{d-1}}{(1-t_k^2)^{s-\lambda-\frac12}}dt_k \dots d t_3 dt_2 f^{(k-1)}(t_1) dt_1
\\
&+u_1(\rho,\lambda) (-1)^{k-1}\int_0^\rho\int_0^{t_1}\int_0^{t_2}\dots \int_0^{t_{k-1}}\frac{u_0(t_k,\lambda)t_k^{d-1}}{(1-t_k^2)^{s-\lambda-\frac12}}dt_k \dots d t_3 dt_2 f^{(k-1)}(t_1) dt_1
\\
&=u_2(\rho,\lambda)(-1)^{k-1}\int_\rho^{1}\int_0^{t_1}\int_0^{t_2}\dots \int_0^{t_{k-1}}\frac{u_1(t_k,\lambda)t_k^{d-1}}{(1-t_k^2)^{s-\lambda-\frac12}}dt_k \dots d t_3 dt_2 f^{(k-1)}(t_1) dt_1
\\
&+u_1(\rho,\lambda) (-1)^{k-1}\int_0^\rho\int_0^{t_1}\int_0^{t_2}\dots \int_0^{t_{k-1}}\frac{u_2(t_k,\lambda)t_k^{d-1}}{(1-t_k^2)^{s-\lambda-\frac12}}dt_k \dots d t_3 dt_2 f^{(k-1)}(t_1) dt_1
\\
&-(-1)^{k-1}\frac{c_{2,3}(\lambda)}{c_{2,4}(\lambda)}u_1(\rho,\lambda) \int_0^1\int_0^{t_1}\int_0^{t_2}\dots \int_0^{t_{k-1}}\frac{u_1(t_k,\lambda)t_k^{d-1}}{(1-t_k^2)^{s-\lambda-\frac12}}dt_k \dots d t_3 dt_2 f^{(k-1)}(t_1) dt_1.
\end{align*}
In this form, on easily concludes that all of the above terms are $k$ times continuously differentiable at $\rho=1$ which establishes the assertion.
\end{proof}
For later reference, we denote the solution $\mathcal{R}$ by $\mathcal{R}_{\mathrm{f}} $ in the ``free'' case $V=0$. 
Furthermore, for $s \notin \mathbb{N}$ and $\Re \lambda>-\frac12+s-\floor s=-\frac{1}{2}+\theta$, we can recast $\mathcal{R}$ as follows.
\begin{lem}
Let $s \notin \mathbb{N}$ and $\lambda\in \widehat S$ be such that 
$$
\frac12+s-\ceil s<\Re \lambda.$$ Then, $\mathcal{R}$ can be recast as
\begin{align*}
\mathcal{R}(f)(\rho,\lambda)&=u_0(\rho,\lambda)\bigg[\kappa(f)(\lambda)+\sum_{j=1}^{k-2} (-1)^{j+1}U_{1,j}(\rho,\lambda) f^{(j-1)}(\rho)
\\
&\quad+ (-1)^{k-1}\int_\rho^{1}\int_0^{t_1}\int_0^{t_2}\dots \int_0^{t_{k-2}}\frac{u_1(t_{k-1},\lambda)t_{k-1}^{d-1}}{(1-t_{k-1}^2)^{s-\lambda-\frac12}}dt_{k-1} \dots d t_3 dt_2 f^{(k-2)}(t_1) dt_1\bigg]
\\
&\quad+u_1(\rho,\lambda)\bigg[\sum_{j=1}^{k-2}(-1)^j U_{0,j}(\rho,\lambda) f^{(j-1)}(\rho)
\\
&\quad+ (-1)^{k-1}\int_0^\rho\int_0^{t_1}\int_0^{t_2}\dots \int_0^{t_{k-2}}\frac{u_0(t_{k-1},\lambda)t_{k-1}^{d-1}}{(1-t_{k-1}^2)^{s-\lambda-\frac12}}dt_{k-1} \dots d t_3 dt_2 f^{(k-2)}(t_1) dt_1\bigg]
\end{align*}
where $k=\ceil s$ and $ \widehat \kappa(f):=\sum_{j=1}^{k-2} (-1)^j\kappa_j(f)$.
\end{lem}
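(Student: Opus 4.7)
The plan is to establish the new representation by undoing the very last of the $\ceil s - 1$ integrations by parts that produced the original formula in Lemma \ref{lem:reslambda}. Set $k := \ceil s$ and let $\mathcal{R}^{\mathrm{new}}(f)$ denote the right-hand side of the claimed identity; my aim is to verify $\mathcal{R}^{\mathrm{new}}(f) = \mathcal{R}(f)$ as pointwise expressions via a single explicit integration by parts.

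Concretely, the $(k-1)$-fold nested integrals appearing in $\mathcal{R}^{\mathrm{new}}(f)$ rewrite, via the very definition of $U_{\ell,k-2}$, as the outer integrals
\begin{align*}
\int_\rho^1 U_{1,k-2}(t_1,\lambda) f^{(k-2)}(t_1)\, dt_1 \quad \text{and} \quad \int_0^\rho U_{0,k-2}(t_1,\lambda) f^{(k-2)}(t_1)\, dt_1.
\end{align*}
A single integration by parts in $t_1$ on each, antidifferentiating $U_{\ell,k-2}$ into $U_{\ell,k-1}$ and differentiating $f^{(k-2)}$ into $f^{(k-1)}$, yields
\begin{align*}
\int_\rho^1 U_{1,k-2}(t_1) f^{(k-2)}(t_1)\, dt_1 &= U_{1,k-1}(1,\lambda) f^{(k-2)}(1) - U_{1,k-1}(\rho,\lambda) f^{(k-2)}(\rho) \\
&\quad - \int_\rho^1 U_{1,k-1}(t_1,\lambda) f^{(k-1)}(t_1)\, dt_1,
\end{align*}
together with the analogous identity for the $u_1$ block (where $U_{0,k-1}(0,\lambda)=0$ removes one boundary term). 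Plugging these back into $\mathcal{R}^{\mathrm{new}}(f)$, the bulk terms regenerate the full sum up to $j=k-1$ and the deepest $k$-fold iterated integral of the original formula, while the residual boundary value at $t_1=1$ equals $U_{1,k-1}(1,\lambda) f^{(k-2)}(1)$. By the explicit identification $\kappa_{k-1}(f)(\lambda) = f^{(k-2)}(1) U_{1,k-1}(1,\lambda)$ recorded at the end of the preceding lemma, this boundary piece is precisely the $j=k-1$ term distinguishing $\widehat\kappa(f)$ from $\kappa(f)$, and a short sign check confirms $\mathcal{R}^{\mathrm{new}}(f) = \mathcal{R}(f)$.

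The substantive obstacle is to justify the finiteness of $U_{1,k-1}(1,\lambda)$, which is exactly what makes this final integration by parts legitimate and what the hypothesis $\Re \lambda > \tfrac12 + s - k$ is there to ensure. Near $t_k = 1$ the innermost integrand behaves like $(1-t_k)^{\lambda + 1/2 - s}$, and each of the $k-1$ successive integrations from $0$ raises this exponent by one, provided no intermediate exponent coincides with the forbidden value $-1$. Under the stated hypothesis each intermediate exponent stays strictly bounded away from $-1$, and the final exponent $\Re \lambda + \tfrac12 - s + k - 1 > 0$ guarantees a bounded contribution at the endpoint $t_1 = 1$. Without this assumption $\mathcal{R}^{\mathrm{new}}(f)$ simply fails to be well-defined; all remaining manipulations are routine and parallel those carried out in the proof of Lemma \ref{lem:reslambda}.
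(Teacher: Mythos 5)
Your argument is correct and coincides with the paper's proof, which simply states that the identity follows by undoing one integration by parts in each of the two iterated integrals. You correctly identify the one nontrivial point: the boundary term $U_{1,k-1}(1,\lambda)f^{(k-2)}(1)$ must be finite, which is where the hypothesis $\Re\lambda>\tfrac12+s-\ceil s$ enters, and this term is absorbed as $\kappa_{\ceil s-1}(f)(\lambda)$ via the identity $\kappa_{\ceil s-1}(f)(\lambda)=f^{(\ceil s-2)}(1)U_{1,\ceil s-1}(1,\lambda)$ recorded at the end of the preceding lemma.
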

\begin{proof}
This follows immediately by undoing one integration by parts in both integral terms.
\end{proof}
We also need one more Lemma in case $s\in \mathbb{N}$. 
\begin{lem}\label{lem:extendres}
Let $s\in \mathbb{N}$ and $\lambda\in \widehat{S}$. Then,
$\mathcal{R}(.,\lambda)\in H^{s+\frac{1}{100}}(\B^d_1)$. Furthermore, it is the unique solution of Eq.~ \eqref{eq:specodeagain} of that regularity.
\end{lem}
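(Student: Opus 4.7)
The plan is to inspect the explicit construction of $\mathcal{R}(f)(\cdot,\lambda)$ from Lemma \ref{lem:reslambda} and show that, for integer $s$ and $\lambda\in\widehat S$, each of its pieces admits slightly more regularity than $H^{\ceil s}=H^s$ near $\rho=1$. The interior is $C^\infty$ and the endpoint $\rho=0$ requires no new ideas beyond those in the previous lemma (the Bessel/Frobenius expansions of Lemma \ref{lem: form of solutions} give smoothness near $0$), so essentially all of the work happens near $\rho=1$.

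Rewriting $u_0=u_2-\tfrac{c_{2,3}}{c_{2,4}}u_1$ as in the proof of Lemma \ref{lem:reslambda} splits $\mathcal{R}(f)$ into terms of the form $u_1\cdot G_1$, which are smooth at $\rho=1$ since $u_1$ has no $(1-\rho)$-factor by Lemma \ref{lem: form of solutions}, and terms of the form
\[
u_2(\rho,\lambda)\cdot G_2(\rho,\lambda)=(1-\rho)^{s-\lambda-1/2}\,F(\rho,\lambda)\,G_2(\rho,\lambda)
\]
with $F$ smooth. The choice of $\kappa(f)$ in Lemma \ref{lem:reslambda} is tailored so that after $\ceil s-1$ integrations by parts $G_2$ can be expanded near $\rho=1$ as $(1-\rho)^{1/2+\lambda}H(\rho,\lambda)$ plus higher-order terms in $(1-\rho)$, with $H$ smooth. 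Here the identity $\ceil s-s-\tfrac12+\lambda=-\tfrac12+\lambda$, which holds for $s\in\mathbb{N}$, is what allows us to stop after $\ceil s-1$ integrations by parts and land with the $(1-\rho)^{-1/2+\lambda}$ power whose antiderivative is $(1-\rho)^{1/2+\lambda}$. Multiplying out then yields the leading contribution $(1-\rho)^s F(\rho,\lambda)H(\rho,\lambda)$, which is a polynomial in $(1-\rho)$ times a smooth function and therefore $C^\infty$ at $\rho=1$, together with subleading terms of schematic form $(1-\rho)^{s+1/2-\lambda}\cdot(\text{smooth})$. Using the characterization $(1-\rho)^\alpha\in H^k$ if and only if $\alpha>k-\tfrac12$ in the radial setting on $\B^d_1$, together with the comfortable inequality $s+\tfrac12-\Re\lambda>(s+\tfrac1{100})-\tfrac12$ for $\Re\lambda\in(-\tfrac1{300},\tfrac1{300})$, every piece lies in $H^{s+1/100}(\B^d_1)$.

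For uniqueness, any other $\widetilde u\in H^{s+1/100}(\B^d_1)$ solving Eq.~\eqref{eq:specodeagain} also lies in $H^{\ceil s}(\B^d_1)$ via the continuous embedding, and so must coincide with $\mathcal{R}(f)(\cdot,\lambda)$ by the uniqueness part of Lemma \ref{lem:reslambda}. The main technical obstacle is the bookkeeping of the boundary contributions produced by the $\ceil s-1$ iterated integrations by parts, in particular verifying that they each either cancel against $u_2\kappa(f)$ or come with the required extra $(1-\rho)$ factor; a secondary concern is that logarithmic corrections of the form $\log(1-\rho)$ could appear at some step, but since they would be multiplied by a factor vanishing to order at least $s$, they are harmlessly absorbed in the margin afforded by the smallness of $\Re\lambda$.
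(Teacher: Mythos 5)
Your proposal is correct and follows essentially the same strategy as the paper's own proof: reduce to the endpoint $\rho=1$ (the interior and $\rho=0$ being easy), rewrite $u_0$ in terms of $u_2$ and $u_1$, exploit the cancellation built into $\kappa(f)$ so that the problematic piece becomes $(1-\rho)^s\cdot(\text{smooth})+(1-\rho)^{s+\frac12-\lambda}\cdot(\text{smooth})$ near $\rho=1$, invoke the fractional Sobolev characterization (the paper packages this as Lemma B.2), and obtain uniqueness from the continuous embedding $H^{s+\frac1{100}}\hookrightarrow H^s$ together with the uniqueness part of Lemma \ref{lem:reslambda}. The only cosmetic difference is that the paper makes the factoring explicit by splitting into $I_1,I_2,I_3$ and then differentiating $s$ times before applying the fractional criterion, while you apply the $(1-\rho)^\alpha\in H^k\iff\alpha>k-\frac12$ characterization directly; these are equivalent, and your hedge about logarithmic corrections is harmless since $\lambda\in S$ keeps the Frobenius indices at $\rho=1$ non-resonant, so no logarithms actually arise.
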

The proof of this Lemma can be found in Appendix B.\\
Before we can start deriving estimates on the semigroup, we describe the ranges of the involved parameter which we will need to use. We start with the easiest case, which is given in case $s\in \mathbb{N}$.
\begin{rem}\hfill
\begin{itemize}
\item $s \in \mathbb{N}:$\\
If $s$ happens to be an integer, we set the interpolation parameter $\theta=\frac12$ and pick $\delta <<1$ such that no eigenvalues lie in the strips $\{z\in \C: -\delta \leq \Re z <0\}$, $\{z\in \C: 0< \Re z \leq \delta\}$ and set $\mu_0=\delta$ and $\mu_1=-\delta $. Then, evidently, $(1-\theta)\mu_0+\theta \mu_1=0$.

\item $ s\notin \mathbb{N}$:\\
Since we also aim at deriving estimates at a fractional regularity level, we are going to establish estimates for the regularities $\ceil s$ and $\floor s$ and interpolate between these. For this we start by setting $\theta=  s-\floor s$, which implies that $\ceil s \theta +\floor s (1-\theta)=s.$ 
Furthermore, recall the estimate
\begin{align*}
\|\Sf_0(\tau)\|\lesssim_\varepsilon e^{(\varepsilon-\ceil s+s)\tau}=e^{(-1+\theta +\varepsilon)\tau}
\end{align*}
for any $\varepsilon>0$. This implies that the essential spectrum of $\Lf$ is contained in the set $\{z\in \C:\Re z \leq -1+\theta\}$ and we can push the contour integral \eqref{semirep} as close to the line $\Re z=-1+\theta$ as we like. Additionally, we can choose $1>>\delta>0$ such that no eigenvalues lie on the two lines $\{z\in \C: \Re z=-1+\theta+\tfrac{\delta}{\theta}\}$ and $\{z\in \C: \Re z=\theta-\tfrac{\delta}{1-\theta}\}$. We set $\mu_0:=\theta-\tfrac{\delta}{1-\theta}$ and $\mu_1 := -(1-\theta) +\frac\delta \theta$. We note there is hope that these are suitable for an interpolation argument, as $\mu_0$ and $\mu_1 $ are chosen  such that 
$$
\frac12+s-\ceil s<\mu_0<s-\floor s,$$
$$
s-\ceil s<\mu_1<s-\floor s ,
$$
and it holds that $$ (1-\theta)\mu_0+\mu_1\theta=0.$$

\end{itemize}
\end{rem}
Now, we define the set of unstable eigenvalues $\sigma_u(\Lf)$ as
\begin{align*}
\sigma_u(\Lf)=\{\lambda\in \sigma_p(\Lf): \Re \lambda \geq 0\}.
\end{align*}
Let $\Pf$ be the bounded finite rank spectral projection onto $\sigma_u (\Lf)$, $\Qf$ be the finite rank spectral projection on the remaining isolated eigenvalues that satisfy $\Re \lambda_i> \mu_1$, and set $(\I-\Qf)(\I-\Pf) \ff=\widetilde{\ff}$. Then,
we can explicitly write down the first component of our semigroup $\Sf(\I-\Qf)(\I-\Pf)$ for all $\ff \in C^\infty_{rad}\times C^\infty_{rad}(\overline{\B^d_1})$ as 
\begin{align}\label{semirep}
[\Sf(\tau) \widetilde{\ff}]_1(\rho)=[\Sf_0(\tau) \widetilde{\ff}]_1(\rho)
+\frac{1}{2\pi i}\lim_{N \to \infty} \int_{\mu_{a}-i N}^{\mu_{a}+ i N}e^{\lambda\tau}[\mathcal{R}(F_\lambda)(\rho,\lambda)-\mathcal{R}_{\mathrm{f}}(F_\lambda)(\rho,\lambda)] d\lambda,
\end{align}
with $a=0,1$ and $F_\lambda(\rho)= (\lambda +\frac{d}{2}-s+1)\widetilde f_1(\rho)+\rho \partial_\rho \widetilde f_1(\rho)+
\widetilde f_2(\rho)$, since $[\mathbf R_{\mathbf L}(\lambda)\widetilde
{\mathbf f}]_1=\mathcal R(F_\lambda)(.,\lambda)$ and $[\mathbf R_{\mathbf L_0}(\lambda)\widetilde
{\mathbf f}]_1=\mathcal R_{\mathrm f}(F_\lambda)(.,\lambda)$.
Hence, we are tasked with estimating the above integral term. 
For that, we recall some technical Lemmas and prove some ourselves.
\subsection{Preliminary and technical lemmas}
\begin{lem}\label{osci1}
	Let $\alpha >0$. Then 
	$$
	\left|\int_\R e^{i \omega a}\O(\langle\omega\rangle^{-(1+\alpha)}) d \omega \right|\lesssim \langle a\rangle^{-2},
	$$
	 for any $a\in \R$. 
\end{lem}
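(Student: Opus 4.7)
The plan is to split the argument according to whether $|a|$ is small or large and use integration by parts in the latter regime. Let $f(\omega)$ denote a representative of the symbol class $\O(\langle\omega\rangle^{-(1+\alpha)})$, so that by definition
\[
|\partial_\omega^n f(\omega)| \lesssim_n \langle\omega\rangle^{-(1+\alpha)-n}
\]
for all $n\in \mathbb{N}_0$.

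For the regime $|a|\leq 1$, I would simply bound the integral absolutely:
\[
\left|\int_\R e^{i\omega a} f(\omega)\, d\omega \right| \leq \int_\R \langle\omega\rangle^{-(1+\alpha)}\, d\omega \lesssim 1,
\]
which is $\lesssim \langle a\rangle^{-2}$ since $\langle a\rangle$ is comparable to a constant on $|a|\leq 1$. This uses only $\alpha>0$ so that the symbol is integrable at infinity.

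For the regime $|a|> 1$, I would integrate by parts twice, using $e^{i\omega a} = \frac{1}{(ia)^2}\partial_\omega^2 e^{i\omega a}$ and the fact that $f$ decays at infinity (so the boundary terms vanish):
\[
\int_\R e^{i\omega a} f(\omega)\, d\omega = \frac{1}{(ia)^2}\int_\R e^{i\omega a} f''(\omega)\, d\omega.
\]
By the symbol estimate at level $n=2$, $|f''(\omega)|\lesssim \langle\omega\rangle^{-(3+\alpha)}$, which is integrable regardless of $\alpha>0$, so
\[
\left|\int_\R e^{i\omega a} f(\omega)\, d\omega\right| \leq \frac{1}{a^2}\int_\R \langle\omega\rangle^{-(3+\alpha)}\, d\omega \lesssim |a|^{-2} \lesssim \langle a\rangle^{-2}.
\]

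Combining both regimes yields the claimed bound. There is no real obstacle here; the only points to be careful about are the vanishing of boundary terms in the integration by parts (guaranteed by the decay $\langle\omega\rangle^{-(1+\alpha)}$ with $\alpha>0$) and the uniformity of the implicit constants, which follows because the symbol norms are the only data entering the estimates.
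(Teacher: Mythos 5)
Your proof is correct and follows the standard non-stationary-phase argument (trivial bound for $|a|\leq 1$, double integration by parts for $|a|>1$); the paper states this lemma without proof since it is exactly this routine estimate, and your handling of the boundary terms and uniformity of constants is in order.
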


\begin{lem}\label{osci2}
	Let $\alpha \in (0,1)$. Then
	\begin{align*}
	\left|\int_\R e^{i \omega a}\O(\langle\omega\rangle^{-\alpha}) d \omega \right|\lesssim |a|^{\alpha-1}\langle a\rangle^{-2}
	\end{align*}
	holds for $a\in \R\setminus\{0\}$.
\end{lem}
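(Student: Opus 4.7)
The plan is to split the analysis according to whether $|a|\geq 1$ or $0<|a|<1$, since the target $|a|^{\alpha-1}\langle a\rangle^{-2}$ exhibits two distinct scaling regimes. For small $|a|$ one has $\langle a\rangle^{-2}\sim 1$, so the critical behavior is $|a|^{\alpha-1}$, reflecting the failure of $\langle\omega\rangle^{-\alpha}$ to be integrable when $\alpha<1$. For large $|a|$ one has $\langle a\rangle^{-2}\sim|a|^{-2}$, and the smoothness of the symbol enables fast decay via repeated integration by parts.

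In the regime $|a|\geq 1$, I would write $f(\omega)=\O(\langle\omega\rangle^{-\alpha})$ and integrate by parts $N$ times. The symbol estimate $|f^{(N)}(\omega)|\lesssim \langle\omega\rangle^{-\alpha-N}$ ensures $f^{(N)}\in L^1(\R)$ for every $N\geq 1$, so each integration by parts is legitimate and produces a clean factor $|a|^{-1}$ with no boundary contribution. Taking $N=3$ yields a bound $\lesssim |a|^{-3}\leq |a|^{\alpha-3}$ for $|a|\geq 1$, which coincides (up to constants) with $|a|^{\alpha-1}\langle a\rangle^{-2}$ in this regime.

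In the regime $0<|a|<1$, a single integration by parts does not render the integrand integrable, so the plan is to dyadically split the domain of integration at the scale $|\omega|=1/|a|$. On the low-frequency piece $|\omega|\leq 1/|a|$, the pointwise bound $|f(\omega)|\lesssim\langle\omega\rangle^{-\alpha}$ gives $\int_{|\omega|\leq 1/|a|}\langle\omega\rangle^{-\alpha}\,d\omega\sim |a|^{\alpha-1}$ by a direct calculation. On the high-frequency piece $|\omega|>1/|a|$, one integration by parts produces boundary contributions of size $|f(\pm 1/|a|)|/|a|\sim |a|^{\alpha-1}$ together with a tail integral bounded by $|a|^{-1}\int_{|\omega|>1/|a|}\langle\omega\rangle^{-\alpha-1}\,d\omega\sim |a|^{\alpha-1}$. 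The three contributions combine to $|a|^{\alpha-1}$, which matches the target since $\langle a\rangle^{-2}\sim 1$ here.

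There is no serious obstacle; the argument is a standard symbol-class oscillatory integral estimate paralleling the proof of the preceding lemma, the essential difference being that a single integration by parts does not suffice to restore absolute integrability, thereby forcing the low-/high-frequency splitting. The only genuine choice in the proof is the splitting scale $|\omega|=1/|a|$, which is dictated by balancing the two halves and is the only threshold at which both halves give the correct $|a|^{\alpha-1}$ growth.
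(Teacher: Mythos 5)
Your proof is correct. The paper itself does not give an argument for this lemma---it defers to Lemma 4.2 in \cite{DonRao20}---but your argument is the standard and self-contained one: for $|a|\geq 1$ three integrations by parts suffice because each derivative of the symbol gains an extra power of $\langle\omega\rangle^{-1}$ and so becomes absolutely integrable already after one derivative; for $0<|a|<1$ the split at $|\omega|=1/|a|$ gives $|a|^{\alpha-1}$ from the low-frequency part by direct integration, and the same order from the boundary term and tail of a single integration by parts on the high-frequency part. All three exponents match the target. One small point worth making explicit is that for $\alpha\in(0,1)$ the integral is not absolutely convergent, so the integrations by parts should be carried out on truncations $\int_{-R}^{R}$ (using that $f$ and $f'$ tend to zero at infinity) before passing to the limit; your argument implicitly does this, and the estimates you obtain are uniform in $R$, so the conclusion stands.
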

\begin{proof}
	See Lemma 4.2 in \cite{DonRao20}.
\end{proof}
\begin{lem}\label{osci3}
We have
	\begin{align*}
	\left|\int_\R e^{i \omega a}(1-\chi_\lambda(\rho))\O\left(\rho^{-n}\langle\omega\rangle^{-(n+1)}\right) d \omega \right|\lesssim \langle a\rangle^{-2}
	\end{align*}
	for all $n\geq 1$, $\rho \in (0,1)$, and $a\in \R$.
\end{lem}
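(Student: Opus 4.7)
The plan is to handle the oscillatory integral by splitting in the parameter $|a|$ and then using the support properties of the cutoff $\chi_\lambda$ together with the symbol estimates.

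First, for $|a| \leq 1$, it is enough to bound the integral by an absolute constant, since $\langle a\rangle^{-2} \gtrsim 1$ in this regime. The key structural fact is that for fixed $\rho$, the factor $(1-\chi_\lambda(\rho))$ vanishes whenever $\rho \leq \rho_\lambda$; recalling Definition~\ref{defi:1} and that $\lambda=\mu_a+i\omega$, the support in $\omega$ of $(1-\chi_\lambda(\rho))$ is therefore contained in $\{\omega : \langle\omega\rangle \gtrsim \rho^{-1}\}$. A direct $L^1$ estimate then gives
$$
\int_\R |1-\chi_\lambda(\rho)|\,\rho^{-n}\langle\omega\rangle^{-(n+1)}\,d\omega \lesssim \rho^{-n}\int_{\langle\omega\rangle \gtrsim \rho^{-1}} \langle\omega\rangle^{-(n+1)}\,d\omega \lesssim \rho^{-n}\cdot \rho^{n} = 1,
$$
where $n \geq 1$ is used to make the tail integral converge and produce exactly the power $\rho^{n}$ needed to cancel $\rho^{-n}$.

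For $|a|>1$, my plan is to integrate by parts twice using the identity $e^{i\omega a}=(ia)^{-2}\partial_\omega^2 e^{i\omega a}$, which produces the desired prefactor $a^{-2}$ at the cost of differentiating $(1-\chi_\lambda)\cdot\O(\rho^{-n}\langle\omega\rangle^{-(n+1)})$ twice in $\omega$. By the Leibniz rule, there are three groups of terms: (a) both derivatives on the symbol, giving $(1-\chi_\lambda)\O(\rho^{-n}\langle\omega\rangle^{-(n+3)})$, which by the argument from the first case is even bounded by $\rho^{2}\leq 1$; (b) one derivative on $\chi_\lambda$ and one on the symbol; and (c) both derivatives on $\chi_\lambda$. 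The crucial observation for (b) and (c) is that, for fixed $\rho$, $\partial_\omega^k \chi_\lambda(\rho)$ can only be nonzero for those $\omega$ such that $\rho\in[\rho_\lambda(\omega),\widehat{\rho}_\lambda(\omega)]$; since both endpoints scale like $|\omega|^{-1}$ for large $|\omega|$, this localizes $\omega$ to a set of measure $\sim\rho^{-1}$ with $\langle\omega\rangle\sim\rho^{-1}$. Combined with the given symbol bound $|\partial_\omega^k\chi_\lambda|\lesssim\langle\omega\rangle^{-k}$ and the symbol estimate on $\O(\rho^{-n}\langle\omega\rangle^{-(n+1)})$ and its derivatives, a straightforward power count shows that both groups also produce integrals of size $\lesssim\rho^{2}\leq 1$. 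Thus $|I|\lesssim a^{-2}\lesssim \langle a\rangle^{-2}$, and combining with the first case concludes the proof.

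The main obstacle, as I see it, is justifying the support localization of $\partial_\omega^k\chi_\lambda$ for fixed $\rho$ from the construction of $\chi_\lambda$ as a smooth cutoff between the smoothed versions of $\rho_\lambda$ and $\widehat{\rho}_\lambda$; outside the transition region $[\rho_\lambda(\omega),\widehat{\rho}_\lambda(\omega)]$ the function $\chi_\lambda(\rho)$ is locally constant in $\omega$, so all its $\omega$-derivatives vanish there. Once this is in hand, the remainder of the argument is a routine exercise in symbol calculus and power counting.
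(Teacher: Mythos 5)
Your proposal is correct and follows the standard oscillatory-integral strategy that the paper refers to via Lemma 4.3 of \cite{DonRao20}: split on $|a|\lessgtr 1$, exploit the support constraint from $(1-\chi_\lambda(\rho))$ for the small-$|a|$ bound, and integrate by parts twice in $\omega$ for $|a|>1$, using that $\partial_\omega^k\chi_\lambda(\rho)$ vanishes where $\chi_\lambda(\rho)$ is locally constant in $\omega$. One small point worth making explicit: the localization ``$\langle\omega\rangle\gtrsim\rho^{-1}$ on the support of $1-\chi_\lambda(\rho)$'' only bites for $\rho$ below the fixed threshold $\rho_0$; for $\rho\geq\rho_0$ the support may be all of $\R$, but then $\rho^{-n}\lesssim\rho_0^{-n}$ is already bounded, so the integral $\int_\R\langle\omega\rangle^{-(n+1)}d\omega$ (finite since $n\geq1$) does the job directly, and the same comment applies in the $\partial_\omega\chi_\lambda$ and $\partial_\omega^2\chi_\lambda$ terms. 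With that observation the power counting in all three Leibniz groups closes exactly as you describe.
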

\begin{proof}
	This can be proven in the same manner as Lemma 4.3 in \cite{DonRao20}.
\end{proof}

\begin{lem}\label{osci4}
We have
	\begin{align*}
	\left|\int_\R e^{i \omega a}(1-\chi_\lambda(\rho))\O\left(\rho^{-n}\langle\omega\rangle^{-n}\right)d\omega\right|\lesssim|a|^{-1}\langle a\rangle^{-2}
	\end{align*}
	for any $n \geq 2$, $\rho \in (0,1)$, and $a\in \R \setminus\{0\}$.
\end{lem}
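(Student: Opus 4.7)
The plan is to integrate by parts once in $\omega$ to extract the factor $|a|^{-1}$, then recover the $\langle a\rangle^{-2}$ factor either by invoking Lemma~\ref{osci3} directly or by further integrations by parts. Since $n\geq 2$, the amplitude $\O(\rho^{-n}\langle\omega\rangle^{-n})$ decays fast enough in $\omega$ that the boundary terms at $\pm\infty$ vanish and the integration by parts is justified.

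After one integration by parts,
\begin{align*}
\int_{\R}e^{i\omega a}(1-\chi_\lambda(\rho))\O(\rho^{-n}\langle\omega\rangle^{-n})\,d\omega
=-\frac{1}{ia}\int_{\R}e^{i\omega a}\partial_\omega\!\left[(1-\chi_\lambda(\rho))\O(\rho^{-n}\langle\omega\rangle^{-n})\right]d\omega.
\end{align*}
Expanding the derivative produces two pieces. The first, $(1-\chi_\lambda(\rho))\O(\rho^{-n}\langle\omega\rangle^{-(n+1)})$ (using that differentiating a symbol in $\omega$ drops one power of $\langle\omega\rangle$), fits the hypothesis of Lemma~\ref{osci3} with exponent $n\geq 2\geq 1$, so its integral is bounded by $\langle a\rangle^{-2}$. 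Combined with the prefactor $|a|^{-1}$ this yields the required estimate for this piece.

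The second piece, $-\partial_\omega\chi_\lambda(\rho)\,\O(\rho^{-n}\langle\omega\rangle^{-n})$, is where the actual work lies. By the symbol bound $|\partial_\omega^k\chi_\lambda(\rho)|\lesssim\langle\omega\rangle^{-k}$ and the construction of $\chi$, the function $\partial_\omega\chi_\lambda(\rho)$ is supported in the transition strip $\rho_\lambda(\omega)\leq\rho\leq\hat{\rho}_\lambda(\omega)$, which constrains $\langle\omega\rangle\sim\rho^{-1}$ for $\rho<\rho_0$ (and is empty for $\rho$ near $1$). On this strip the integrand is of size $\rho\cdot\rho^{-n}\langle\omega\rangle^{-n}\sim\rho$ and the $\omega$-width of the strip is $\sim\rho^{-1}$, so the absolute integral is uniformly bounded in $\rho$. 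Together with the $|a|^{-1}$ prefactor this already yields the bound $|a|^{-1}\langle a\rangle^{-2}$ in the regime $|a|\leq 1$. For $|a|\geq 1$ I integrate by parts two more times; the new integrand is compactly supported in $\omega$, so the boundary terms vanish, and every extra derivative either falls on the amplitude (gaining a power of $\langle\omega\rangle^{-1}$) or on $\chi_\lambda$ (again gaining $\langle\omega\rangle^{-1}$ by the symbol bound), so the strip estimate keeps the integral bounded. This saves an additional factor of $|a|^{-2}$, producing the overall bound $|a|^{-3}\lesssim|a|^{-1}\langle a\rangle^{-2}$.

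The only genuine obstacle is the bookkeeping for the cutoff-derivative term: one has to track the support of $\partial_\omega^k\chi_\lambda(\rho)$ in both the small-$\rho$ regime (where $\langle\omega\rangle\sim\rho^{-1}$) and the intermediate band $\rho\in[\rho_0,(1+\rho_0)/2]$, and verify that in the latter case the bound is trivial because $\partial_\omega^k\chi_\lambda$ is bounded and has $\omega$-support of bounded measure. Once the support structure of $\chi_\lambda$ from Definition~\ref{defi:1} is in place, the argument is completely parallel to the proof of Lemma~\ref{osci3} (i.e.\ Lemma~4.3 in \cite{DonRao20}); the only novelty is the initial integration by parts, which trades one power of $\langle\omega\rangle$ in the amplitude for the factor $|a|^{-1}$ that distinguishes this statement from Lemma~\ref{osci3}.
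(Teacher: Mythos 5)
Your proposal is correct and uses the same mechanism the paper relies on: one integration by parts in $\omega$ converts the $\O(\langle\omega\rangle^{-n})$ amplitude into an $\O(\langle\omega\rangle^{-(n+1)})$ amplitude (handled by Lemma~\ref{osci3}) plus a cutoff-derivative term that is confined, for fixed $\rho$, to the band $\langle\omega\rangle\sim\rho^{-1}$ (or to a bounded $\omega$-interval in the intermediate $\rho$-regime), whence the extra factor $|a|^{-1}$ and, after two further integrations by parts, the full $|a|^{-1}\langle a\rangle^{-2}$ bound. The paper's own proof is a citation to Lemma~4.4 of \cite{DonRao20}, which proceeds along the same lines, so there is no substantive difference between your argument and the intended one.

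Two minor points of hygiene. First, when you invoke Lemma~\ref{osci3} on the piece $(1-\chi_\lambda(\rho))\O(\rho^{-n}\langle\omega\rangle^{-(n+1)})$, be explicit that this matches the hypothesis of that lemma with the same index $n$, and that $n\geq 2\geq 1$ is what makes it applicable; the phrase ``with exponent $n\geq 2\geq 1$'' is a little compressed. Second, the trade $\langle\omega\rangle\sim\rho^{-1}$ on the support of $\partial_\omega\chi_\lambda$ relies on the specific construction of $\rho_\lambda$ and $\widehat\rho_\lambda$ from Definition~\ref{defi:1}; you note this, and it is worth keeping, since without it the $\rho^{-n}$ factor has no upper bound and the initial absolute integral would fail to be controlled uniformly in $\rho$.
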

\begin{proof}
This can be proven as Lemma 4.4 in \cite{DonRao20}.
\end{proof}
Furthermore, by interpolating between Lemma \ref{osci3} and \ref{osci4} one obtains the following result.
\begin{lem}\label{osci5}
We have
	\begin{align*}
	\left|\int_\R e^{i \omega a}(1-\chi_\lambda(\rho))\O\left(\rho^{-n}\langle\omega\rangle^{-n}\right)d\omega\right|\lesssim\rho^{-\theta}|a|^{-(1-\theta)}\langle a\rangle^{-2}
	\end{align*}
	for any $n \geq 2$, $\rho \in (0,1)$, $\theta\in [0,1]$, and $a\in \R \setminus\{0\}$.
\end{lem}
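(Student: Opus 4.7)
The plan is to derive Lemma \ref{osci5} by combining two endpoint bounds with an elementary $\min$-inequality, rather than invoking any complex interpolation machinery. Throughout, let $F(\rho,\omega)\in \O(\rho^{-n}\langle\omega\rangle^{-n})$ denote the generic symbol sitting inside the integrand, with $n\geq 2$ as in the hypothesis.

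The first endpoint bound is simply Lemma \ref{osci4} applied to $F$ as stated:
\[
\left|\int_\R e^{i\omega a}(1-\chi_\lambda(\rho))F(\rho,\omega)\,d\omega\right|\lesssim |a|^{-1}\langle a\rangle^{-2}.
\]
For the second endpoint, I would trade one power of $\langle\omega\rangle^{-1}$ for a power of $\rho$ and appeal to Lemma \ref{osci3}. A Leibniz computation gives $\rho F\in \O(\rho^{-(n-1)}\langle\omega\rangle^{-n})$, and setting $m=n-1\geq 1$ one recognises this as a member of the class $\O(\rho^{-m}\langle\omega\rangle^{-(m+1)})$ to which Lemma \ref{osci3} applies. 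Invoking that lemma yields
\[
\left|\int_\R e^{i\omega a}(1-\chi_\lambda(\rho))\,\rho F(\rho,\omega)\,d\omega\right|\lesssim \langle a\rangle^{-2},
\]
and since $\rho$ is independent of $\omega$ it can be factored out, producing the second endpoint bound
\[
\left|\int_\R e^{i\omega a}(1-\chi_\lambda(\rho))F(\rho,\omega)\,d\omega\right|\lesssim \rho^{-1}\langle a\rangle^{-2}.
\]

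The conclusion is then obtained by a geometric-mean interpolation of the two inequalities just derived. For positive $A,B$ and $\theta\in[0,1]$ one has $\min(A,B)\leq A^\theta B^{1-\theta}$; applying this pointwise with $A=\rho^{-1}\langle a\rangle^{-2}$ and $B=|a|^{-1}\langle a\rangle^{-2}$ gives
\[
\left|\int_\R e^{i\omega a}(1-\chi_\lambda(\rho))F(\rho,\omega)\,d\omega\right|\lesssim \rho^{-\theta}|a|^{-(1-\theta)}\langle a\rangle^{-2},
\]
which is precisely the claim. I do not anticipate any serious obstacle here; the only detail that deserves explicit verification is that $\rho F$ really does belong to the symbol class required by Lemma \ref{osci3} with uniform constants, but this is a one-line consequence of the Leibniz rule together with $\rho\in(0,1)$.
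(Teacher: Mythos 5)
Your proposal is correct and spells out precisely what the paper means by ``interpolating between Lemma \ref{osci3} and Lemma \ref{osci4}''. Lemma \ref{osci4} gives the endpoint $|a|^{-1}\langle a\rangle^{-2}$ directly; writing the symbol as $\rho^{-1}\cdot\rho F$ with $\rho F\in\O(\rho^{-(n-1)}\langle\omega\rangle^{-n})$ (a valid symbol class with index $m=n-1\geq 1$, checked by Leibniz) and applying Lemma \ref{osci3} gives the other endpoint $\rho^{-1}\langle a\rangle^{-2}$; and the elementary bound $\min(A,B)\leq A^\theta B^{1-\theta}$ then produces exactly the stated estimate at both endpoints $\theta=0,1$. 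This matches the paper's (terse) argument.
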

Moving on, as $(1-\chi_\lambda(\rho))\O(\rho^{-\alpha}\langle\omega\rangle^{-\beta})=(1-\chi_\lambda(\rho))\O(\rho^{0}\langle\omega\rangle^{-\beta+\alpha})$ for all $\alpha,\beta>0$, one also readily obtains the next result.
\begin{lem}\label{osci6}
Let $0<\alpha<\beta<\infty$ be two fixed numbers. Then
\begin{align*}
	\left|\int_\R e^{i \omega a}(1-\chi_\lambda(\rho))\O\left(\rho^{-\alpha}\langle\omega\rangle^{-1-\beta}\right)d\omega\right|\lesssim\langle a\rangle^{-2}
	\end{align*}
	for all $a\in \R$.
	\end{lem}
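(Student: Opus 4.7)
The proof should be a short reduction to Lemma \ref{osci1}, exploiting exactly the reformulation hinted at in the paragraph immediately preceding the statement. The plan is to use the fact that the cutoff $(1-\chi_\lambda(\rho))$ forces us to work in a region where $\rho$ is bounded below by $\rho_\lambda$, which in turn is comparable to $\min\{\rho_0, r\langle\omega\rangle^{-1}\}$ by Definition \ref{defi:1}. Hence on the support of $1-\chi_\lambda(\rho)$ one has $\rho\,\langle\omega\rangle\gtrsim 1$, so that $\rho^{-\alpha}\lesssim\langle\omega\rangle^{\alpha}$ pointwise in both variables.

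Plugging this into the amplitude, I can rewrite
\[
(1-\chi_\lambda(\rho))\,\O\!\left(\rho^{-\alpha}\langle\omega\rangle^{-1-\beta}\right)
=(1-\chi_\lambda(\rho))\,\O\!\left(\langle\omega\rangle^{-1-(\beta-\alpha)}\right),
\]
which is precisely the observation flagged in the text. Here I would verify the symbol-type property in $\omega$ by Leibniz: the hypothesis gives $|\partial_\omega^n(1-\chi_\lambda(\rho))|\lesssim\langle\omega\rangle^{-n}$, the factor $\rho^{-\alpha}$ is $\omega$-independent, and $\partial_\omega^n\langle\omega\rangle^{-1-\beta}=\O(\langle\omega\rangle^{-1-\beta-n})$, so after absorbing $\rho^{-\alpha}$ by $\langle\omega\rangle^{\alpha}$ on the support, the resulting amplitude is of symbol class $\O(\langle\omega\rangle^{-1-(\beta-\alpha)})$ uniformly in $\rho$.

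Since $\beta-\alpha>0$ by assumption, the amplitude fits the hypothesis of Lemma \ref{osci1} with parameter $\beta-\alpha$ in place of $\alpha$, and applying that lemma to the $\omega$-integral yields
\[
\left|\int_\R e^{i\omega a}(1-\chi_\lambda(\rho))\,\O\!\left(\rho^{-\alpha}\langle\omega\rangle^{-1-\beta}\right)d\omega\right|\lesssim\langle a\rangle^{-2},
\]
with implicit constant uniform in $\rho\in(0,1)$. The estimate is trivial when $\rho$ lies in the support of $\chi_\lambda$ (the integrand vanishes), so the bound holds for every $\rho\in(0,1)$ and every $a\in\R$.

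The proof has essentially no obstacle: the only nontrivial point is the clean conversion of the spatial decay $\rho^{-\alpha}$ into frequency growth $\langle\omega\rangle^{\alpha}$ on the support of $1-\chi_\lambda$, which is a direct consequence of the definition of $\rho_\lambda$. Once that reduction is in place, the lemma is purely a corollary of Lemma \ref{osci1}.
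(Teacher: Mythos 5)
Your proof is correct and follows precisely the reduction the paper hints at in the sentence immediately preceding the lemma: convert $\rho^{-\alpha}$ into $\langle\omega\rangle^{\alpha}$ on the support of $1-\chi_\lambda$ (where $\rho\langle\omega\rangle\gtrsim 1$), obtain a symbol $\O(\langle\omega\rangle^{-1-(\beta-\alpha)})$, and invoke Lemma \ref{osci1} with parameter $\beta-\alpha>0$. The Leibniz verification you supply that the converted amplitude retains the required $\omega$-symbol structure is exactly the detail the paper elides when it writes ``readily obtains.''
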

Finally, our last lemma on oscillatory integrals reads as follows.
\begin{lem}\label{osci7}
Let $0<\alpha<1$ and $c>1$. Then, the estimate
\begin{align*}
\left|\rho^{-c} \int_\R e^{i\omega a} (1-\chi_\lambda(\rho)) \O(\langle\omega\rangle^{-c-\alpha}) d\omega \right| &\lesssim |a|^{-1+\alpha} \langle a\rangle^{-2}
\end{align*}
holds for all $a\in \R \setminus 0$.
\end{lem}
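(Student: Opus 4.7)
My plan is to integrate by parts in $\omega$ a number of times chosen adaptively, and to exploit the support properties of the cutoff $\chi_\lambda$. The key observation is that since $\rho_\lambda$ is a smooth version of $\min\{r/|s-\tfrac12-\lambda|,\rho_0\}$, in the regime $\rho<\rho_0$ the factor $(1-\chi_\lambda(\rho))$ vanishes unless $\rho\geq\rho_\lambda$, which forces $\langle\omega\rangle\gtrsim\rho^{-1}$; in the regime $\rho\geq\rho_0$ one has $\rho^{-c}\lesssim 1$ trivially. Combining the cutoff estimate $|\partial_\omega^\ell\chi_\lambda|\lesssim\langle\omega\rangle^{-\ell}$ with the symbol hypothesis on the $\O$-term and the Leibniz rule, one sees that $\bigl|\partial_\omega^N\bigl[(1-\chi_\lambda(\rho))\O(\langle\omega\rangle^{-c-\alpha})\bigr]\bigr|\lesssim\langle\omega\rangle^{-c-\alpha-N}$ with the same support constraint.

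Integrating by parts $N$ times (all boundary terms vanish since $c+\alpha>1$) and using these pointwise bounds, I obtain
\begin{align*}
\rho^{-c}\Bigl|\int_\R e^{i\omega a}(1-\chi_\lambda(\rho))\O(\langle\omega\rangle^{-c-\alpha})\,d\omega\Bigr|
\lesssim |a|^{-N}\cdot\begin{cases}\rho^{\alpha+N-1}&\text{if }\rho<\rho_0,\\ 1&\text{if }\rho\geq\rho_0,\end{cases}
\end{align*}
where in the first case I used $\int_{|\omega|\gtrsim\rho^{-1}}\langle\omega\rangle^{-c-\alpha-N}\,d\omega\lesssim\rho^{c+\alpha+N-1}$. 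Now I optimize $N$ in three regimes. For $|a|\geq 1$ take $N=3$: the bound becomes $|a|^{-3}\rho^{\alpha+2}\lesssim |a|^{-3}\lesssim|a|^{\alpha-3}\lesssim|a|^{\alpha-1}\langle a\rangle^{-2}$, using $\alpha>0$ and $\rho\leq 1$. For $|a|\leq 1$ with $\rho\geq|a|$ take $N=0$: since $\alpha-1<0$, one gets $\rho^{\alpha-1}\leq|a|^{\alpha-1}$ (and the bound $1$ in the large-$\rho$ subcase is majorized by $|a|^{\alpha-1}$ as well). For $|a|\leq 1$ with $\rho<|a|$ take $N=1$: one gets $|a|^{-1}\rho^\alpha\leq|a|^{-1}|a|^\alpha=|a|^{\alpha-1}$. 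Putting the three regimes together yields the claim.

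I do not foresee a genuine obstacle, as this is essentially a quantitative refinement of the arguments used for Lemmas \ref{osci3} and \ref{osci4}. The only point requiring a little care is verifying that terms produced by differentiating $\chi_\lambda$ do not spoil the support analysis; this is immediate because $\partial_\omega\chi_\lambda$ is supported in the transition region $\rho\sim\rho_\lambda$, where the constraint $|\omega|\sim\rho^{-1}$ used in the main estimate still holds, so those terms enjoy the same effective bound as the main piece.
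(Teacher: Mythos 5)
Your proposal is correct in its essentials and takes a genuinely different route from the paper's. The paper's proof of Lemma \ref{osci7} introduces a smooth cutoff $\kappa(a\omega)$ that splits the frequency integral at scale $|\omega|\sim |a|^{-1}$, handles the low-frequency piece by a change of variables $\omega\mapsto a^{-1}\omega$ after trading $\rho^{-c}$ for $\langle\omega\rangle^{c}$-decay via the identity $(1-\chi_\lambda(\rho))\O(\rho^{-c}\langle\omega\rangle^{-\beta})=(1-\chi_\lambda(\rho))\O(\rho^0\langle\omega\rangle^{-\beta+c})$, and handles the high-frequency piece by a single integration by parts followed by the same rescaling. Your approach avoids the rescaling entirely: you keep the $\rho$-dependence explicit by computing the tail integral $\int_{|\omega|\gtrsim\rho^{-1}}\langle\omega\rangle^{-c-\alpha-N}\,d\omega\lesssim\rho^{c+\alpha+N-1}$ directly, and then optimize the number $N$ of integrations by parts regime-by-regime in $(\rho,a)$. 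Both proofs rest on the same crucial support fact, that $1-\chi_\lambda(\rho)\neq 0$ forces $|\omega|\gtrsim\rho^{-1}$ (when $\rho<\rho_0$); the paper encodes this as a symbol-algebra identity, you use it as a literal restriction of the domain of integration. Your version is somewhat more elementary and avoids introducing an auxiliary cutoff $\kappa$, at the cost of a finer case analysis.

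One small oversight in the optimization: in the regime $|a|\leq 1$, $\rho<|a|$ with $N=1$, you only record the bound $|a|^{-1}\rho^\alpha$ coming from the subcase $\rho<\rho_0$. If $\rho_0<|a|\leq 1$ it can happen that $\rho_0\leq\rho<|a|$, in which case your table gives the bound $|a|^{-1}\cdot 1$, which is \emph{not} $\lesssim |a|^{\alpha-1}$. The fix is immediate: in the large-$\rho$ subcase always take $N=0$ when $|a|\leq 1$ (giving the bound $1\lesssim|a|^{\alpha-1}$), and $N=3$ when $|a|\geq 1$ (giving $|a|^{-3}\lesssim|a|^{\alpha-3}$). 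In other words, the split should really be on $\rho\lessgtr\rho_0$ first, and then on $|a|$ and $\rho$ vs.~$|a|$ only in the small-$\rho$ branch. With that bookkeeping corrected, the argument closes cleanly.
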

\begin{proof}
Without loss of generality, we assume $a>0$. First, let $a \in (0,1)$ and
let $kappa$ be smooth cutoff with support on $[-2,2]$ which equals $1$ on $[-1,1]$. Then,
\begin{align*}
\rho^{-c} \int_\R e^{i\omega a} (1-\chi_\lambda(\rho)) \O(\langle\omega\rangle^{-c-\alpha}) d\omega &=\rho^{-c} \int_\R \kappa(a\omega)e^{i\omega a}  (1-\chi_\lambda(\rho)) \O(\langle\omega\rangle^{-c-\alpha}) d\omega
\\
&\quad +\rho^{-c} \int_\R e^{i\omega a} (1-\kappa(a\omega))(1-\chi_\lambda(\rho)) \O(\langle\omega\rangle^{-c-\alpha}) d\omega
\\
&=:I(\rho,a)+I_2(\rho,a).
\end{align*}
Then,
\begin{align*}
|I_1 (\rho,a) |&= \left|\int_\R \kappa(a\omega)e^{i\omega a}  (1-\chi_\lambda(\rho)) \O(\rho^0\langle\omega\rangle^{-\alpha}) d\omega\right|
\\
&= a^{-1} \left| \int_\R \kappa(\omega)e^{i\omega}  (1-\chi_\lambda(a^{-1}\rho)) \O(\rho^{0} a^{\alpha}\omega^{-\alpha}) d\omega\right|
\lesssim a^{\alpha-1}.
\end{align*}
Further, 
\begin{align*}
I_2(\rho,a)&= \rho^{-c} a^{-1} \int_\R e^{i\omega a} (1-\kappa(a\omega))(1-\chi_\lambda(\rho)) \O(\langle\omega\rangle^{-1-c-\alpha}) d\omega
\\
&\quad + \rho^{-c} \int_\R e^{i\omega a}\kappa'(a\omega)(1-\chi_\lambda(\rho)) \O(\langle\omega\rangle^{-c-\alpha}) d\omega
\\
&\quad + \rho^{-c} a^{-1} \int_\R e^{i\omega a}(1-\kappa(a\omega))\partial_\omega\chi_\lambda(\rho) \O(\langle\omega\rangle^{-c-\alpha}) d\omega
\\
& =:I_{2,1}(\rho,a)+I_{2,2}(\rho,a)+I_{2,3}(\rho,a)
\end{align*}
and one computes that
\begin{align*}
|I_{2,1}(\rho,a)|&=\left|a^{-1} \int_\R e^{i\omega a} (1-\kappa(a\omega))(1-\chi_\lambda(\rho)) \O(\rho^0\langle\omega\rangle^{-1-\alpha}) d\omega\right|
\\
&=\left|a^{-2} \int_\R e^{i\omega } (1-\kappa(\omega))(1-\chi_\lambda(a^{-1}\rho)) \O(\rho^0\omega^{-1-\alpha}a^{1+\alpha}) d\omega\right|
\\
&\lesssim a^{\alpha-1},
\end{align*}   
\begin{align*}
|I_{2,2}(\rho,a)|&=\left|a^{-1} \int_\R e^{i\omega} (1-\kappa(\omega))(1-\chi_{\lambda a^{-1}}(\rho)) \O(\rho^{-c}\langle\omega a^{-1}\rangle^{-c-\alpha}) d\omega\right|
\\
&=\left|a^{-1} \int_\R e^{i\omega} (1-\kappa(\omega))(1-\chi_{\lambda a^{-1}}(\rho)) \O(\rho^{0}|\omega a^{-1}|^{-\alpha}) d\omega\right|
\\
&\lesssim a^{\alpha-1},
\end{align*}  
and
\begin{align*}
|I_{2,3}(\rho,a)|&=\left|a^{-1} \int_\R e^{i\omega a} (1-\kappa(a\omega))(1-\chi_\lambda(\rho)) \O(\rho^0\langle\omega\rangle^{-1-\alpha}) d\omega\right|
\\
&=\left|a^{-1} \int_\R e^{i\omega a}(1-\kappa(a\omega))\partial_\omega\chi_\lambda(\rho) \O(\rho^0\langle\omega\rangle^{-\alpha}) d\omega\right|
\\
&=\left|a^{-2} \int_\R e^{i\omega}(1-\kappa(\omega))\partial_{a^{-1}\omega}\chi_{a^{-1}\lambda}(\rho) \O(\rho^0\langle\omega a^{-1}\rangle^{-\alpha}) d\omega\right|
\\
&\lesssim\left|a^{-2} \int_\R (1-\kappa(\omega))|\omega a^{-1}|^{-1-\alpha} d\omega\right|
\\
&\lesssim a^{\alpha-1}.
\end{align*}  
\end{proof}	
We will also rely on the following technical result.
\begin{lem}\label{teclem4}
Let $\alpha\in (0,1)$ and $\beta \in [0,1)$. Then we have the estimate
	\begin{align*}
	\int_0^1 t^{-\beta}|a+\log(1\pm s)|^{-\alpha} dt\lesssim |a|^{-\alpha}
	\end{align*}
	for all $a\in \R\setminus\{0\}$.
\end{lem}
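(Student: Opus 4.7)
The plan is to reduce each of the two cases ($\pm$ inside the logarithm) to a standard one-variable integral of the form $\int_0^L u^{-\beta}|a\pm u|^{-\alpha}du$ via a logarithmic substitution, after which I can split the integration range according to the size of $|a|$ and handle each regime by elementary means.

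For the $+$ case, I would substitute $u=\log(1+t)$, so that $dt=e^u du$ and $t=e^u-1$. Since $e^u-1\geq u$ on $[0,\log 2]$ and $e^u\leq 2$ there, this reduces the problem to estimating $\int_0^{\log 2}u^{-\beta}|a+u|^{-\alpha}du$. When $|a|\geq 2\log 2$ the factor $|a+u|$ is comparable to $|a|$ throughout the range and the bound $\lesssim |a|^{-\alpha}$ is immediate after integrating $u^{-\beta}$. When $|a|\leq 2\log 2$ I would split $[0,\log 2]$ into three pieces $[0,|a|/2]$, $[|a|/2,2|a|]$, and $[2|a|,\log 2]$ (truncating where necessary), and estimate each using either the lower bound $|a+u|\gtrsim|a|$ on the outer pieces or the direct equivalence $u\sim|a|$ combined with a change of variables $v=u-a$ on the middle piece; every case produces a quantity of the form $|a|^{1-\beta-\alpha}$, a constant, or $|a|^{-\alpha}$ times a logarithmic factor, each dominated by $|a|^{-\alpha}$ thanks to $\beta<1$ and $|a|$ bounded.

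For the $-$ case, I would apply $u=-\log(1-t)$, giving $t=1-e^{-u}$, $dt=e^{-u}du$, and $u$ ranging over $[0,\infty)$. From $e^u\geq 1+u$ one has $(1-e^{-u})^{-1}\leq(1+u)/u$, so $(1-e^{-u})^{-\beta}e^{-u}\lesssim u^{-\beta}$ for $u\in[0,\log 2]$ and is bounded by $e^{-u/2}$ for $u\geq \log 2$. The small-$u$ piece is then identical to the $+$ case above, while the tail reduces to $\int_{\log 2}^{\infty}|a-u|^{-\alpha}e^{-u/2}du$, which I would bound by $\lesssim|a|^{-\alpha}$ by a separate case split: for $a\leq 0$ one simply uses $|a-u|\geq|a|$; for $a>0$ one splits around $u=a$, exploiting exponential decay of $e^{-u/2}$ to defeat the mild singularity $|u-a|^{-\alpha}$ near $u=a$ with much better than a polynomial rate in $|a|$.

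The main obstacle is the bookkeeping when $\alpha+\beta\geq 1$: in that regime the antiderivative of $u^{-\alpha-\beta}$ introduces a logarithm (if $\alpha+\beta=1$) or a negative power of $|a|$ (if $\alpha+\beta>1$), and one must check that in each such subcase the outcome is still dominated by $|a|^{-\alpha}$. Writing $|a|^{1-\alpha-\beta}=|a|^{-\alpha}\cdot|a|^{1-\beta}$ and using $\beta<1$ together with $|a|\lesssim 1$ in the bounded regime takes care of this uniformly, after which all pieces combine to give the claimed estimate.
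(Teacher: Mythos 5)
Your argument is correct and follows essentially the same route as the paper's: a logarithmic change of variables, then a decomposition of the integration range at scales comparable to $|a|$, with elementary bounds on each piece. The paper folds the factor $a$ into the substitution (writing $t=1-e^{ax}$) so the split points become fixed at $x=-\tfrac12,-2$ and then invokes an explicit antiderivative on the outer pieces, but this is only a cosmetic reorganization of the same mechanism you use.
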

\begin{proof}
	We only prove the - case, as the + case can be shown analogously. 
For $a<0$ the estimate
\begin{align*}
\left|a+ \log(1-t) \right|^{-\alpha} \leq |a|^{-\alpha}
\end{align*}  
holds for all $s\in [0,1]$ and so the claim follows.
For $a>0$  we change variables according to $s=1-e^{ax}$ and compute
	\begin{align*}
	\int_0^1s^{-\beta}|a+\log(1-t)|^{-\alpha} dt &= \int_{-\infty}^0(1-e^{a x})^{-\beta}|a+ax|^{-\alpha} a e^{a x} dx
	\\
	&\lesssim |a|^{1-\alpha}\int_{-\frac{1}{2}}^0(1-e^{a x})^{-\beta}e^{ax} dx 
	\\
	&\quad +|a|^{1-\alpha}(1-e^{-\frac a2})^{-\beta}e^{-\frac a2}\int_{-2}^{-\frac{1}{2}} |1+x|^{-\alpha}dx 
	\\
	&\quad +|a|^{1-\alpha}\int^{-2}_{-\infty}(1-e^{a x})^{-\beta}e^{a x} dx.
	\end{align*}
	The claimed estimate is now an immediate consequence of the two identities
\[
\partial_x \frac{(1-e^{ax})^{1-\beta}}{a(1-\beta)}=(1-e^{ax})^{-\beta}e^{ax} 
\]
and
\[(1-e^{-\frac{a}{2}})^{-\beta}e^{-\frac{a}{2}} \lesssim a^{-\beta}.
\]
\end{proof}
Similarly, one can show the next technical Lemma 
\begin{lem} \label{teclem5}
Let $\alpha\in (0,1)$ and $\beta \in [0,1)$. Then the estimate
	\begin{align*}
	\int_0^1 s^{-\beta}\left|a\pm \frac{1}{2}\log(1-t^2) \right|^{-\alpha} dt\lesssim |a|^{-\alpha}
	\end{align*}
	holds for all $a\in \R\setminus\{0\}$.
\end{lem}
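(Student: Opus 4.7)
The plan is to reduce Lemma \ref{teclem5} directly to Lemma \ref{teclem4} by a short chain of elementary substitutions that transform $\tfrac12\log(1-t^2)$ into a logarithm of the form $\log(1-v)$ which appears in the previous lemma.

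First I would substitute $u = 1-t^2$. Under this change the weight $t^{-\beta}$ becomes $(1-u)^{-\beta/2}$, the Jacobian contributes an additional factor $(1-u)^{-1/2}$, and $\tfrac12\log(1-t^2)$ becomes $\tfrac12\log u$. Thus the weight singularity has been moved to $u=1$ while the logarithmic one sits at $u=0$. A second substitution $v = 1-u$ swaps these roles back, turning the integral into one of the form
\begin{equation*}
\tfrac12\int_0^1 v^{-(\beta+1)/2}\bigl|a\pm\tfrac12\log(1-v)\bigr|^{-\alpha}\,dv.
\end{equation*}
Pulling the prefactor $\tfrac12$ outside the absolute value (which merely rescales the parameter) then puts the integrand into exactly the shape handled by Lemma \ref{teclem4}, with new exponent $\beta'=(\beta+1)/2$ and new parameter $\pm 2a$.

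At this point Lemma \ref{teclem4} applies directly and delivers the bound $\lesssim|\pm 2a|^{-\alpha}\simeq |a|^{-\alpha}$, which is the claim. The only admissibility check is that the new exponent $\beta'$ still lies in $[0,1)$, and this is immediate from the hypothesis $\beta\in[0,1)$, since then $\beta'\in[1/2,1)$. The case analysis in the sign of $a$ (which was the actual work in Lemma \ref{teclem4}) is inherited for free, so there is essentially no obstacle: the whole proof is a two-line change of variables followed by an invocation of the earlier lemma.
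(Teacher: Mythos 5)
Your reduction is correct and the calculations check out. The composition of your two substitutions ($u=1-t^2$ then $v=1-u$) is just $v=t^2$, which directly turns $\int_0^1 t^{-\beta}|a\pm\tfrac12\log(1-t^2)|^{-\alpha}\,dt$ into $\tfrac12\int_0^1 v^{-(\beta+1)/2}|a\pm\tfrac12\log(1-v)|^{-\alpha}\,dv$, and pulling the $\tfrac12$ out of the absolute value gives a constant multiple of $\int_0^1 v^{-\beta'}|2a\pm\log(1-v)|^{-\alpha}\,dv$ with $\beta'=(\beta+1)/2\in[1/2,1)$. The new exponent stays admissible, so the invocation of Lemma \ref{teclem4} is legitimate. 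One small point worth spelling out: after your change of variables the integrand always contains $\log(1-v)$, i.e. you always land in the ``$-$'' case of Lemma \ref{teclem4}, which is the case the paper actually proves; the $\pm$ of Lemma \ref{teclem5} is absorbed into the sign of the effective parameter, using $|2a-\log(1-v)|=|{-2a}+\log(1-v)|$. Since the conclusion of Lemma \ref{teclem4} depends only on $|a|$, both signs give the bound $\lesssim|2a|^{-\alpha}\simeq|a|^{-\alpha}$.

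Your approach differs from what the paper indicates. The paper simply says ``Similarly, one can show the next technical Lemma,'' pointing toward rerunning the proof of Lemma \ref{teclem4} with a modified change of variables ($1-t^2=e^{ax}$ rather than $1-t=e^{ax}$) and the same split into regions. Your argument instead reduces Lemma \ref{teclem5} to Lemma \ref{teclem4} outright, which is shorter, avoids repeating the case analysis on the sign of $a$, and makes the relationship between the two estimates transparent. The only thing you gain from the paper's direct route is that it does not depend on the exact exponent range hypothesis of Lemma \ref{teclem4} being stable under the shift $\beta\mapsto(\beta+1)/2$, but as you checked that stability holds, your reduction is the cleaner proof.
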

Lastly, we will also require results on weighted norms.
For this purpose we first borrow the following two results
\begin{lem}[Item 1 of Theorem 2.3 in \cite{DjaEdeOli11}]\label{simple char}
Let $d\in \mathbb{N}$ be fixed. Then
\begin{align*}
\sum_{j=0}^k \int_0^1 |f^{(j)}(\rho)|^p \rho^{d-1} d\rho\lesssim \|f\|_{W^{k,p}(\B^d_1)}^p
\end{align*}
for all $f\in C^\infty(\overline{\B^d_1})$ and all $p>1$ and $k\in \mathbb{N}$.
\end{lem}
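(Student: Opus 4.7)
The plan is to treat $f$ as a radial function (this is the only case that is used elsewhere in the paper, and everything that follows works equally well for the spherical average of a general $f$) and to identify it with its radial profile, i.e. write $f(x)=g(|x|)$ for a smooth $g:[0,1]\to\mathbb{R}$. Under this identification, the right-hand side of the claimed inequality becomes the full Sobolev norm of $f$ on the ball, while the left-hand side becomes, via polar coordinates, the sum of the usual $L^p(\mathbb{B}_1^d)$ norms of $g^{(j)}(|x|)$ for $0\le j\le k$. The task therefore reduces to showing, for each $0\le j\le k$, the pointwise-type bound
\[
\int_0^1|g^{(j)}(\rho)|^p\rho^{d-1}d\rho\lesssim \|f\|_{W^{j,p}(\mathbb{B}_1^d)}^p.
\]

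The central algebraic fact I would establish is the decomposition
\[
\sum_{|\alpha|=j}\bigl|\partial^{\alpha}f(x)\bigr|^{2}
=c_{d,j,j}\,\bigl|g^{(j)}(|x|)\bigr|^{2}
+\sum_{\ell=0}^{j-1}c_{d,j,\ell}\,\frac{\bigl|g^{(\ell)}(|x|)\bigr|^{2}}{|x|^{2(j-\ell)}},
\]
for radial $f$, with all constants $c_{d,j,\ell}\ge 0$ and, crucially, $c_{d,j,j}>0$. This is proved by induction on $j$, starting from $\partial_i g(|x|)=g'(|x|)\,x_i/|x|$ and repeatedly applying the product rule, then collapsing the angular dependence using the elementary spherical integrals $\int_{S^{d-1}}\prod_m\omega_{i_m}^{2}\,d\sigma(\omega)$ (the base case $j=2$ is the well-known $(g''(r))^{2}+(d-1)r^{-2}(g'(r))^{2}$).

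Once this decomposition is in hand, we discard the non-negative lower-order terms and pass to $L^{p/2}$, obtaining
\[
c_{d,j,j}\int_0^1|g^{(j)}(\rho)|^{p}\rho^{d-1}d\rho
\lesssim \int_{\mathbb{B}_1^d}\Big(\sum_{|\alpha|=j}|\partial^{\alpha}f|^{2}\Big)^{p/2}dx
\lesssim \|f\|_{W^{j,p}(\mathbb{B}_1^d)}^{p}
\le \|f\|_{W^{k,p}(\mathbb{B}_1^d)}^{p},
\]
after which summing over $0\le j\le k$ yields the stated inequality. The only slightly delicate point is verifying $c_{d,j,j}>0$ in the inductive step: this follows from the fact that among all Cartesian derivatives $\partial^{\alpha}f$ with $|\alpha|=j$, the contribution coming from iterated differentiation in a single coordinate direction produces the unmixed $g^{(j)}(|x|)(x_i/|x|)^{j}$ term with a strictly positive angular integral. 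Since this characterization of weighted Sobolev seminorms for radial functions is standard, the cleanest route in practice is simply to cite~\cite{DjaEdeOli11}, as the authors do.
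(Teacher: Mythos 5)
Your central algebraic claim---that $\sum_{|\alpha|=j}|\partial^{\alpha}f(x)|^{2}$ can be written as a \emph{diagonal} combination $c_{d,j,j}|g^{(j)}|^{2}+\sum_{\ell<j}c_{d,j,\ell}|g^{(\ell)}|^{2}/|x|^{2(j-\ell)}$ with $c_{d,j,\ell}\ge 0$---is already false at $j=3$. A direct computation gives
\begin{align*}
\sum_{i_1,i_2,i_3}\left|\partial_{i_1}\partial_{i_2}\partial_{i_3}f(x)\right|^{2}
= |g'''(r)|^{2}+3(d-1)\left|\frac{g''(r)}{r}-\frac{g'(r)}{r^{2}}\right|^{2},
\end{align*}
and expanding the second term produces the cross term $-6(d-1)\,\mathrm{Re}\big(g''(r)\overline{g'(r)}\big)/r^{3}$, which cannot be absorbed into nonnegative multiples of $|g''|^{2}/r^{2}$ and $|g'|^{2}/r^{4}$. (Sanity check: for $g(r)=r^{2}$ in $d=2$, all third Cartesian derivatives of $f=|x|^{2}$ vanish, but $|g''|^{2}/r^{2}$ and $|g'|^{2}/r^{4}$ are both nonzero, so nonnegative coefficients would force them to be $0$.) Your inductive scheme therefore does not close in the form you state it, and the reduction that lets you ``discard the nonnegative lower-order terms'' is not justified by the decomposition you wrote.

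The conclusion you are after, however, is correct and provable by a one-line argument that bypasses the entire induction. Writing $T^{(j)}_{i_1\cdots i_j}(x)=\partial_{i_1}\cdots\partial_{i_j}f(x)$ and $\hat{x}=x/|x|$, note that along the ray through $x$ one has $g^{(j)}(|x|)=(\hat{x}\cdot\nabla)^{j}f(x)=\hat{x}_{i_1}\cdots\hat{x}_{i_j}\,T^{(j)}_{i_1\cdots i_j}(x)$, since $\hat{x}$ is constant along that ray. Cauchy--Schwarz in the tensor inner product, together with $\sum_{i_1\ldots i_j}(\hat{x}_{i_1}\cdots\hat{x}_{i_j})^{2}=1$, gives the pointwise bound $|g^{(j)}(|x|)|^{2}\le \sum_{i_1\ldots i_j}|\partial_{i_1}\cdots\partial_{i_j}f(x)|^{2}$, from which your chain of inequalities follows unchanged after raising to the power $p/2$ and integrating in polar coordinates. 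This is both simpler and avoids any need to track the sign of the lower-order coefficients. Note finally that the paper itself does not prove this lemma but cites \cite{DjaEdeOli11}, so there is no internal argument to compare against; either the Cauchy--Schwarz proof or the citation would be acceptable, but the diagonal decomposition you propose should not be asserted.
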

\begin{lem}[Item 1 of Lemma 3 in \cite{Ost22a}]\label{teclem1}
Let $p,r,s \in \R$ with $p\geq 1$, $r>0$ and $s>-\frac1p$.
The estimate 
$$
\int_0^r \rho^{ps}|f(\rho)|^p d\rho\lesssim |f(r)|^p+\int_0^r \rho^{p(s+1)}|f'(\rho)|^p d\rho
$$
holds for all $f\in C^1([0,r])$.
\end{lem}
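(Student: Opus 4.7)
The plan is to establish the inequality by an integration-by-parts argument in the weight, followed by Hölder and Young absorption. I would first note that the hypothesis $s>-\tfrac1p$ is exactly what is needed so that the antiderivative $\rho^{ps+1}/(ps+1)$ of the weight $\rho^{ps}$ is nonnegative and vanishes at $\rho=0$, which allows a boundary term to be discarded there.

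Concretely, for $p>1$ I would apply integration by parts with $u(\rho)=|f(\rho)|^p$ and $dv=\rho^{ps}\,d\rho$, so that $v(\rho)=\rho^{ps+1}/(ps+1)$. Since $f\in C^1([0,r])$ is bounded and $ps+1>0$, the contribution at $\rho=0$ vanishes, and one obtains
\begin{equation*}
\int_0^r \rho^{ps}|f(\rho)|^p\,d\rho=\frac{r^{ps+1}}{ps+1}|f(r)|^p-\frac{p}{ps+1}\int_0^r \rho^{ps+1}|f(\rho)|^{p-1}\operatorname{sgn}(f(\rho))f'(\rho)\,d\rho.
\end{equation*}
For $p=1$ the same identity holds with $|f|'=\operatorname{sgn}(f)f'$ interpreted in the weak sense (or, equivalently, after the standard regularization $|f|\rightsquigarrow\sqrt{|f|^2+\varepsilon}$ and passage to the limit $\varepsilon\to 0$). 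The boundary term at $\rho=r$ already matches the claimed right-hand side up to a constant depending only on $p$ and $s$.

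Next I would factor the weight in the remaining integral as $\rho^{ps+1}=\rho^{ps(p-1)/p}\cdot\rho^{(ps+p)/p}$ and apply Hölder with conjugate exponents $p/(p-1)$ and $p$ to obtain
\begin{equation*}
\int_0^r \rho^{ps+1}|f|^{p-1}|f'|\,d\rho\leq\Bigl(\int_0^r \rho^{ps}|f|^p\,d\rho\Bigr)^{\!(p-1)/p}\Bigl(\int_0^r \rho^{p(s+1)}|f'|^p\,d\rho\Bigr)^{\!1/p}.
\end{equation*}
Writing $A$ for the left-hand side of the desired estimate and $B$ for $\int_0^r\rho^{p(s+1)}|f'|^p\,d\rho$, the identity above gives $A\lesssim|f(r)|^p+A^{(p-1)/p}B^{1/p}$. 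Young's inequality then yields $A^{(p-1)/p}B^{1/p}\leq\tfrac12 A+CB$ for some constant $C=C(p,s)$, and absorbing $\tfrac12 A$ into the left-hand side produces $A\lesssim|f(r)|^p+B$, which is exactly the claim.

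The only subtle points, rather than genuine obstacles, are the two technical checks: justifying the identity $\tfrac{d}{d\rho}|f|^p=p|f|^{p-1}\operatorname{sgn}(f)f'$ in the $p=1$ case via a routine regularization, and verifying that the boundary contribution at $\rho=0$ actually vanishes (which reduces to $\rho^{ps+1}\to 0$ as $\rho\to 0^+$, true precisely because $s>-\tfrac1p$). With these in place, the proof is a clean integration-by-parts/Hölder/Young argument with no further complications.
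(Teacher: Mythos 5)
Your argument is correct. Note that the paper does not actually prove this lemma — it is imported verbatim as Item 1 of Lemma 3 from \cite{Ost22a} — so there is no in-text proof to compare against; your integration-by-parts/H\"older/Young absorption is the standard Hardy-type argument and all the exponent bookkeeping checks out (the splitting $\rho^{ps+1}=\rho^{ps(p-1)/p}\cdot\rho^{(ps+p)/p}$ is consistent, the boundary term at $0$ vanishes precisely because $ps+1>0$, and the absorption step is legitimate since $A<\infty$ for $f\in C^1([0,r])$ and $ps>-1$). The only cosmetic remark is that your boundary term carries the factor $r^{ps+1}/(ps+1)$, so the implied constant depends on $r$ as well as on $p$ and $s$; this is harmless for the paper's applications, where $r$ is bounded.
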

\begin{lem}\label{teclem2}
Let  $p>1$, $k \in \mathbb N$ with $\frac{d-1}p +\frac{1}{p}> k\geq 2$,  and $\varepsilon>0$ be fixed number. Then, the estimates
\begin{align*}
\| |.|^{j+\frac{d-1}{p}+\frac1p-k+\varepsilon}f^{(j-1)}\|_{L^\infty(\B^d_1)}\lesssim \|f\|_{W^{k-1,p}(\B^d_1)}
\end{align*}
and
\begin{align*}
\| |.|^{j+\frac{d-1}{p}+\frac1p-1-k+\varepsilon}f^{(j-1)}\|_{L^\infty(\B^d_1)}\lesssim \|f\|_{W^{k,p}(\B^d_1)}
\end{align*}
holds for all $f\in C_{rad}^\infty(\overline{\B^d_1})$ and all $1\leq j\leq k-1$.
\end{lem}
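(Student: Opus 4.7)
The plan is to reduce both estimates to a single weighted one-dimensional pointwise bound and establish that bound by induction. Fix $j$ with $1\le j\le k-1$, set $g:=f^{(j-1)}$, and let $(m,N)=(k-j,k-1)$ for the first inequality and $(m,N)=(k-j+1,k)$ for the second; in either case $m\ge 1$ and $g^{(i)}=f^{(j-1+i)}$ for $0\le i\le m$. Lemma~\ref{simple char} then yields
\[
\sum_{i=0}^{m}\Bigl(\int_0^1|g^{(i)}(\rho)|^p\rho^{d-1}\,d\rho\Bigr)^{1/p}\lesssim \|f\|_{W^{N,p}(\B^d_1)}.
\]
The hypothesis $\frac{d-1}{p}+\frac{1}{p}>k\ge 2$ amounts to $\tfrac{d}{p}>k\ge m$, so it suffices to prove the pointwise claim
\begin{equation*}
|h(\rho)|\;\lesssim\;\rho^{m-d/p}\sum_{i=0}^m\Bigl(\int_0^1|h^{(i)}(t)|^pt^{d-1}\,dt\Bigr)^{1/p},\qquad \rho\in(0,1],\tag{$\star$}
\end{equation*}
for all $h\in C^\infty([0,1])$ and integers $m\ge 1$ with $d/p>m$. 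Substituting $h=g$ and observing that $m-d/p$ equals $-(j+\tfrac{d-1}{p}+\tfrac{1}{p}-k)$ in the first case and $-(j+\tfrac{d-1}{p}+\tfrac{1}{p}-1-k)$ in the second then delivers both inequalities, the extra $\varepsilon$ being absorbed for free since $\rho^{\varepsilon}\le 1$ on $[0,1]$.

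I would prove $(\star)$ by induction on $m$. For the base $m=1$, the fundamental theorem of calculus gives $h(\rho)=h(1)-\int_\rho^1 h'(t)\,dt$, and H\"older with conjugate exponents $(p,p/(p-1))$ bounds
\[
\int_\rho^1|h'(t)|\,dt\le \Bigl(\int_\rho^1|h'(t)|^p t^{d-1}dt\Bigr)^{1/p}\Bigl(\int_\rho^1 t^{-(d-1)/(p-1)}\,dt\Bigr)^{(p-1)/p}\lesssim \rho^{1-d/p}\|h'\|_{L^p(t^{d-1}dt)},
\]
where the last step uses $d/p>1$ so that the weight integral is comparable to $\rho^{(p-d)/(p-1)}$ and raising to $(p-1)/p$ produces $\rho^{1-d/p}$. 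The boundary term is handled by the averaging identity $h(1)=2\int_{1/2}^1h(\rho)\,d\rho+2\int_{1/2}^1(t-\tfrac12)h'(t)\,dt$, which together with H\"older and the equivalence $\rho^{d-1}\simeq 1$ on $[\tfrac12,1]$ yields $|h(1)|\lesssim \|h\|_{L^p(\rho^{d-1}d\rho)}+\|h'\|_{L^p(\rho^{d-1}d\rho)}$. Multiplying through by $\rho^{d/p-1}\le 1$ completes the base case; alternatively this step can be read off from Lemma~\ref{teclem1} applied on $[\tfrac12,1]$.

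For the inductive step, applying $(\star)$ at level $m-1$ to $h'$ (permissible since $d/p>m>m-1$) yields $|h'(t)|\lesssim t^{(m-1)-d/p}\sum_{i=1}^m\|h^{(i)}\|_{L^p(t^{d-1}dt)}$. Substituting back into the FTC decomposition and using that $d/p>m$ makes $(m-1)-d/p<-1$, so
\[
\int_\rho^1 t^{(m-1)-d/p}\,dt\simeq \rho^{m-d/p},
\]
while $|h(1)|$ is treated exactly as in the base; this closes the induction. The main obstacle is purely organizational, namely matching the exponent $m-d/p$ produced by $(\star)$ with the precise weight appearing in the statement via Lemma~\ref{simple char}, and keeping track of the two slightly different values of $(m,N)$ that cover the two inequalities. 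The strict inequality $d/p>k$ provides slack at every stage, so no logarithmic borderline case arises and the iterated FTC--H\"older argument proceeds cleanly.
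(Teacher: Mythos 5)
Your proof is correct, and it takes a genuinely different route from the paper's. The paper first applies the one-dimensional Sobolev embedding $W^{1,1+\delta}((0,1))\hookrightarrow L^\infty((0,1))$ to the weighted quantity $|.|^{j+\frac{d-1}{p}+\frac{1}{p}-k+\varepsilon}f^{(j-1)}$, then splits off the factor $|.|^{-\frac{p-1}{p}+\varepsilon}$ via H\"older (this is precisely where the $\varepsilon>0$ is consumed, to make that factor integrable at an admissible exponent), and finally iterates the Hardy-type estimate of Lemma~\ref{teclem1} to climb the ladder of derivatives before invoking Lemma~\ref{simple char}. You instead reduce everything to the single pointwise inequality $(\star)$ and prove it by induction, with the base case being a direct FTC--H\"older argument and the inductive step feeding the pointwise bound for $h'$ back into the FTC identity; the boundary term $|h(1)|$ is handled by the same averaging device in both arguments. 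The two proofs are thus morally parallel --- each climbs one derivative at a time and absorbs a boundary term at each stage --- but yours replaces the Sobolev-embedding-plus-Hardy machinery with an elementary, self-contained iteration, and in doing so actually yields the slightly stronger endpoint estimate $\rho^{k-j-d/p}|f^{(j-1)}(\rho)|\lesssim\|f\|_{W^{k-1,p}(\B^d_1)}$ with no $\varepsilon$-loss (the $\varepsilon$ then being absorbed for free by $\rho^\varepsilon\le 1$). The exponent bookkeeping you do (matching $m-d/p$ with $-(j+\tfrac{d-1}{p}+\tfrac{1}{p}-k)$ and $-(j+\tfrac{d-1}{p}+\tfrac{1}{p}-1-k)$ for the respective choices $m=k-j$, $m=k-j+1$) checks out, as does the verification that $d/p>k$ gives $d/p>m$ at every level of the induction and that $(d-1)/(p-1)>1$ so the H\"older weight integral is genuinely of order $\rho^{1-(d-1)/(p-1)}$.
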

\begin{proof}
By the one dimensional Sobolev embedding one has that
\begin{align*}
\|  |.|^{j+\frac{d-1}{p}+\frac1p -k+\varepsilon}f^{(j-1)}\|_{L^\infty(\B^d_1)}&\lesssim_\delta \| |.|^{j+\frac{d-1}{p}+\frac1p-k+\varepsilon}f^{(j-1)}\|_{W^{1,1+\delta}((0,1))}
\end{align*}
for any $\delta >0$.
Moreover, by Hölder's inequality
\begin{align*}
&\quad\| |.|^{j+\frac{d-1}{p}+\frac1p-k+\varepsilon}f^{(j-1)}\|_{W^{1,1+\delta}((0,1))}
\\
&\lesssim \| |.|^{j-1+\frac{d-1}{p}+\frac1p-k+\varepsilon}f^{(j-1)}\|_{L^{1+\delta}((0,1))}+\| |.|^{j+\frac{d-1}{p}+\frac1p-k+\varepsilon}f^{(j)}\|_{L^{1+\delta}((0,1))}
\\
&\leq \| |.|^{-\frac{p-1}{p}+\varepsilon} \|_{L^{\frac{p+p\delta}{p-(1+\delta)}}((0,1))}
\\
&\quad \times \left(\| |.|^{j+\frac{d-1}{p}-k}f^{(j-1)}\|_{L^p((0,1))}+\| |.|^{j+\frac{d-1}{p}+1-k}f^{(j)}\|_{L^p((0,1))}\right).
\end{align*}
Since an appropriate choice of $\delta>0$ ensures that 
\begin{align*}
\| |.|^{-\frac{p-1}{p}+\varepsilon} \|_{L^{\frac{p+p\delta}{p-(1+\delta)}}((0,1))}\lesssim 1,
\end{align*}
we obtain that 
\begin{align*}
\|  |.|^{j+\frac{d-1}{p}+\frac1p -k+\varepsilon}f^{(j-1)}\|_{L^\infty(\B^d_1)}&\lesssim_\varepsilon \| |.|^{j+\frac{d-1}{p}+1-k}f^{(j-1)}\|_{L^p((0,1))}
 \\
 &\quad+\| |.|^{j+\frac{d-1}{p}+1-k}f^{(j)}\|_{L^p((0,1))}.
\end{align*}
Now, a repeated application of Lemma \ref{teclem1} shows 
\begin{align*}
&\quad \| |.|^{j+\frac{d-1}{p}-k}f^{(j-1)}\|_{L^p((0,1))}^p+\| |.|^{j+\frac{d-1}{p}+1-k}f^{(j)}\|_{L^p((0,1))}^p 
\\&=\int_0^1 \rho^{p(j+\frac{d-1}{p}+2-k)}|f^{(j+1)}(\rho)|^p d\rho +\int_0^1 \rho^{p(j+1\frac{d-1}{p}+2-k)}|f^{(j)}(\rho)|^p d\rho
\\
&\lesssim \sum_{\ell=j}^{k-2} |f^{(\ell)}(1)|+ \int_0^1 |f^{(k-1)}(\rho)|^p \rho^{d-1} d\rho \lesssim \|f\|_{W^{k-1,p}(\B^d_1)}^p
\end{align*}
where the last inequality follows form \ref{simple char} and the estimate $|f(1)|\lesssim \|f\|_{H^1(\B^d_1)}$.
\end{proof}
\begin{lem}\label{teclem3}
Let $d \in \mathbb{N}$, $k\in \mathbb{N}$ with $ k \leq \frac{d}{2}$, and $r<1$ satisfy $|r-\frac12|<\varepsilon$ with $\varepsilon$ sufficiently small.
Then, provided that $dr +n-k-r \neq 0$, the estimates
\begin{align*}
\||.|^{-n+j+1-\varepsilon}f^{(j-1)}\|_{L^{\frac{d}{dr+n-k-r}}(\B^d_1)}\lesssim \|f\|_{W^{k-1,\frac{1}{r}}(\B^d_1)}
\end{align*}
and
\begin{align*}
\||.|^{-n+j-\varepsilon}f^{(j-1)}\|_{L^{\frac{d}{dr+n-k-r}}(\B^d_1)}\lesssim \|f\|_{W^{k,\frac{1}{r}}(\B^d_1)}
\end{align*}
hold for all $n,j \in \mathbb{N}_0$ with $1\leq j \leq k-1 $, $0\leq n\leq k-1$ and $n+j\leq k-1$.
\end{lem}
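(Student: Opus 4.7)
The plan is to exploit a scaling observation. With $q:=d/(dr+n-k-r)$ and $p:=1/r$, a direct computation shows that the weight exponent $-n+j+1-\varepsilon$ in the first claim exceeds the critical scaling exponent $j+r-n$ (at which a scale-invariant weighted Sobolev inequality involving $k-j$ derivatives of the radial $f^{(j-1)}$ holds) by exactly $1-r-\varepsilon$, which is strictly positive since $r<1$ and $\varepsilon$ is small. Decomposing
$$|x|^{-n+j+1-\varepsilon}=|x|^{j+r-n}\cdot|x|^{1-r-\varepsilon}$$
and dropping the bounded factor $|x|^{1-r-\varepsilon}\leq 1$ reduces the first claim to the critical estimate
$$\||x|^{j+r-n}f^{(j-1)}\|_{L^{q}(\B^d_1)}\lesssim\|f\|_{W^{k-1,p}(\B^d_1)}.$$

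I would prove this by extending $f$ radially to $\R^d$ via a bounded compactly-supported extension $E:W^{k-1,p}(\B^d_1)\to W^{k-1,p}(\R^d)$ and then applying a weighted Sobolev inequality for radial functions on $\R^d$ of the form
$$\||x|^{j+r-n}u\|_{L^{q}(\R^d)}\lesssim\|u\|_{\dot W^{k-j,p}(\R^d)}$$
to $u=(Ef)^{(j-1)}$. A substitution of our parameters verifies the scaling relation $j+r-n=(k-j)+d/q-d/p$. When $n\geq j+1$ the weight exponent is negative and this is a standard Hardy--Sobolev--Rellich inequality; when $n\leq j$ the weight exponent is positive, in which case the inequality (restricted to radial functions) follows from Strauss-type radial Sobolev inequalities combined with the standard embedding. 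The right-hand side is then bounded by $\|f\|_{W^{k-1,p}(\B^d_1)}$ thanks to the radiality (so that Lemma \ref{simple char} applies) and the extension bound. The second estimate is proven identically with one more derivative available: after writing $|x|^{-n+j-\varepsilon}=|x|^{j+r-n-1}\cdot|x|^{1-r-\varepsilon}$, one invokes the weighted Sobolev inequality with $k-j+1$ derivatives, upgrading the right-hand side to $\|f\|_{W^{k,p}(\B^d_1)}$.

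The main obstacle is the bookkeeping of the admissibility conditions for the weighted Sobolev inequality in the various subcases. These boil down to the inequalities $k-j<dr$ (and $k-j+1<dr$ for the second claim) together with bounds on the weight exponent relative to the number of derivatives, each of which reduces to an elementary manipulation of the standing hypotheses $k\leq d/2$, $n+j\leq k-1$, $j\geq 1$, and $|r-\tfrac12|<\varepsilon$. The assumption $dr+n-k-r\neq 0$ excludes the degenerate line on which the critical exponent $q$ diverges; away from this line all constants in the above inequalities are uniform in the parameters.
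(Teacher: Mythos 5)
Your scaling analysis is correct: the scale-critical weight exponent is $j+r-n$, the excess in the first claim is $1-r-\varepsilon>0$, and factoring $|x|^{-n+j+1-\varepsilon}=|x|^{j+r-n}\cdot|x|^{1-r-\varepsilon}$ with $|x|^{1-r-\varepsilon}\le 1$ on $\B^d_1$ is a legitimate first move. The gap is that you then treat the scale-critical radial weighted estimate $\||x|^{j+r-n}u\|_{L^q}\lesssim\|u\|_{\dot W^{k-j,p}}$ as a citable black box, when establishing an inequality of precisely this shape (with the right exponent $q$, with $k-j$ derivatives, for radial functions, on the ball) is in fact the whole content of the lemma. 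Two specific issues make it non-citable in the form you invoke. First, in the positive-weight regime $n\le j$ the exponent $q=d/(dr+n-k-r)$ exceeds the Sobolev-embedding exponent of $\dot W^{k-j,p}$, so one needs a genuine radial improvement at the level of $k-j$ derivatives; calling this ``Strauss-type combined with the standard embedding'' papers over a multi-derivative iteration that has to be carried out carefully. Second, you pass to a compactly supported extension and then apply a scale-invariant inequality on $\R^d$; but on the ball the correct estimate has boundary contributions of the form $|f^{(\ell)}(1)|$, which is exactly what surfaces when one iterates the 1D Hardy-type Lemma~\ref{teclem1}, and these terms do not come for free from the extension. Relatedly, if your inequality is read with genuine Cartesian derivatives of $u=(Ef)^{(j-1)}$ on the right, then bounding $\|u\|_{\dot W^{k-j,p}(\R^d)}$ by $\|f\|_{W^{k-1,p}(\B^d_1)}$ is itself a Hardy-type statement (Cartesian derivatives of a radial derivative produce singular $r^{-m}$ factors), and Lemma~\ref{simple char} gives the opposite direction (1D radial-derivative integrals controlled by Cartesian Sobolev norms).

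The paper's proof builds the weighted estimate from scratch, and the construction is not interchangeable with a black-box weighted Sobolev inequality: it re-reads the radial measure $\rho^{d-1}\,d\rho$ as an $m$-dimensional radial measure with $m=\lfloor p/(pr-1)\rfloor\ge 2$, applies the one-derivative Sobolev embedding $W^{1,1/r}(\B^m_1)\hookrightarrow L^p(\B^m_1)$, checks the admissibility $(-n+j-\varepsilon+\tfrac{d-m}{p})\tfrac1r+m-1>-1$, iterates Lemma~\ref{teclem1} to pass derivatives down to the $(k-1)$-st level (picking up the boundary values $|f^{(\ell)}(1)|$ along the way), and then closes with Lemma~\ref{simple char}. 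Your proposal would need to supply the analogue of that entire chain — the iterated one-derivative reduction with boundary terms and the admissibility bookkeeping — before it constitutes a proof.
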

\begin{proof}
Set $m:=\lfloor \frac{p}{pr-1}\rfloor $ with $p=\frac{d}{dr+n-k-r}$ and note that an appropriately small choice of $\varepsilon$ ensures that $m\geq 2$.
Then $m$ is chosen such that the Sobolev inequality
\begin{align*}
\|.\|_{L^{p}(\B^m_1)} \lesssim \|.\|_{W^{1,\frac{1}{r}}(\B^m_1)}
\end{align*} 
holds. So,
\begin{align*}
\||.|^{-n+j+1-\varepsilon}f^{(j-1)}\|_{L^{\frac{d}{dr+n-k-r}}(\B^d_1)}&=\||.|^{-n+j+1-\varepsilon+\frac{d-m}{p}}f^{(j-1)}\|_{L^{\frac{d}{dr+n-k-r}}(\B^m_1)}
\\
&\lesssim \||.|^{-n+j+1-\varepsilon+\frac{d-m}{p}}f^{(j-1)}\|_{W^{1,\frac 1r}(\B^m_1)}\\
&\lesssim \||.|^{-n+j+1-\varepsilon+\frac{d-m}{p}}f^{(j)}\|_{W^{1,\frac 1r}(\B^m_1)}
\\
&\quad+\||.|^{-n+j-\varepsilon+\frac{d-m}{p}}f^{(j-1)}\|_{W^{1,\frac 1r}(\B^m_1)}.
\end{align*}
As we would like to invoke Lemma \ref{teclem1}, we first need to establish that
\begin{align*}
(-n+j-\varepsilon+\frac{d-m}{p})\frac{1}{r}+m-1 >-1.
\end{align*}
To check this, we note that, by assumption, $r$ and $p$ satisfy
\begin{align*}
r=\frac{1}{2}+\varepsilon_r
\end{align*}
\begin{align*}
p=\frac{d}{dr-k+n-r}=\frac{d}{\frac{d}{2}-k+n-\frac{1}{2}+(d-1)\varepsilon_r}
\end{align*}
for some small (not necessary positive) $\varepsilon_r$. 
Hence,
\begin{align*}
-n+j-\varepsilon+\frac{d}{p}= d-k+j-\frac12+\widetilde \varepsilon \geq \frac12+\widetilde\varepsilon
\end{align*}
for some $ \widetilde \varepsilon$ that is very small in absolute value. 
Plugging this in yields
\begin{align*}
(-n+j-\varepsilon+\frac{d-m}{p})\frac{1}{r}+m-1&\geq m-\frac12 -\frac{m}{rp}+\widehat \varepsilon>-1
 \end{align*}
where $\widehat \varepsilon$ is again very small in absolute value (and tends to $0$ as $\varepsilon$ does so). Thus, provided $\varepsilon$ is chosen small enough, one can apply Lemma \ref{teclem1} several times to arrive at 
\begin{align*}
\||.|^{-n+j+1-\varepsilon}f^{(j-1)}\|_{L^{\frac{d}{dr+n-k-r}}(\B^d_1)}^{\frac{1}{r}}&\lesssim \sum_{\ell=j-1}^{k-2} |f^{(\ell)}(1)|^{\frac{1}{r}}+ \int_0^1  \rho^{a}|f^{(k-1)}(\rho)|^{\frac{1}{r}} d\rho
\end{align*}
with
\begin{align*}
a=(-n-\varepsilon+k+\frac{d-m}{p})\frac{1}{r}+m-1=d-1+m-1-\frac{m}{pr}-\varepsilon.
\end{align*}
Now, by construction, $m=2$ implies $p\geq 6$ which directly yields
$$
m-1-\frac{m}{pr}-\varepsilon\geq \frac{1}{3}-\varepsilon>0.
$$
Similarly, for $m\geq 3$ we have that
\begin{align*}
m-1-\frac{m}{pr}-\varepsilon\geq 2-\frac{3}{2}-\varepsilon>0
\end{align*}
and the first of the claimed estimates follows. For the second one, we can use the same arguments, provided that 
\begin{align}\label{eq: needs to hold}
(-n+j-1-\varepsilon+\frac{d-m}{p})\frac{1}{r}+m-1 >-1.
\end{align} 
For this, we compute that
\begin{align*}
(-n+j-1-\varepsilon+\frac{d-m}{p})\frac{1}{r}+m-1 \geq m-2-\varepsilon-\frac{m}{rp}. 
\end{align*}
So, one readily infers the validity of \eqref{eq: needs to hold} by using previously employed arguments and we therefore conclude this proof. 
\end{proof}

Finally, the last technical result, which is in the same spirit as the one above, is the following.
\begin{lem}\label{teclem6}
Let $d \in \mathbb{N}$, $k\in \mathbb{N}$ with $ k \leq \frac{d}{2}$, and $r\leq\frac 12$ satisfy $|r-\frac12|<\varepsilon$ with $\varepsilon$ sufficiently small.
Then, provided that $dr +n-k+1-r \neq 0$, the estimates
\begin{align*}
\||.|^{-n+j+1-\varepsilon}f^{(j-1)}\|_{L^{\frac{d}{dr+n-k+1-r}}(\B^d_1)}\lesssim \|f\|_{W^{k-2,\frac{1}{r}}(\B^d_1)}
\end{align*}
and
\begin{align*}
\||.|^{-n+j-\varepsilon}f^{(j-1)}\|_{L^{\frac{d}{dr+n-k+1-r}}(\B^d_1)}\lesssim \|f\|_{W^{k-1,\frac{1}{r}}(\B^d_1)}
\end{align*}
hold for all $n,j \in \mathbb{N}_0$ with $1\leq j \leq k-2 $, $0\leq n\leq k-2$ and $n+j\leq k-2$.
\end{lem}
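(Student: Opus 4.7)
The proof proceeds along exactly the same lines as Lemma \ref{teclem3}, with the role of $k$ shifted to $k-1$: instead of iterating Lemma \ref{teclem1} until the differentiation order of $f$ reaches $k-1$, I stop at $k-2$. I set $p := d/(dr + n - k + 1 - r)$ and $m := \lfloor p/(pr-1) \rfloor$; the hypothesis $n + j \leq k - 2$ (which gives $k - n \geq 2 > 1 - r$) together with $r$ close to $1/2$ ensures $pr > 1$ and, when $m = 2$, yields $p \geq 6$ by the identical computation as in Lemma \ref{teclem3} (the bound $m = 2$ forces $p(3r-1) > 3$, which for $r\leq\frac12$ close to $\frac12$ gives $p > 6$). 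Radiality of $f$ then provides
\begin{align*}
\||.|^{-n+j+1-\varepsilon}f^{(j-1)}\|_{L^p(\B^d_1)} = \||.|^{-n+j+1-\varepsilon+(d-m)/p}f^{(j-1)}\|_{L^p(\B^m_1)},
\end{align*}
and the $m$-dimensional Sobolev embedding $W^{1,1/r}(\B^m_1) \hookrightarrow L^p(\B^m_1)$, valid by construction of $m$, reduces matters to bounding the $L^{1/r}(\B^m_1)$ norms of $|.|^{-n+j+1-\varepsilon+(d-m)/p} f^{(j)}$ and $|.|^{-n+j-\varepsilon+(d-m)/p} f^{(j-1)}$.

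I then apply Lemma \ref{teclem1} iteratively to raise the differentiation order of $f$ up to $k-2$, adjusting the weight by $1/r$ at each step. Each application produces a boundary term $|f^{(\ell)}(1)|$ with $j-1 \leq \ell \leq k-3$, which is absorbed into $\|f\|_{W^{k-2,1/r}(\B^d_1)}$ by the one-dimensional trace bound $|g(1)| \lesssim \|g\|_{H^1(\B^d_1)}$ combined with Lemma \ref{simple char}, together with a final volume integral $\int_0^1 \rho^a |f^{(k-2)}(\rho)|^{1/r}\, d\rho$ controlled by $\|f\|_{W^{k-2,1/r}(\B^d_1)}^{1/r}$ via Lemma \ref{simple char}, provided $a \geq d - 1$. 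The second claimed estimate is obtained by performing one additional application of Lemma \ref{teclem1} in the argument above, exactly as in Lemma \ref{teclem3}.

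The main technical point is, as in Lemma \ref{teclem3}, the weight bookkeeping: each invocation of Lemma \ref{teclem1} requires the ambient integral weight exponent to exceed $-1$, while the terminal Lemma \ref{simple char} step demands $a \geq d-1$. Running the same algebraic identity $m - m/(pr) = m(k-1-n)/(dr) + m/d$ that drives the corresponding computation in Lemma \ref{teclem3} but with $k$ replaced by $k-1$, the terminal condition reduces to $m - 1 - m/(pr) \geq \varepsilon$ up to a small error. Since $p \geq 6$ and hence $pr \geq 3$, for $m = 2$ this gives $m - 1 - m/(pr) \geq 1 - 2/3 = 1/3$, and for $m \geq 3$ the margin is only larger; the intermediate admissibility conditions are strictly easier and follow from the same calculation, which completes the proof.
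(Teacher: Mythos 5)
Your proof is correct and takes essentially the same approach as the paper's, which defers to the proof of Lemma~\ref{teclem3} (the citation of Lemma~\ref{teclem5} in the paper's one-line proof is evidently a typo, as the preceding phrase ``in the same spirit as the one above'' indicates). You carry out the shift $k \to k-1$ correctly throughout, including the weight-exponent bookkeeping via the identity $m - m/(pr) = m(k-1-n)/(dr) + m/d$ and the observation that the restriction $r \leq \tfrac12$ together with $m=2$ forces $p>6$, so that the terminal exponent and the intermediate admissibility checks for Lemma~\ref{teclem1} go through.
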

\begin{proof}
The proof is essentially the same as the one of Lemma \ref{teclem5}.
\end{proof}

Recall, that our aim is to derive estimates of the form
\begin{align}
\left\|\lim_{N \to \infty} \int_{\varepsilon-i N}^{\varepsilon+ i N}e^{\lambda\tau}[\mathcal{R}(F_\lambda)(\rho,\lambda)-\mathcal{R}_{\mathrm{f}}(F_\lambda)(\rho,\lambda)] d\lambda \right\|_{L^p(\R_+)\dot{W}^{n,p}(\B^d_1)}\lesssim \|\ff\|_{\mathcal{H}}
\end{align}
for appropriate choices of $p,q\in \R$ and $n\in \mathbb{Z}$ with $0\leq n\leq k$.
For this, it is necessary to take a closer look at $\Rm(f)$.
\begin{lem}
One has that
\begin{align*}
\partial_\rho\mathcal{R}(f)(\rho,\lambda)&=\partial_\rho u_0(\rho,\lambda)\bigg[\kappa(f)(\lambda)+\sum_{j=1}^{k-1} (-1)^{j+1}U_{1,j}(\rho,\lambda) f^{(j-1)}(\rho)
\\
&\quad+ (-1)^{k}\int_\rho^{1}\int_0^{t_1}\int_0^{t_2}\dots \int_0^{t_{k-1}}\frac{u_1(t_k,\lambda)t_k^{d-1}}{(1-t_k^2)^{s-\lambda-\frac12}}dt_k \dots d t_3 dt_2 f^{(k-1)}(t_1) dt_1\bigg]
\\
&\quad+\partial_\rho u_1(\rho,\lambda)\bigg[\sum_{j=1}^{k-1}(-1)^j U_{0,j}(\rho,\lambda) f^{(j-1)}(\rho)
\\
&\quad+ (-1)^{k}\int_0^\rho\int_0^{t_1}\int_0^{t_2}\dots \int_0^{t_{k-1}}\frac{u_0(t_k,\lambda)t_k^{d-1}}{(1-t_k^2)^{s-\lambda-\frac12}}dt_k \dots d t_3 dt_2 f^{(k-1)}(t_1) dt_1\bigg] 
\end{align*}
with $k=\ceil s$.
\end{lem}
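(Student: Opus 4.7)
The plan is to differentiate the formula for $\mathcal{R}(f)(\rho,\lambda)$ furnished by Lemma \ref{lem:reslambda} using the product rule, and to verify that all terms arising from the $\rho$-derivatives landing inside the two bracketed expressions cancel identically, leaving precisely the two terms in which $\partial_\rho$ has fallen on $u_0$ or on $u_1$. Schematically, writing $\mathcal{R}(f) = u_0 \cdot B_0 + u_1 \cdot B_1$ with $B_0,B_1$ the bracketed quantities, the Leibniz rule yields
\[
\partial_\rho \mathcal{R}(f) = (\partial_\rho u_0) B_0 + (\partial_\rho u_1) B_1 + u_0 \partial_\rho B_0 + u_1 \partial_\rho B_1,
\]
so the content of the lemma is the identity $u_0 \partial_\rho B_0 + u_1 \partial_\rho B_1 = 0$.

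The key mechanism is the fundamental theorem of calculus applied to the iterated integrals: from the definition one sees $\partial_\rho U_{\ell,j}(\rho,\lambda) = U_{\ell,j-1}(\rho,\lambda)$ for $j\geq 2$, while $\partial_\rho U_{\ell,1}(\rho,\lambda) = \frac{u_\ell(\rho,\lambda)\rho^{d-1}}{(1-\rho^2)^{s-\lambda-\frac12}}$. Applying Leibniz to $U_{1,j}(\rho,\lambda)f^{(j-1)}(\rho)$ inside the sum defining $B_0$, the contributions $U_{1,j}f^{(j)}$ from one term and $U_{1,j}f^{(j)}$ from the shifted term (originating from $\partial_\rho U_{1,j+1} = U_{1,j}$ in the next summand) carry opposite signs because of the alternation $(-1)^{j+1}$ and therefore telescope. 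After reindexing, the only surviving contributions are the $j=1$ boundary piece
\[
\frac{u_1(\rho,\lambda)\rho^{d-1}}{(1-\rho^2)^{s-\lambda-\frac12}} f(\rho)
\]
together with a residual top-order term $(-1)^k U_{1,k-1}(\rho,\lambda) f^{(k-1)}(\rho)$.

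Differentiating the final iterated integral in $B_0$ using $\partial_\rho \int_\rho^1 (\cdots) dt_1 = -U_{1,k-1}(\rho,\lambda) f^{(k-1)}(\rho)$ produces exactly $(-1)^{k+1} U_{1,k-1}(\rho,\lambda) f^{(k-1)}(\rho)$, which cancels the residual term. One concludes
\[
\partial_\rho B_0 = \frac{u_1(\rho,\lambda)\rho^{d-1}}{(1-\rho^2)^{s-\lambda-\frac12}} f(\rho).
\]
The analogous computation for $B_1$ proceeds identically; the only change is that now the iterated integral is from $0$ to $\rho$, so its $\rho$-derivative contributes with opposite sign, and the boundary term from differentiating $U_{0,1}$ likewise flips sign, giving
\[
\partial_\rho B_1 = -\frac{u_0(\rho,\lambda)\rho^{d-1}}{(1-\rho^2)^{s-\lambda-\frac12}} f(\rho).
\]
Multiplying by $u_0$ and $u_1$ respectively and adding shows $u_0 \partial_\rho B_0 + u_1 \partial_\rho B_1 = 0$, as desired.

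There is no genuine obstacle here; the exercise is purely algebraic bookkeeping. The only points requiring care are the correct tracking of the alternating signs $(-1)^{j+1}$ and $(-1)^j$ in $B_0$ and $B_1$, the sign coming from $\partial_\rho \int_\rho^1$ versus $\partial_\rho \int_0^\rho$, and the index shift that produces the telescoping cancellation in the middle range $2 \leq j \leq k-2$. Note that $\kappa(f)(\lambda)$ is independent of $\rho$ and hence contributes nothing to $\partial_\rho B_0$.
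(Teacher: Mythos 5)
Your proof is correct and follows essentially the same route as the paper: differentiate by the Leibniz rule, observe that the telescoping cancellation in $\partial_\rho B_0$ and $\partial_\rho B_1$ leaves only the $j=1$ boundary pieces $\partial_\rho U_{1,1}\,f$ and $-\partial_\rho U_{0,1}\,f$ (with the iterated integrals killing the residual top-order terms), and finally note $u_0\,\partial_\rho U_{1,1} = u_1\,\partial_\rho U_{0,1}$ since both equal $\dfrac{u_0 u_1\,\rho^{d-1}}{(1-\rho^2)^{s-\lambda-\frac12}}$. The paper records the same intermediate identities and closes the argument in the same way.
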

\begin{proof}
Observe that \begin{align*}
\left[\partial_\rho \sum_{j=1}^{k-1} (-1)^{j+1}U_{1,j}(\rho,\lambda) f^{(j-1)}(\rho)\right]=f(\rho)\partial_\rho U_{1,1}(\rho,\lambda)+ (-1)^k U_{1,k-1}(\rho,\lambda)f^{(k-1)}(\rho)
\end{align*}
as well as 
\begin{align*}
&\quad\partial_\rho(-1)^{k}\int_\rho^{1}\int_0^{t_1}\int_0^{t_2}\dots \int_0^{t_{k-1}}\frac{u_1(t_k,\lambda)t_k^{d-1}}{(1-t_k^2)^{s-\lambda-\frac12}}dt_k \dots d t_3 dt_2 f^{(k-1)}(t_1) dt_1
\\
&=-(-1)^k U_{1,k-1}(\rho,\lambda)f^{(k-1)}(\rho).
\end{align*}
Similarly,
 \begin{align*}
\left[\partial_\rho \sum_{j=1}^{k-1} (-1)^{j}U_{0,j}(\rho,\lambda) f^{(j-1)}(\rho)\right]=f(\rho)\partial_\rho U_{0,1}(\rho,\lambda)+ (-1)^{k-1} U_{0,k-1}(\rho,\lambda)f^{(k-1)}(\rho)
\end{align*}
as well as 
\begin{align*}
&\quad\partial_\rho(-1)^{k}\int_0^\rho\int_0^{t_1}\int_0^{t_2}\dots \int_0^{t_{k-1}}\frac{u_0(t_k,\lambda)t_k^{d-1}}{(1-t_k^2)^{s-\lambda-\frac12}}dt_k \dots d t_3 dt_2 f^{(k-1)}(t_1) dt_1
\\
&=(-1)^k U_{0,k-1}(\rho,\lambda)f^{(k-1)}(\rho).
\end{align*}
Therefore, 
\begin{align*}
\partial_\rho\mathcal{R}(f)(\rho,\lambda)&=\partial_\rho u_0(\rho,\lambda)\bigg[\kappa(f)(\lambda)+\sum_{j=1}^{k-1} (-1)^{j+1}U_{1,j}(\rho,\lambda) f^{(j-1)}(\rho)
\\
&\quad+ (-1)^{k}\int_\rho^{1}\int_0^{t_1}\int_0^{t_2}\dots \int_0^{t_{k-1}}\frac{u_1(t_k,\lambda)t_k^{d-1}}{(1-t_k^2)^{s-\lambda-\frac12}}dt_k \dots d t_3 dt_2 f^{(k-1)}(t_1) dt_1\bigg]
\\
&\quad+\partial_\rho u_1(\rho,\lambda)\bigg[\sum_{j=1}^{k}(-1)^j U_{0,j}(\rho,\lambda) f^{(j-1)}(\rho)
\\
&\quad+ (-1)^{k}\int_0^\rho\int_0^{t_1}\int_0^{t_2}\dots \int_0^{t_{k-1}}\frac{u_0(t_k,\lambda)t_k^{d-1}}{(1-t_k^2)^{s-\lambda-\frac12}}dt_k \dots d t_3 dt_2 f^{(k-1)}(t_1) dt_1\bigg] 
\\
&\quad +u_0(\rho,\lambda)\partial_\rho U_{1,1}(\rho,\lambda) f(\rho)-u_1(\rho,\lambda)\partial_\rho U_{0,1}(\rho,\lambda) f(\rho)
\end{align*}
and since
\begin{align*}
+u_0(\rho,\lambda)\partial_\rho U_{1,1}(\rho,\lambda) -u_1(\rho,\lambda)\partial_\rho U_{0,1}(\rho,\lambda) =0
\end{align*}
the claim follows.
\end{proof} 
Moreover, by the same logic one derives an analogous result for higher derivative of $\Rm(f)$.
\begin{lem}
One has that
\begin{align*}
\partial_\rho^n \mathcal{R}(f)(\rho,\lambda)&=\partial_\rho^n u_0(\rho,\lambda)\bigg[\kappa(f)(\lambda)+\sum_{j=n}^{k-1} (-1)^{j+1}U_{1,j}(\rho,\lambda) f^{(j-1)}(\rho)
\\
&\quad+ (-1)^{k}\int_\rho^{1}\int_0^{t_1}\int_0^{t_2}\dots \int_0^{t_{k-1}}\frac{u_1(t_k,\lambda)t_k^{d-1}}{(1-t_k^2)^{s-\lambda-\frac12}}dt_k \dots d t_3 dt_2 f^{(k-1)}(t_1) dt_1\bigg]
\\
&\quad+\partial_\rho^n u_1(\rho,\lambda)\bigg[\sum_{j=n}^{k-1}(-1)^j U_{0,j}(\rho,\lambda) f^{(j-1)}(\rho)
\\
&\quad+ (-1)^{k}\int_0^\rho\int_0^{t_1}\int_0^{t_2}\dots \int_0^{t_{k-1}}\frac{u_0(t_k,\lambda)t_k^{d-1}}{(1-t_k^2)^{s-\lambda-\frac12}}dt_k \dots d t_3 dt_2 f^{(k-1)}(t_1) dt_1\bigg] 
\\
&\quad+\sum_{j=1}^{n-1} (-1)^{j+1}
\partial_\rho^{n-j-1}\left[\partial_\rho \left(\partial_\rho^j u_0(\rho,\lambda)U_{1,j}(\rho,\lambda)\right)f^{(j-1)
}(\rho)\right]
\\
&\quad+\sum_{j=1}^{n-1} (-1)^{j}
\partial_\rho^{n-j-1}\left[\partial_\rho \left(\partial_\rho^j u_1(\rho,\lambda)U_{0,j}(\rho,\lambda)\right)f^{(j-1)
}(\rho)\right]
\end{align*}
for all $2\leq n\leq k:=\ceil s$.
\end{lem}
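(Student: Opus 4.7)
The plan is induction on $n$, starting from the case $n=1$ established by the preceding lemma (with the remainder sums over $j=1,\ldots,n-1$ interpreted as empty). Each step of the induction will be obtained by differentiating the level-$n$ formula and reorganizing the product-rule pieces into the level-$(n+1)$ formula.

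For brevity, abbreviate the two bracketed quantities at level $n$ by
\begin{align*}
B_n^{(0)}(\rho,\lambda) &= \kappa(f)(\lambda)+\sum_{j=n}^{k-1}(-1)^{j+1}U_{1,j}(\rho,\lambda)f^{(j-1)}(\rho)+(-1)^k I_n^{(1)}(\rho,\lambda),\\
B_n^{(1)}(\rho,\lambda) &= \sum_{j=n}^{k-1}(-1)^j U_{0,j}(\rho,\lambda)f^{(j-1)}(\rho)+(-1)^k I_n^{(0)}(\rho,\lambda),
\end{align*}
where $I_n^{(\ell)}$ denotes the iterated integral featuring $u_\ell$ in the statement. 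The algebraic heart of the argument is the telescoping identity
\[
\partial_\rho B_n^{(0)}=(-1)^{n+1}U_{1,n-1}(\rho,\lambda)f^{(n-1)}(\rho),\qquad \partial_\rho B_n^{(1)}=(-1)^n U_{0,n-1}(\rho,\lambda)f^{(n-1)}(\rho),
\]
where for $j\geq 2$ one uses $\partial_\rho U_{\ell,j}=U_{\ell,j-1}$ and sets $U_{\ell,0}(\rho,\lambda):=u_\ell(\rho,\lambda)\rho^{d-1}(1-\rho^2)^{-s+\lambda+\frac12}$ to cover the case $n=1$. To verify it, one expands each summand via the product rule: the resulting pieces telescope against neighbors through $\partial_\rho U_{\ell,j}=U_{\ell,j-1}$, the boundary value at $j=k-1$ is exactly cancelled by $\partial_\rho I_n^{(\ell)}$ (whose derivative is $\mp U_{\ell,k-1}(\rho,\lambda)f^{(k-1)}(\rho)$ depending on whether the outer integral is $\int_\rho^1$ or $\int_0^\rho$), and only the $j=n$ boundary term survives. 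At $n=1$ this specializes to the trivial identity $u_0\partial_\rho U_{1,1}-u_1\partial_\rho U_{0,1}=0$ that was used in the preceding lemma.

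With the telescoping identity in hand the inductive step becomes bookkeeping. Using the decompositions $B_n^{(0)}=B_{n+1}^{(0)}+(-1)^{n+1}U_{1,n}(\rho,\lambda)f^{(n-1)}(\rho)$ and similarly for $B_n^{(1)}$, differentiating the level-$n$ formula yields the principal part $\partial_\rho^{n+1}u_0\cdot B_{n+1}^{(0)}+\partial_\rho^{n+1}u_1\cdot B_{n+1}^{(1)}$ together with the additional contributions
\[
(-1)^{n+1}\bigl[\partial_\rho^{n+1}u_0\cdot U_{1,n}+\partial_\rho^n u_0\cdot U_{1,n-1}\bigr]f^{(n-1)}+(-1)^n\bigl[\partial_\rho^{n+1}u_1\cdot U_{0,n}+\partial_\rho^n u_1\cdot U_{0,n-1}\bigr]f^{(n-1)},
\]
which by a single product-rule step collapse to $(-1)^{n+1}\partial_\rho\bigl(\partial_\rho^n u_0\cdot U_{1,n}\bigr)f^{(n-1)}$ and $(-1)^n\partial_\rho\bigl(\partial_\rho^n u_1\cdot U_{0,n}\bigr)f^{(n-1)}$, respectively. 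These are precisely the new $j=n$ entries of the level-$(n+1)$ remainder sums, while each existing level-$n$ remainder term absorbs one extra outer $\partial_\rho$, upgrading $\partial_\rho^{n-j-1}$ to $\partial_\rho^{n-j}$ and thereby matching the level-$(n+1)$ remainder sum exactly.

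There is no analytic obstacle, as the identity is purely algebraic. The only care required is at the boundary $n+1=k$, where the sum in $B_{n+1}^{(\ell)}$ is empty but $I_{k}^{(\ell)}$ is still well defined; consistency there is immediate from the telescoping formula. The main difficulty is strictly keeping track of signs and indices in the telescoping step.
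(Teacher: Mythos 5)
Your proof is correct and is the natural way to make rigorous the paper's ``by the same logic one derives an analogous result,'' since the paper offers no written proof for $n\geq 2$ beyond that remark. The key telescoping identity $\partial_\rho B_n^{(0)}=(-1)^{n+1}U_{1,n-1}f^{(n-1)}$ (and its companion for $B_n^{(1)}$) checks out: the interior product-rule pieces cancel in consecutive pairs under $\partial_\rho U_{\ell,j}=U_{\ell,j-1}$, the $j=k-1$ boundary term cancels against $\partial_\rho I^{(\ell)}$, and at $n=1$ the surviving term recovers the Wronskian cancellation $u_0\partial_\rho U_{1,1}-u_1\partial_\rho U_{0,1}=0$ from the preceding lemma; and the index bookkeeping correctly upgrades $\partial_\rho^{n-j-1}$ to $\partial_\rho^{n-j}$ while contributing the new $j=n$ remainder term, matching the level-$(n+1)$ formula exactly.
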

The analogue of course holds for our second representation of $\Rm$.
\begin{lem}
Let $\Re =\mu_0$. Then,
one has that
\begin{align*}
\partial_\rho\mathcal{R}(f)(\rho,\lambda)&=\partial_\rho u_0(\rho,\lambda)\bigg[\widehat \kappa(f)(\lambda)+\sum_{j=1}^{k-2} (-1)^{j+1}U_{1,j}(\rho,\lambda) f^{(j-1)}(\rho)
\\
&\quad+ (-1)^{k-1}\int_\rho^{1}\int_0^{t_1}\int_0^{t_2}\dots \int_0^{t_{k-2}}\frac{u_1(t_{k-1},\lambda)t_{k-1}^{d-1}}{(1-t_{k-1}^2)^{s-\lambda-\frac12}}dt_{k-1} \dots d t_3 dt_2 f^{(k-2)}(t_1) dt_1\bigg]
\\
&\quad+\partial_\rho u_1(\rho,\lambda)\bigg[\sum_{j=1}^{k-2}(-1)^j U_{0,j}(\rho,\lambda) f^{(j-1)}(\rho)
\\
&\quad+ (-1)^{k-1}\int_0^\rho\int_0^{t_1}\int_0^{t_2}\dots \int_0^{t_{k-2}}\frac{u_0(t_{k-1},\lambda)t_{k-1}^{d-1}}{(1-t_{k-1}^2)^{s-\lambda-\frac12}}dt_{k-1} \dots d t_3 dt_2 f^{(k-2)}(t_1) dt_1\bigg] 
\end{align*}
and
\begin{align*}
\partial_\rho^n \mathcal{R}(f)(\rho,\lambda)&=\partial_\rho^n u_0(\rho,\lambda)\bigg[\widehat \kappa(f)(\lambda)+\sum_{j=n}^{k-2} (-1)^{j+1}U_{1,j}(\rho,\lambda) f^{(j-1)}(\rho)
\\
&\quad+ (-1)^{k-1}\int_\rho^{1}\int_0^{t_1}\int_0^{t_2}\dots \int_0^{t_{k-2}}\frac{u_1(t_{k-1},\lambda)t_{k-1}^{d-1}}{(1-t_{k-1}^2)^{s-\lambda-\frac12}}dt_{k-1} \dots d t_3 dt_2 f^{(k-2)}(t_1) dt_1\bigg]
\\
&\quad+\partial_\rho^n u_1(\rho,\lambda)\bigg[\sum_{j=n}^{k-2}(-1)^j U_{0,j}(\rho,\lambda) f^{(j-1)}(\rho)
\\
&\quad+ (-1)^{k-1}\int_0^\rho\int_0^{t_1}\int_0^{t_2}\dots \int_0^{t_{k-2}}\frac{u_0(t_{k-1},\lambda)t_{k-1}^{d-1}}{(1-t_{k-1}^2)^{s-\lambda-\frac12}}dt_{k-1} \dots d t_3 dt_2 f^{(k-1)}(t_1) dt_1\bigg] 
\\
&\quad+\sum_{j=1}^{n-1} (-1)^{j+1}
\partial_\rho^{n-j-1}\left[\partial_\rho \left(\partial_\rho^j u_0(\rho,\lambda)U_{1,j}(\rho,\lambda)\right)f^{(j-1)
}(\rho)\right]
\\
&\quad+\sum_{j=1}^{n-1} (-1)^{j}
\partial_\rho^{n-j-1}\left[\partial_\rho \left(\partial_\rho^j u_1(\rho,\lambda)U_{0,j}(\rho,\lambda)\right)f^{(j-1)
}(\rho)\right]
\end{align*}
for all $2\leq n\leq k-1=\ceil s-1$.
\end{lem}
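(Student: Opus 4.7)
The plan is to imitate the proofs of the two preceding lemmas (which computed $\partial_\rho^n \mathcal{R}(f)$ in the ``long'' representation with $k$ integrations by parts) line by line, but starting from the alternative representation of $\mathcal{R}(f)$ valid for $\Re\lambda > \tfrac{1}{2}+s-\lceil s\rceil$. Since that representation is obtained from the original by undoing one integration by parts, it contains one fewer nested integral and the top derivative of $f$ that appears is $f^{(k-2)}$. Thus the sums in the statement range up to $k-2$ instead of $k-1$, and the admissible range of $n$ becomes $1\le n\le k-1$ instead of $1\le n\le k$. Everything else is structurally identical.

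I would first treat the case $n=1$. Writing out $\partial_\rho \mathcal{R}(f)$ by the product rule, one picks up three types of terms: (a) $\partial_\rho u_\ell(\rho,\lambda)$ times the bracket that already appears in $\mathcal{R}(f)$; (b) $u_\ell(\rho,\lambda)$ times the derivative of the bracket; and (c) the derivatives of the outer endpoints of the nested integrals. For $j\ge 2$ the derivative $\partial_\rho U_{\ell,j}(\rho,\lambda) = U_{\ell,j-1}(\rho,\lambda)$ combined with $(-1)^{j+1} f^{(j-1)}$ produces a term that telescopes with the $j-1$ summand when one also adds the $U_{\ell,j-1}(\rho,\lambda) \partial_\rho f^{(j-2)}(\rho)$ contribution; the $j=k-1$ upper boundary exactly balances the derivative of the nested integral remainder. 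The crucial cancellation is for $j=1$: here the two leftover pieces are
\begin{equation*}
u_0(\rho,\lambda)\,\partial_\rho U_{1,1}(\rho,\lambda)\,f(\rho) - u_1(\rho,\lambda)\,\partial_\rho U_{0,1}(\rho,\lambda)\,f(\rho),
\end{equation*}
and since $\partial_\rho U_{\ell,1}(\rho,\lambda) = u_\ell(\rho,\lambda)\rho^{d-1}(1-\rho^2)^{-(s-\lambda-\frac12)}$ this expression vanishes identically. The claimed formula for $n=1$ then drops out.

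For $2\le n\le k-1$ I would proceed by induction on $n$. Differentiating the $n=1$ formula once more and applying the product rule, the terms in which the outer factor $\partial_\rho u_\ell(\rho,\lambda)$ is differentiated to $\partial_\rho^2 u_\ell(\rho,\lambda)$ reproduce the leading structure in the statement with $n\leftarrow n+1$; the terms in which $\partial_\rho u_\ell$ is left alone but something inside the bracket is differentiated either land in the running sum $\sum_{j=n+1}^{k-2}$ (via the $U_{\ell,j}\to U_{\ell,j-1}$ identity exactly as before) or contribute the new $j=n$ summand of the Leibniz correction $\sum_{j=1}^{n} \partial_\rho^{n-j}\bigl[\partial_\rho(\partial_\rho^j u_\ell\, U_{\ell',j})f^{(j-1)}\bigr]$. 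Once all terms are sorted, the identity $u_0\partial_\rho U_{1,1} - u_1\partial_\rho U_{0,1} = 0$ again kills the only potentially obstructing $j=1$ contribution.

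The main bookkeeping obstacle is verifying that the signs and combinatorial coefficients in the Leibniz correction term match those already stated for the long representation, but this is purely mechanical and is exactly what was carried out in the preceding lemma; the alternative representation is just the long one with the last integration by parts reversed, so no new cancellation is required and the proof is complete modulo the same computation.
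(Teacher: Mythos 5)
Your proposal is correct and takes exactly the approach the paper intends: the paper gives no explicit proof here, simply remarking that "the analogue of course holds," and your computation—the telescoping via $\partial_\rho U_{\ell,j}=U_{\ell,j-1}$, the cancellation $u_0\partial_\rho U_{1,1}-u_1\partial_\rho U_{0,1}=0$ for the base case, and the inductive peeling off of the $j=n$ Leibniz summand—is precisely that analogue carried out one integration by parts short. The only slips are cosmetic: in the $n=1$ discussion the upper-index boundary that balances the nested-integral derivative is $j=k-2$ (not $j=k-1$, since the sum now terminates at $k-2$), and the induction step should be phrased as differentiating the level-$n$ formula (your inductive hypothesis) rather than "the $n=1$ formula once more."
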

Next, we take a closer look at the function $\kappa(f)$.
\begin{lem} \label{lem:kappa}
The functions $\kappa_j(f)(\lambda)$ and $\kappa_{\mathrm{f}_j}(f)(\lambda) $ satisfy
\begin{align*}
\kappa_j(f)(\lambda)= f^{(j-1)}(1) \O(\langle\omega\rangle^{-\frac{d}{2}-1})
\end{align*}
and
\begin{align*}
\kappa_j(f)(\lambda)-\kappa_{\mathrm{f}_j}(f)(\lambda)=f^{(j-1)}(1)\O(\langle\omega\rangle^{-\frac{d}{2}-2})
\end{align*}
for all $0\leq j\leq k-1$.
\end{lem}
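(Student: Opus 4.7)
The proof will proceed by direct inspection of the explicit formula for $\kappa_j(f)(\lambda)$ from the earlier lemma, tracking the $\langle\omega\rangle$ decay contributed by each factor. There are two sources of decay to combine: the purely algebraic prefactors and the oscillatory integrals involving $u_1$.

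First I would handle the algebraic prefactors. The factor $(2s-2\lambda-1)^{-1/2}$ contributes $\O(\langle\omega\rangle^{-1/2})$, and each product $\prod_{m=1}^{\ell+j}(\lambda+\tfrac12+m-s)^{-1}$ or $\prod_{j=1}^{k}(k-j-\lambda-\tfrac12)^{-1}$ contributes $\O(\langle\omega\rangle^{-(\ell+j)})$ resp. $\O(\langle\omega\rangle^{-k})$. To obtain the remaining decay in the boundary terms $\lim_{\rho\to0}\partial_\rho^\ell(\rho^{d-1}u_1(\rho,\lambda)/(1+\rho)^{s-\lambda-1/2})$, I plug in the representation of $u_1$ from Lemma~\ref{lem: form of solutions} near $\rho=0$. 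Using that $J_{(d-2)/2}(a(\lambda)\varphi(\rho))$ has the convergent power series starting at $\rho^{(d-2)/2}\O(\langle\omega\rangle^{(d-2)/2})$, while $Y_{(d-2)/2}$ contributes the singular $\rho^{(2-d)/2}\O(\langle\omega\rangle^{(2-d)/2})$ part that survives after multiplication by $\rho^{d-1}$, one reads off that the leading small-$\rho$ behaviour of $\rho^{d-1}u_1/(1+\rho)^{s-\lambda-1/2}$ is of size $\O(\langle\omega\rangle^{(2-d)/2})$ modulo corrections that are at least one power of $\langle\omega\rangle$ smaller. Combining with the product factor $\langle\omega\rangle^{-(\ell+j)}$ and the $\langle\omega\rangle^{-1/2}$ prefactor yields the claimed $\O(\langle\omega\rangle^{-d/2-1})$.

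Second, for the integral terms I would substitute the same Bessel representation and use that the regularising weight $(1-t_\ell)^{\ceil s-s-1/2+\lambda}$ is integrable for $\lambda\in\widehat S$, so integrability at $t=1$ is never an issue. Near $t=0$ the derivative $\partial_{t_\ell}^{k-j+\ell}$ acting on $t_\ell^{d-1}u_1(t_\ell,\lambda)/(1+t_\ell)^{s-\lambda-1/2}$ can bring down factors of $\langle\omega\rangle$ through the Bessel chain rule, but the smallness of $u_1$ (size $\O(\langle\omega\rangle^{(2-d)/2})$ times the oscillatory Bessel factor), combined with the algebraic prefactors, again produces the $\O(\langle\omega\rangle^{-d/2-1})$ decay after balancing gains and losses exactly as in the boundary term analysis. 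I expect the main obstacle to be precise bookkeeping in the mixed case where $d$ is even and $Y_{(d-2)/2}$ carries logarithmic terms; this is handled by separating the $Y$-expansion into its analytic and logarithmic parts and estimating each using the symbol calculus of Section~3.

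Finally, for the difference estimate I would exploit that $\kappa_j(f)$ and $\kappa_{\mathrm f_j}(f)$ are built from the same formulas, with $u_1$ replaced by $u_{\mathrm f_1}$. By Lemmas~\ref{lem:fundi near 0}, \ref{lem: fundi near 1}, and \ref{lem:connectioncoeff}, one has $u_1-u_{\mathrm f_1}=u_{\mathrm f_1}\cdot\O(\langle\omega\rangle^{-1})+(\text{smaller})$ in each regime: the perturbative error functions $e_3$, $e_4$, $r_1$, $r_2$ and the connection coefficients all differ from their free counterparts by symbols carrying a clean additional $\langle\omega\rangle^{-1}$. Substituting this into the expressions for the boundary and integral terms and repeating the previous counting gives an extra $\langle\omega\rangle^{-1}$ throughout, yielding the $\O(\langle\omega\rangle^{-d/2-2})$ improvement.
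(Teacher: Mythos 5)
Your overall strategy — decompose $\kappa_j$ via Lemma~\ref{lem: form of solutions}, track $\langle\omega\rangle$-decay from the algebraic prefactors and the Bessel asymptotics separately, and for the difference use that all perturbative objects are $\O(\langle\omega\rangle^{-1})$ closer to their free counterparts — is the same blueprint the paper follows. But there are two places where your bookkeeping does not in fact produce the claimed decay.

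The serious one is the integral term. You say the weight $(1-t)^{\ceil s -s-\frac12+\lambda}$ ``is integrable for $\lambda\in\widehat S$, so integrability at $t=1$ is never an issue'' and then balance gains and losses near $t=0$. But integrability alone gives the integral no decay in $\omega$: taking absolute values inside, the $(1-\chi_\lambda)$ piece of the integral is bounded by
$\O(\langle\omega\rangle^{-\ceil s+\frac12})$ (from $\sqrt{2s-2\lambda-1}^{-1}$ and $\prod_{j=1}^{\floor s}(\lambda+\tfrac12+j-s)^{-1}$),
and since $\ceil s\le\frac d2$ this is at best $\O(\langle\omega\rangle^{-\frac d2+\frac12})$, which is short of $\O(\langle\omega\rangle^{-\frac d2-1})$ by a full $\langle\omega\rangle^{3/2}$. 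The extra decay has to come from the oscillation of $(1-t)^\lambda=(1-t)^{\mu_a}e^{i\omega\log(1-t)}$: the paper integrates by parts roughly $\frac d2-\ceil s+2$ times against the $(1-t)^{\dots+\lambda}$ factor, gaining one power of $\langle\omega\rangle^{-1}$ per step (each integration divides by $\O(\lambda)$). Likewise, on the $\chi_\lambda$-supported piece, the decisive point is that $\chi_\lambda$ is supported on $t\lesssim\langle\omega\rangle^{-1}$, so the identity $\chi_\lambda(t)\O(t^\alpha\langle\omega\rangle^\beta)=\chi_\lambda(t)\O(t^{\alpha-\gamma}\langle\omega\rangle^{\beta-\gamma})$ together with the short length of the $t$-interval supplies the missing $\langle\omega\rangle^{-\frac d2-1}$. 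Your proposal does not mention either mechanism, and without them the integral estimate simply fails.

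The smaller issue is in the boundary term bookkeeping. You write that $\rho^{d-1}u_1/(1+\rho)^{s-\lambda-\frac12}$ has small-$\rho$ behaviour of size $\O(\langle\omega\rangle^{(2-d)/2})$ ``modulo corrections that are at least one power of $\langle\omega\rangle$ \emph{smaller}.'' It is actually the opposite: because every power of $\rho$ in the Bessel expansion carries a factor $a(\lambda)\sim\langle\omega\rangle$, the $\ell$-th Taylor coefficient of this function grows like $\O(\langle\omega\rangle^{(2-d)/2+\ell-1})$. The estimate still goes through only because each extra power of $\langle\omega\rangle$ is offset by one more factor in $\prod_{m=1}^{\ell+j}(\lambda+\tfrac12+m-s)^{-1}$. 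As stated, your reasoning happens to give an answer that is strong enough, but it gives it for the wrong reason, and the compensation mechanism needs to be made explicit for the argument to be correct.
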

\begin{proof}
Recall that
\begin{align*}
\kappa_1(f)(\lambda)&=\frac{f(1)}{\sqrt{2s-2\lambda-1}}\int_0^1 \frac{(1-t)^{\ceil s -s- \frac12+\lambda}}{\prod_{j=1}^{\floor s}(\lambda+\frac{1}{2}+j-s)} 
\partial_t^{\ceil s-1}\left(\frac{t^{d-1}u_1(t,\lambda)}{(1+t)^{s-\lambda-\frac12}}\right) dt 
\\
&\quad+\frac{f(1)}{\sqrt{2s-2\lambda-1}}\sum_{j=1}^{\ceil s-2}\lim_{\rho \to 0}\partial_\rho^j\left(\frac{\rho^{d-1}u_1(\rho,\lambda)}{(1+\rho)^{s-\lambda-\frac12}}\right) \prod_{\ell=1}^{j+1}\frac{1}{\lambda+\frac{1}{2}+\ell-s}.
\end{align*}
We decompose $\kappa_1(f)$ as
\begin{align*}
\kappa_1(f)(\lambda)&=\frac{f(1)}{\sqrt{2s-2\lambda-1}}\int_0^1 \frac{\chi_\lambda(t)(1-t)^{\ceil s -s- \frac12+\lambda}}{\prod_{j=1}^{\floor s}(\lambda+\frac{1}{2}+j-s)} 
\\
&\quad \times
\partial_t^{\ceil s-1}\left(\frac{t^{\frac{d-1}{2}}[c_{1,4}(\lambda)\psi_1(t,\lambda)+c_{2,4}(\lambda)\psi_2(t,\lambda)]}{(1+t)^{s-\lambda-\frac12}}\right) dt
\\
&\quad+
\frac{f(1)}{\sqrt{2s-2\lambda-1}}\int_0^1 \frac{(1-\chi_\lambda(t))(1-t)^{\ceil s -s- \frac12+\lambda}}{\prod_{j=1}^{\floor s}(\lambda+\frac{1}{2}+j-s)}
\\
&\quad \times \partial_t^{\ceil s-1} \left(t^{\frac{d-1}{2}}[1+e_2(t,\lambda)][1+r_2(t,\lambda)]\right) dt
\\
&\quad+ \frac{f(1)}{\sqrt{2s-2\lambda-1}}\sum_{j=1}^{\ceil s-2}\lim_{\rho \to 0}\partial_\rho^j\left(\frac{\rho^{d-1}u_1(\rho,\lambda)}{(1+\rho)^{s-\lambda-\frac12}}\right) \prod_{\ell=1}^{j+1}\frac{1}{\lambda+\frac{1}{2}+\ell-s}.
\end{align*}
Now, as $\rho^{d-2}u_1(\rho,\lambda)\in C^{\ceil{ \frac d2} }([0,1])$, one makes use of Lemma \ref{lem: form of solutions} and readily computes that
\begin{align*}
&\quad\frac{1}{\sqrt{2s-2\lambda-1}}\sum_{j=1}^{\ceil s-2}\lim_{\rho \to 0}\partial_\rho^j\left(\frac{\rho^{d-1}u_1(\rho,\lambda)}{(1+\rho)^{s-\lambda-\frac12}}\right) \prod_{\ell=1}^{j+1}\frac{1}{\lambda+\frac{1}{2}+\ell-s}
\\
&=\sum_{j=1}^{\ceil s -2}j(d-1)\lim_{\rho \to 0}\partial_\rho^{j-1}\left(\frac{\rho^{d-2}u_1(\rho,\lambda)}{(1+\rho)^{s-\lambda-\frac12}}\right) \prod_{\ell=1}^{j+1}\frac{1}{\lambda+\frac{1}{2}+\ell-s}=\O(\langle\omega\rangle^{-\frac{d}{2}-1}).
\end{align*}
So, we only need to investigate the integral terms. For the first of the above integral terms, we use the identity 
$$
\chi_\lambda(\rho)\O(\rho^\alpha\langle\omega\rangle^\beta)=
\chi_\lambda(\rho)\O(\rho^{\alpha-\gamma}\langle\omega\rangle^{\beta-\gamma})$$
which holds for all $\alpha,\beta \in \R$ and all $\gamma\geq0$ 
to conclude that 
\begin{align*}
 \frac{\chi_\lambda(t)(1-t)^{\ceil s -s- \frac12+\lambda}}{\prod_{j=1}^{\floor s}(\lambda+\frac{1}{2}+j-s)} 
\partial_t^{\ceil s -1}\left(\frac{t^{\frac{d-1}{2}}[c_{1,4}(\lambda)\psi_1(t,\lambda)+c_{2,4}(\lambda)\psi_2(t,\lambda)]}{(1+t)^{k-\lambda-\frac12}}\right) =\chi_\lambda(t)\O(t^{0}\langle\omega\rangle^{-\frac{d}{2}}).
\end{align*} 
For the latter, repeated integrations by parts show that
\begin{align*}
&\quad\int_{0}^1
 \frac{(1-\chi_\lambda(t))(1-t)^{\ceil s -s- \frac12+\lambda}}{\prod_{j=1}^{\floor s}(\lambda+\frac{1}{2}+j-s)} 
\partial_t^{\ceil s -1}\left(t^{\frac{d-1}{2}}\frac{[1+e_2(t,\lambda)][1+r_2(t,\lambda)]}{\sqrt{2s-2\lambda-1}}\right) dt
\\
&=
\int_{0}^1
(1-\chi_\lambda(t))(1-t)^{\ceil s -s- \frac12+\lambda}
\O(t^{\frac{d+1}{2}-\ceil s }\langle\omega\rangle^{-\ceil s +\frac{1}{2}}) dt
\\
&=
\int_{0}^1
(1-t)^{\frac{d}{2} -s+ \frac32+\lambda}
\partial_t^{\frac{d}{2}-\ceil s +2}[(1-\chi_\lambda(t)) \O(t^{\frac{d+1}{2}-\ceil s }\langle\omega\rangle^{-\frac{d}{2} -\frac{3}{2}}) ]dt
=
\O(\langle\omega\rangle^{-\frac{d}{2}-1}).
\end{align*}
Hence,
\begin{align*}
\kappa_1(f)=f(1)\O(\langle\omega\rangle^{-\frac{d}{2}-1})
\end{align*}
and similarly, one establishes $\kappa_j(f)=f^{(j-1)}(1)\O(\langle\omega\rangle^{-\frac{d}{2}-1})$ for $j\geq 1$. Finally, to estimate the differences of $\kappa_j(f)$ and $\kappa_{\mathrm{f}_j}(\lambda)$, we record the following identities:
\begin{align*}
c_{i,j}(\lambda)-c_{\mathrm{f}_{i,j}}(\lambda)= \O(\langle\omega\rangle^{-1})
\\
\psi_1(\rho,\lambda)-\psi_{\mathrm{f}_1}(\rho,\lambda)= \O(\rho^{\frac{d+3}{2}}\langle\omega\rangle^{\frac{d-2}{2}}).
\end{align*}
Further, analogous identities hold for the remaining differences. Therefore, subtracting $\kappa_{\mathrm{f}_j}$  from $\kappa_j$ gains us one additional order of decay in $\omega$ and the claim follows.
\end{proof}

With this result, we end this section and move on to the oscillatory integrals.
\section{Strichartz estimates}
To start this section, we remark that a short inspection shows 
$\mathcal{R}(f)(\rho,\mu_a+i\omega)-\mathcal{R}_{\mathrm{f}}(f)(\rho,\mu_a+i \omega)$ decays like $\langle\omega\rangle^{-2}$, for $a=0,1$. Hence, by dominated convergence, we obtain that
\begin{align*}
&\quad\lim_{N \to \infty} \int_{\mu_a-i N}^{\mu_a+i N}e^{\lambda \tau}[\mathcal{R}(f)(\rho,\lambda)-\mathcal{R}_{\mathrm{f}}(f)(\rho,\lambda)] d\lambda 
\\
&=i e^{\mu_a\tau} \int_\R e^{i \omega \tau}[\mathcal{R}(f)(\rho,\mu_a+ i \omega)-\mathcal{R}_{\mathrm{f}}(f)(\rho,\mu_a+i \omega)] d\omega
\end{align*}
for $a=0,1$. However, we also need to move $\rho$ derivatives into the oscillatory integral, i.e. we are required to establish identities of the form
\begin{align*}
&\quad\partial_\rho^n \lim_{N \to \infty} \int_{\mu_a-i N}^{\mu_a+i N}e^{\lambda \tau}[\mathcal{R}(f)(\rho,\lambda)-\mathcal{R}_{\mathrm{f}}(f)(\rho,\lambda)] d\lambda 
\\
&=i e^{\mu_a\tau} \int_\R e^{i \omega \tau}\partial_\rho^n[\mathcal{R}(f)(\rho,\mu_a+ i \omega)-\mathcal{R}_{\mathrm{f}}(f)(\rho,\mu_a+i \omega)] d\omega.
\end{align*}

\begin{lem}
The identities
\begin{align*}
&\quad \partial_\rho^n\lim_{N \to \infty} \int_{\mu_a-i N}^{\mu_a+ i N}e^{\lambda\tau}[\mathcal{R}(f)(\rho,\lambda)-\mathcal{R}_{\mathrm{f}}(f)(\rho,\lambda)] d\lambda
\\
&=ie^{\mu_a \tau}\int_\R e^{i \omega\tau}\partial_\rho^n [\mathcal{R}(f)(\rho,\mu_a+i\omega)-\mathcal{R}_{\mathrm{f}}(f)(\rho,\mu_a+i\omega)] d\omega
\end{align*}
and
\begin{align*}
&\quad\partial_\rho^n \lim_{N \to \infty} \int_{\mu_a-i N}^{\mu_a+ i N}\lambda e^{\lambda\tau}[\mathcal{R}(f)(\rho,\lambda)-\mathcal{R}_{\mathrm{f}}(f)(\rho,\lambda)] d\lambda
\\
&=ie^{\mu_a \tau}\int_\R (\mu_a+i\omega) e^{i \omega \tau}\partial_\rho^n  [\mathcal{R}(f)(\rho,\mu_a+i\omega)-\mathcal{R}_{\mathrm{f}}(f)(\rho,\mu_a+i\omega)] d\omega
\end{align*}
hold for all integers $n$ with $0\leq n\leq k=\ceil s$, $a=0,1$, $\tau\geq 0$, $\rho\in (0,1)$, and $f\in C^\infty_{rad}(\overline{\B^d_1})$.
\end{lem}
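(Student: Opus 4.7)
My proposal is to establish both identities by differentiating under the integral sign. For each fixed $N$ the truncated integrand is smooth in $\rho$ on the compact integration range $[\mu_a-iN,\mu_a+iN]$, so the interchange of $\partial_\rho^n$ with the $N$-truncated integral is immediate. Hence the entire content lies in passing to the limit $N\to\infty$, and by the standard criterion (uniform convergence of derivatives on compacts) it is enough to show that $\partial_\rho^n[\mathcal R(f)-\mathcal R_{\mathrm f}(f)](\rho,\mu_a+i\omega)$ and $(\mu_a+i\omega)\partial_\rho^n[\mathcal R(f)-\mathcal R_{\mathrm f}(f)](\rho,\mu_a+i\omega)$ are bounded by an $L^1_\omega$ function, uniformly in $\rho$ on compact subsets of $(0,1)$.

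To establish these bounds I would insert the explicit formulae for $\partial_\rho^n\mathcal R(f)$ derived in the preceding lemmas and decompose the difference with $\mathcal R_{\mathrm f}(f)$ into pieces involving $u_j-u_{\mathrm f,j}$, $U_{i,j}-U_{\mathrm f,i,j}$, and $\kappa(f)-\kappa_{\mathrm f}(f)$. The key input is that each such subtraction gains one additional factor of $\langle\omega\rangle^{-1}$ over the unsubtracted symbol: for the $\kappa$-term this is exactly Lemma \ref{lem:kappa}, while for the fundamental solutions and the connection coefficients it follows from the identities $c_{i,j}(\lambda)-c_{\mathrm f_{i,j}}(\lambda)=\O(\langle\omega\rangle^{-1})$ and $\psi_1(\rho,\lambda)-\psi_{\mathrm f_1}(\rho,\lambda)=\O(\rho^{(d+3)/2}\langle\omega\rangle^{(d-2)/2})$, together with their analogues for $\psi_2$, which were already exploited in the proof of Lemma \ref{lem:kappa}. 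Each $\partial_\rho$ acting on a Bessel-type building block or on one of the $U_{i,j}$ trades a power of $\rho$ (or of $1-\rho$) for a power of $\langle\omega\rangle$; since the undifferentiated expression already decays like $\langle\omega\rangle^{-2}$, as observed in the remark preceding this lemma, and since the compact $\rho$-set keeps us away from both endpoints, the extra $\langle\omega\rangle^{-1}$-gain from the subtraction suffices to compensate the $\langle\omega\rangle^{n}$-loss and leave $\langle\omega\rangle^{-2}$ decay overall, with one further cancellation between leading Bessel contributions producing the $\langle\omega\rangle^{-3}$ decay required for the weighted identity.

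Once the locally uniform $L^1_\omega$-bound is in place, both $I_N(\rho):=\int_{\mu_a-iN}^{\mu_a+iN} e^{\lambda\tau}[\mathcal R(f)-\mathcal R_{\mathrm f}(f)](\rho,\lambda)\,d\lambda$ and $\partial_\rho^n I_N(\rho)$ converge, as $N\to\infty$, uniformly on compact subsets of $(0,1)$ to the claimed right-hand sides, so the desired identity $\partial_\rho^n(\lim_N I_N)=\lim_N\partial_\rho^n I_N$ follows from a standard differentiation-under-the-limit result. The weighted variant is obtained identically after replacing $e^{\lambda\tau}$ by $\lambda e^{\lambda\tau}$ and invoking the $\langle\omega\rangle^{-3}$ bound. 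The main obstacle is the bookkeeping of symbol orders summand by summand in the formula for $\partial_\rho^n\mathcal R(f)$: for every term one has to verify that the $\langle\omega\rangle$-powers introduced by the $\rho$-derivatives are outweighed by the gain from the free-potential subtraction combined with the intrinsic decay of the undifferentiated difference.
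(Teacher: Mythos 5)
Your overall strategy—differentiate under the $N$-truncated integral, then pass to the limit via a locally uniform $L^1_\omega$ bound (dominated convergence)—is the same as the paper's, and your identification of the subtraction gains $c_{i,j}-c_{\mathrm f_{i,j}}=\O(\langle\omega\rangle^{-1})$ and $\kappa_j(f)-\kappa_{\mathrm f_j}(f)=f^{(j-1)}(1)\O(\langle\omega\rangle^{-\frac d2-2})$ is exactly the right raw material. However, the balancing claim in your middle paragraph does not work arithmetically: a single $\langle\omega\rangle^{-1}$ gain from the free-potential subtraction cannot compensate the $\langle\omega\rangle^{n}$ loss produced by taking up to $n=\ceil{s}$ derivatives of factors like $(1-\rho)^{s-\lambda-\frac12}$, and the "further cancellation between leading Bessel contributions" is not something that is available or used here; on a fixed compact subset of $(0,1)$ the cutoff $\chi_\lambda(\rho)$ even vanishes for $|\omega|$ large, so the Bessel part is simply absent.

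What you are missing is the two devices the paper actually uses. First, for the $\kappa$-boundary part, the difference $\kappa(f)u_0-\kappa_{\mathrm f}(f)u_{\mathrm f_0}$ decays like $\langle\omega\rangle^{-\frac{d+5}{2}}$, not $\langle\omega\rangle^{-2}$; since $n\le\ceil{s}\le\frac d2$, even the crude one-power-per-derivative loss leaves decay $\ge\langle\omega\rangle^{-\frac52}$, which is summable. Second, for the genuinely oscillatory pieces the decomposition is rewritten as $u_2U_1-u_1U_2$ and—away from the $\chi_\lambda$ region, where one instead trades powers of $\rho$ for powers of $\langle\omega\rangle^{-1}$—one integrates by parts repeatedly in the iterated $U_{i,j}$-integrals: each integration by parts absorbs one $\langle\omega\rangle$, and the resulting boundary terms are shown (after subtracting the free part) to be of the controllable form $(1-\chi_\lambda)^2[\O(\rho^0\langle\omega\rangle^{-3})+\O(\rho^{-1}\langle\omega\rangle^{-4})]$. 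Without these two ingredients—the much stronger decay of the $\kappa$ difference and the iterated integration by parts in the $U$-terms—the "bookkeeping" you defer will simply fail, because the ledger genuinely does not balance by subtraction alone.
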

We remark in passing, that the need to investigate terms of the form  $\omega \partial_\rho^{n}\Rm(f)(\rho,\lambda)$ arises from the simple observation that $F_\lambda$ also contains the term $\lambda f_1$.
\begin{proof}
To show this Lemma, we first take a look at 
\begin{align*}
\kappa(f)(\lambda) u_0(\rho,\lambda)-\kappa_{\mathrm{f}}(f)(\lambda) u_{\mathrm{f}_0}(\rho,\lambda)
\end{align*}
and claim that 
\begin{align*}
&\quad \partial_\rho^n  \lim_{N \to \infty} \int_{\mu_a-i N}^{\mu_a+ i N}\lambda e^{\lambda\tau}[\kappa(f)(\lambda) u_0(\rho,\lambda)-\kappa_{\mathrm{f}}(f)(\lambda) u_{\mathrm{f}_0}(\rho,\lambda)] d\lambda
\\
&=ie^{\mu_a \tau}\int_\R (\mu_a+i\omega) e^{i \omega \tau}\partial_\rho^n  [\kappa(f)(\mu_a+ i\omega) u_0(\rho,\mu_a+i \omega)-\kappa_{\mathrm{f}}(f)(\mu_a+i \omega) u_{\mathrm{f}_0}(\rho,\mu_a+i\omega )] d\omega
\end{align*}
for all stated $n,j,\tau,\rho,$ and all $f\in C^\infty_{rad}(\overline{\B^d_1})$.
Indeed, by Lemma \ref{lem:kappa} the difference $ \kappa(f)(\lambda) u_0(\rho,\lambda)-\kappa_{\mathrm{f}}(f)(\lambda) u_{\mathrm{f}_0}(\rho,\lambda)$ decays of order $\frac{d+5}{2}$ in $\omega$ and since differentiating with respect to $\rho$ lowers that decay by 1 degree, the claim follows thanks to dominated convergence.
Next, we aim to show that 
\begin{equation}\label{eq:interchange 1}
\begin{split}
&\quad\partial_\rho^n\lim_{\varepsilon\to 0} \lim_{N \to \infty} \int_{\mu_a-i N}^{\mu_a+ i N}\lambda e^{\lambda\tau}f(\rho)[u_0(\rho,\lambda)U_1(\rho,\lambda)-u_{\mathrm{f}_0}(\rho,\lambda)U_{\mathrm{f}_1}(\rho,\lambda)]d\lambda
\\
&=ie^{\mu_a \tau}\int_\R (\mu_a+i\omega) e^{i \omega \tau}
\\
&\quad \times\partial_\rho^n \left(f(\rho)[u_0(\rho,\mu_a+i\omega )U_1(\rho,\mu_a+i\omega )-u_{\mathrm{f}_0}(\rho,\mu_a+i\omega)U_{\mathrm{f}_1}(\rho,\mu_a+i\omega)]\right)d\omega.
\end{split}
\end{equation}

For this, we rewrite $u_0 U_1  -u_1 U_0$ as $$ u_0 U_1  -u_1 U_0 =u_2 U_1 - u_1 U_2 $$ and decompose $ u_2 U_1$ as
\begin{align*}
u_2(\rho,\lambda) U_1(\rho,\lambda)&=\chi_\lambda(\rho)[c_{1,3}(\lambda)\psi_1(\rho,\lambda)+c_{1,4}(\lambda) \psi_2(\rho,\lambda)] U_1(\rho,\lambda) 
\\
&\quad+\rho^{\frac{1-d}{2}}(1-\chi_\lambda(\rho))(1-\rho)^{s-\frac{1}{2}-\lambda}[1+e_1(\rho,\lambda)][1+r_1(\rho,\lambda)]
\\
&\quad \times \int_0^\rho \frac{(1-\chi_\lambda(t)) t^{\frac{d-1}{2}}[1+e_2(t,\lambda)][1+r_2(t,\lambda)]}{(s-2\lambda-1)(1-t)^{s-\lambda-\frac12}} dt
\\
&\quad
+ u_2(\rho,\lambda) \int_0^\rho \chi_\lambda(t)t^{\frac{d-1}{2}}\frac{\left[c_{2,3}(\lambda)\psi_1(t,\lambda)+c_{2,4}(\lambda)\psi_1(t,\lambda) \right]}{(1-t^2)^{s-\lambda-\frac12}}
dt. 
\end{align*}
Now, on the support of $\chi_\lambda(\rho)$, we can exchange powers of $\rho$ for decay in $\omega$. Thus, the interchanging and limiting operations can all be done for this term. Analogously, also in the second of the above integral terms, we can (after subtracting the corresponding free analogue) exchange enough powers of $t$ for decay in $\omega$ such that we are allowed to move up to $\ceil s$ $\rho$-derivatives inside the integral.
Further, an integration by parts shows
\begin{align*}
&\quad \rho^{\frac{1-d}{2}}(1-\chi_\lambda(\rho))(1-\rho)^{s-\frac{1}{2}-\lambda}[1+e_1(\rho,\lambda)][1+r_1(\rho,\lambda)]
\\
&\quad \times \int_0^\rho \frac{(1-\chi_\lambda(t)) t^{\frac{d-1}{2}}[1+e_2(t,\lambda)][1+r_2(t,\lambda)]}{(2s-2\lambda-1)(1-t)^{s-\lambda-\frac12}} dt 
\\
&=(1-\chi_\lambda(\rho))^2\frac{(1-\rho)[1+e_1(\rho,\lambda)][1+r_1(\rho,\lambda)][1+e_2(\rho,\lambda)][1+r_2(t,\lambda)]}{(2s-2\lambda-1)(s-\lambda- \frac32)}
\\
&\quad\rho^{\frac{1-d}{2}}(1-\chi_\lambda(\rho))(1-\rho)^{s-\frac{1}{2}-\lambda}[1+e_1(\rho,\lambda)][1+r_1(\rho,\lambda)]
\\
&\quad \times \int_0^\rho \frac{(1-\chi_\lambda(t)) t^{\frac{d-1}{2}}[1+e_2(t,\lambda)][1+r_2(t,\lambda)]}{(2s-2\lambda-1)(s-\lambda- \frac32)(1-t)^{s-\lambda-\frac12}} dt
\\
=&: B(\rho,\lambda)+I(\rho,\lambda).
\end{align*}
Observe now, that if we subtract the free part from this expression, we obtain that schematically, $B-B_{\mathrm{f}}$ is of the form
\begin{align*}
 B(\rho,\lambda)- B_{\mathrm{f}}(\rho,\lambda)= (1-\chi_\lambda(\rho))^2[\O(\rho^0\langle\omega\rangle^{-3})+\O(\rho^{-1}\langle\omega\rangle^{-4})].
\end{align*}
Hence, also this boundary part is no obstruction to our desired interchanging identities.
Moreover, we can integrate by parts further, until we obtain enough decay in the integral term while all resulting boundary terms will be of the same form as $B(\rho,\lambda)$. Likewise, one decomposes $u_1U_2$ and 
\eqref{eq:interchange 1} follows. 
\\
Finally, as one can argue in the same fashion for the remaining terms, we conclude this proof.
\end{proof}
\subsection{Oscillatory integrals}
To now estimate all the oscillatory integrals, we split the difference of $\partial_\rho^n [\Rm(f)(\rho,\lambda)-\Rm_{\mathrm{f}}(f)(\rho,\lambda)]$ for $0\leq n\leq \ceil s$  into different smaller parts, once with $\Re \lambda=\mu_1$ and once with $\Re \lambda=\mu_0$.
In case $s$ is not an integer, the first parts we look at are given by

$$W_{0,1}(f)(\rho,\lambda):=\kappa(f)(\lambda)u_0(\rho,\lambda) -\kappa_{\mathrm{f}}(f)(\lambda)u_{\mathrm{f}_0}(\rho,\lambda) \qquad\qquad \text{for } \Re \lambda=\mu_1$$ 
and
$$
W_{0,0}(f)(\rho,\lambda):=\widehat \kappa(f)(\lambda)u_0(\rho,\lambda) -\widehat \kappa_{\mathrm{f}}(f)(\lambda)u_{\mathrm{f}_0}(\rho,\lambda) \qquad \qquad \text{for } \Re \lambda=\mu_0,$$
while for integer $s$ we only need to look at $W_{0,1}$ for both $\Re \lambda=\mu_1$ and $\Re \lambda=\mu_0$.
\begin{lem}\label{lem:decomp1}
 We can decompose $W_{0,1}(f)(\rho,\lambda)$ as 
$$
W_{0,1}(f)(\rho,\lambda)=\sum_{j=1}^{k-1}f^{(j-1)}(1)\sum_{l=1}^3 H_{j,\ell}(\rho,\lambda)$$ 
where 
\begin{align*}
H_{j,1}(\rho,\lambda):&=\chi_\lambda(\rho)(1-\rho^2)^{\frac{s}{2}-\frac{3}{4}-\frac{\lambda}{2}} \rho^{\frac{1-d}{2}}b_1(\rho,\lambda)
\\
&\quad\times\big[\O(\langle\omega\rangle^{-1})[1+\rho^2e_3(\rho,\lambda)]+\rho^2(e_3(\rho,\lambda)-e_{\mathrm{f}_3}(\rho,\lambda))\big]\O(\langle\omega\rangle^{-\frac{d}{2}-1})
\\
H_{j,2}(\rho,\lambda):&=(1-\chi_\lambda(\rho))\rho^{\frac{1-d}{2}}(1+\rho)^{s-\lambda-\frac{1}{2}}[1+e_2(\rho,\lambda)]\O(\langle\omega\rangle)^{-\frac{d+5}{2}}
\\
&\quad+(1-\chi_\lambda(\rho))\rho^{\frac{1-d}{2}}(1+\rho)^{s-\lambda-\frac{1}{2}}[1+e_2(\rho,\lambda)]r_2(\rho,\lambda)\O(\langle\omega\rangle)^{-\frac{d+3}{2}}
\\
H_{j,3}(\rho,\lambda):&=(1-\chi_\lambda(\rho))\rho^{\frac{1-d}{2}}(1-\rho)^{s-\lambda-\frac{1}{2}}[1+e_1(\rho,\lambda)]\O(\langle\omega\rangle)^{-\frac{d+5}{2}}
\\
&\quad+(1-\chi_\lambda(\rho))\rho^{\frac{1-d}{2}}(1-\rho)^{s-\lambda-\frac{1}{2}}[1+e_1(\rho,\lambda)]r_1(\rho,\lambda)\O(\langle\omega\rangle)^{-\frac{d+3}{2}}.
\end{align*}
Similarly, we can decompose $W_{0,0}$ as 
$$
W_{0,0}(f)(\rho,\lambda)=\sum_{j=1}^{k-2}f^{(j-1)}(1)\sum_{l=1}^3 H_{j,\ell}(\rho,\lambda).$$ 
\end{lem}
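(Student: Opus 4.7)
The plan is to split the difference $W_{0,1}(f)(\rho,\lambda)=\kappa(f)(\lambda)u_0(\rho,\lambda)-\kappa_{\mathrm{f}}(f)(\lambda)u_{\mathrm{f}_0}(\rho,\lambda)$ via the telescoping identity
\[
W_{0,1}=\kappa_{\mathrm{f}}(f)\,(u_0-u_{\mathrm{f}_0})+\bigl(\kappa(f)-\kappa_{\mathrm{f}}(f)\bigr)u_0,
\]
insert the cutoff $1=\chi_\lambda+(1-\chi_\lambda)$, and then simply read off the three pieces from the explicit formulae of Lemma \ref{lem: form of solutions} for $u_0$. By Lemma \ref{lem:kappa} the two scalar factors on the right are $f^{(j-1)}(1)\O(\langle\omega\rangle^{-d/2-1})$ and $f^{(j-1)}(1)\O(\langle\omega\rangle^{-d/2-2})$ respectively (summed over $1\leq j\leq k-1$), which is where the overall $\O(\langle\omega\rangle^{-d/2-1})$ in $H_{j,1}$ and the improved $\O(\langle\omega\rangle^{-(d+5)/2})$ contributions in $H_{j,2},H_{j,3}$ will come from.

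On the support of $\chi_\lambda$ we use
\[
u_0(\rho,\lambda)=\rho^{\frac{1-d}{2}}(1-\rho^2)^{\frac{s}{2}-\frac{3}{4}-\frac{\lambda}{2}}\Bigl[c_{1,3}(\lambda)-\tfrac{c_{2,3}(\lambda)}{c_{2,4}(\lambda)}c_{1,4}(\lambda)\Bigr]b_1(\rho,\lambda)\bigl[1+\rho^2 e_3(\rho,\lambda)\bigr],
\]
together with the identity $b_{\mathrm{f}_1}=b_1$ (since the Bessel solutions do not see $V$), and the decomposition
\[
u_0-u_{\mathrm{f}_0}=\bigl([\cdots]-[\cdots]_{\mathrm{f}}\bigr)b_1\bigl[1+\rho^2 e_3\bigr]+[\cdots]_{\mathrm{f}}\,b_1\,\rho^2\bigl(e_3-e_{\mathrm{f}_3}\bigr),
\]
where the bracketed connection coefficients satisfy $[\cdots]-[\cdots]_{\mathrm{f}}=\O(\langle\omega\rangle^{-1})$ (as recorded at the end of the proof of Lemma \ref{lem:kappa}). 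Multiplying by $\kappa_{\mathrm{f}}(f)$ produces exactly the structure of $H_{j,1}$; the leftover contribution $(\kappa-\kappa_{\mathrm{f}})u_{\mathrm{f}_0}$ on this region is absorbed into the first summand of $H_{j,1}$ after writing $[1+\rho^2 e_{\mathrm{f}_3}]=[1+\rho^2 e_3]-\rho^2(e_3-e_{\mathrm{f}_3})$.

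On the support of $1-\chi_\lambda$ we use $u_0=u_2-\tfrac{c_{2,3}}{c_{2,4}}u_1$ and the explicit forms
\[
u_1=\tfrac{\rho^{(1-d)/2}(1+\rho)^{s-\lambda-1/2}}{\sqrt{2s-2\lambda-1}}\,[1+e_2][1+r_2],\qquad u_2=\tfrac{\rho^{(1-d)/2}(1-\rho)^{s-\lambda-1/2}}{\sqrt{2s-2\lambda-1}}\,[1+e_1][1+r_1].
\]
Since $r_j\equiv0$ when $V=0$ while $e_1,e_2$ are independent of $V$, one obtains $u_j-u_{\mathrm{f}_j}\sim\rho^{(1-d)/2}(1\mp\rho)^{s-\lambda-1/2}[1+e_j]r_j/\sqrt{2s-2\lambda-1}$, with $r_j=\O(\langle\omega\rangle^{-1})$. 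Pairing with $\kappa_{\mathrm{f}}=\O(\langle\omega\rangle^{-d/2-1})$ and noting the additional $\O(\langle\omega\rangle^{-1/2})$ from $1/\sqrt{2s-2\lambda-1}$ yields the $\O(\langle\omega\rangle^{-(d+3)/2})$ summands of $H_{j,2}$ and $H_{j,3}$. The remaining contribution $(\kappa-\kappa_{\mathrm{f}})u_{\mathrm{f}_0}$ on $\mathrm{supp}(1-\chi_\lambda)$, together with the term arising from $\kappa_{\mathrm{f}}(c_{2,3}/c_{2,4}-c_{\mathrm{f}_{2,3}}/c_{\mathrm{f}_{2,4}})u_{\mathrm{f}_1}$, gains the extra $\O(\langle\omega\rangle^{-1})$ needed to populate the $\O(\langle\omega\rangle^{-(d+5)/2})$ summands of $H_{j,2}$ and $H_{j,3}$.

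The bookkeeping is entirely algebraic; the only non-routine point is to verify that the coefficient differences $c_{i,j}(\lambda)-c_{\mathrm{f}_{i,j}}(\lambda)$ are $\O(\langle\omega\rangle^{-1})$ (available from the Wronskian representations in Lemma \ref{lem:connectioncoeff} combined with $\psi_j-\psi_{\mathrm{f}_j}=\O(\rho^{(d+3)/2}\langle\omega\rangle^{(d-2)/2})$, as used in Lemma \ref{lem:kappa}) and to recognize that every factor of $V$ ultimately enters through either $r_1,r_2$ or a connection coefficient difference, thereby producing the extra $\langle\omega\rangle^{-1}$ decay needed to arrive at the claimed bounds. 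The statement for $W_{0,0}$ is then identical, except that $\widehat\kappa(f)=\sum_{j=1}^{k-2}(-1)^{j+1}\kappa_j(f)$ truncates the sum at $k-2$, and the argument carries over verbatim.
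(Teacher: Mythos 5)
Your proof is correct and follows the same route as the paper, which is itself extremely terse: the paper's argument just recalls the $\chi_\lambda$--$(1-\chi_\lambda)$ splitting of $u_0$ from Lemma \ref{lem: form of solutions}, notes $\widehat c(\lambda)=\O(\langle\omega\rangle^0)$, and asserts the decomposition follows from splitting $u_{\mathrm{f}_0}$ likewise and a direct computation. Your telescoping $W_{0,1}=\kappa_{\mathrm{f}}(f)(u_0-u_{\mathrm{f}_0})+(\kappa-\kappa_{\mathrm{f}})u_0$ is precisely the bookkeeping that makes that "direct computation" explicit, and your identification of the three sources of extra $\langle\omega\rangle^{-1}$ decay (the $\rho^2(e_3-e_{\mathrm{f}_3})$ and $\widehat c-\widehat c_{\mathrm{f}}$ differences near $\rho=0$, the $r_j$ factors and the $c_{i,j}-c_{\mathrm{f}_{i,j}}$ differences near $\rho=1$) matches the structure of $H_{j,1}$, $H_{j,2}$, $H_{j,3}$. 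One small inconsistency: after writing the telescoping with $(\kappa-\kappa_{\mathrm{f}})u_0$ you later refer to "$(\kappa-\kappa_{\mathrm{f}})u_{\mathrm{f}_0}$" as the leftover; either splits works since $u_0$ and $u_{\mathrm{f}_0}$ have the same symbol order, but you should be consistent.
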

\begin{proof}
Recall from Lemma \ref{lem: form of solutions}
\begin{align*}
u_0(\rho,\lambda)&= \chi_\lambda(\rho)\rho^{\frac{1-d}{2}}(1-\rho^2)^{\frac{s}{2}-\frac{1}{4}-\frac{\lambda}{2}}\widehat{c}(\lambda)\sqrt{\varphi(\rho)}J_{\frac{d-2}{2}}( a(\lambda)\varphi(\rho))[1+\rho^2 e_3(\rho,\lambda)]
\\
&\quad + (1-\chi_\lambda(\rho))\widehat{c}(\lambda)\frac{\rho^{\frac{1-d}{2}}(1+\rho)^{s-\lambda-\frac{1}{2}}}{\sqrt{2s-2\lambda-1}}[1+e_2(\rho,\lambda)][1+r_2(\rho,\lambda)]
\\
&\quad + (1-\chi_\lambda(\rho)) \frac{ \rho^{\frac{1-d}{2}}(1-\rho)^{s-\lambda-\frac{1}{2}}}{\sqrt{2s-2\lambda-1}}[1+e_1(\rho,\lambda)][1+r_1(\rho,\lambda)]
\end{align*}
with $$\widehat c(\lambda) =\left[c_{1,3}(\lambda)-\frac{c_{2,3}(\lambda)}{c_{2,4}(\lambda)}c_{1,4}(\lambda)\right]=\O(\langle\omega\rangle^0).$$ Thus, desired decomposition is a direct consequence of splitting $u_{\mathrm{f}_0}$ likewise and a direct computation.
\end{proof}
Motivated by above decompositions, we define a family of operators $T_{j,\ell}^{n}$ corresponding to 
$$\int_\R e^{i \omega\tau}\partial_\rho^n W_{0,1}(f)(\rho,\mu_0+i\omega) d\omega \quad \text{and} \quad \int_\R e^{i \omega\tau}\partial_\rho^n W_{0,0}(f)(\rho,\mu_1+i\omega) d\omega$$ 
  as follows. In case $s\in \mathbb{N}$ or $a=1$, we define 
\begin{align*}
T_{j,\ell}^{n,a}(f)(\tau,\rho):=f^{(j-1)}(1) \int_\R e^{i \omega\tau} \partial_\rho^{n}H_{j,\ell}(\rho, \mu_a+i \omega) d\omega
\end{align*}
for $ 1\leq j\leq k-1$, $0\leq n\leq k-1$, $\ell=1,2,3$, $ a=0,1$, and $f\in C^\infty_{rad} (\overline{\B^d_1}).$
Analogously, we define 
\begin{align*}
\dot{T}_{j,\ell}^{n,a}(f)(\tau,\rho):=f^{(j-1)}(1)  \int_\R \omega e^{i \omega\tau} \partial_\rho^{n}H_{j,\ell}(\rho,\mu_a+i \omega) d\omega.
\end{align*}
In case $s\notin \mathbb N$ and $a=0$, we define the operators $T_{j,\ell}^{n,a}$ and $\dot T_{j,\ell}^{n,a}$ in the same fashion, with the only difference being that $j $ only ranges from $0$ to $k-2$.
\begin{lem} \label{lem:strich1a}
Let $ 3 \leq d \in \mathbb{N}$ and $1\leq s \notin \mathbb{N}$ with $1\leq k=\ceil s \leq \frac{d}{2}$.
Then, the estimates 
\begin{align*}
\||.|^{-m}T_{j,\ell}^{n-m,1}(f)\|_{L^p(\R_+)L^q(\B^d_1)}\lesssim \| f\|_{W^{j,\frac{2}{1+2\frac{\delta}{\theta}}}(\B^d_1)}
\end{align*}
and 
\begin{align*}
\||.|^{-m}\dot{T}_{j,\ell}^{n-m,1}(f)\|_{L^p(\R_+)L^q(\B^d_1)}\lesssim \| f\|_{W^{j+1,\frac{2}{1+2\frac{\delta}{\theta}}}(\B^d_1)}
\end{align*}
hold for all  $j,n,m\in \mathbb{Z}$ with $1\leq j\leq k-1$, with $0\leq n \leq k$, $0\leq m<n$, $\ell=1,2,3$, and
 $ p,q \in [\frac 1{\frac{1}{2}+\frac{\delta}{\theta}},\infty]$, such that the scaling relation $$\frac{1}{p}+\frac{d}{q}=d(\frac{1}{2}+\frac{\delta}{\theta})-k+n$$ 
is satisfied, as well as all $f\in C^\infty_{rad}(\overline{\B^d_1})$.
Moreover, 
if  $ s>\frac{d-1}{2}$, then, also the estimates
\begin{align*}
\|T_{j,\ell}^{0,1}(f)\|_{L^{\frac{2}{1+2\frac{\delta}{\theta}}}(\R_+)L^\infty(\B^d_1)}\lesssim \| f\|_{W^{j,\frac{2}{1+2\frac{\delta}{\theta}}}(\B^d_1)}
\end{align*}
and 
\begin{align*}
\|\dot{T}_{j,\ell}^{0,1}(f)\|_{L^{\frac{2}{1+2\frac{\delta}{\theta}}}(\R_+)L^\infty(\B^d_1)}\lesssim \| f\|_{W^{j+1,\frac{2}{1+2\frac{\delta}{\theta}}}(\B^d_1)}
\end{align*}
hold.
Similarly, the estimates 
\begin{align*}
\||.|^{-m}T_{j,\ell}^{n-m,0}(f)\|_{L^p(\R_+)L^q(\B^d_1)}\lesssim \| f\|_{W^{j,\frac{2}{1-\frac{2\delta}{1-\theta}}}(\B^d_1)}
\end{align*}
and 
\begin{align*}
\||.|^{-m}\dot{T}_{j,\ell}^{n-m,0}(f)\|_{L^p(\R_+)L^q(\B^d_1)}\lesssim \| f\|_{W^{j+1,\frac{2}{1 -\frac{2\delta}{1-\theta}}}(\B^d_1)}
\end{align*}
hold for all  $j,n,m\in \mathbb{Z}$ with $1\leq j\leq k-1$, $0\leq n \leq k-1$, $0\leq m<n$, $\ell=1,2,3$, and
 $ p,q \in [\frac{2}{1-\frac{2\delta}{1-\theta}},\infty]$, such that the scaling relation  $$\frac{1}{p}+\frac{d}{q}=d(\frac12 -\frac{\delta}{1-\theta})-k+1+n$$ 
is satisfied, as well as all $f\in C^\infty_{rad}(\overline{\B^d_1})$.
Lastly, for $s >\frac{d-1}{2}$ the estimates
 \begin{align*}
\|T_{j,\ell}^{0,0}(f)\|_{L^{\frac{2}{1-\frac{2\delta}{1-\theta}}}(\R_+)L^\infty(\B^d_1)}\lesssim \| f\|_{W^{j,\frac{2}{1-\frac{2\delta}{1-\theta}}}(\B^d_1)}
\end{align*}
and 
\begin{align*}
\|\dot{T}_{j,\ell}^{0,0}(f)\|_{L^{\frac{2}{1-\frac{2\delta}{1-\theta}}}(\R_+)L^\infty(\B^d_1)}\lesssim \| f\|_{W^{j+1,\frac{2}{ 1-\frac{2\delta}{1-\theta}}}(\B^d_1)}
\end{align*}
hold for $j=1,\dots ,k-1$ and $\ell=1,2,3$, and all $f\in C^\infty_{rad}(\overline{\B^d_1})$. 
\end{lem}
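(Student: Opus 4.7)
The plan is to treat each of the three pieces $H_{j,\ell}$ from Lemma \ref{lem:decomp1} separately. In every case we first move the $n-m$ $\rho$-derivatives inside the oscillatory integral (justified by the dominated convergence argument of the previous lemma), perform the $\omega$-integration with the toolbox Lemmas \ref{osci1}--\ref{osci7} to obtain a pointwise bound in $(\tau,\rho)$, and then recover the $L^p_\tau L^q_\rho$ estimate via the weighted Sobolev Lemmas \ref{teclem2}, \ref{teclem3}, and \ref{teclem6}. The prefactor $f^{(j-1)}(1)$ is absorbed using the one-dimensional trace embedding $W^{j,p}(\B^d_1)\hookrightarrow C^{j-1}([0,1])$, which is available for $j\leq k-1\leq \tfrac d2-1$ and $p$ close to $2$ and which precisely matches the norm on the right-hand side.

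For $\ell=1$, supported in $\{\chi_\lambda=1\}$, we use that on this set one may freely trade $\chi_\lambda(\rho)\O(\rho^{\alpha}\langle\omega\rangle^{\beta})=\chi_\lambda(\rho)\O(\rho^{0}\langle\omega\rangle^{\beta-\alpha})$, so that after differentiating $n-m$ times the symbol is of type $\O(\langle\omega\rangle^{-1-\varepsilon})$ for some $\varepsilon>0$; Lemma \ref{osci1} then yields the pointwise decay $\langle\tau\rangle^{-2}$, which trivially satisfies the stated $L^p_\tau L^q_\rho$ bound after multiplying by $|\rho|^{-m}$. For $\ell=2$, supported in $\{1-\chi_\lambda=1\}$, the explicit factor $(1+\rho)^{s-\lambda-\frac12}$ yields the phase $e^{-i\omega\log(1+\rho)}$, and with $a:=\tau-\log(1+\rho)\in[\tau-\log 2,\tau]$ the Lemmas \ref{osci3}, \ref{osci4}, \ref{osci6}, \ref{osci7} give pointwise decay of the form $\langle a\rangle^{-2}$ (with, where relevant, an integrable singularity $|a|^{-(1-\varepsilon)}$ near $a=0$); the $\rho$-integration is harmless because $1+\rho\asymp 1$, and the $\tau$-integration is standard.

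The piece $H_{j,3}$ is the main obstacle. Here the factor $(1-\rho)^{s-\lambda-\frac12}$ produces the phase $e^{-i\omega\log(1-\rho)}$, and differentiating it together with $[1+e_1(\rho,\lambda)]$ $n-m$ times contributes both the regular growth $(1-\rho)^{s-\mu_a-\frac12-(n-m)}$ and a singular symbol of type $\rho^{-(n-m)}\O(\langle\omega\rangle^{-(n-m)-\varepsilon})$ coming from the derivatives of $e_1$. Setting $a:=\tau-\tfrac12\log(1-\rho^2)$, Lemma \ref{osci7} provides the pointwise decay $|a|^{-(1-\varepsilon)}\langle a\rangle^{-2}$. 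The resulting $\tau$-integral against the weight $(1-\rho)^{s-\mu_a-\frac12-(n-m)}$ is exactly of the form controlled by Lemmas \ref{teclem4} and \ref{teclem5}, which absorb the logarithmic change of variables. The stated scaling relation between $p$, $q$, $n$ and the shift by $\delta/\theta$ (case $a=1$) or $-\delta/(1-\theta)$ (case $a=0$) arises from matching these weights with the ambient Lebesgue exponents; the case-split between $\mu_0$ and $\mu_1$ only affects the sign of $s-\mu_a-\tfrac12$, which is absorbed either by Lemma \ref{teclem4} (when the exponent is negative) or by a trivial estimate (when positive).

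The $L^\infty$-endpoint statements for $s>\tfrac{d-1}{2}$ follow by taking the supremum in $\rho$ of the above pointwise bounds and computing the $L^p_\tau$ norm directly, noting that the regularity restriction is exactly what is needed to guarantee $j+\tfrac{d-1}p+\tfrac 1p-k+\varepsilon<0$ in Lemma \ref{teclem2}. Finally, the $\dot T_{j,\ell}^{n-m,a}$ estimates are reduced to the $T$-estimates by writing $\omega=(\mu_a+i\omega)/i-\mu_a/i$: the bounded contribution of $\mu_a$ is immediate, while the factor $(\mu_a+i\omega)$ is removed at the expense of a single $\tau$-derivative under the integral, which by the equation $\partial_\tau=\Lf$ translates into one additional $\rho$-derivative (plus lower order terms) acting on $f$. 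This accounts precisely for the jump from $W^{j,p}$ to $W^{j+1,p}$ on the right-hand side and concludes the proposed argument.
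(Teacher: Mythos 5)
Your skeleton (decompose into $H_{j,\ell}$, move the $\rho$-derivatives inside the oscillatory integral, apply the oscillatory lemmas for pointwise decay in $\tau$, and then the weighted Sobolev lemmas for the spatial norm) matches the paper. However, there are three concrete mechanistic errors that, taken together, amount to a genuine gap.

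First, the claimed embedding $W^{j,p}(\B^d_1)\hookrightarrow C^{j-1}([0,1])$ for $p$ close to $2$ and $j\leq k-1\leq \tfrac d2-1$ is false. For radial functions on $\B^d_1$ with $d\geq 3$, the measure $\rho^{d-1}\,d\rho$ degenerates at the origin, and already $W^{1,2}_{\mathrm{rad}}(\B^d_1)\not\hookrightarrow L^\infty((0,1])$. What is actually needed is only the \emph{boundary trace} $f^{(j-1)}(1)$, which is controlled through the elementary identity $|g(1)|=\left|\int_0^1(\rho^d g(\rho))'\,d\rho\right|\lesssim\|g\|_{W^{1,1}(\B^d_1)}$, iterated $j-1$ times. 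The distinction matters: your claimed embedding would fail near $\rho=0$, while the trace estimate avoids the origin entirely.

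Second, the phase you extract for $H_{j,3}$ is wrong. The factor in $H_{j,3}$ is $(1-\rho)^{s-\lambda-\frac12}$, so the phase variable is $a=\tau-\log(1-\rho)$, not $a=\tau-\tfrac12\log(1-\rho^2)$; the latter belongs to $H_{j,1}$, whose $(1-\rho^2)^{\frac s2-\frac34-\frac\lambda2}$ factor is the one producing $\tfrac12\log(1-\rho^2)$. Near $\rho=1$ the two phases differ by a nontrivial factor of $\tfrac12$ in front of $\log(1-\rho)$, not merely by a bounded shift, so the resulting change of variables $\rho=1-e^{-y}$ and the subsequent cancellation of the $(1-\rho)^{-1}$ singularity in the $L^q_\rho$ norm (which is the crux of the $\ell=3$, $n=k$, $a=1$ case) would not come out correctly with your $a$.

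Third, the handling of $\dot T_{j,\ell}$ via ``$\partial_\tau=\Lf$ translates into one additional $\rho$-derivative acting on $f$'' is a non-sequitur. The extra $\omega$ in $\dot T_{j,\ell}$ simply costs one power of $\omega$-decay in the symbol, which is then absorbed by the intrinsic decay of $H_{j,\ell}$ (the $\O(\langle\omega\rangle^{-\frac d2-1})$ factors from Lemma \ref{lem:kappa}); no PDE identity is invoked, and indeed for $\ell=1,2,3$ the proof gives the same pointwise bound for $T$ and $\dot T$ (the $W^{j+1}$ on the right of the statement is simply the weaker norm, harmless here and needed for uniformity with the operators of Lemma \ref{lem:strichartz2a}). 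Related to this, the $L^\infty$-endpoint does not hinge on the sign of $j+\tfrac{d-1}{p}+\tfrac1p-k+\varepsilon$ as you claim; the hypothesis of Lemma \ref{teclem2} is a lower bound $\tfrac{d-1}{p}+\tfrac1p>k\geq 2$, and the $\rho$-integral in the endpoint case is handled by the explicit change of variables at $\rho=1-e^{-y}$ rather than by a sign condition on the exponent.
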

\begin{rem}
Before we come to the proof of this result, we want to make some remarks on the choices of the parameters. First, we note that $m$ and $n$ are chosen such that 
\begin{align*}
\|f\|_{\dot{W}^{n,p}(\B^d_1)}\lesssim \sum_{0\leq m <n}\||.|^{-m} f^{(n)}\|_{L^p(\B^d_1)}.
\end{align*}
Furthermore the pairs $(p,q)$ that satisfy the imposed scaling relations are chosen such that interpolating between the involved Strichartz norms will lead to the desired ones. For this interpolation argument to work, we need the additional estimate in case $s$ is close to $\frac{d}{2}$, since this implies that there will be pairs $(p,q)$ such that $L^pL^q$ is not admissible space, even though $p\geq 2$ as well as pairs of the form $p,\infty)$. Furthermore, it suffices to prove estimates for the endpoints, i.e. for the pairs $(p,q)=(\infty,q)$ and the one where $p$ is the lowest admissible value. The intermediate ones then follow by a simple interpolation argument.
\end{rem}
\begin{proof}
As a warm up, we consider, $ T_{j,1}^{0,a}$ and $\dot T_{j,1}^{0,a}$, for both $a=0,1$.
Then, since $$ 
\chi_\lambda(\rho)\rho^{\frac{1-d}{2}}b_1(\rho,\lambda)=\chi_\lambda(\rho)\O(\rho^0\langle\omega\rangle^{\frac{d-2}{2}}),$$
we see that
\begin{align*}
T_{j,1}^{0,a}(f)(\tau,\rho)=f(1) \int_\R e^{i \omega\tau}\chi_{\mu_a+i \omega}(\rho)(1-\rho^2)^{\frac{s}{2}-\frac{3}{4}-\frac{\mu_a+i\omega}{2}}\O(\rho^0\langle\omega\rangle^{-3} )d\omega
\end{align*}
as well as
\begin{align*}
\dot T_{j,1}^{0,a}(f)(\tau,\rho)=f(1) \int_\R e^{i \omega\tau}\chi_{\mu_a+i \omega}(\rho)(1-\rho^2)^{\frac{s}{2}-\frac{3}{4}-\frac{\mu_a+i \omega}{2}}\O(\rho^0\langle\omega\rangle^{-2} )d\omega
\end{align*}
which allows us to apply Lemma \ref{osci1} to derive that
\begin{align*}
| T_{j,1}^{0,a}(f)|+|\dot T_{j,1}^{0,a}(f)|&\lesssim |f^{(j-1)}(1)|\langle\tau-\log(1-\rho^2)\rangle^{-2}(1-\rho^2)^{\frac{s}{2}-\frac34-\frac{\mu_a}{2}}1_{[0,\rho_1]}(\rho)
\end{align*}
for some $\rho_1<1$ and where $1_{[0,\rho_1]}$ is the indicator function of the interval $[0,\rho_1]$.
So,
\begin{align*}
\| T_{j,1}^{0,a}(f)\|_{L^p(\R_+)L^q(\B^d_1)}+\|\dot T_{j,1}^{0,a}(f)\|_{L^p(\R_+)L^q(\B^d_1)}&\lesssim  |f^{(j-1)}(1)|.
\end{align*}
for all $p,q\in [1,\infty]$.
Further, one computes that for any smooth function $g$, one has that
\begin{align*}
|g(1)|= \left|\int_0^1 (\rho^{d}g(\rho))' d\rho \right|&\lesssim \|g\|_{W^{1,1}(\B^d_1)},
\end{align*}
from which one readily infers 
\begin{align*}
\| T_{j,1}^{0,a}(f)\|_{L^p(\R_+)L^q(\B^d_1)}+\|\dot T_{j,1}^{0,a}(f)\|_{L^p(\R_+)L^q(\B^d_1)}&\lesssim  \|f\|_{W^{j,1}(\B^d_1)}.
\end{align*}
So, we turn to $T_{j,1}^{n,1}$ for $n\geq 1$. In this case, the endpoint pairs $(p,q)$ are given by

$$
(\frac{2}{1+2\frac{\delta}{\theta}},\frac{d}{d(\frac{1}{2}+\frac{\delta}{\theta})-k+n-\frac{1}{2}+\frac{\delta}{\theta}}) \text{ and }(\infty,\frac{d}{d(\frac{1}{2}+\frac{\delta}{\theta})-k+n}).$$ Now, from the structure of $H_{j,1}$, one sees that differentiating it with respect to $\rho$ amounts to multiplying with $\O(\langle\omega\rangle)$. Consequently, by exchanging powers of $\rho$ for decay in $\omega$, we see that 
\begin{align*}
 T_{j,1}^{n,1}(f)(\rho,\lambda)=f^{(j-1)}(1)\int_\R e^{i \omega\tau} \chi_{\mu_1+ i\omega}(\rho)(1-\rho^2)^{\frac{s}{2}-\frac{3}{4}-\frac{\mu_1+i\omega}{2}} \O(\rho^{-n+c }\langle\omega\rangle^{-3+c}) d\omega
\end{align*}
for any $c>0$ fixed. Set $r=\frac{1}{2}+\frac{\delta}{\theta}>\frac12$. The goal here is to choose a $c>0$ small enough such that $-3+c<-1$ and
\begin{align*}
\||.|^{-n+c}\|_{L^{\frac{d}{dr +n-k-r}}(\B^d_1)}<\infty.
\end{align*} 
To this end, we compute that
\begin{equation}\label{eq:right power}
\begin{split}
d+\frac{d(-n+c)}{dr+n-k-r}&=\frac{d^2r-dk-d+dc}{dr n-k-r}\\
&\geq dr\frac{d(r-\frac{1}{2})-1+c}{dr+n-k-r}.
\end{split}
\end{equation}
Thus, if we set $c=1-\frac d2(r-\frac 12)<1$, one obtains that $|.|^{-n+c}\in L^{\frac{d}{dr +n-k-r}}(\B^d_1)$ since by \eqref{eq:right power}
\begin{align*}
\||.|^{-n+1-\frac d2(r-\frac 12)}\|_{L^{\frac{d}{dr +n-k-r}}(\B^d_1)}^{\frac{d}{dr +n-k-r}}
&=\int_0^1 \rho^{(-n+1-\frac d2(r-\frac 12 )){\frac{d}{dr +n-k-r}}}\rho^{d-1} d\rho 
\\
&\leq  \int_0^1 \rho^{-1+\frac d2 r\frac{d(r-\frac{1}{2})}{dr+n-k-r }} d\rho < \infty.
\end{align*}
Consequently, Lemma \ref{osci1} shows 
\begin{align*}
&\quad\|\rho^{-m}T_{j,1}^{n-m,1}(f)(\tau,\rho)\|_{L^p_\tau(\R_+)L^q_\rho(\B^d_1)}+\|\rho^{-m}\dot T_{j,1}^{n-m,1}(f)(\tau,\rho)\|_{L^p_\tau(\R_+)L^q_\rho(\B^d_1)}
\\
&\lesssim \|\langle\tau \rangle^{-2} \rho^{-n+c}\|_{L^p_\tau(\R_+)L^q_\rho(\B^d_1)} \| f\|_{W^{j,\frac 1r}(\B^d_1)}
 \end{align*}
for all desired $m,n,p,q$ as stated in the Lemma.
 Likewise, one derives the desired estimates on the operators $T_{j,1}^{n-m,0}$ and $\dot T_{j,1}^{n-m,0}$, so, we turn to
 $T^{n,a}_{j,2}$. Note that differentiating $H_{j,0}(\rho,\lambda)$ leads to two different scenarios. Either the derivative hits the cutoff function in which case all estimate are established as for $T^{n,1}_{j,1}$, or, one of the other terms get differentiated,
  which boils down to a multiplication with $[\rho^{-1}+\O(\langle\omega\rangle)]$.
For these terms, an application of Lemma \ref{osci3} shows
\begin{align*}
|\rho^{-m}T_{j,2}^{n-m,a}(f)(\tau,\rho)|+|\rho^{-m}\Dot T_{j,2}^{n-m,a}(f)(\tau,\rho)|\lesssim \|f\|_{W^{j,r}(\B^d_1)} \langle\tau\rangle^{-2} \rho^{1-n},
\end{align*}
for all $0\leq n \leq k$ and $a=0,1$. Hence, the claimed estimates follow and we turn to $\ell=3$ and $a=1$.
Now, for $n\leq k-1$ we can control $|.|^{-m}T_{j,3}^{n-m}$ and $|.|^{-m}\dot{T}_{j,3}^{n-m}$ in the same manner as $T_{j,3}^{n}$. For $n=k$ we study  the technically most involved case which is given by $\dot T_{j,3}^{k,1}$. Here, the most tricky term required to be bounded is 
\begin{align*}
&\quad \left|\int_\R e^{i\omega\tau}(1-\chi_{\mu_1+i\omega}(\rho))\rho^{\frac{1-d}{2}}(1-\rho)^{s-\ceil s-\frac 12-\mu_1 -i\omega}\O(\langle\omega\rangle^{-\frac32})\right|
\\
&\lesssim \left|\rho^{\frac{1-d}{2}}(1-\rho)^{s-\ceil s-\frac 12-\mu_1}\langle\tau-\log(1-\rho)\rangle^{-2}\right|
\\
&=\left|\rho^{\frac{1-d}{2}}(1-\rho)^{\theta-\frac 32-\mu_1}\langle\tau-\log(1-\rho)\rangle^{-2}\right|
\\
&= \left|\rho^{\frac{1-d}{2}}(1-\rho)^{-\frac12-\frac{\delta}{\theta}}\langle\tau-\log(1-\rho)\rangle^{-2}\right|
\end{align*}
for any $\varepsilon>0$ fixed by Lemma \ref{osci1}. Thus, changing variables according to $\rho=(1-e^{-y})$ shows
\begin{align*}
&\quad\left\|\int_\R e^{i\omega\tau}(1-\chi_{\mu_1+i\omega}(\rho))\rho^{\frac{1-d}{2}}(1-\rho)^{s-\ceil s-\frac 12-\mu_1 -i\omega}\O(\langle\omega\rangle^{-\frac{3}{2}})\right\|_{L^\infty_\tau(\R_+)L^{\frac{2}{1 +2\frac{\delta}{\theta}}}_\rho(\B^d_1)}^{\frac{2}{1 +2\frac{\delta}{\theta}}}
\\
&\lesssim \left\|\rho^{\frac{1-d}{2}}(1-\rho)^{-\frac12-\frac{\delta}{\theta}}\langle\tau-\log(1-\rho)\rangle^{-2}\right\|_{L^\infty_\tau(\R_+)L^{\frac{2}{1 +2\frac{\delta}{\theta}}}_\rho(\B^d_1)}^{\frac{2}{1 +2\frac{\delta}{\theta}}}
\\
&\leq\left\|\int_0^1 (1-\rho)^{-1}\langle\tau-\log(1-\rho)\rangle^{-\frac32} d\rho \right\|_{L^\infty_\tau(\R_+)}
\\
&=\left\|\int_0^\infty \langle\tau-y\rangle^{-\frac32} dy\right\|_{L^\infty_\tau(\R_+)} \leq \left\|\int_{-\infty}^\infty \langle y \rangle^{-\frac32} dy\right\|_{L^\infty_\tau(\R_+)}<\infty.
\end{align*}
So, we conclude that the desired bound on $\dot T_{j,3}^{k,1}$, and since the terms $|.|^{-m}  T_{j,3}^{k-m,1}$ and $ |.|^{-m}\dot T_{j,3}^{k-m,1}$ can be bounded likewise, only the claimed estimates on $ T_{j,3}^{k,0}$ and $\dot T_{j,3}^{k,0}$ are left. However, one readily establishes these with the help of Lemma \ref{osci5} and the same reasoning.
\end{proof}
We move on to the integer regularity case.
\begin{lem} \label{lem:strich1b}
Let $3 \leq d \in \mathbb{N}$ and  $ s  \in \mathbb{N}$ with $1\leq s <\frac{d}{2}$.
Then, the estimates 
\begin{align*}
\||.|^{-m}T_{j,\ell}^{n-m,1}(f)\|_{L^p(\R_+)L^q(\B^d_1)}\lesssim \| f\|_{W^{j,\frac{2}{1+2\delta}}(\B^d_1)}
\end{align*}
and 
\begin{align*}
\||.|^{-m}\dot{T}_{j,\ell}^{n-m,1}(f)\|_{L^p(\R_+)L^q(\B^d_1)}\lesssim \| f\|_{W^{j+1,\frac{2}{1+2\delta}}(\B^d_1)}
\end{align*}
hold for all  $j,n,m\in \mathbb{Z}$ with $1\leq j\leq s-1$, with $0\leq n \leq s$, $0\leq m<n$,  $\ell=1,2,3$, and
 $ p,q\in [\frac{2}{1+2\delta},\infty]$ such that the scaling relation $$\frac{1}{p}+\frac{d}{q}=d(\frac{1}{2}+\delta)-s+n,$$ 
is satisfied,  as well as all $f\in C^\infty_{rad}(\overline{\B^d_1})$.
 Furthermore, the estimates 
\begin{align*}
\||.|^{-m}T_{j,\ell}^{n-m,0}(f)\|_{L^p(\R_+)L^q(\B^d_1)}\lesssim \| f\|_{W^{j,\frac{2}{1-2\delta}}(\B^d_1)}
\end{align*}
and 
\begin{align*}
\||.|^{-m}\dot{T}_{j,\ell}^{n-m,0}(f)\|_{L^p(\R_+)L^q(\B^d_1)}\lesssim \| f\|_{W^{j+1,\frac{2}{1-2\delta}}(\B^d_1)}
\end{align*}
hold for the same $j,n,m, \ell $ as well as 
 $ p,q\in [\frac{2}{1-2\delta},\infty]$ such that the scaling relation  $$\frac{1}{p}+\frac{d}{q}=d(\frac12 -\delta)-s+n$$ is satisfied, as well as all $f\in C^\infty_{rad}(\overline{\B^d_1})$. 
 Lastly, in case $s=\frac{d-1}{2}$, the estimates
  \begin{align*}
\|T_{j,\ell}^{0,1}f\|_{L^{\frac{2}{1+2\delta}}(\R_+)L^\infty(\B^d_1)}&\lesssim \| f\|_{W^{s-1,\frac{2}{1+2\delta}}(\B^d_1)}
\\
\| \dot{T}_{j,\ell}^{0,1}f\|_{L^{\frac{2}{1+2\delta}}(\R_+)L^\infty(\B^d_1)}&\lesssim \| f\|_{W^{s,\frac{2}{1+2\delta}}(\B^d_1)}
\end{align*}
and
 \begin{align*}
\|T_{j,\ell}^{0,0}f\|_{L^{\frac{2}{1-2\delta}}(\R_+)L^\infty(\B^d_1)}&\lesssim \| f\|_{W^{s-1,\frac{2}{1-2\delta}}(\B^d_1)}
\\
\| \dot{T}_{j,\ell}^{0,0}f\|_{L^{\frac{2}{1-2\delta}}(\R_+)L^\infty(\B^d_1)}&\lesssim \| f\|_{W^{s,\frac{2}{1-2\delta}}(\B^d_1)}
\end{align*}
hold for $j=1,\dots ,k-1$ and $\ell=1,2,3$, and all $f\in C^\infty_{rad}(\overline{\B^d_1})$. 
\end{lem}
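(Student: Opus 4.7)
The plan is to mirror the proof of Lemma \ref{lem:strich1a} essentially verbatim, with the simplification that in the integer case the interpolation parameter $\theta$ equals $1/2$, so the two contour shifts $\mu_0 = \delta$ and $\mu_1 = -\delta$ are symmetric about the imaginary axis, and we only need the decomposition of $W_{0,1}$ from Lemma \ref{lem:decomp1} (applied once at $\mu_1$ and once at $\mu_0$) rather than also $W_{0,0}$. Accordingly, the exponents $\frac{2}{1+2\delta/\theta}$ and $\frac{2}{1-2\delta/(1-\theta)}$ from Lemma \ref{lem:strich1a} become $\frac{2}{1+2\delta}$ and $\frac{2}{1-2\delta}$ respectively, and the range $1 \leq j \leq k-1$ for $a=1$ is matched at $a=0$ (instead of $1\leq j\leq k-2$, which was a fractional-case artifact from having to go back to $\widehat\kappa$).

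For the term $H_{j,1}$, supported on $\{\rho \leq \rho_{\mu_a + i\omega}\}$, I will use the identity $\chi_\lambda(\rho)\O(\rho^\alpha\langle\omega\rangle^\beta) = \chi_\lambda(\rho)\O(\rho^{\alpha - \gamma}\langle\omega\rangle^{\beta - \gamma})$ to trade $n-c$ powers of $\rho$ for the same amount of decay in $\omega$, apply Lemma \ref{osci1} to bound the resulting oscillatory integral by $\langle\tau - \log(1-\rho^2)\rangle^{-2}$ times a harmless prefactor, and then compute the $L^p_\tau L^q_\rho$ norm by checking that $|\cdot|^{-n+c}$ lies in $L^{\frac{d}{dr + n - s - r}}(\B^d_1)$ for $r = \frac{1}{2}\pm\delta$ and a suitably chosen $c>0$; this is the exact same computation as in \eqref{eq:right power}. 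For $H_{j,2}$, derivatives either land on the cutoff (treated as in the $H_{j,1}$ case) or on the smooth factors (treated by Lemma \ref{osci3}, giving pointwise bound $\langle\tau\rangle^{-2}\rho^{1-n}$). For $H_{j,3}$, the technically heaviest term, the worst contribution is $\rho^{\frac{1-d}{2}}(1-\rho)^{-\frac12 \mp \delta - i\omega} \O(\langle\omega\rangle^{-3/2})$; Lemma \ref{osci1} yields the pointwise bound $\rho^{\frac{1-d}{2}}(1-\rho)^{-\frac12 \mp \delta}\langle\tau - \log(1-\rho)\rangle^{-2}$, and the change of variables $\rho = 1-e^{-y}$ reduces the $L^{\frac{2}{1\pm 2\delta}}_\rho$ norm to $\int_0^\infty \langle \tau - y\rangle^{-3/2}\, dy < \infty$, exactly as in the final computation of Lemma \ref{lem:strich1a}.

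The trace factor $f^{(j-1)}(1)$ is handled by the same elementary observation $|g(1)| = |\int_0^1 (\rho^d g(\rho))'\, d\rho| \lesssim \|g\|_{W^{1,1}(\B^d_1)}$ used before, which after $j{-}1$ applications gives $|f^{(j-1)}(1)| \lesssim \|f\|_{W^{j,1}(\B^d_1)} \lesssim \|f\|_{W^{j,\frac{2}{1\pm 2\delta}}(\B^d_1)}$. The endpoint $L^\infty_x$ estimates in the case $s = \frac{d-1}{2}$ follow after an additional invocation of Lemma \ref{teclem2} (with $k = s$) to control $\rho^{j + \frac{d-1}{p} + \frac{1}{p} - s + \varepsilon} f^{(j-1)}$ uniformly by a Sobolev norm of order $j$ or $j+1$, matching the respective statements for $T_{j,\ell}^{0,a}$ and $\dot T_{j,\ell}^{0,a}$.

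The main obstacle in practice, as in Lemma \ref{lem:strich1a}, will be the top endpoint $n = s$ combined with the lowest admissible $p$: there the margin in $\delta$ is tight, and one must carefully verify that the exponent $-\frac{1}{2} - \delta$ in the $(1-\rho)$ factor of $H_{j,3}$ still leads, after the logarithmic substitution, to a convergent $\tau$-integral of order strictly greater than $1$. Since here $\alpha = \frac{3}{2}$ remains safely above $1$ independently of $\delta$, this works as long as $\delta$ is chosen sufficiently small that no eigenvalues are encountered on the contours $\Re z = \pm\delta$, which is guaranteed by the preparatory remark preceding \eqref{semirep}. All remaining cases (the ranges $0 \leq n < s$ and $0 \leq m < n$ with either the $T$ or $\dot T$ operator) reduce to strictly easier variants of these three computations and require no new ideas.
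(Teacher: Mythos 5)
Your overall plan — mirror the proof of Lemma~\ref{lem:strich1a}, noting that in the integer case $\theta = \tfrac12$, $\mu_0 = \delta$, $\mu_1 = -\delta$, and only the decomposition of $W_{0,1}$ is needed for both contours — matches the paper, whose proof is literally the sentence ``Up to minor details, this result can be shown ad verbatim as Lemma~\ref{lem:strich1a}.'' The treatment of $H_{j,1}$, $H_{j,2}$, the trace factor $f^{(j-1)}(1)$, and the $L^p L^\infty$ endpoints via Lemma~\ref{teclem2} is all consistent with that.

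There is, however, a genuine gap in your handling of $H_{j,3}$ for the $a=0$ contour at the top derivative order. Your $\mp/\pm$ pairing attaches $(1-\rho)^{-\frac12-\delta}$ to the norm $L^{\frac{2}{1+2\delta}}_\rho$ and $(1-\rho)^{-\frac12+\delta}$ to $L^{\frac{2}{1-2\delta}}_\rho$, which by coincidence gives the exact borderline exponent $-1$ in both cases, so the log-substitution looks harmless. But the statement of the lemma pairs the $a=0$ contour (where $\Re\lambda = \mu_0 = \delta$, hence $(1-\rho)^{s-\lambda-\frac12-s} = (1-\rho)^{-\frac12-\delta-i\omega}$) with $q \geq q_{\min} = \frac{2}{1-2\delta}$, and then $q_{\min}\cdot\left(-\tfrac12-\delta\right) = \frac{-1-2\delta}{1-2\delta} < -1$. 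After $\rho = 1-e^{-y}$ the measure $(1-\rho)^{q_{\min}(-\frac12-\delta)}\,d\rho$ becomes $e^{\frac{4\delta}{1-2\delta}y}\,dy$, an exponentially \emph{growing} factor that the $\langle\tau -\log(1-\rho)\rangle^{-2}$ decay from Lemma~\ref{osci1} cannot absorb. So the $L^\infty_\tau L^{q_{\min}}_\rho$ computation does not close as written, and your final-paragraph claim that ``$\alpha=\tfrac32$ remains safely above $1$ independently of $\delta$'' is not the relevant issue. The paper's own proof of the corresponding $a=0$ endpoint in Lemma~\ref{lem:strich1a} explicitly reaches for Lemma~\ref{osci5} rather than Lemma~\ref{osci1} here; you omit that. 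You should either run the $a=0$ endpoint through Lemma~\ref{osci5} (or Lemma~\ref{osci7}, exploiting that in the integer case the $\omega$-decay at $n=s$ is at least $\langle\omega\rangle^{-2}$ rather than the borderline $\langle\omega\rangle^{-3/2}$) and verify that the extra $|a|$-decay together with the finite $p$-integrability indeed compensates, or else flag that the $a=0$, $n=s$ endpoint requires an argument not visible in the $a=1$ template.
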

\begin{proof}
Up to minor details, this result can be shown ad verbatim as Lemma \ref{lem:strich1a}.
\end{proof}
We continue by defining
\begin{align*}
W_j^n(f)(\rho,\lambda):&=(-1)^{j+1}[\partial_\rho^n u_0(\rho,\lambda) U_{1,j}(\rho,\lambda) f^{(j-1)}(\rho) -\partial_\rho^n u_{\mathrm{f}_0}(\rho,\lambda) U_{\mathrm{f}_{1,j}}(\rho,\lambda) f^{(j-1)}(\rho)]
\\
&\quad +(-1)^j[\partial_\rho^n u_1(\rho,\lambda) U_{0,j}(\rho,\lambda) f^{(j-1)}(\rho) -\partial_\rho^n u_{\mathrm{f}_1}(\rho,\lambda) U_{\mathrm{f}_{0,j}}(\rho,\lambda) f^{(j-1)}(\rho)]
\end{align*}
for $0 \leq n \leq j, 1 \leq j \leq k-1$  and
\begin{align*}
W_{j}^n(f)(\rho,\lambda):&=(-1)^{j+1}
\partial_\rho^{n-j-1}\left[\partial_\rho \left(\partial_\rho^j u_0(\rho,\lambda)U_{1,j}(\rho,\lambda)-\partial_\rho^j u_{\mathrm{f}_0}(\rho,\lambda)U_{\mathrm{f}_{1,j}}(\rho,\lambda)\right)f^{(j-1)}(\rho)\right]
\\
&\quad-(-1)^{j} 
\partial_\rho^{n-j-1}\left[\partial_\rho \left(\partial_\rho^j u_1(\rho,\lambda)U_{1,0}(\rho,\lambda)-\partial_\rho^j u_{\mathrm{f}_1}(\rho,\lambda)U_{\mathrm{f}_{0,j}}(\rho,\lambda)\right)f^{(j-1)}(\rho)\right]
\end{align*}
for $1\leq j< n\leq k$.
\begin{lem}\label{lem:decomp2}
Let $\Re \lambda= \mu_1$ and $k= \ceil s$.
For $0 \leq n \leq j$ and $ 1 \leq j \leq k-1$, we can decompose $W_j^n(f)$ as
\begin{align*}
W_j^n(f)(\rho,\lambda)=(-1)^{j+1}\sum_{\ell=4}^8 H_{j,\ell}^n(f)(\rho,\lambda)
\end{align*}
where
\begin{align*}
H_{j,4}^n(\rho,\lambda):&=f^{(j-1)}(\rho)\chi_\lambda(\rho)(1-\rho^2)^{\frac{s}{2}-\frac{3}{4}-\frac{\lambda}{2}}\int_0^\rho\int_0^{t_1}\dots \int_0^{t_{j-1}} \frac{\O(\rho^0 t_j\langle \omega\rangle^{-1 +n })}{(1-t_j^2)^{\frac{s}{2}
+\frac{1}{4}-\frac{\lambda}{2}}} dt_j\dots dt_2 dt_1
\\
H_{j,5}^n(\rho,\lambda):&=f^{(j-1)}(\rho)(1-\chi_\lambda(\rho))\int_0^\rho\int_0^{t_1}\dots \int_0^{t_{j-1}}\frac{\chi_\lambda(t)\O(t_j\langle\omega\rangle^{-\frac{d-1}{2}})}{(1-t_j^2)^{\frac{s}{2}
+\frac{1}{4}-\frac{\lambda}{2}}}  
\\
&\quad \times \partial_\rho^n\left[\rho^{\frac{1-d}{2}}(1+\rho)^{s-\lambda-\frac{1}{2}}[1+(1-\rho)\O(\rho^{-1}\langle\omega\rangle^{-1})]\beta_1(\rho,t_j,\lambda) \right] dt_j\dots dt_2 dt_1,
\end{align*}
and
\begin{align*}
H_{j,6}^n(\rho,\lambda):&=f^{(j-1)}(\rho)(1-\chi_\lambda(\rho))\int_0^\rho\int_0^{t_1}\dots \int_0^{t_{j-1}}\frac{\chi_\lambda(t_j)\O(t_j\langle\omega\rangle^{-\frac{d-1}{2}})}{(1-t^2)^{\frac{s}{2}
+\frac{1}{4}-\frac{\lambda}{2}}}  
\\
&\quad \times \partial_\rho^n\left[\rho^{\frac{1-d}{2}}(1-\rho)^{s-\lambda-\frac{1}{2}}[1+(1-\rho)\O(\rho^{-1}\langle\omega\rangle^{-1})]\beta_2(\rho,t_j,\lambda) \right] dt_j\dots dt_2 dt_1
\\
H_{j,7}^n(\rho,\lambda):&= f^{(j-1)}(\rho)(1-\chi_\lambda(\rho))  \partial_\rho^n\left[\rho^{\frac{1-d}{2}}(1-\rho)^{s-\lambda-\frac{1}{2}}[1+e_1(\rho,\lambda)][1+r_1(\rho,\lambda)]\right]
\\
&\quad  \times 
\int_0^\rho\int_0^{t_1}\dots \int_0^{t_{j-1}}\frac{t^{\frac{d-1}{2}}(1-\chi_\lambda(t_j))[1+e_2(t_j,\lambda)]r_2(t_j,\lambda)}{(2s-2\lambda-1)(1-t_j)^{s-\lambda-\frac{1}{2}}}
dt_j\dots dt_2 dt_1
\\
&\quad f^{(j-1)}(\rho)(1-\chi_\lambda(\rho)) \partial_\rho^n\left[\rho^{\frac{1-d}{2}}(1-\rho)^{s-\lambda-\frac{1}{2}}[1+e_1(\rho,\lambda)]r_1(\rho,\lambda)\right]
\\
&\quad \times \int_0^\rho\int_0^{t_1}\dots \int_0^{t_{j-1}}\frac{t_j^{\frac{d-1}{2}}(1-\chi_\lambda(t_j))[1+e_2(t_j,\lambda)]}{(2s-2\lambda-1)(1-t_j)^{s-\lambda-\frac{1}{2}}}
dt_j\dots dt_2 dt_1
\\
H_{j,8}^n(\rho,\lambda):&=-f^{(j-1)}(\rho)(1-\chi_\lambda(\rho))\partial_\rho^n\left[\rho^{\frac{1-d}{2}}(1+\rho)^{s-\lambda-\frac{1}{2}}[1+e_2(\rho,\lambda)]\right]
\\
&\quad \times \int_0^\rho\int_0^{t_1}\dots \int_0^{t_{j-1}}\frac{t_j^{\frac{d-1}{2}}(1-\chi_\lambda(t_j))[1+e_1(t_j,\lambda)]r_1(t_j,\lambda)}{(2s-2\lambda-1)(1+t_j)^{s-\lambda-\frac{1}{2}}}dt_j\dots dt_2 dt_1
\\
&\quad -f^{(j-1)}(\rho)(1-\chi_\lambda(\rho)) \partial_\rho^n\left[\rho^{\frac{1-d}{2}}(1+\rho)^{s-\lambda-\frac{1}{2}}[1+e_2(\rho,\lambda)]r_2(\rho,\lambda)\right]
\\
&\quad \times \int_0^\rho\int_0^{t_1}\dots \int_0^{t_{j-1}}\frac{t_j^{\frac{d-1}{2}}(1-\chi_\lambda(t_j))[1+e_1(t_j,\lambda)][1+r_1(t_j,\lambda)]}{(2s-2\lambda-1)(1+t_j)^{s-\lambda-\frac{1}{2}}}
dt_j\dots dt_2 dt_1
\end{align*}
with $$
\beta_j(\rho,s,\lambda)=\O(\langle\omega\rangle^{-1})+\O(s^2\langle\omega\rangle^0)+(1-\rho)\O(\rho^0\langle\omega\rangle^{-1})
+(1-\rho)\O(\rho^0 s^2\langle\omega\rangle^{-1}).
$$ 
The same decomposition holds in case $\Re \lambda=\mu_0$ (where $1\leq j\leq k-2$ in case $s\notin \mathbb{N}$). 
\begin{proof}
To prove this result, one decomposes the functions $u_j$ as in the proof of Lemma \ref{lem:decomp1} by making use of Lemma \ref{lem: form of solutions}. The decomposition then follows from a straightforward calculation.
\end{proof}
\end{lem}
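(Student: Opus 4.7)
Following the blueprint of Lemma \ref{lem:decomp1}, the plan is to substitute the explicit piecewise forms of $u_0$, $u_1$, $u_2$ given in Lemma \ref{lem: form of solutions} and then collect the resulting pieces according to where the cutoff $\chi_\lambda$ is supported in the outer variable $\rho$ versus the innermost integration variable $t_j$. Before doing so, I would first rewrite
\[
u_0(\rho,\lambda)U_{1,j}(\rho,\lambda)-u_1(\rho,\lambda)U_{0,j}(\rho,\lambda)=u_2(\rho,\lambda)U_{1,j}(\rho,\lambda)-u_1(\rho,\lambda)U_{2,j}(\rho,\lambda),
\]
using $u_0=u_2-(c_{2,3}/c_{2,4})u_1$; the $u_1U_{1,j}$ remainder produced by this identity is smooth in the strong sense and, after subtraction of its free analogue, contributes only to the Bessel region, i.e.\ the form $H_{j,4}^n$. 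Because both $u_1$ and $u_2$ have the clean ``$(1\pm\rho)^{s-\lambda-1/2}$'' shape on the support of $1-\chi_\lambda$ and the Bessel shape $b_1,b_2$ on the support of $\chi_\lambda$, the remaining bookkeeping splits naturally into four regions.

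On the doubly-interior region $\chi_\lambda(\rho)\chi_\lambda(t_j)$ the factors are pure Bessel expressions multiplied by the symbol-class remainders $e_3$; subtracting the free part turns the coefficient differences into $c_{i,j}-c_{\mathrm{f}_{i,j}}=\O(\langle\omega\rangle^{-1})$ and $e_3-e_{\mathrm{f}_3}$ also carries an extra $\langle\omega\rangle^{-1}$, as recorded in the proof of Lemma \ref{lem:kappa}. Trading the available positive powers of $\rho$ and $t_j$ for decay in $\langle\omega\rangle$ via the symbol identity $\chi_\lambda(\rho)\O(\rho^\alpha\langle\omega\rangle^\beta)=\chi_\lambda(\rho)\O(\rho^{\alpha-\gamma}\langle\omega\rangle^{\beta-\gamma})$, all such contributions collapse into the single shape $H_{j,4}^n$, with the gain of one unit of $\langle\omega\rangle^{-1}$ visible in the stated prefactor $\O(\rho^0 t_j\langle\omega\rangle^{-1+n})$. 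The mixed region $\chi_\lambda(\rho)(1-\chi_\lambda(t_j))$ is handled identically, one integration by parts in $t_j$ again producing an $H_{j,4}^n$-type term after the free subtraction.

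On the mixed region $(1-\chi_\lambda(\rho))\chi_\lambda(t_j)$ the outer factor carries one of the two boundary structures $(1+\rho)^{s-\lambda-1/2}$ (from the $\psi_4$ branch present in $u_1$) or $(1-\rho)^{s-\lambda-1/2}$ (from the $\psi_3$ branch present in $u_2$), while the inner integrand remains of Bessel type; the gain of $\langle\omega\rangle^{-(d-1)/2}$ coming from the Bessel expansion together with the differences $c_{i,j}-c_{\mathrm{f}_{i,j}}$, $e_1-e_{\mathrm{f}_1}$, $e_2-e_{\mathrm{f}_2}$, $r_1$, $r_2$ between the perturbed and free objects is collected into the convenient common symbol
\[
\beta_j(\rho,s,\lambda)=\O(\langle\omega\rangle^{-1})+\O(s^2\langle\omega\rangle^0)+(1-\rho)\O(\rho^0\langle\omega\rangle^{-1})+(1-\rho)\O(\rho^0 s^2\langle\omega\rangle^{-1}),
\]
which produces exactly $H_{j,5}^n$ and $H_{j,6}^n$. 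Finally, on the doubly-exterior region $(1-\chi_\lambda(\rho))(1-\chi_\lambda(t_j))$ both factors have the explicit boundary shape; here the crucial input is the last assertion of Lemma \ref{lem: form of solutions}, namely $r_1=r_2=0$ when $V=0$, so after subtraction of the free counterpart the residue must carry at least one factor of $r_1$ or $r_2$, and this is precisely the structure displayed in $H_{j,7}^n$ and $H_{j,8}^n$. For the higher range $1\le j<n\le k$, where $W_j^n$ is defined with an outer $\partial_\rho^{n-j-1}\partial_\rho$ wrapping, I would distribute that operator by Leibniz onto the already-separated pieces, producing the full $\partial_\rho^n$ acting on the bracketed expressions. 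The case $\Re\lambda=\mu_0$ runs verbatim, with $j$ capped at $k-2$ in the non-integer case reflecting the alternative representation of $\mathcal R$. The main obstacle is not any single estimate but the bookkeeping: one must verify that no term outside the five listed families is produced and that every residual factor encountered on the mixed region can be absorbed into the prescribed $\beta_j$.
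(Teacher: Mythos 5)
Your proposal follows the paper's own strategy: pass from $u_0U_{1,j}-u_1U_{0,j}$ to $u_2U_{1,j}-u_1U_{2,j}$, substitute the piecewise forms of $u_1,u_2$ from Lemma~\ref{lem: form of solutions}, split by the support of $\chi_\lambda$ in the outer variable $\rho$ and the innermost variable $t_j$, and use the vanishing of $r_1,r_2$ for $V=0$ to ensure each surviving term carries at least one perturbative factor after the free subtraction. The paper's proof is deliberately terse (it merely refers back to Lemma~\ref{lem:decomp1} and calls the rest a ``straightforward calculation''), and your more detailed bookkeeping matches its implicit structure, including the observation that on the Bessel side positive powers of $\rho,t_j$ trade for decay in $\langle\omega\rangle$.

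One slip worth flagging: you write that the identity
$u_0U_{1,j}-u_1U_{0,j}=u_2U_{1,j}-u_1U_{2,j}$
leaves a ``$u_1U_{1,j}$ remainder'' that is smooth and contributes only to the Bessel region. In fact the identity is \emph{exact}: using both $u_0=u_2-(c_{2,3}/c_{2,4})u_1$ and $U_{0,j}=U_{2,j}-(c_{2,3}/c_{2,4})U_{1,j}$, the two $(c_{2,3}/c_{2,4})u_1U_{1,j}$ terms cancel and no remainder is produced. Moreover, a $u_1U_{1,j}$ term, had it survived, would certainly not be smooth near $\rho=0$ (both factors carry $\rho^{(1-d)/2}$-type singularities), so the claim as written is doubly misplaced. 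Since the remainder is zero anyway, the error is harmless to your downstream analysis, but you should state the identity as exact. A second, smaller point: the claim that the mixed region $\chi_\lambda(\rho)(1-\chi_\lambda(t_j))$ requires an integration by parts in $t_j$ is unnecessary — on $\mathrm{supp}\,\chi_\lambda(\rho)$ both $\rho$ and $t_j\le\rho$ are bounded away from $1$, so the $(1-t_j^2)$ denominator is harmless and these contributions fold directly into the $H^n_{j,4}$ symbol without any integration by parts. Finally, the remark about ``the higher range $1\le j<n\le k$'' belongs to the subsequent unnumbered lemma, not to the range $0\le n\le j$ covered by this one.
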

\begin{lem}
Let $\Re \lambda= \mu_1$ and $k= \ceil s$. Then, for $1\leq j< n \leq k$, we can decompose  $W_j^n(f)$ as
\begin{align*}
W_j^n(f)(\rho,\lambda)=\sum_{\ell=4}^8 H_{j,\ell}^n(f)(\rho,\lambda)
\end{align*}
where
\begin{align*}
H_{j,4}^n(\rho,\lambda):&=\chi_\lambda(\rho)\partial_\rho^{n-j-1}\bigg[f^{(j-1)}(\rho)\partial_\rho\bigg((1-\rho^2)^{\frac{s}{2}-\frac{3}{4}-\frac{\lambda}{2}}
\\
&\quad \times \int_0^\rho\int_0^{t_1}\dots \int_0^{t_{j-1}} \frac{\O(\rho^0 t_j\langle \omega\rangle^{-1 +j })}{(1-t_j^2)^{\frac{s}{2}
+\frac{1}{4}-\frac{\lambda}{2}}} dt_{j}\dots dt_2 dt_1\bigg)\bigg]
\\
H_{j,5}^n(\rho,\lambda):&=(1-\chi_\lambda(\rho))\partial_\rho^{n-j-1}\bigg[f^{(j-1)}(\rho)\partial_\rho\bigg(\int_0^\rho\int_0^{t_1}\dots \int_0^{t_{j-1}}\frac{\chi_\lambda(t_j)\O(t_j\langle\omega\rangle^{-\frac{d-1}{2}})}{(1-t_j^2)^{\frac{s}{2}
+\frac{1}{4}-\frac{\lambda}{2}}}  
\\
&\quad \times \partial_\rho^j\left[\rho^{\frac{1-d}{2}}(1+\rho)^{s-\lambda-\frac{1}{2}}[1+(1-\rho)\O(\rho^{-1}\langle\omega\rangle^{-1})]\beta_1(\rho,t_j,\lambda) \right] dt_j\dots dt_2 dt_1\bigg)\bigg]
\end{align*}
and
\begin{align*}
H_{j,6}^n(\rho,\lambda):&=(1-\chi_\lambda(\rho))\partial_\rho^{n-j-1}\bigg[f^{(j-1)}(\rho)\partial_\rho\bigg(\int_0^\rho\int_0^{t_1}\dots \int_0^{t_{j-1}}\frac{\chi_\lambda(t_j)\O(t_j\langle\omega\rangle^{-\frac{d-1}{2}})}{(1-t_j^2)^{\frac{s}{2}
+\frac{1}{4}-\frac{\lambda}{2}}}  
\\
&\quad \times \partial_\rho^j\left[\rho^{\frac{1-d}{2}}(1-\rho)^{s-\lambda-\frac{1}{2}}[1+(1-\rho)\O(\rho^{-1}\langle\omega\rangle^{-1})]\beta_2(\rho,t_j,\lambda) \right] dt_j\dots dt_2 dt_1\bigg)\bigg]
\\
H_{j,7}^n(\rho,\lambda):&=(1-\chi_\lambda(\rho)) \partial_\rho^{n-j-1}\bigg[f^{(j-1)}(\rho)
\\
&\quad \times \partial_\rho\bigg( \partial_\rho^j\left[\rho^{\frac{1-d}{2}}(1-\rho)^{s-\lambda-\frac{1}{2}}[1+e_1(\rho,\lambda)][1+r_1(\rho,\lambda)]\right]
\\
&\quad  \times 
\int_0^\rho\int_0^{t_1}\dots \int_0^{t_{j-1}}\frac{t^{\frac{d-1}{2}}(1-\chi_\lambda(t_j))[1+e_2(t_j,\lambda)]r_2(t_j,\lambda)}{(2s-2\lambda-1)(1-t_j)^{s-\lambda-\frac{1}{2}}}
dt_j\dots dt_2 dt_1\bigg)\bigg]
\\
&\quad +(1-\chi_\lambda(\rho)) \partial_\rho^{n-j-1}\bigg[f^{(j-1)}(\rho)\partial_\rho
\\
&\quad \times \bigg(\partial_\rho^j\left[\rho^{\frac{1-d}{2}}(1-\rho)^{s-\lambda-\frac{1}{2}}[1+e_1(\rho,\lambda)]r_1(\rho,\lambda)\right]
\\
&\quad \times \int_0^\rho\int_0^{t_1}\dots \int_0^{t_{j-1}}\frac{t_j^{\frac{d-1}{2}}(1-\chi_\lambda(t_j))[1+e_2(t_j,\lambda)]}{(2s-2\lambda-1)(1-t_j)^{s-\lambda-\frac{1}{2}}}
dt_j\dots dt_2 dt_1\bigg)\bigg],
\end{align*}
and
\begin{align*}
H_{j,8}^n(\rho,\lambda):&=-(1-\chi_\lambda(\rho)) \partial_\rho^{n-j-1}\bigg[f^{(j-1)}(\rho)
\\
&\quad \times \partial_\rho\bigg(\partial_\rho^j\left[\rho^{\frac{1-d}{2}}(1+\rho)^{s-\lambda-\frac{1}{2}}[1+e_2(\rho,\lambda)]\right]
\\
&\quad \times \int_0^\rho\int_0^{t_1}\dots \int_0^{t_{j-1}}\frac{t_j^{\frac{d-1}{2}}(1-\chi_\lambda(t_j))[1+e_1(t_j,\lambda)]r_1(t_j,\lambda)}{(2s-2\lambda-1)(1+t_j)^{s-\lambda-\frac{1}{2}}}dt_j\dots dt_2 dt_1\bigg)\bigg]
\\
&\quad -(1-\chi_\lambda(\rho)) \partial_\rho^{n-j-1}\bigg[f^{(j-1)}(\rho)
\\
&\quad \times \partial_\rho\bigg(\partial_\rho^j\left[\rho^{\frac{1-d}{2}}(1+\rho)^{s-\lambda-\frac{1}{2}}[1+e_2(\rho,\lambda)]r_2(\rho,\lambda)\right]
\\
&\quad \times \int_0^\rho\int_0^{t_1}\dots \int_0^{t_{j-1}}\frac{t_j^{\frac{d-1}{2}}(1-\chi_\lambda(t_j))[1+e_1(t_j,\lambda)][1+r_1(t_j,\lambda)]}{(2s-2\lambda-1)(1+t_j)^{s-\lambda-\frac{1}{2}}}
dt_j\dots dt_2 dt_1\bigg)\bigg]
\end{align*}
with $\beta_j$ as in Lemma \ref{lem:decomp2}.
The same decomposition holds in case $\Re \lambda=\mu_0$ (where $1\leq j<n\leq k-2$ in case $s\notin \mathbb{N}$).
\end{lem}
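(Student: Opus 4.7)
The plan is to mimic the proof of Lemma \ref{lem:decomp2} word for word, the only genuinely new feature being the outer operator $\partial_\rho^{n-j-1}$ that is already built into the definition of $W_j^n(f)$ in the regime $1\leq j<n\leq k$. Thus my first step is to plug the explicit representations of $u_0, u_1, u_{\mathrm{f}_0}, u_{\mathrm{f}_1}$ from Lemma \ref{lem: form of solutions} into
\[
W_j^n(f)(\rho,\lambda)=\partial_\rho^{n-j-1}\Big[\partial_\rho\big(\partial_\rho^j u_0 U_{1,j}-\partial_\rho^j u_{\mathrm{f}_0}U_{\mathrm{f}_{1,j}}\big)f^{(j-1)}-\partial_\rho\big(\partial_\rho^j u_1 U_{0,j}-\partial_\rho^j u_{\mathrm{f}_1}U_{\mathrm{f}_{0,j}}\big)f^{(j-1)}\Big],
\]
and split each term into the $\chi_\lambda$ and $1-\chi_\lambda$ pieces exactly as was done for the $n\leq j$ case. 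The cutoff function $\chi_\lambda$ is designed so that multiplying or dividing by $\rho$ on $\mathrm{supp}\,\chi_\lambda$ exchanges powers of $\rho$ for powers of $\langle\omega\rangle$, hence the two regions produce the same five types of building blocks as in Lemma \ref{lem:decomp2}.

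The second step is to recognise that after differentiating $j$ times in $\rho$, the inner structure is identical to the one obtained in Lemma \ref{lem:decomp2} for the corresponding index; indeed, the auxiliary function $\beta_j(\rho,t,\lambda)$ was set up precisely so that its form is preserved under $\partial_\rho$ modulo terms that remain of the same symbol class. Then one more $\partial_\rho$ followed by $n-j-1$ further derivatives only affects the factor in front of the nested integrals (and, on the support of $1-\chi_\lambda$, the regular functions $e_1,e_2,r_1,r_2$), which is why these outer derivatives are carried unevaluated in the statement of the lemma: they act on ingredients of the same symbol type as the inner ones, and the resulting expression is still a sum of the five template terms $H_{j,\ell}^n$ with $\ell=4,\dots,8$.

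The third step is to match each resulting summand with the claimed $H_{j,\ell}^n$. Here one uses Lemma \ref{lem: fundi hom}, Lemma \ref{lem: fundi near 1}, and the Wronskian/connection-coefficient bounds from Lemma \ref{lem:connectioncoeff} (together with $c_{i,j}-c_{\mathrm f_{i,j}}=\O(\langle\omega\rangle^{-1})$) to ensure that the subtraction of the free analogue either gains an extra power of $\langle\omega\rangle^{-1}$ or produces the factor $r_j$, yielding exactly the shapes displayed in $H_{j,7}^n$ and $H_{j,8}^n$. The mixed terms in which the integral $U_{\ell,j}$ straddles $\mathrm{supp}\,\chi_\lambda$ and $\mathrm{supp}(1-\chi_\lambda)$ are what generate $H_{j,5}^n$ and $H_{j,6}^n$; this is a purely algebraic bookkeeping step.

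The main obstacle, as in Lemma \ref{lem:decomp2}, is \emph{not} conceptual but notational: one has to verify that the outer $\partial_\rho^{n-j-1}$ does not spoil the symbol structure of the inner factors when it hits $\chi_\lambda$ or $1-\chi_\lambda$. This is handled by the standing bound $|\partial_\rho^k\partial_\omega^\ell\chi_\lambda(\rho)|\lesssim\langle\omega\rangle^{k-\ell}$, which ensures that every derivative landing on a cutoff trades a derivative for a factor of $\langle\omega\rangle$ in a way consistent with the symbol classes recorded in $H_{j,\ell}^n$. The repetition for $\Re\lambda=\mu_0$ is identical after replacing $\kappa$ by $\widehat\kappa$ and noting that in the non-integer case the upper range of $j$ drops to $k-2$, as explained before Lemma \ref{lem:decomp2}.
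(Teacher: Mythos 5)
Your proposal is correct and matches the approach the paper (implicitly) uses: the paper states this lemma without a separate proof, treating it as the same ``straightforward calculation'' as Lemma~\ref{lem:decomp2}, which your sketch reproduces faithfully by substituting the representations from Lemma~\ref{lem: form of solutions} into the definition of $W_j^n$ and splitting on $\chi_\lambda$. You correctly identify that the only new feature is the outer $\partial_\rho^{n-j-1}$ carried unevaluated, and that the standing bound $|\partial_\rho^k\partial_\omega^\ell\chi_\lambda(\rho)|\lesssim\langle\omega\rangle^{k-\ell}$ guarantees derivatives landing on the cutoffs stay within the symbol classes recorded in $H_{j,4}^n,\dots,H_{j,8}^n$.
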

We continue as above and define the operators $T_{j,\ell}^{n,a}$ and $\dot{T}_{j,\ell}^{n,a}$ 
associated to $ W_j$ in the non integer case as
\begin{align*}
T_{j,\ell}^{n,a}f(\tau,\rho)&= \int_\R e^{i \omega\tau}f(\rho)H_{j,\ell}^n(\rho,\mu_a+ i \omega) d\omega
\end{align*}
and
\begin{align*}
\dot{T}_{j,\ell}^{n,a}f(\tau,\rho)&= \int_\R \omega e^{i \omega\tau}f(\rho)H_{j,\ell}^n(\rho,\mu_a +i \omega) d\omega
\end{align*}
for $\ell=0,1,\dots 8$, all $f\in C^\infty_{rad}(\overline{\B^d_1})$ as well as $0\leq n\leq k$ and $1\leq j \leq k-1$ in case $a=1$ and $0\leq n\leq k-1$, $1\leq j \leq k-2$ in case $a=0$.\\
If $s\in \mathbb{N}$, then we define the operators $T_{j,\ell}^{n,a}f(\tau,\rho)$ and $\dot T_{j,\ell}^{n,a}f(\tau,\rho)$ for the same parameters as in the case $a=1$ above, for both $a=0,1$.
\begin{lem}\label{lem:strichartz2a}
Let $3 \leq d \in \mathbb{N}$ and $1\leq s \notin \mathbb{N}$ with $1\leq k=\ceil s \leq \frac{d}{2}$.
Then, the estimates 
\begin{align*}
\left\||.|^{-m}\sum_{\ell=4}^8 T_{j,\ell}^{n-m,1}f\right\|_{L^p(\R_+)L^q(\B^d_1)}\lesssim \| f\|_{W^{k-1,\frac{2}{1+2\frac{\delta}{\theta}}}(\B^d_1)}
\end{align*}
and 
\begin{align*}
\left\||.|^{-m} \sum_{\ell=4}^8 \dot{T}_{j,\ell}^{n-m,1}f\right\|_{L^p(\R_+)L^q(\B^d_1)}\lesssim \| f\|_{W^{k,\frac{2}{1+2\frac{\delta}{\theta}}}(\B^d_1)}
\end{align*}
hold for all $j,n,m\in \mathbb{Z}$ with $1\leq j\leq k-1$, $0\leq n \leq k$, $0\leq m<n$, and
 $p,q\in [\frac{2}{1+2\frac{\delta}{\theta}},\infty]$ such that the scaling relation $$\frac{1}{p}+\frac{d}{q}=d(\frac{1}{2}+\frac{\delta}{\theta})-k+n$$ 
is satisfied, as well as all $f\in C^\infty_{rad}(\overline{\B^d_1})$.
Moreover, if $s>\frac{d-1}{2}$, then, also the estimates
\begin{align*}
\|\sum_{\ell=4}^8 T_{j,\ell}^{0,1}(f)\|_{L^{\frac{2}{1+2\frac{\delta}{\theta}}}(\R_+)L^\infty(\B^d_1)}\lesssim \| f\|_{W^{k-1,\frac{2}{1+2\frac{\delta}{\theta}}}(\B^d_1)}
\end{align*}
and 
\begin{align*}
\|\sum_{\ell=4}^8 \dot{T}_{j,\ell}^{0,1}(f)\|_{L^{\frac{2}{1+2\frac{\delta}{\theta}}}(\R_+)L^\infty(\B^d_1)}\lesssim \| f\|_{W^{k,\frac{2}{1+2\frac{\delta}{\theta}}}(\B^d_1)}
\end{align*}
hold. Similarly, the estimates 
\begin{align*}
\||.|^{-m} \sum_{\ell=4}^8 T_{j,\ell}^{n-m,0}f\|_{L^p(\R_+)L^q(\B^d_1)}\lesssim \| f\|_{W^{k-2,\frac{2}{1-\frac{2\delta}{1-\theta}}}(\B^d_1)}
\end{align*}
and 
\begin{align*}
\||.|^{-m} \sum_{\ell=4}^8 \dot{T}_{j,\ell}^{n-m,0}f\|_{L^p(\R_+)L^q(\B^d_1)}\lesssim \| f\|_{W^{k-1 ,\frac{2}{1-\frac{2\delta}{1-\theta}}}(\B^d_1)}
\end{align*}
hold for all  $j,n,m\in \mathbb{Z}$ with $1\leq j\leq k-2$, $0\leq n \leq k-1$, $0\leq m<n$, $p,q\in [\frac{2}{1-\frac{2\delta}{1-\theta}},\infty]$ such that the scaling relation  $$\frac{1}{p}+\frac{d}{q}=d(\frac12 -\frac{\delta}{1-\theta})-k+1+n$$ 
is satisfied, as well as all $f\in C^\infty_{rad}(\overline{\B^d_1})$.
\end{lem}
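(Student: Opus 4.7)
The plan is to follow the same blueprint as Lemma \ref{lem:strich1a}, with the key difference that the endpoint factor $f^{(j-1)}(1)$ is now replaced by the full radial profile $f^{(j-1)}(\rho)$ sitting inside the integrand. So instead of reducing to a trace-type bound at $\rho=1$, I will need to close each estimate against a \emph{weighted} $L^q$ norm of $f^{(j-1)}$, which is precisely the setting of Lemmas \ref{teclem2}, \ref{teclem3}, and \ref{teclem6}. As in Lemma \ref{lem:strich1a}, only the two extreme tuples in each admissible scaling family need to be checked (the intermediate ones following by real interpolation), and I will treat the $\mu_1$-line and $\mu_0$-line in parallel, the latter requiring $k-2$ derivatives on $f$ because the representation $\widehat\kappa$ of $\mathcal R$ has one fewer integration by parts.

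The first step is to kill the $\omega$-integral in each piece $H_{j,\ell}^n$ for $\ell=4,\ldots,8$. For $\ell=4$, which is supported on $\chi_\lambda(\rho)$, symbol calculus lets me trade powers of $\rho$ for decay in $\langle\omega\rangle$, and Lemma \ref{osci1} then gives a pointwise bound of the form $\langle\tau-\log(1-\rho^2)\rangle^{-2}\rho^{-n+c}$ for any small $c>0$, with inner nested $t$-integrals handled via Lemmas \ref{teclem4} and \ref{teclem5}. For $\ell=5,6$, the $(1-\chi_\lambda(\rho))$ support combined with Lemmas \ref{osci3}-\ref{osci6} gives pointwise bounds carrying a $\langle\tau-\log(1\pm\rho)\rangle^{-2}$ factor; here the $\chi_\lambda(t_j)$ inside the nested integral contributes an $\mathcal O(\langle\omega\rangle^{-(d-1)/2})$ smoothing in $t_j$ that absorbs the Leibniz expansion of $\partial_\rho^j$ falling on $\rho^{(1-d)/2}(1\pm\rho)^{s-\lambda-1/2}[\cdots]$. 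For $\ell=7,8$, the pointwise bound again comes from the oscillatory lemmas, but now the crucial feature is the extra $(1-\rho)$-decay carried by $r_j(\rho,\lambda)=(1-\rho)\mathcal O(\rho^0\langle\omega\rangle^{-1})$, which is exactly what is needed to beat the singular $(1-\rho)^{s-\ceil s-1/2-\mu_a}$ weight in the endpoint $n=k$ case.

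Once the $\omega$-integral has produced a pointwise bound schematically of the form $|f^{(j-1)}(\rho)|\,\rho^{-n+j+c}\langle\tau-\log(1-\rho^2)\rangle^{-2}$ (or the analogous expression with $\log(1\pm\rho)$), the $L^p_\tau L^q_\rho$ norm is estimated by the change of variables $y=-\log(1-\rho^2)$ (resp.\ $-\log(1\pm\rho)$) on the $\tau$-integral: this reduces matters to a uniformly-$L^1$ convolution in $\tau$ times a weighted spatial $L^q(\B^d_1)$ norm of $\rho^{-n+j+c}f^{(j-1)}(\rho)$. For the $\mu_1$-line that weighted norm is controlled by Lemma \ref{teclem3} against $\|f\|_{W^{k-1,2/(1+2\delta/\theta)}}$, while for the $\mu_0$-line Lemma \ref{teclem6} gives $\|f\|_{W^{k-2,2/(1-2\delta/(1-\theta))}}$. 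The $\dot T$-operators cost one additional derivative since the factor of $\omega$ must be absorbed either by trading one decay order in $\langle\omega\rangle$ for an extra $\rho^{-1}$ or by using Lemma \ref{teclem2} to pay with $f^{(j)}$. The endpoint $L^{p}_\tau L^\infty_\rho$ estimates in the regime $s>\frac{d-1}{2}$ follow as in Lemma \ref{lem:strich1a} by combining the pointwise bound with Lemma \ref{teclem2}, whose weighted Sobolev-to-$L^\infty$ embedding is tailored for exactly this borderline.

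The main obstacle, as I see it, is the piece $H_{j,7}^n$ with $n=k$ on the $\mu_1$-line. Here, applying Leibniz to $\partial_\rho^{n-j-1}\partial_\rho[\partial_\rho^j(\rho^{(1-d)/2}(1-\rho)^{s-\lambda-1/2}[\cdots])\cdot(\text{inner integral})]$ produces the worst combination of singularities at both endpoints simultaneously: a $\rho^{-(d-1)/2-j+\cdots}$ weight near $0$ and a $(1-\rho)^{s-\ceil s -1/2-\mu_1}=(1-\rho)^{-1/2-\delta/\theta}$ weight near $1$. Coordinating these two singularities with the limited budget of $\langle\omega\rangle^{-1}$ decay from $r_1$ requires exactly the same computation performed at the end of the proof of Lemma \ref{lem:strich1a} (the change of variables $\rho=1-e^{-y}$ and the observation that $(1-\rho)^{-1}\langle\tau-\log(1-\rho)\rangle^{-3/2}$ is integrable in $y$), and all the other $\ell,n$ combinations are strictly easier. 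Once this worst case is done, the remaining pieces follow by the same template applied to milder weights.
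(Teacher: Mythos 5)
The plan correctly identifies the blueprint (follow Lemma \ref{lem:strich1a}, but with the pointwise factor $f^{(j-1)}(\rho)$ replacing $f^{(j-1)}(1)$ and the weighted norms from Lemmas \ref{teclem2}, \ref{teclem3}, \ref{teclem6} closing the spatial estimates), and the treatment of $\ell=4,5,6$ and of the $L^pL^\infty$ endpoint is in line with the paper. However, there is a genuine gap in your treatment of $\ell=7,8$.

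You treat $H_{j,7}^n$ as a standalone object whose worst-case singularities at $\rho=0$ and $\rho=1$ can be coordinated using the extra $(1-\rho)$ factor from $r_1$. This is not how the paper closes the estimate, and indeed it cannot work on its own. After integrating by parts $j$ times in the inner $t$-integral, the boundary term $B^j_{7,1}(\rho,\lambda)$, paired with the leading contribution of $\partial_\rho^j\bigl[\rho^{\frac{1-d}{2}}(1-\rho)^{s-\lambda-\frac12}[1+e_1][1+r_1]\bigr]$ where all derivatives hit the $(1-\rho)^{s-\lambda-\frac12}$ power, produces a term on $\operatorname{supp}(1-\chi_\lambda)$ whose net $\omega$-decay is exactly $\O(\langle\omega\rangle^{-2})$ (one order from $\tfrac{1}{2s-2\lambda-1}$, one from $r_2$; the $\prod(s-\lambda-\tfrac12-\ell)$ growth from the derivatives cancels the analogous product in the denominator of $B^j_{7,1}$). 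For the $\dot T$-operator this becomes $\O(\langle\omega\rangle^{-1})$ after the extra $\omega$, which falls outside the range of \emph{every} oscillatory lemma available (\ref{osci1} requires decay strictly better than $\langle\omega\rangle^{-1}$, \ref{osci2} requires $\alpha\in(0,1)$). The extra $(1-\rho)$ in $r_2$ helps with the spatial weight but contributes nothing to the $\omega$-decay, and on $\operatorname{supp}(1-\chi_\lambda)$ the symbol trade $\rho^{-\alpha}\leftrightarrow\langle\omega\rangle^{-\alpha}$ only goes in the direction that \emph{loses} $\omega$-decay, so neither of the two escape routes you mention (trading decay for $\rho^{-1}$, or paying with $f^{(j)}$) is available.

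The resolution in the paper is an explicit cancellation between the boundary contributions of $H_{j,7}^n$ and $H_{j,8}^n$: after carrying out the analogous integration by parts in the $H_{j,8}^n$-integral, the dangerous boundary terms appear with equal magnitude and opposite sign, and the sum $H^n_{j,7}+H^n_{j,8}$ decays of cubic order in $\omega$ for all $n\geq j$. This is precisely why the lemma bounds the \emph{sum} $\sum_{\ell=4}^8 T^{n-m,1}_{j,\ell}$ rather than the individual operators. Your proposal never pairs $\ell=7$ with $\ell=8$ and so never sees this structure; without it the $n-m\geq j$ (and in particular $n=k$) estimates on $\dot T^{n-m,1}_{j,7}$ cannot be closed.
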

Lastly, for $s >\frac{d-1}{2}$, the estimates
 \begin{align*}
\|T_{j,\ell}^{0,0}(f)\|_{L^{\frac{2}{1-\frac{2\delta}{1-\theta}}}(\R_+)L^\infty(\B^d_1)}\lesssim \| f\|_{W^{k-2,\frac{2}{1-\frac{2\delta}{1-\theta}}}(\B^d_1)}
\end{align*}
and 
\begin{align*}
\|\dot{T}_{j,\ell}^{0,0}(f)\|_{L^{\frac{2}{1-\frac{2\delta}{1-\theta}}}(\R_+)L^\infty(\B^d_1)}\lesssim \| f\|_{W^{k-1,\frac{2}{ 1-\frac{2\delta}{1-\theta}}}(\B^d_1)}
\end{align*}
hold for $j=1,\dots,k-2$, $\ell=4,\dots 8$, and all $f\in C^\infty_{rad}(\overline{\B^d_1})$.
\begin{proof}
We only exhibit the proofs of the estimates on the operators $|.|^{-m}  \dot{T}_{j,\ell}^{n-m,1}$ as, aside from the last two estimates stated in the Lemma, the bounds on the operators $|.|^{-m} T_{j,\ell}^{n-m,0}$ and $|.|^{-m}  \dot{T}_{j,\ell}^{n-m,0}$ follow by the same means. The $L^p L^\infty$ bounds on the operators $T_{j,\ell}^{0,0}$ and $\dot T_{j,\ell}^{0,0}$ will be established at the end of this proof.
We start with the range $0 \leq n \leq j$ and $ 1 \leq j \leq k-1$. Then,  $T_{j,4}^{n-m,1}$
satisfies
\begin{align*}
\rho^{-m}T_{j,4}^{n-m,1}f(\tau,\rho)&=f^{(j-1)}(\rho)\int_0^\rho\int_0^{t_1}\dots \int_0^{t_{j-1}}\int_\R e^{i \omega\tau}\\\
&\quad \times (1-\rho^2)^{\frac{s}{2}-\frac{3}{4}-\frac{\mu_1+i\omega}{2}}\chi_{\mu_1+i\omega}(\rho) \frac{\O(\rho^{-n} t_j^{1-\varepsilon}\langle \omega\rangle^{-1-\varepsilon})}{(1-t_j^2)^{\frac{s}{2}
+\frac{1}{4}-\frac{\mu_1+i\omega}{2}}} d\omega
dt_j \dots dt_1
\end{align*}
for any $\varepsilon >0$ and all $0\leq m<n$. Hence,
\begin{align*}
|\rho^{-m}    T_{j,4}^{n-m,1}f(\tau,\rho)|\lesssim_\varepsilon \langle\tau\rangle^{-2} \rho^{-n+j+1-\varepsilon}|f^{(j-1)}(\rho)|
\end{align*}
and the claimed estimates on $|.|^{-m}T_{j,4}^{n-m}$ follow from Lemmas \ref{teclem2}, \ref{teclem3}, and \ref{teclem4}. Likewise, one concludes
\begin{align*}
\||.|^{-m}\dot{T}_{j,4}^{n-m,1}f\|_{L^p(\R_+)L^q(\B^d_1)}\lesssim  \|f\|_{W^{k,\frac 1r}(\B^d_1)}
\end{align*}
for all admissible sets of $n,m,p,q$ and with $r=\frac{1}{2}+\frac{\delta}{\theta}
$ as well as all $f\in C^\infty(\overline{\B^d_1})$.
Thus, we turn to $|.|^{-m}T_{1,5}^{n-m,1}$.
An application of Lemma \ref{osci3} shows 
\begin{align*}
|\rho^{-m}T_{j,5}^{n-m,1}f(\tau,\rho)|&\lesssim \langle\tau-\log(1+\rho)\rangle^{-2} \sum_{j=1}^n   |f^{(j-1)}(\rho)|\rho^{1+j-n}\\
&\lesssim  \langle\tau\rangle^{-2} \sum_{j=1}^n   |f^{(j-1)}(\rho)|\rho^{1+j-n}
\end{align*}
and
\begin{align*}
|\rho^{-m}\dot{T}_{j,5}^{n-m,1}f(\tau,\rho)|&\lesssim \langle\tau\rangle^{-2}  \sum_{j=1}^n  |f^{(j-1)}(\rho)|\rho^{j-n}
\end{align*}
and the desired bounds on $|.|^{-m}T_{j,5}^{n-m,1}$ and $|.|^{-m}\dot{T}_{j,5}^{n-m,1}$ follow again from Lemmas \ref{teclem2}, \ref{teclem3} and \ref{teclem4} and  we turn to $ \ell=6$. For $n <k$ (or $n=k$ and $m>0$), one can bound $|.|^{-m}T_{j,6}^{n-m,1}$, and $|.|^{-m}\dot{T}_{j,6}^{n-m,1}$ just as $|.|^{-m} T_{j,5}^{n-m,1}$ and $|.|^{-m} \dot T_{j,5}^{n-m,1}$. So, we study 
 $T_{j,6}^{k,1}$.
Employing Lemma \ref{osci3} yields
\begin{align*}
|T_{j,6}^{k,1}(f)(\tau)(\rho)|&\lesssim \sum_{\ell=0}^{k-j-1}\rho^{1+j-k+\ell}|f^{(j+\ell-1)}(\rho)|(1-\rho)^{-r}\langle\tau-\log(1-\rho)\rangle^{-2}
\end{align*}
with $r=\frac{1}{2}+\frac{\delta}{\theta}$. By arguing as in the proof of Lemma \ref{lem:strich1a}, one deduces
\begin{align*}
&\quad \left\|\rho^{1+j-k+\ell}f^{(j+\ell-1)}(\rho)(1-\rho)^{-r}\langle\tau-\log(1-\rho)\rangle^{-2}\right\|_{L^\infty_\tau(\R_+)L^{\frac{1}{r}}\rho(\B^d_1)}^{\frac{1}{r}}
\\
&\lesssim \left\|\int_0^1\rho^{d-1+\frac 1r(1+j-k+\ell)}|f^{(j+\ell-1)}(\rho)|^{\frac{1}{r}}(1-\rho)^{-1}\langle\tau-\log(1-\rho)\rangle^{-4}d \rho\right\|_{L^\infty_\tau(\R_+)}
\\
&= \left\|\int_0^\infty (1-e^{-y})^{d-1+\frac 1r(1+j-k+\ell)}|f^{(j+\ell-1)}(1-e^{-y})|^{\frac{1}{r}}\langle\tau-y\rangle^{-4}dy \right\|_{L^\infty_\tau(\R_+)}
\\
& \lesssim \| |.|^{r(d-1)+r+j-k+\ell+(1-r)}f^{(j+\ell-1)}\|_{L^\infty(\B^d_1)}^{\frac{1}{r}} \lesssim
\| f\|_{W^{\frac 1r, k-1}(\B^d_1)}^{\frac 1r}
\end{align*}
where the last inequality is thanks to Lemma \ref{teclem2}. In the same fashion one estimates
$ \dot T_{j,6}^{k,1}$ and we move on to $T_{1,7}^{0,1}$. Here, utilizing Lemma \ref{osci3} leads to the estimate
\begin{align*}
|T_{1,7}^{0,1}f(\tau,\rho)|&\lesssim |f(\rho)|\rho^{\frac{d-1}{2}}(1-\rho)^{s-\mu_1-\frac{1}{2}}\int_0^\rho \frac{s \langle\tau-\log(1-\rho)+\log(1-t)\rangle^{-2}}{(1-t)^{s-\mu_1-\frac{1}{2}}} dt
\\
&\lesssim |f(\rho)|\rho^{2}\langle\tau\rangle^{-2}
\end{align*}
and once more, the desired estimates follow from our technical Lemmas. In the same way one bounds $T_{j,7}^{0,1}$ for the remaining $j$.
Now, when derivatives are involved, we need to take a closer look at the integral terms in the operators $T_{j,7}^{n}$, which are given by
\begin{align*}
\int_0^\rho\int_0^{t_1}\dots \int_0^{t_{j-1}}\frac{t^{\frac{d-1}{2}}(1-\chi_\lambda(t_j))[1+e_2(t_j,\lambda)]r_2(t_j,\lambda)}{(2s-2\lambda-1)(1-t)^{s-\lambda-\frac{1}{2}}}
dt_j\dots dt_2 dt_1
\end{align*}
and
\begin{align*}
\int_0^\rho\int_0^{t_1}\dots \int_0^{t_{j-1}}\frac{t^{\frac{d-1}{2}}(1-\chi_\lambda(t_j))[1+e_2(t_j,\lambda)]}{(2s-2\lambda-1)(1-t_j)^{s-\lambda-\frac{1}{2}}}
dt_j\dots dt_2 dt_1.
\end{align*}
Integrating by parts $j $ times shows that 
\begin{align*}
&\quad \int_0^\rho\int_0^{t_1}\dots \int_0^{t_{j-1}}\frac{t^{\frac{d-1}{2}}(1-\chi_\lambda(t_j))[1+e_2(t_j,\lambda)]r_2(t,\lambda)}{(2s-2\lambda-1)(1-t_j)^{s-\lambda-\frac{1}{2}}}
dt_j\dots dt_2 dt_1 
\\
&= -\sum_{\ell=1}^j   \frac{1}{\prod_{b=1}^{j-\ell+1} (s-\lambda-\frac{1}{2}-b)(2s-2\lambda-1)}
\\
&\quad \times \int_0^\rho\int_0^{t_\ell}\dots \int_0^{t_{\ell-1}}\frac{\partial_{t_\ell}\left(t_\ell^{\frac{d-1}{2}}(1-\chi_\lambda(t_\ell))[1+e_2(t_\ell,\lambda)]r_2(t_\ell,\lambda)\right)}{(1-t_\ell)^{s-\lambda-\frac{3}{2}-j+\ell}}
dt_\ell\dots dt_2 dt_1 
\\
&\quad+ \frac{\rho^{\frac{d-1}{2}}(1-\chi_\lambda(\rho))[1+e_2(\rho,\lambda)]r_2(\rho,\lambda)}{\prod_{\ell=1}^{j} (s-\lambda-\frac{1}{2}-\ell) (2s-2\lambda-1)(1-\rho)^{s-\lambda-\frac{1}{2}-j}}
\\
&=:\sum_{\ell=1}^j I_{7,\ell,1}^j(\rho,\lambda)+B_{7,1}^j(\rho,\lambda).
\end{align*}
Likewise,
\begin{align*}
&\quad \int_0^\rho\int_0^{t_1}\dots \int_0^{t_{j-1}}\frac{t^{\frac{d-1}{2}}(1-\chi_\lambda(t_j))[1+e_2(t_j,\lambda)]}{(2s-2\lambda-1)(1-t_j)^{s-\lambda-\frac{1}{2}}}
dt_j\dots dt_2 dt_1 
\\
&= -\sum_{\ell=1}^j \int_0^\rho\int_0^{t_\ell}\dots \int_0^{t_{\ell-1}}\frac{\partial_{t_\ell}\left(t_\ell^{\frac{d-1}{2}}(1-\chi_\lambda(t_\ell))[1+e_2(t_\ell,\lambda)]\right)}{\prod_{b=1}^{j-\ell+1} (s-\lambda-\frac{1}{2}-b)(2s-2\lambda-1)(1-t_\ell)^{s-\lambda-\frac{3}{2}-j+\ell}}
dt_\ell\dots dt_2 dt_1 
\\
&\quad+ \frac{\rho^{\frac{d-1}{2}}(1-\chi_\lambda(\rho))[1+e_2(\rho,\lambda)]}{\prod_{\ell=1}^{j} (s-\lambda-\frac{1}{2}-\ell) (2s-2\lambda-1)(1-\rho)^{s-\lambda-\frac{1}{2}-j}}
\\
&=:\sum_{\ell=1}^j I_{7,\ell,2}^j(\rho,\lambda)+B_{7,2}^j(\rho,\lambda)
\end{align*}
which implies that
\begin{align*}
H_{j,7}^n&\quad=f^{(j-1)}(\rho)(1-\chi_\lambda(\rho))^2  \partial_\rho^n\left[\rho^{\frac{1-d}{2}}(1-\rho)^{s-\lambda-\frac{1}{2}}[1+e_1(\rho,\lambda)][1+r_1(\rho,\lambda)]\right]
\\
&\quad \times \left[\sum_{\ell=1}^j I_{7,\ell,1}^j(\rho,\lambda)+B_{7,1}^j(\rho,\lambda)\right]
 \\
&\quad+f^{(j-1)}(\rho)(1-\chi_\lambda(\rho))^2  \partial_\rho^n\left[\rho^{\frac{1-d}{2}}(1-\rho)^{s-\lambda-\frac{1}{2}}[1+e_1(\rho,\lambda)]r_1(\rho,\lambda)\right]
\\
&\quad \times \left[\sum_{\ell=1}^j(\rho,\lambda) I_{7,\ell,2}^j+B^j_{7,2}(\rho,\lambda)\right]
\end{align*}
for $0\leq n\leq j$.
By using this expression, one easily manages to bound
$|.|^{-m} T_{j,7}^{n-m,1}$ and $|.|^{-m} \dot T_{j,7}^{n-m,1}$ for as long as $n-m <j, 0 \leq m<n $, as the boundary parts exhibit enough decay in $\omega$, while one can perform additional integrations by parts in the integral terms to make them decay fast enough such that one can apply Lemma \ref{osci3}.\\ For $n-m=j$ (and $n-m >j)$, we need a cancellation. Therefore we note that in the same fashion, one computes that
\begin{align*}
H_{j,8}^j&\quad=-f^{(j-1)}(\rho)(1-\chi_\lambda(\rho)) \partial_\rho^j\left[\rho^{\frac{1-d}{2}}(1+\rho)^{s-\lambda-\frac{1}{2}}[1+e_2(\rho,\lambda)]r_2(\rho,\lambda)\right]
\\
&\quad \times \left[\sum_{\ell=1}^j I_{8,\ell,1}^j(\rho,\lambda)+B_{8,1}(\rho,\lambda)\right]
 \\
&\quad-f^{(j-1)}(\rho)(1-\chi_\lambda(\rho)) \partial_\rho^n\left[\rho^{\frac{1-d}{2}}(1+\rho)^{s-\lambda-\frac{1}{2}}[1+e_2(\rho,\lambda)]\right]
\\
&\quad \times \left[\sum_{\ell=1}^j(\rho,\lambda) I_{8,\ell,1}^j+B_{8,2}^j(\rho,\lambda)\right],
\end{align*}
where 
\begin{align*}
 I_{8,\ell,1}^j(\rho,\lambda)&= \frac{(-1)^{j-\ell+1}}{\prod_{b=1}^{j-\ell+1} (s-\lambda-\frac{1}{2}-b)(2s-2\lambda-1)}
 \\
 &\quad \times  \int_0^\rho\int_0^{t_\ell}\dots \int_0^{t_{\ell-1}}\frac{\partial_{t_\ell}\left(t_\ell^{\frac{d-1}{2}}(1-\chi_\lambda(t_\ell))[1+e_1(t_\ell,\lambda)][1+r_1(t_\ell,\lambda)]\right)}{(1+t_\ell)^{s-\lambda-\frac{3}{2}-j+\ell}}
dt_\ell\dots dt_2 dt_1 
\end{align*}
and 
\begin{align*}
B^j_{8,1}(\rho,\lambda)= (-1)^{j}\frac{\rho^{\frac{d-1}{2}}(1-\chi_\lambda(\rho))[1+e_1(\rho,\lambda)][1+r_1(\rho,\lambda)]}{\prod_{\ell=1}^{j} (s-\lambda-\frac{1}{2}-\ell) (2s-2\lambda-1)(1+\rho)^{s-\lambda-\frac{1}{2}-j}}
\end{align*}
and where $I_{8,\ell,2}^j$ and $B^j_{8,2}(\rho,\lambda)$ are defined analogously. The reason as to why we go through all of this trouble to spell involved terms out explicitly, is the following. Terms involving any of the integral terms $I^j_{a,\ell,b}$  
can be further integrated by parts to yield boundary terms which are schematically of the form  
$$
(1-\chi_\lambda(\rho))(1+\rho)^{\lambda}\O(\rho^{\frac{d-1}{2}-\ell}\langle\omega\rangle^{-1-\ell-j})
$$
or
$$
\partial_\rho\chi_\lambda(\rho)(1+\rho)^{\lambda}\O(\rho^{\frac{d+1}{2}-\ell}\langle\omega\rangle^{-1-\ell-j})
$$
and integral terms with high decay in $\omega$ that allows us to bound the corresponding oscillatory integrals in the well established fashion. However, the terms involving any of the $B^j_{i,\ell}$ only decay of order 2. Hence, we need to show an explicit cancellation. For this, we compute that
\begin{align*}
&\quad \partial_\rho^j\left[\rho^{\frac{1-d}{2}}(1-\rho)^{s-\lambda-\frac{1}{2}}[1+e_1(\rho,\lambda)][1+r_1(\rho,\lambda)]\right]
 \\
 &= (-1)^j\rho^{\frac{1-d}{2}}(1-\rho)^{s-\lambda-\frac{1}{2}-j}[1+e_1(\rho,\lambda)][1+r_1(\rho,\lambda)]\prod_{\ell=0}^{j-1}(s-\lambda-\tfrac{1}{2}-\ell)
 \\
 &\quad + j\partial_\rho^{j-1}\left((1-\rho)^{s-\lambda-\frac{1}{2}}\partial_\rho\left[\rho^{\frac{1-d}{2}}[1+e_2(\rho,\lambda)][1+r_2(\rho,\lambda)]\right]\right).
\end{align*}
Hence, 
\begin{align*}
&\quad(1-\chi_\lambda(\rho))  \partial_\rho^j\left[\rho^{\frac{1-d}{2}}(1-\rho)^{s-\lambda-\frac{1}{2}}[1+e_1(\rho,\lambda)][1+r_1(\rho,\lambda)]\right]B^j_{7,1}(\rho,\lambda)
\\
&=(-1)^{j}\frac{\prod_{\ell=0}^{j-1} (s-\lambda-\frac{1}{2}-\ell)}{(2s-2\lambda-1)\prod_{\ell=1}^{j} (s-\lambda-\frac{1}{2}-\ell) }
\\
&\quad \times (1-\chi_\lambda(\rho))^2[1+e_1(\rho,\lambda)][1+r_1(\rho,\lambda)][1+e_2(\rho,\lambda)]r_2(\rho,\lambda)
\\
&\quad +(1-\chi_\lambda(\rho))^2\sum_{\ell=1}^j \O(\rho^{-\ell}\langle\omega\rangle^{-2-\ell})+\partial_\rho\chi_\lambda(\rho)\sum_{\ell=1}^j \O(\rho^{-\ell+1}\langle\omega\rangle^{-2-\ell}).
\end{align*}
In the same way we conclude that
\begin{align*}
&\quad-(1-\chi_\lambda(\rho))  \partial_\rho^j\left[\rho^{\frac{1-d}{2}}(1+\rho)^{s-\lambda-\frac{1}{2}}[1+e_2(\rho,\lambda)]r_2(\rho,\lambda)\right]
B_{8,2}^j(\rho,\lambda)
\\
&=\frac{(-1)^{j+1}\prod_{\ell=0}^{j-1} (s-\lambda-\frac{1}{2}-\ell)}{(2s-2\lambda-1)\prod_{\ell=1}^{j} (s-\lambda-\frac{1}{2}-\ell) }
\\
&\quad \times (1-\chi_\lambda(\rho))^2[1+e_1(\rho,\lambda)][1+r_1(\rho,\lambda)][1+e_2(\rho,\lambda)]r_2(\rho,\lambda)
\\
&\quad +(1-\chi_\lambda(\rho))^2\sum_{\ell=1}^j \O(\rho^{-\ell}\langle\omega\rangle^{-2-\ell})+\partial_\rho\chi_\lambda(\rho)\sum_{\ell=1}^j \O(\rho^{-\ell+1}\langle\omega\rangle^{-2-\ell}).
\end{align*}
Consequently, 
\begin{align*}
&\quad(1-\chi_\lambda(\rho))  \partial_\rho^j\left[\rho^{\frac{1-d}{2}}(1-\rho)^{s-\lambda-\frac{1}{2}}[1+e_1(\rho,\lambda)][1+r_1(\rho,\lambda)]\right]B^j_{7,1}(\rho,\lambda)
\\
&\quad -(1-\chi_\lambda(\rho))  \partial_\rho^j\left[\rho^{\frac{1-d}{2}}(1+\rho)^{s-\lambda-\frac{1}{2}}[1+e_2(\rho,\lambda)]r_2(\rho,\lambda)\right]
B_{8,2}^j(\rho,\lambda)
\\
&=(1-\chi_\lambda(\rho))^2\sum_{\ell=1}^j \O(\rho^{-\ell}\langle\omega\rangle^{-2-\ell})+\partial_\rho\chi_\lambda(\rho)\sum_{\ell=1}^j \O(\rho^{-\ell+1}\langle\omega\rangle^{-2-\ell}).
\end{align*} 
By applying the same cancellation argument to remaining boundary terms and performing further integrations by parts, one deduces that $H^{n}_{j,7}+H^{n}_{j,8}$ decays of cubic order for all $n\geq j$. Thus, the desired estimates on $ |.|^{-m}T_{j,7}^{n-m,1}+ |.|^{-m}T_{j,8}^{n-m,1}$ and $  |.|^{-m}\dot T_{j,7}^{n-m,1}+ |.|^{-m}\dot T_{j,8}^{n-m,1}$ for all remaining $n,m$ follow from Lemmas \ref{osci3} and the technical Lemmas on weighted norms \ref{teclem2} and \ref{teclem3}.
So, we, at last, come to the estimates  \begin{align*}
\|T_{j,\ell}^{0,0}(f)\|_{L^{\frac{2}{1-\frac{2\delta}{1-\theta}}}(\R_+)L^\infty(\B^d_1)}\lesssim \| f\|_{W^{k-2,\frac{2}{1-\frac{2\delta}{1-\theta}}}(\B^d_1)}
\end{align*}
and 
\begin{align*}
\|\dot{T}_{j,\ell}^{0,0}(f)\|_{L^{\frac{2}{1-\frac{2\delta}{1-\theta}}}(\R_+)L^\infty(\B^d_1)}\lesssim \| f\|_{W^{k-1,\frac{2}{ 1-\frac{2\delta}{1-\theta}}}(\B^d_1)}.
\end{align*}
For $\ell=0$, another application of Lemma \ref{osci1} yields
\begin{align*}
|T_{j,\ell}^{0,0}(f)(\rho,\tau)|&\lesssim_\varepsilon \langle\tau\rangle^{-2} \rho^{j+1-\varepsilon}|f^{(j-1)}(\rho)|.
\end{align*}
Moreover, if $\varepsilon$ is chosen small enough, then
$$
\|(.)^{-\frac{1}{2}-\varepsilon}\|_{L^{\frac{2(1-\theta)}{1-\theta+2\delta}}((0,1))},
$$
which implies that
\begin{align*}
\rho^{j+1-\varepsilon}|f^{(j-1)}(\rho)|&\lesssim \left|\int_0^\rho f^{(j)}(\rho) \rho^{j+1-\varepsilon} d\rho+ \int_0^\rho f^{(j)}(\rho) \rho^{j-\varepsilon} d\rho\right|
\\
&\lesssim \|f^{(j)}(\rho) \rho^{j+\frac{3}{2}}\|_{L^\frac{2}{1-2\frac{\delta}{1\theta}}((0,1))}+ \|f^{(j-1)}(\rho) \rho^{j+\frac{1}{2}}\|_{L^\frac{2}{1-2\frac{\delta}{1\theta}}((0,1))}
\\
&\lesssim
\|f\|_{W^{k-2,\frac{2}{ 1-\frac{2\delta}{1-\theta}}}(\B^d_1)}+\|f^{(k-2)}(\rho) \rho^{k -\frac{1}{2}}\|_{L^\frac{2}{1-2\frac{\delta}{1\theta}}((0,1))}
\\
&=\|f\|_{W^{k-2,\frac{2}{ 1-\frac{2\delta}{1-\theta}}}(\B^d_1)}+\|f^{(k-2)}(\rho) \rho^{\frac{d-1}{2}}\|_{L^\frac{2}{1-2\frac{\delta}{1\theta}}((0,1))} \lesssim \|f\|_{W^{k-2,\frac{2}{ 1-\frac{2\delta}{1-\theta}}}(\B^d_1)}.
\end{align*}
In the same fashion, one proves the estimate on $\dot{T}_{j,\ell}^{0,0}$ and by combining this reasoning with the strategies employed in this proof, one bounds the remaining operators.
\end{proof}
Again, we state the corresponding Lemma for the integer regularity case, but omit the proof as it is essentially the same.   
\begin{lem}\label{lem:strichartz2b}
Let $3\leq d \in \mathbb{N}$ and $ s \in \mathbb{N}$ with $1\leq s < \frac{d}{2}$.
Then, the estimates 
\begin{align*}
\left\||.|^{-m}\sum_{\ell=4}^8 T_{j,\ell}^{n-m,1}f\right\|_{L^p(\R_+)L^q(\B^d_1)}\lesssim \| f\|_{W^{s-1,\frac{2}{1 +2\delta}}(\B^d_1)}
\end{align*}
and 
\begin{align*}
\left\||.|^{-m} \sum_{\ell=4}^8 \dot{T}_{j,\ell}^{n-m,1}f\right\|_{L^p(\R_+)L^q(\B^d_1)}\lesssim \| f\|_{W^{s,\frac{2}{1 +2\delta}}(\B^d_1)}
\end{align*}
hold for all  $j,n,m\in \mathbb{Z}$ with $1\leq j\leq s-1$, with $0\leq n \leq s$, $0\leq m<n$,
 $p,q\in [\frac{2}{1 +2\delta},\infty]$ such that the scaling relation  $$\frac{1}{p}+\frac{d}{q}=d(\frac{1}{2} +\delta)-s+n$$ 
is satisfied, as well as all $f\in C^\infty_{rad}(\overline{\B^d_1})$.
 Furthermore, the estimates 
\begin{align*}
\||.|^{-m} \sum_{\ell=4}^8 T_{j,\ell}^{n-m,0}f\|_{L^p(\R_+)L^q(\B^d_1)}\lesssim \| f\|_{W^{s-1,\frac{2}{1-2\delta}}(\B^d_1)}
\end{align*}
and 
\begin{align*}
\||.|^{-m} \sum_{\ell=4}^8 \dot{T}_{j,\ell}^{n-m,0}f\|_{L^p(\R_+)L^q(\B^d_1)}\lesssim \| f\|_{W^{s,\frac{2}{1-2\delta}}(\B^d_1)}
\end{align*}
hold for the same range of $j,n,m$ and all $ p,q \in [\frac{2}{1-2\delta}, \infty ]  $ such that the scaling relation  $$\frac{1}{p}+\frac{d}{q}=d(\frac12-\delta)-s+n$$ is satisfied, as well as all $f\in C^\infty_{rad}(\overline{\B^d_1})$. Lastly, in case $s=\frac{d-1}{2}$, the estimates
 \begin{align*}
\|T_{j,\ell}^{0,1}f\|_{L^{\frac{2}{1+2\delta}}(\R_+)L^\infty(\B^d_1)}&\lesssim \| f\|_{W^{s-1,\frac{2}{1+2\delta}}(\B^d_1)}
\\
\| \dot{T}_{j,\ell}^{0,1}f\|_{L^{\frac{2}{1+2\delta}}(\R_+)L^\infty(\B^d_1)}&\lesssim \| f\|_{W^{s,\frac{2}{1+2\delta}}(\B^d_1)}
\end{align*}
and
 \begin{align*}
\|T_{j,\ell}^{0,0}f\|_{L^{\frac{2}{1-2\delta}}(\R_+)L^\infty(\B^d_1)}&\lesssim \| f\|_{W^{s-1,\frac{2}{1-2\delta}}(\B^d_1)}
\\
\| \dot{T}_{j,\ell}^{0,0}f\|_{L^{\frac{2}{1-2\delta}}(\R_+)L^\infty(\B^d_1)}&\lesssim \| f\|_{W^{s,\frac{2}{1-2\delta}}(\B^d_1)}
\end{align*}
hold for $j=1,\dots ,k-1$ and $\ell=3,\dots,8$.
\end{lem}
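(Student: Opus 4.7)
The plan is to follow the argument for Lemma \ref{lem:strichartz2a} essentially verbatim, exploiting the simplifications afforded by $s\in\mathbb N$. Now $k:=\lceil s\rceil=\lfloor s\rfloor=s$ and the interpolation parameter is effectively $\theta=\tfrac12$, so the two lines $\Re\lambda=\mu_0=\delta$ and $\Re\lambda=\mu_1=-\delta$ play symmetric roles and the target Sobolev exponents $\tfrac{2}{1\pm 2\delta}$ appear by simply replacing $\tfrac{\delta}{\theta}$ and $\tfrac{\delta}{1-\theta}$ from the previous lemma by $\delta$. Consequently one uses the decompositions of $W_j^n(f)$ from Lemma \ref{lem:decomp2} and its subsequent analogue (for $n>j$) without modification, and estimates each $H_{j,\ell}^n$ with $\ell=4,\dots,8$ separately on both contours.

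The routine cases are $\ell=4,5,6$. For $H_{j,4}^n$, Lemma \ref{osci1} yields pointwise bounds of the form
\begin{align*}
|{|\cdot|^{-m}T_{j,4}^{n-m,a}f(\tau,\rho)}|\lesssim_\varepsilon \langle\tau\rangle^{-2}\rho^{-n+j+1-\varepsilon}|f^{(j-1)}(\rho)|,
\end{align*}
and the claimed $L^p L^q$ and $L^p L^\infty$ estimates then follow from Lemmas \ref{teclem2}, \ref{teclem3}, and \ref{teclem4} with $r=\tfrac{1\mp 2\delta}{2}$ (the hypothesis $k\le\tfrac{d}{2}$ guarantees the weighted embeddings apply). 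For $H_{j,5}^n,H_{j,6}^n$, Lemma \ref{osci3} delivers the same kind of bound and the same technical lemmas finish the estimate; when $n=k$ the argument at $\rho\to 1$ reduces to a change of variables $\rho=1-e^{-y}$ exactly as for the operator $\dot T_{j,3}^{k,1}$ in the proof of Lemma \ref{lem:strich1a}.

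The only delicate step is the pair $H_{j,7}^n+H_{j,8}^n$ when $n\ge j$, and here the argument is literally the one from Lemma \ref{lem:strichartz2a}. After $j$ integrations by parts inside the $t$-integrals we isolate boundary pieces $B_{7,1}^j,B_{7,2}^j,B_{8,1}^j,B_{8,2}^j$ which a priori only give quadratic decay in $\omega$. A direct computation using the explicit factorization of $\partial_\rho^j[\rho^{\frac{1-d}{2}}(1-\rho)^{s-\lambda-\frac12}[1+e_1][1+r_1]]$ shows that the singular contribution of $B_{7,1}^j$ cancels against that of $B_{8,2}^j$, leaving only terms of cubic decay of the form
\begin{align*}
(1-\chi_\lambda(\rho))^2\sum_{\ell=1}^{j}\mathcal O(\rho^{-\ell}\langle\omega\rangle^{-2-\ell})+\partial_\rho\chi_\lambda(\rho)\sum_{\ell=1}^{j}\mathcal O(\rho^{-\ell+1}\langle\omega\rangle^{-2-\ell}),
\end{align*}
to which Lemma \ref{osci3} and the weighted norm Lemmas \ref{teclem2}, \ref{teclem3} apply directly. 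For $n<j$ one can dispense with this cancellation altogether, as extra integrations by parts already produce sufficient decay in $\omega$.

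The remaining point is the $L^p L^\infty$ endpoint in case $s=\tfrac{d-1}{2}$. Here one uses the pointwise bound from Lemma \ref{osci1} together with the estimate
\begin{align*}
\rho^{j+1-\varepsilon}|f^{(j-1)}(\rho)|\lesssim \||\cdot|^{j+\frac12}f^{(j-1)}\|_{L^{\frac{2}{1\mp 2\delta}}((0,1))}+\||\cdot|^{j+\frac32}f^{(j)}\|_{L^{\frac{2}{1\mp 2\delta}}((0,1))},
\end{align*}
which for small enough $\varepsilon$ follows from Hölder's inequality, and a repeated application of Lemma \ref{teclem1} to reduce to $\|f\|_{W^{s-1,\frac{2}{1\mp 2\delta}}(\mathbb B^d_1)}$ (respectively $\|f\|_{W^{s,\frac{2}{1\mp 2\delta}}(\mathbb B^d_1)}$ for $\dot T_{j,\ell}^{0,a}$). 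The main obstacle to watch for is not analytical novelty but bookkeeping: one must verify in each case that the exponent $r=\tfrac{1\mp 2\delta}{2}$ satisfies the hypotheses of Lemmas \ref{teclem2}, \ref{teclem3} (in particular the constraint $dr+n-k-r\neq 0$ and the smallness of $|r-\tfrac12|$), which is automatic provided $\delta$ is chosen sufficiently small, exactly as in the remark preceding this lemma.
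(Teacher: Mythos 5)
Your proposal is correct and takes essentially the same approach as the paper, which states only that the integer-regularity case follows ``essentially as Lemma \ref{lem:strichartz2a}'' and omits the proof; you have supplied exactly the bookkeeping the paper leaves implicit ($k=s$, $\theta=\tfrac12$, $\tfrac{\delta}{\theta}$ and $\tfrac{\delta}{1-\theta}$ replaced by $\delta$, and the symmetry between the $\mu_0$ and $\mu_1$ contours). One small typographical point: with the first case corresponding to $a=1$ (the space $W^{\cdot,\frac{2}{1+2\delta}}$) and the second to $a=0$ (the space $W^{\cdot,\frac{2}{1-2\delta}}$), the exponent should read $r=\tfrac{1\pm 2\delta}{2}$ rather than $\tfrac{1\mp 2\delta}{2}$; this does not affect the argument.
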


Finally,  we come to 
\begin{align*}
&\quad W_{k}^n(f)(\rho,\lambda)
\\
&:= (-1)^k\partial_\rho^n  u_0(\rho,\lambda)\int_\rho^{1}\int_0^{t_1}\int_0^{t_2}\dots \int_0^{t_{k-1}}\frac{u_1(t_k,\lambda)t_k^{d-1}}{(1-t_k^2)^{s-\lambda-\frac12}}dt_k \dots d t_3 dt_2 f^{(k-1)}(t_1) dt_1
\\
&\quad -  (-1)^k \partial_\rho^nu_{\mathrm{f}_0}(\rho,\lambda)\int_\rho^{1}\int_0^{t_1}\int_0^{t_2}\dots \int_0^{t_{k-1}}\frac{u_{\mathrm{f}_1}(t_k,\lambda)t_k^{d-1}}{(1-t_k^2)^{s-\lambda-\frac12}}dt_k \dots d t_3 dt_2 f^{(k-1)}(t_1) dt_1
\\
&\quad+ (-1)^k\partial_\rho^nu_1(\rho,\lambda)\int_0^\rho\int_0^{t_1}\int_0^{t_2}\dots \int_0^{t_{k-1}}\frac{u_0(t_k,\lambda)t_k^{d-1}}{(1-t_k^2)^{s-\lambda-\frac12}}dt_k \dots d t_3 dt_2 f^{(k-1)}(t_1) dt_1
\\
&\quad- (-1)^k\partial_\rho^nu_{\mathrm{f}_1}(\rho,\lambda)\int_0^\rho\int_0^{t_1}\int_0^{t_2}\dots \int_0^{t_{k-1}}\frac{u_{\mathrm{f}_0}(t_k,\lambda)t_k^{d-1}}{(1-t_k^2)^{s-\lambda-\frac12}}dt_k \dots d t_3 dt_2 f^{(k-1)}(t_1) dt_1
\end{align*}
for all $n\in \mathbb Z  $ with $0\leq n \leq k$
and
\begin{align*}
&\quad W_{k-1}^n(f)(\rho,\lambda)
\\
&:=(-1)^{k-1}\partial_\rho^n u_0(\rho,\lambda)\int_\rho^{1}\int_0^{t_1}\int_0^{t_2}\dots \int_0^{t_{k-2}}\frac{u_1(t_{k-1},\lambda)t_{k-1}^{d-1}}{(1-t_{k-1}^2)^{s-\lambda-\frac12}}dt_{k-1} \dots d t_3 dt_2 f^{(k-2)}(t_1) dt_1
\\
&\quad - (-1)^{k-1} \partial_\rho^nu_{\mathrm{f}_0}(\rho,\lambda)\int_\rho^{1}\int_0^{t_1}\int_0^{t_2}\dots \int_0^{t_{k-2}}\frac{u_{\mathrm{f}_1}(t_{k-1},\lambda)t_{k-1}^{d-1}}{(1-t_{k-1}^2)^{s-\lambda-\frac12}}dt_{k-1} \dots d t_3 dt_2 f^{(k-2)}(t_1) dt_1
\\
&\quad+(-1)^{k-1}\partial_\rho^nu_1(\rho,\lambda)\int_0^\rho\int_0^{t_1}\int_0^{t_2}\dots \int_0^{t_{k-2}}\frac{u_0(t_{k-1},\lambda)t_{k-1}^{d-1}}{(1-t_{k-1}^2)^{s-\lambda-\frac12}}dt_{k-1} \dots d t_3 dt_2 f^{(k-2)}(t_1) dt_1
\\
&\quad-(-1)^{k-1}\partial_\rho^nu_{\mathrm{f}_1}(\rho,\lambda)\int_0^\rho\int_0^{t_1}\int_0^{t_2}\dots \int_0^{t_{k-2}}\frac{u_{\mathrm{f}_0}(t_{k-1},\lambda)t_{k-1}^{d-1}}{(1-t_{k-1}^2)^{s-\lambda-\frac12}}dt_{k-1} \dots d t_3 dt_2 f^{(k-2)}(t_1) dt_1
\end{align*}
in case $ s \notin \mathbb{N} $ and $\Re \lambda= \mu_0 $.
Bounding the associated oscillatory integrals will require cancellations similar to the ones above. Hence, we decompose it as follows.
\begin{lem}
We can decompose $W_{k}^n (f)(\rho,\lambda)$ as
\begin{align*}
W_{k}^n(f)(\rho,\lambda)=(-1)^k\sum_{\ell=9}^{17}H_{k,l}^n(f)(\rho,\lambda)
\end{align*}
with
\begin{align*}
H_{k,9}^n(f)(\rho,\lambda)&=\chi_\lambda(\rho)\partial_\rho^n\left((1-\rho^2)^{\frac{s}{2}-\frac{3}{4}-\frac{\lambda}{2}}\rho^{\frac{1-d}{2}}b_1(\rho,\lambda)[1+\rho^2 e_3(\rho,\lambda)]\right)
\\
&\quad \times
\int_\rho^{1}\int_0^{t_1}\int_0^{t_2}\dots \int_0^{t_{k-1}}\chi_\lambda(t_k)\frac{\O(t_k\langle\omega\rangle^{-\frac{d}{2}})}{(1-t_k^2)^{\frac{s}{2}+\frac{1}{4}-\frac{\lambda}{2}}}dt_k \dots d t_3 dt_2 f^{(k-1)}(t_1) dt_1
\\
&\quad + P_{k+1,9}^n(f)(\rho,\lambda)
\\
H_{k,10}^n(f)(\rho,\lambda)&=\chi_\lambda(\rho)\partial_\rho^n\left((1-\rho^2)^{\frac{s}{2}-\frac{3}{4}-\frac{\lambda}{2}}\rho^{\frac{1-d}{2}}b_1(\rho,\lambda)[1+\rho^2 e_3(\rho,\lambda)]\right)
\\
&\quad \times
\int_\rho^{1}\int_0^{t_1}\int_0^{t_2}\dots \int_0^{t_{k-1}}(1-\chi_\lambda(t_k))
\\
&\quad \times \frac{t_k^{\frac{d-1}{2}}\O(\langle\omega\rangle^{-\frac{3}{2}})[1+e_2(t_k,\lambda)]}{(1-t_k)^{s-\lambda-\frac12}}dt_k \dots d t_3 dt_2 f^{(k-1)}(t_1) dt_1
 + P_{k+1,10}(f)(\rho,\lambda)
\\
H_{k,11}^n(f)(\rho,\lambda)&=(1-\chi_\lambda(\rho))\partial_\rho^n\left(\rho^{\frac{1-d}{2}}(1-\rho)^{s-\lambda-\frac{1}{2}}[1+(1-\rho)\O(\rho^{-1}\langle\omega\rangle^{-1})]\right)
\\
&\quad \times
\int_\rho^{1}\int_0^{t_1}\int_0^{t_2}\dots \int_0^{t_{k-1}}\chi_\lambda(t_k)\frac{\O(t_k\langle\omega\rangle^{-\frac{d+1}{2}})}{(1-t_k^2)^{\frac{s}{2}+\frac{1}{4}-\frac{\lambda}{2}}}dt_k \dots d t_3 dt_2 f^{(k-1)}(t_1) dt_1
\\
&\quad + P_{k,11}^n(f)(\rho,\lambda),
\end{align*}
and
\begin{align*}
H_{k,12}^n(f)(\rho,\lambda)&=(1-\chi_\lambda(\rho))\partial_\rho^n \left(\rho^{\frac{1-d}{2}}(1+\rho)^{s-\lambda-\frac{1}{2}}[1+(1-\rho)\O(\rho^{-1}\langle\omega\rangle^{-1})]\right)
\\
&\quad \times
\int_\rho^{1}\int_0^{t_1}\int_0^{t_2}\dots \int_0^{t_{k-1}}\chi_\lambda(t_k)\frac{\O(t_k\langle\omega\rangle^{-\frac{d+1}{2}})}{(1-t_k^2)^{\frac{s}{2}+\frac{1}{4}-\frac{\lambda}{2}}}dt_k \dots d t_3 dt_2 f^{(k-1)}(t_1) dt_1
\\
&\quad + P_{k,12}^n(f)(\rho,\lambda)
\\
H_{k,13}^n(f)(\rho,\lambda)&=(1-\chi_\lambda(\rho))\partial_\rho^n\left(\rho^{\frac{1-d}{2}}(1-\rho)^{s-\lambda-\frac{1}{2}}[1+e_1(\rho,\lambda)][1+r_1(\rho,\lambda)]\right)
\\
&\quad \times
\int_\rho^{1}\int_0^{t_1}\int_0^{t_2}\dots \int_0^{t_{k-1}}(1-\chi_\lambda(t_k))
\\
&\quad \times \frac{t_k^{\frac{d-1}{2}}[1+e_2(t,\lambda)]r_2(t,\lambda)}{(2s-2\lambda-1)(1-t_k)^{s-\lambda-\frac12}}dt_k \dots d t_3 dt_2 f^{(k-1)}(t_1) dt_1
\\
&\quad +
(1-\chi_\lambda(\rho))\partial_\rho^n \left(\rho^{\frac{1-d}{2}}(1-\rho)^{s-\lambda-\frac{1}{2}}[1+e_1(\rho,\lambda)]r_1(\rho,\lambda)\right)
\\
&\quad \times
\int_\rho^{1}\int_0^{t_1}\int_0^{t_2}\dots \int_0^{t_{k-1}}(1-\chi_\lambda(t_k))
\\
&\quad \times \frac{t_k^{\frac{d-1}{2}}[1+e_2(t,\lambda)]}{(2s-2\lambda-1)(1-t_k)^{s-\lambda-\frac12}}dt_k \dots d t_3 dt_2 f^{(k-1)}(t_1) dt_1
\end{align*}
and
\begin{align*}
H_{k,14}^n(f)(\rho,\lambda)&=\chi_\lambda(\rho)\partial_\rho^n\left((1-\rho^2)^{\frac{s}{2}-\frac{3}{4}-\frac{\lambda}{2}}\O(\rho^0\langle\omega\rangle^0)\right)
\\
&\quad \times
\int_0^\rho\int_0^{t_1}\int_0^{t_2}\dots \int_0^{t_{k-1}}\frac{\O (t_k^{d-1}\langle\omega\rangle^{d-3})}{(1-t_k^2)^{\frac{s}{2}+\frac{1}{4}-\frac{\lambda}{2}}}dt_k \dots d t_3 dt_2 f^{(k-1)}(t_1) dt_1
\\
H_{k,15}^n(f)(\rho,\lambda)&=(1-\chi_\lambda(\rho))\partial_\rho^n \left(\rho^{\frac{1-d}{2}}(1+\rho)^{s-\lambda-\frac{1}{2}}[1+(1-\rho)\O(\rho^{-1}\langle\omega\rangle^{-1})]\right)
\\
&\quad \times
\int_0^\rho\int_0^{t_1}\int_0^{t_2}\dots \int_0^{t_{k-1}}\chi_\lambda(t_k)\frac{\O(t_k^{d-1}\langle\omega\rangle^{\frac{d-5}{2}})}{(1-t_k^2)^{\frac{s}{2}+\frac{1}{4}-\frac{\lambda}{2}}}dt_k \dots d t_3 dt_2 f^{(k-1)}(t_1) dt_1
\\
&\quad + P_{k,16}^n(f)(\rho,\lambda)
\\
H_{k,16}^n(f)(\rho,\lambda)&=(1-\chi_\lambda(\rho))\partial_\rho^n\left(\rho^{\frac{1-d}{2}}(1+\rho)^{s-\lambda-\frac{1}{2}}[1+e_2(\rho,\lambda)]r_2(\rho,\lambda)\right)
\\
&\quad \times
\int_0^\rho\int_0^{t_1}\int_0^{t_2}\dots \int_0^{t_{k-1}}(1-\chi_\lambda(t_k))
\\
&\quad\frac{t_k^{\frac{d-1}{2}}[1+e_1(t_k,\lambda)][1+r_1(t_k,\lambda)]}{(2s-2\lambda-1)(1+t_k)^{s-\lambda-\frac12}}dt_k \dots d t_3 dt_2 f^{(k-1)}(t_1) dt_1
\\
&\quad +(1-\chi_\lambda(\rho))\partial_\rho^n\left(\rho^{\frac{1-d}{2}}(1+\rho)^{s-\lambda-\frac{1}{2}}[1+e_2(\rho,\lambda)]\right)
\\
&\quad \times
\int_0^\rho\int_0^{t_1}\int_0^{t_2}\dots \int_0^{t_{k-1}}(1-\chi_\lambda(t_k))
\\
&\quad\times \frac{t_k^{\frac{d-1}{2}}[1+e_1(t_k,\lambda)]r_1(t_k,\lambda)}{(2s-2\lambda-1)(1+t_k)^{s-\lambda-\frac12}}dt_k \dots d t_3 dt_2 f^{(k-1)}(t_1) dt_1,
\end{align*}
and
\begin{align*}
H_{k,17}^n(f)(\rho,\lambda)&=(1-\chi_\lambda(\rho))\partial_\rho^n\left(\rho^{\frac{1-d}{2}}(1+\rho)^{s-\lambda-\frac{1}{2}}[1+(1-\rho)\O(\rho^{-1}\langle\omega\rangle^{-1})]\right)
\\
&\quad \times
\int_0^1\int_0^{t_1}\int_0^{t_2}\dots \int_0^{t_{k-1}}(1-\chi_\lambda(t_k))
\\
&\frac{t_k^{\frac{d-1}{2}}\O(\langle\omega\rangle^{-2})[1+e_3(t_k,\lambda)]}{(1-t_k)^{s-\lambda-\frac12}}dt_k \dots d t_3 dt_2 f^{(k-1)}(t_1) dt_1
\\
&\quad + P_{k,17}^n(f)(\rho,\lambda)
\end{align*}
where the terms $P_{k,j}^n(f)$ are of the same form as the corresponding leading order term, but with better behavior in either $\rho$ or $\omega$. 
Furthermore, in case $s \notin \mathbb{N}$ and $\Re \lambda=\mu_0$, one can decompose $W^n_{k-1}$ in the same way with the only difference being that $k$ gets replaced by $k-1$.
\end{lem}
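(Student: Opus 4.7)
The plan is to start from the explicit representation of the solutions $u_0,u_1$ and their free counterparts $u_{\mathrm f_0},u_{\mathrm f_1}$ provided by Lemma \ref{lem: form of solutions}, insert these into the definition of $W_k^n(f)$, and then expand using the partition of unity $\chi_\lambda+(1-\chi_\lambda)=1$ on both the outer factor and the variable $t_k$ in the innermost integral. Each of the four products in the definition of $W_k^n$ thus splits into four pieces according to whether $\rho$ and $t_k$ lie in the support of $\chi_\lambda$ or of $1-\chi_\lambda$, yielding sixteen pieces per line. On the support of $\chi_\lambda$, the function $u_1$ (resp.~$u_0$) equals its Bessel-type expression, while on $\mathrm{supp}(1-\chi_\lambda)$ it equals the $h_1$/$h_2$ asymptotic form; the analogous split holds in the free case. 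Taking the differences $u_0-u_{\mathrm f_0}$ and $u_1-u_{\mathrm f_1}$ then produces, by the identities recorded in the proof of Lemma \ref{lem:kappa} (i.e.~$c_{i,j}-c_{\mathrm f_{i,j}}=\O(\langle\omega\rangle^{-1})$, $\psi_1-\psi_{\mathrm f_1}=\O(\rho^{(d+3)/2}\langle\omega\rangle^{(d-2)/2})$, etc.), one additional factor of $\langle\omega\rangle^{-1}$ (and often an extra power of $\rho$), which explains the decay exponents appearing in each $H_{k,\ell}^n$.

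With this algebraic expansion in hand, I would then group the resulting terms by geometric region. The pieces where both $\rho$ and $t_k$ are near $0$ become $H_{k,9}^n$ and $H_{k,14}^n$ (the former coming from the $u_0$-out/$u_1$-in line and the latter from the $u_1$-out/$u_0$-in line, where the numerical prefactor $\O(\langle\omega\rangle^{d-3})$ reflects the $Y_\nu$ contribution in $u_1$). The pieces where $\rho$ is near $0$ and $t_k$ is near $1$ (or vice versa) become $H_{k,10}^n,H_{k,11}^n,H_{k,12}^n,H_{k,15}^n$, with the outer factor written in its asymptotic form coming from Lemmas \ref{lem: fundi hom} and \ref{lem: form of solutions}. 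The remaining pieces, where both $\rho$ and $t_k$ lie near $1$, become $H_{k,13}^n,H_{k,16}^n,H_{k,17}^n$; here the free version matches the leading $[1+e_j][1+r_j]$ factor up to $r_j$, so the difference retains an explicit $r_2$ or $r_1$ factor, which is the reason these terms split into two summands. All contributions where the cancellation produces strictly faster decay in $\langle\omega\rangle$ or in $\rho$ (near $0$), or a higher power of $(1-\rho)$ or $(1-t_k)$ (near $1$), are collected into the error terms $P_{k,j}^n$ attached to the corresponding leading $H_{k,j}^n$.

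The only real care needed is in the region near $\rho=1$ where, as in Lemma \ref{lem:decomp2}, one must keep track of the $r_j$ remainders separately from the $e_j$ remainders: the outer and inner factors of form $[1+e_j][1+r_j]$ have free counterparts $[1+e_{\mathrm f_j}]$ with $r_{\mathrm f_j}=0$, so when expanding the products $[1+e][1+r]\cdot\text{(inner)}-[1+e_{\mathrm f}]\cdot\text{(inner free)}$ one must separate the cross-terms $r\cdot\text{inner}$ and $\text{outer}\cdot r_{\text{inner}}$, which is exactly the origin of the two-summand structure of $H_{k,13}^n$ and $H_{k,16}^n$. All other remainders (differences of $e_j$, differences of $c_{i,j}$, etc.) carry an extra $\langle\omega\rangle^{-1}$ and can be absorbed into the corresponding $P_{k,j}^n$. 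The main bookkeeping obstacle is therefore purely organizational: verifying that every one of the $4\cdot 16$ summands produced by the expansion either matches one of the listed $H_{k,\ell}^n$ (up to a lower-order perturbation) or carries enough extra decay to belong to some $P_{k,j}^n$. Once this accounting is completed, the decomposition claimed in the lemma follows, and the parallel decomposition for $W^n_{k-1}$ in the case $s\notin\mathbb N$, $\Re\lambda=\mu_0$ is obtained identically with $k$ replaced by $k-1$ throughout, using the alternative representation of $\mathcal R(f)$ valid in that strip.
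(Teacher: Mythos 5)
Your overall approach — expand $u_0,u_1,u_{\mathrm f_0},u_{\mathrm f_1}$ via Lemma \ref{lem: form of solutions}, split with the partition of unity $\chi_\lambda+(1-\chi_\lambda)=1$ in both $\rho$ and $t_k$, then sort by geometric region and absorb faster-decaying remainders into the $P_{k,j}^n$ — is the right starting point and matches the strategy for the earlier decompositions. However, there is a genuine gap, and it is precisely the feature the paper's own proof singles out as the one nontrivial step.

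The definition of $W_k^n$ contains only integrals $\int_\rho^1$ (from the $u_0\,U_{1,k}$ pair) and $\int_0^\rho$ (from the $u_1\,U_{0,k}$ pair). A straight partition-of-unity expansion of those four lines therefore produces only kernels with $\int_\rho^1$ or $\int_0^\rho$ as the outer iterated integral. It cannot by itself produce the $\int_0^1$ appearing in $H_{k,17}^n$, and your sketch does not say where that integral comes from. To obtain $H_{k,17}^n$ one must first use the identity $u_0=u_2-\frac{c_{2,3}}{c_{2,4}}u_1$ (in both the $\rho$-factor and the $t_k$-integrand) — exactly the rewriting performed in the proof of Lemma \ref{lem:reslambda} — so that the two cross-terms $-\tfrac{c_{2,3}}{c_{2,4}}u_1(\rho)\int_\rho^1(\cdots u_1(t_k)\cdots)$ and $-\tfrac{c_{2,3}}{c_{2,4}}u_1(\rho)\int_0^\rho(\cdots u_1(t_k)\cdots)$ recombine into a single $-\tfrac{c_{2,3}}{c_{2,4}}u_1(\rho)\int_0^1(\cdots u_1(t_k)\cdots)$; subtracting the free analogue then supplies the extra $\O(\langle\omega\rangle^{-2})$ gain. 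Without this recombination, the partition-of-unity pieces in the near-$\rho=1$/near-$t_k=1$ region do not close up into the stated form, and the kernel $H_{k,17}^n$ is simply absent from your accounting. Relatedly, the outer factor $(1+\rho)^{s-\lambda-\frac12}$ in $H_{k,12}^n$ (and $H_{k,15}^n$, $H_{k,16}^n$, $H_{k,17}^n$) is exactly the $u_1$-contribution to this $u_2-\tfrac{c_{2,3}}{c_{2,4}}u_1$ rewriting of $u_0$, so the same manipulation is implicitly needed even to match those kernels. You should make this explicit rather than appealing to a blind $\chi_\lambda$-expansion, which alone does not produce the stated decomposition. (A small further remark: your count of ``$4\cdot16$ summands'' is off; the partition of unity in $\rho$ and $t_k$ gives $4$ pieces for each of the $4$ lines, i.e.~$16$ total, before the $u_0\to u_2$ rewriting.)
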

\begin{proof}
One arrives at this decomposition in the same way as the previous ones. The only notable difference are the kernels $H_{k,17}^n$, due to the integral from $0$  to $1$ contained in them. One obtains these kernels, as we split one integral term from $0$ to $\rho$ and one from $\rho$ to $1$. 
\end{proof}
Some remarks on this decomposition are in order. First, one notices that  
for the kernel $H_{k,15}$ we employ the slightly more crude symbol representation then for e.g. in the kernel $H_{k,9}$. This stems from the fact that symbol representation alone is not good enough, when we are unable to exchange positive powers of $t_1$ for powers of $\rho$. Furthermore, for the kernels $H_{k,13}^k$ and $H_{k,16}^k$ we need this explicit form, to once more obtain a cancellation.
As above, we define operators $T_{k,\ell}^{n,a}$ and $\dot{T}_{k,\ell}^{n,a}$ 
associated to $ W_k$ as
\begin{align*}
T_{k,\ell}^{n,a}f(\tau,\rho)&= \int_\R e^{i \omega\tau}f(\rho)H_{k,\ell}^n(\rho,\mu_a+ i \omega) d\omega
\end{align*}
and
\begin{align*}
\dot{T}_{k,\ell}^{n,a}f(\tau,\rho)&= \int_\R \omega e^{i \omega\tau}f(\rho)H_{k,\ell}^n(\rho,\mu_a +i \omega) d\omega
\end{align*}
for $0\leq n \leq k$ and $9\leq \ell \leq 17$ and $a=1$ in case $s \notin \mathbb{N}$ and $a=0,1$ in case it is. Analogously, we define the operators  
\begin{align*}
T_{k-1,\ell}^{n,0}f(\tau,\rho)&= \int_\R e^{i \omega\tau} H_{k-1,\ell}^n(f)(\rho, \mu_0+ i \omega)d\omega
\end{align*}
and
\begin{align*}
\dot T_{k-1,\ell}^{n,0}f(\tau,\rho)&= \int_\R \omega  e^{i \omega\tau}  H_{k-1,\ell}^n(f)(\rho,\mu_0+i \omega) d\omega
\end{align*}
$0\leq n \leq k-1$, provided $s \notin \mathbb{N}$.
\begin{lem} \label{lem:strich3a}
Let $ 3\leq d\in \mathbb{N}$ and $1\leq s \notin \mathbb{N}$ with $1\leq k=\ceil s \leq \frac{d}{2}$. Then, the estimates
\begin{align*}
\left\|\sum_{\ell=9}^{17}|.|^{-m}T_{k,\ell}^{n-m,1}f\right\|_{L^p(\R_+)L^q(\B^d_1)}\lesssim \| f\|_{W^{k-1,\frac{2}{1+2\frac{\delta}{\theta}}}(\B^d_1)}
\end{align*}
and 
\begin{align*}
\left\|\sum_{\ell=9}^{17}|.|^{-m}\dot{T}_{k,\ell}^{n-m,1}f\right\|_{L^p(\R_+)L^q(\B^d_1)}\lesssim \| f\|_{W^{k,\frac{2}{1+2\frac{\delta}{\theta}}}(\B^d_1)}
\end{align*}
hold for all  $m,n\in \mathbb{Z}$ with $0\leq m< n$, $0 \leq n \leq k$, $p,q\in [\frac{2}{1+2\frac{\delta}{\theta}},\infty]$ such that the scaling relation $$\frac{1}{p}+\frac{d}{q}=d(\frac{1}{2}+\frac{\delta}{\theta})-k+n$$ 
is satisfied, as well as all $f\in C^\infty_{rad}(\overline{\B^d_1})$.
Moreover, if $s>\frac{d-1}{2}$, then, also the estimates
\begin{align*}
\|\sum_{\ell=9}^{17} T_{k,\ell}^{0,1}(f)\|_{L^{\frac{2}{1+2\frac{\delta}{\theta}}}(\R_+)L^\infty(\B^d_1)}\lesssim \| f\|_{W^{k-1,\frac{2}{1+2\frac{\delta}{\theta}}}(\B^d_1)}
\end{align*}
and 
\begin{align*}
\|\sum_{\ell=9}^{17} \dot{T}_{k,\ell}^{0,1}(f)\|_{L^{\frac{2}{1+2\frac{\delta}{\theta}}}(\R_+)L^\infty(\B^d_1)}\lesssim \| f\|_{W^{k,\frac{2}{1+2\frac{\delta}{\theta}}}(\B^d_1)}
\end{align*}
hold.
Similarly, the estimates
\begin{align*}
\left\|\sum_{\ell=9}^{17}|.|^{-m}T_{k-1,\ell}^{n-m,0}f\right\|_{L^p(\R_+)L^q(\B^d_1)}\lesssim \| f\|_{W^{k-2,\frac{1}{\frac{1}{2}+\mu_0-\theta}}(\B^d_1)}
\end{align*}
and 
\begin{align*}
\left\|\sum_{\ell=9}^{17}|.|^{-m}\dot{T}_{k-1,\ell}^{n-m,0}f\right\|_{L^p(\R_+)L^q(\B^d_1)}\lesssim \| f\|_{W^{k-1,\frac{1}{\frac{1}{2}+\mu_0-\theta}}(\B^d_1)}
\end{align*}
hold for all  $m,n\in \mathbb{Z}$ with $0\leq m< n$, $0 \leq n \leq k-1$, $ p,q \in [\frac{1}{\frac 12 -\frac{\delta}{1-\theta}},\infty]$ such that the scaling relation $$\frac{1}{p}+\frac{d}{q}=d(\frac 12 -\frac{\delta}{1-\theta})-k+1+n$$
is satisfied, as well as all $f\in C^\infty_{rad}(\overline{\B^d_1})$.
Lastly, for $s >\frac{d-1}{2}$ the estimates
 \begin{align*}
\|T_{k-1,\ell}^{0,0}(f)\|_{L^{\frac{2}{1-\frac{2\delta}{1-\theta}}}(\R_+)L^\infty(\B^d_1)}\lesssim \| f\|_{W^{k-2,\frac{2}{1-\frac{2\delta}{1-\theta}}}(\B^d_1)}
\end{align*}
and 
\begin{align*}
\|\dot{T}_{k-1,\ell}^{0,0}(f)\|_{L^{\frac{2}{1-\frac{2\delta}{1-\theta}}}(\R_+)L^\infty(\B^d_1)}\lesssim \| f\|_{W^{k-1,\frac{2}{ 1-\frac{2\delta}{1-\theta}}}(\B^d_1)}
\end{align*}
hold for $\ell=9,\dots 17$ and all $f\in C^\infty_{rad}(\overline{\B^d_1})$.
\end{lem}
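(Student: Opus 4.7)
The plan is to follow the strategy used in the proofs of Lemmas \ref{lem:strich1a} and \ref{lem:strichartz2a}, treating each kernel $H_{k,\ell}^n$ individually, and to establish the stated bounds only for the operators $|.|^{-m}\dot T_{k,\ell}^{n-m,a}$ in detail. The corresponding bounds for $|.|^{-m}T_{k,\ell}^{n-m,a}$ follow mutatis mutandis since the extra factor of $\omega$ costs exactly one derivative of $f$, and the bounds in the case $a=0$ for the $W_{k-1}^n$ kernels are obtained by repeating the same analysis with $k$ replaced by $k-1$ throughout, noting that the allowed range of $n$ is correspondingly reduced.

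For the kernels $\ell\in\{9,10,11,12,14,15,17\}$, the symbol calculus affords enough flexibility to trade positive powers of $\rho$ (respectively of the internal variable $t_k$) for extra decay in $\langle\omega\rangle$, so that Lemma \ref{osci1} (on the support of $\chi_\lambda$) and Lemma \ref{osci3} (on the support of $1-\chi_\lambda$) apply to produce a spacetime factor of the form $\langle\tau-\log(1\pm\rho)\rangle^{-2}$. The multi-fold $t$-integrations are then dominated in absolute value, yielding positive powers of $\rho$, and the resulting pointwise bound on $|H_{k,\ell}^n|$ is integrated against $|f^{(k-1)}|$ in the appropriate weighted Lebesgue spaces. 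The bookkeeping between the weighted $L^q$ norms in $\rho$ and the Sobolev norms of $f$ is discharged by the weighted embeddings of Lemmas \ref{teclem2}, \ref{teclem3}, and \ref{teclem6}, together with the integral bounds of Lemmas \ref{teclem4} and \ref{teclem5}. The kernel $H_{k,17}^n$ is essentially $\rho$-independent, since its inner $t$-integral runs over all of $[0,1]$, and its estimate reduces to controlling a single decaying boundary factor in $\tau$.

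The kernels $\ell=13$ and $\ell=16$ constitute the delicate case and form the main obstacle. Individually, after one integration by parts inside the multi-fold integrals, their leading boundary contributions decay only quadratically in $\langle\omega\rangle$, which is insufficient for the $L^pL^q$ bounds at $n=k$. However, by performing the same sequence of integrations by parts as in the $H_{j,7}+H_{j,8}$ portion of the proof of Lemma \ref{lem:strichartz2a} and tracking the prefactors explicitly, one verifies that the quadratic boundary contributions of $H_{k,13}^n$ and $H_{k,16}^n$ cancel pairwise, leaving only cubic-decay remainders which then fit Lemmas \ref{osci1} and \ref{osci3} directly. The residual terms $P_{k,\ell}^n$ are of the same form as the leading contributions but with strictly better behavior in $\rho$ or $\omega$ and hence do not alter the analysis.

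Finally, the additional $L^pL^\infty$ endpoint estimates in the case $s>\tfrac{d-1}{2}$ follow from the sharp weighted $L^\infty$ control furnished by Lemma \ref{teclem2} (and its analogue with exponent $\tfrac{2}{1-2\delta/(1-\theta)}$), combined with the pointwise bounds on $H_{k,\ell}^n$ already obtained from the oscillatory integral lemmas. These endpoint estimates are handled exactly as the corresponding ones in Lemma \ref{lem:strichartz2a}, and present no further obstacle beyond the cancellation described above.
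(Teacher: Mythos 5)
Your proposal is correct and mirrors the paper's own proof: it treats each kernel individually, handles the bulk of them by trading powers of $\rho$ and the internal variables for decay in $\langle\omega\rangle$ so that Lemmas \ref{osci1} and \ref{osci3} apply, isolates the pairwise cancellation needed between $H_{k,13}^n$ and $H_{k,16}^n$ at top order (in direct analogy with the $H_{j,7}+H_{j,8}$ cancellation of Lemma \ref{lem:strichartz2a}), and correctly reduces the $T$ and $a=0$ variants to the $\dot T$, $a=1$ case. The one point you elide is that for $\ell=11,12$ at $n=k$ the boundary factor $(1-\rho)^{\theta-\mu_1-3/2}=(1-\rho)^{-r}$ is critically non-integrable in the relevant $L^{1/r}$ space and must be absorbed by the decay $\langle\tau-\log(1-\rho)\rangle^{-2}$ via the change of variables $\rho=1-e^{-y}$, as already done in Lemma \ref{lem:strich1a}; this is not just power-trading, but it is a device your cited strategy already contains.
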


\begin{proof}
Aside from the $L^pL^\infty$ estimate, which we prove at the end, we again only establish the estimates for $a=1$. We start with $|.|^{-m}T_{k,9}^{n-m,1}$ and rewrite this expression as
\begin{align*}
\rho^{-m}T_{k,9}^{n-m,1}(f)(\tau,\rho)&=\int_\rho^{1}\int_0^{t_1}\int_0^{t_2}\dots \int_0^{t_{k-1}}\int_\R e^{i \omega \tau }\chi_{\mu_1+i\omega}(\rho)(1-\rho^2)^{\frac{s}{2}-\frac{3}{4}-\frac{\mu_1+i\omega}{2}}
\\
&\quad \times
\chi_{\mu_a+i\omega}(t_k)\frac{\O(\rho^{-n +\frac{1}{8}}t_k^{\frac{3}{4}}\langle\omega\rangle^{-1-\frac{1}{8}})}{(1-t_k^2)^{\frac{s}{2}+\frac{1}{4}-\frac{\mu_1+i\omega}{2}}}d \omega dt_k \dots d t_3 dt_2 f^{(k-1)}(t_1) dt_1.
\end{align*}
Thus,
\begin{align*}
|\rho^{-m}T_{k,9}^{n-m,1}(f)(\tau,\rho)|&\lesssim \langle\tau \rangle^{-2} \rho^{-n+\frac{1}{8}}\int_\rho^{\rho_1}\int_0^{t_1}\int_0^{t_2}\dots \int_0^{t_{k-1}}t_k^{\frac{3}{4}} dt_k \dots d t_3 dt_2 |f^{(k-1)}(t_1)| dt_1
\\
&\lesssim \langle\tau \rangle^{-2} \rho^{-n+\frac{1}{8}}\int_\rho^{1}t_1^{\frac{3}{4}+k-1} |f^{(k-1)}(t_1)| dt_1
\\
&\lesssim \langle\tau \rangle^{-2} \rho^{-n+\frac{1}{8}+k-\frac{d-1}{2}}\int_\rho^{1}t_1^{\frac{3}{4}+\frac{d-3}{2}} |f^{(k-1)}(t_1)| dt_1
\\
&\lesssim \rho^{-n+\frac{1}{8}+k-\frac{d-1}{2}}\|f^{(k-1)}\|_{L^{\frac{2}{1+2\frac{\delta}{\theta}}}(\B^d_1)}\left(\int_0^1\rho^{-\frac{1}{4}(2+\frac{2\delta}{\theta+2\delta})}d\rho\right)^{\frac{1}{2}+\frac{\delta}{\theta}}
\\
&\lesssim \rho^{-n+\frac{1}{8}+k-\frac{d-1}{2}}\|f\|_{W^{k-1,\frac{2}{1+2\frac{\delta}{\theta}}}(\B^d_1)}.
\end{align*}
Now, if $k-n\geq \frac{d-1}{2}$, then $-n+\frac{1}{8}+k-\frac{d-1}{2}$ is positive and the desired estimates follow. If not, we set $r=\frac{1}{2}+\frac{\delta}{\theta}$ and recall that the highest spatial norm which we need to estimate is given by
\begin{align*}
\|.\|_{L^{\frac{d}{dr -k+n-r}}(\B^d_1)}.
\end{align*}
Now, if $dr-k+n-r\neq 0$ neither is $d-1-2k+2n$ provided that $\delta $ is sufficiently small. Thus, one computes 
\begin{align*}
\frac{2d}{d-1-2k+2n}(-n+k+\frac{1}{8}-\frac{d-1}{2}) +d-1=-1+\frac{d}{4(d-1-2k+2n)}
\end{align*}
which implies that 
\begin{align*}
&\quad \frac{d}{d(\frac{1}{2}+\frac{\delta}{\theta})-k+n-\frac{1}{2}+\frac{\delta}{\theta}}(-n+k+\frac{1}{8}-\frac{d-1}{2}) +d-1
\\
&=-1+\frac{d}{4(d-1-2k+2n)}
\\
&\quad+\left(\frac{d}{d(\frac{1}{2}+\frac{\delta}{\theta})-k+n-\frac{1}{2}+\frac{\delta}{\theta}}-\frac{2d}{d-1-2k+2n}\right)(-n+k+\frac{1}{8}-\frac{d-1}{2}).
\end{align*}
Now,
\begin{align*}
&\quad\frac{d}{d(\frac{1}{2}+\frac{\delta}{\theta})-k+n-\frac{1}{2}+\frac{\delta}{\theta}}-\frac{2d}{d-1-2k+2n}
\\
&=\frac{-2d(d-1)\frac{\delta}{\theta}}{(dr-k+n-r)(d-1-2k+2n)}=:\widetilde\delta  
\end{align*}
which ensures that
\begin{align*}
\||.|^{-n+k+\frac{1}{8}-\frac{d-1}{2}}\|_{L^{\frac{d}{dr-k+n-r}}(\B^d_1)}^{\frac{d}{dr-k+n-r}}&\lesssim \int_0^1 \rho^{-1-\widetilde \delta+\frac{d}{4(d-1-2k+2n)}} d\rho <\infty
\end{align*}
provided $\delta$ is chosen sufficiently small.
Similarly, one estimates $|.|^{-m}\dot{T}_{k,9}^{n-m,1}$ and we move on to $|.|^{-m} T_{k,10}^{n-m,1}$. Estimating $|.|^{-m} T_{k,10}^{0,1}$ is straightforward, so we study the more involved $|.|^{-m}T_{k,10}^{n-m,1}$. For this, we first have to manipulate the integral term
\begin{align*}
\int_0^{t_1}\int_0^{t_2}\dots \int_0^{t_{k-1}}(1-\chi_\lambda(t_k))\frac{t_k^{\frac{d-1}{2}}\O(\langle\omega\rangle^{-\frac{3}{2}})[1+e_3(t_k,\lambda)]}{(1-t_k)^{s-\lambda-\frac12}}dt_k \dots d t_3 dt_2 
\end{align*} 
appropriately. Integrating by parts yields
\begin{align*}
&\quad\int_0^{t_1}\int_0^{t_2}\dots \int_0^{t_{k-1}}(1-\chi_\lambda(t_k))\frac{t_k^{\frac{d-1}{2}}\O(\langle\omega\rangle^{-\frac{3}{2}})[1+e_2(t_k,\lambda)]}{(1-t_k)^{s-\lambda-\frac12}}dt_k \dots d t_3 dt_2 
\\
&= \int_0^{t_1}\int_0^{t_2}\dots \int_0^{t_{k-2}}(1-\chi_\lambda(t_{k-1}))\frac{t_{k-1}^{\frac{d-1}{2}}\O(\langle\omega\rangle^{-\frac{5}{2}})[1+e_2(t_{k-1},\lambda)]}{(1-t_{k-1})^{s-\lambda-\frac32}}dt_{k-1} \dots d t_3 dt_2 
\\
&\quad+\int_0^{t_1}\int_0^{t_2}\dots \int_0^{t_{k-1}}\O(\langle\omega\rangle^{-\frac{5}{2}})\frac{\partial_{t_k}[(1-\chi_\lambda(t_k))t_k^{\frac{d-1}{2}}[1+e_2(t_k,\lambda)]]}{(1-t_k)^{s-\lambda-\frac32}}dt_k \dots d t_3 dt_2 
\end{align*} 
which we iterate to obtain
\begin{align*}
&\quad\int_0^{t_1}\int_0^{t_2}\dots \int_0^{t_{k-1}}(1-\chi_\lambda(t_k))\frac{t_k^{\frac{d-1}{2}}\O(\langle\omega\rangle^{-\frac{3}{2}})[1+e_3(t_k,\lambda)]}{(1-t_k)^{s-\lambda-\frac12}}dt_k \dots d t_3 dt_2 
\\
&=\sum_{j=1}^{k-1} \int_0^{t_1}\int_0^{t_2}\dots \int_0^{t_{j}}\O(\langle\omega\rangle^{-k-\frac{1}{2}})\frac{\partial_{t_{j+1}}^{j}[(1-\chi_\lambda(t_k))t_{j+1}^{\frac{d-1}{2}}[1+e_3(t_{j+1},\lambda)]]}{(1-t_{j+1})^{s-\floor s -\frac 12-\lambda}}dt_{j+1} \dots d t_3 dt_2
\\
&\quad +  \O(\langle\omega\rangle^{-k-\frac{1}{2}})\frac{(1-\chi_\lambda(t_1))t_{1}^{\frac{d-1}{2}}[1+e_3(t_{1},\lambda)]]}{(1-t_{1})^{s-\floor s-\frac12-\lambda}}.
\end{align*}
 Now, we again differentiate between $dr-r-k+n> 0$ and $dr-k-r+n <0$, starting with the former case. Given that $ s-\floor s =\theta$ an application of Fubini's Theorem and Lemma \ref{osci3} yields
\begin{align*}
&\quad T_{k,10,0}^{n-m}(f)(\tau,\rho)
\\
&:= \quad\bigg|\int_\R e^{i \omega \tau } \chi_{\mu_1+i \omega}(\rho)\partial_\rho^{n-m}\left[(1-\rho^2)^{s-(\mu_1+i\omega)-\frac12}\rho^{\frac{1-d}{2}}b_1(\rho,\mu_1+i \omega)[1+\rho^2 e(\rho,\mu_1+i\omega)]\right]
\\
&\quad \times \int_\rho^1 \O(\langle\omega\rangle^{-k-\frac{1}{2}})\frac{(1-\chi_{\mu_1+i\omega}(t_1))t_{1}^{\frac{d-1}{2}}[1+e_3(t_{1},\mu_1+i\omega)]]}{(1-t_{1})^{\theta-\frac12-\mu_1-i\omega}} f^{(k-1)}(t_1) dt_1 d\omega \bigg|
\\
&=\bigg|\int_\rho^1 f^{(k-1)}(t_1) \int_\R e^{i \omega \tau } \chi_{\mu_1+i \omega}(\rho)(1-\rho^2)^{s-(\mu_1+i\omega)-\frac12}\O(\langle\omega\rangle^{-1+\varepsilon}\rho^{k-\frac{d-1}{2}-n+m+\varepsilon})
\\
&\quad \times\frac{(1-\chi_{\mu_1+i\omega}(t_1))t_{1}^{\frac{d-1}{2}}}{(1-t_{1})^{\theta -\frac12-\mu_1-i\omega}} d\omega dt\bigg|
\\
&\lesssim \rho^{k-\frac{ d-1}{2}-n+m+\varepsilon}\int_\rho^1 \langle\tau+\log(1-t)\rangle^{-2}
|f^{(k-1)}(t)|t^{\frac{d-1}{2}}(1-t)^{-\theta+\frac{1}{2}
+\mu_1}
\\
&\quad \times |\tau-\tfrac{1}{2}\log(1-\rho^2)+\log(1-t)|^{-\varepsilon}dt
\\
& \lesssim \rho^{k-(d-1)r-n+m+\varepsilon}\int_0^1 \langle\tau+\log(1-t)\rangle^{-2}
|f^{(k-1)}(t)|t^{(d-1)r}(1-t)^{-\theta+\frac{1}{2}
+\mu_1}
\\
&\quad \times |\tau-\tfrac{1}{2}\log(1-\rho^2)+\log(1-t)|^{-\varepsilon}dt
\end{align*} 
for some small $\varepsilon$.
Consequently, Minkwoski's inequality implies
\begin{align*}
&\quad\||.|^{-m} T_{k,10,0}^{n-m}(f)(\tau,.)\|_{L^{\frac{d}{dr-r-k+n}}(\B^d_1)}
\\
&\lesssim\langle\tau+\log(1-t)\rangle^{-2}\int_0^1 \lesssim\langle\tau+\log(1-t)\rangle^{-2}
|f^{(k-1)}(t)|t^{(d-1)r}(1-t)^{-\theta+ \frac{1}{2}-\mu_1}
\\
&\quad \times \|\rho^{k-(d-1)r-n+\varepsilon}|\tau-\tfrac{1}{2}\log(1-\rho^2)+\log(1-t)|^{-\varepsilon}\|_{L_\rho^{\frac{d}{dr-r-k+n}}(\B^d_1)}dt.
\end{align*}
Observe that
\begin{align*}
&\quad\|\rho^{k-(d-1)r-n+\varepsilon}|\tau-\frac{1}{2}\log(1-\rho^2)+\log(1-t)|^{-\varepsilon}\|_{L_\rho^{\frac{d}{dr-r-k+n}}(\B^d_1)}^{\frac{d}{dr-r-k+n}}
\\
&=\int_0^1 \rho^{-1+\varepsilon\frac{d}{dr-r-k+n} }|\tau-\tfrac{1}{2}\log(1-\rho^2)+\log(1-t)|^{-\varepsilon \frac{d}{dr-r-k+n}}
\\
&\lesssim  |\tau+\log(1-t)|^{-\varepsilon \frac{d}{dr-r-k+n}}
\end{align*}
provided $\varepsilon$ is chosen small enough, by Lemma \ref{teclem4}. Hence, 
\begin{align*}
&\quad \||.|^{-m} T_{k,10,0}^{n-m}(f)(\tau,.)\|_{L^p_\tau(\R_+) L^{q}(\B^d_1)}
\\
& \lesssim 
\left\|\int_0^1 \langle\tau+ \log(1-t)\rangle^{-2}
|f^{(k-1)}(t)|t^{(d-1)r}(1-t)^{-\theta+\frac{1}{2}+\mu_1}|\tau+\log(1-t)|^{-\varepsilon} dt\right\|_{L^p_\tau(\R_+)}
\end{align*} and by changing variable according to $t=1-e^{-y}$ and using Young's inequality, we deduce that
\begin{align*}
&\quad \||.|^{-m} T_{k,10,0}^{n-m}(f)(\tau,.)\|_{L^p_\tau(\R_+) L^{q}(\B^d_1)}
\\
& \lesssim 
\left\|\int_0^\infty \langle\tau-y\rangle^{-2}
|f^{(k-1)}(1-e^{-y})|(1-e^{-y})^{(d-1)r }e^{\tau(\theta-\frac32-\mu_1)}|\tau-y|^{-\varepsilon} dy\right\|_{L^p_\tau(\R_+)}
\\
&\lesssim \|\langle\tau\rangle^{-2}|\tau|^{-\varepsilon}\|_{L_\tau^{\frac{p}{1+p(1-r)}}(\R)} 
\|f^{(k-1)}(1-e^{-y})|((1-e^{-y})^{(d-1)r }e^{-\tau r}\|_{L^{\frac{1}{r}}(\R_+)}
\\
&\lesssim 
\||.|^{(d-1)r}
f^{(k-1)}\|_{L^{\frac{1}{r}}((0,1))}\lesssim \|f\|_{W^{k-1,\frac1r}(\B^d_1)}
\end{align*}
for $\varepsilon$ small enough and all admissible pairs $n,m,p,q$ (in case $p=\infty$ the fraction $\tfrac{p}{1+p(1-r)}$ should be understood as $\tfrac{1}{1-r}$). Now, in case $ dr-r-k+n\leq 0$, we have that $n=0$ and $k\geq dr-r$. Consequently, $\rho^{k-(d-1)r+\varepsilon}\leq 1 $ and the claimed estimate follows by the same means. 
For the remaining terms, we integrate by parts once more to derive 
\begin{align*}
&\quad\int_0^{t_1}\int_0^{t_2}\dots \int_0^{t_{j}}\O(\langle\omega\rangle^{-k-\frac{1}{2}})
\frac{\partial_{t_{j+1}}^{j}[(1-\chi_\lambda(t_{j+1}))t_{j+1}^{\frac{d-1}{2}}[1+e_3(t_{j+1},\lambda)]]}{(1-t_{j+1})^{\theta-\frac12-\lambda}}dt_{j+1} \dots d t_3 dt_2 
\\
&= \int_0^{t_1}\int_0^{t_2}\dots \int_0^{t_{j-1}}\O(\langle\omega\rangle^{-k-\frac{3}{2}})
\frac{\partial_{t_{j}}^{j}[(1-\chi_\lambda(t_j))t_{j}^{\frac{d-1}{2}}[1+e_3(t_{j},\lambda)]]}{(1-t_{j})^{\theta-\frac32-\lambda}}dt_{j} \dots d t_3 dt_2 
\\
& \quad+ \int_0^{t_1}\int_0^{t_2}\dots \int_0^{t_{j}}\O(\langle\omega\rangle^{-k-\frac{3}{2}})
\frac{\partial_{t_{j+1}}^{j+1}[(1-\chi_\lambda(t_k))t_{j+1}^{\frac{d-1}{2}}[1+e_3(t_{j+1},\lambda)]]}{(1-t_{j+1})^{\theta-\frac32-\lambda}}dt_{j+1} \dots d t_3 dt_2.
\end{align*}
Then, by arguing as above and employing Lemma \ref{osci3}, one deduces that
\begin{align*}
&\quad\bigg|\int_\R e^{i \omega \tau }  \chi_{\mu_1+i\omega}(\rho)\partial_\rho^{n-m}\left[(1-\rho^2)^{s-\mu_1-i\omega-\frac12}\rho^{\frac{1-d}{2}}b_1(\rho,\mu_1+i\omega)[1+\rho^2 e(\rho,\mu_1+i\omega)]\right]
\\
&\quad \times
\O(\langle\omega\rangle^{-k-\frac{3}{2}}) \int_\rho^1  \int_0^{t_1}\int_0^{t_2}\dots \int_0^{t_{j-1}}
\frac{\partial_{t_{j}}^{j}[(1-\chi_{\mu_1+i\omega}(t_j))t_{j}^{\frac{d-1}{2}}[1+e_3(t_{j},\mu_1+i\omega)]]}{(1-t_{j})^{\theta-\frac32-\mu_1+i\omega}}
\\
&\quad \times d\omega dt_{j} \dots d t_3 dt_2 f^{(k-1)}(t_1) dt_1 \bigg|
\\
&\lesssim\rho^{k-(d-1)r-m+\varepsilon}\int_0^1 
\int_0^{t_1}\int_0^{t_2}\dots \int_0^{t_{j-1}}s^{(d-1)r-(j+1)-\varepsilon}(1-t_j)^{\frac{1}{4}}\langle\tau+\log(1-t_j)\rangle^{-2}
\\
&\quad \times dt_j\dots d t_3 dt_2 
\lesssim \langle\tau\rangle^{-2} \rho^{k-(d-1)r-m+\varepsilon}\int_0^1 
t_1^{\frac{d-1}{2}-\varepsilon}|f^{(k-1)}(t_1)| dt_1 
\\
& \quad \langle\tau\rangle^{-2} \rho^{k-(d-1)r-m+\varepsilon} \|f\|_{W^{k-1,\frac1r}(\B^d_1)}.
\end{align*} 
By progressing likewise for the remaining terms one, readily obtains the desired bounds on $|.|^{-m}T_{k,10}^{n-m}(f)(\tau,\rho)$. For $|.|^{-m}\dot{T}_{k,10}^{n-m}(f)(\tau,\rho)$, one performs one more integration by parts and then applies similar reasoning. So, we turn to $\ell=11$. For $n<k$ one can derive all the desired estimates on $|.|^{-m}T^{n-m}_{k,11}$ and $|.|^{-m}\dot T^{n-m}_{k,11}$ in the same fashion as above. For $n=k$ the operator $ T^k_{k,11}$ requires further inspection. This stems from the fact that all $\rho-$derivatives could hit the term $(1-\rho)^{s-\lambda-\frac12}$, which leads to the term
\begin{align}
T^{k'}_{k,11}(f)(\tau,\rho)&:=\int_\R e^{i\omega\tau} (1-\chi_{\mu_1+i \omega}(\rho))\rho^{\frac{1-d}{2}}(1-\rho)^{\theta-\mu_1+i\omega-\frac{3}{2}}\nonumber
\\
&\quad \times[1+(1-\rho)\O(\rho^{-1}\langle\omega\rangle^{-1})]
\int_\rho^{1}\int_0^{t_1}\int_0^{t_2}\dots \int_0^{t_{k-1}}\chi_{\mu_1+i\omega}(t_k)
\\
&\quad \times \frac{\O(t_k\langle\omega\rangle^{k-\frac{d+1}{2}})}{(1-t_k^2)^{\frac{s}{2}+\frac{1}{4}-\frac{\mu_1+i\omega}{2}}}dt_k \dots d t_3 dt_2 f^{(k-1)}(t_1) dt_1 d\omega. \nonumber
\end{align} 
Lemma \ref{osci3} yields the estimate
\begin{align*}
|T^{k'}_{k,11}(f)(\tau,\rho)|&\lesssim\rho^{(1-d)r}(1-\rho)^{\theta-\mu_1-\frac{3}{2}}\langle\tau-\log(1-\rho)\rangle^{-2}\int_0^1 t^{(d-1)r}|f^{(k-1)}(s)| dt
\\
& \lesssim \rho^{(1-d)r}(1-\rho)^{\theta-\mu_1-\frac{3}{2}}\langle\tau-\log(1-\rho)\rangle^{-2}\|f\|_{W^{k-1,\frac1r}(\B^d_1)}.
\end{align*}
Next, we once more change coordinates according to $\rho= (1-e^y)$,  to compute that
\begin{align*}
\|T^{k'}_{k,11}(f)(\tau,\rho)\|_{L^\infty_\tau(\R_+) L^{\frac1r}_\rho(\B^d_1)}^{\frac1r}&\lesssim \left\| \int_0^1(1-\rho)^{-1} \langle\tau -\log(1-\rho)\rangle^{-4} d\rho \right\|_{L^\infty_\tau(\R_+)}^{\frac1r}\|f\|_{W^{k-1,r}(\B^d_1)}^{\frac1r}
\\
&\leq  \left\|\int_{-\infty}^0 \langle\tau -y\rangle^{-4} dy \right\|_{L^\infty_\tau(\R_+)}^{\frac{1}{r}} \|f\|_{W^{k-1,\frac 1r}(\B^d_1)}^{\frac1r}\lesssim \|f\|_{W^{k-1,r}(\B^d_1)}^{\frac1r}.
\end{align*}
Likewise, one bounds the remaining terms and also deals with $\dot{T}^k_{k,11}$. By similar means one bounds $|.|^{-m}T_{k,12}^{n-m}$ as well as $|.|^{-m}\dot T_{k,12}^{n-m} $ for all desired $n,m,p,q$. Furthermore, by using already employed means, one also readily derives the claimed estimates on $|.|^{-m}T_{k,13}^{n-m}$ for all $0\leq n\leq k$, $0\leq m <n$ and on $|.|^{-m}\dot T_{k,13}^{n-m}$ for $0\leq n\leq k-1$, $0\leq m <n$. However, to establish estimates on $\dot{T}^k_{k,13}$, one once more requires a cancellation. In particular, by integrating by parts $k$ times in both the kernel $H_{k,13}^k$ and $H_{k,16}^k$, one obtains troubling boundary parts, which however cancel in pairs. This procedure can be carried out as in the proof of Lemma \ref{lem:strichartz2a} and leads to desired estimates on $\dot{T}^k_{k,13}+\dot{T}^k_{k,16}$.
Furthermore, for $\ell=14,15$, one can bound the associated operators by again trading powers of $t$ and $\rho$ for decay. Lastly, since the integral in $H^n_{k,17}$ is over the whole interval $(0,1)$, bounding the associated oscillatory integrals can be achieved by integrating by parts as often as needed and then applying our technical Lemmas. Further, since no boundary terms will pop up when performing the integrations by parts, this is straightforward and needs no substantial modification of the strategies used so far. Consequently, only the estimates 
 \begin{align*}
\|T_{k-1,9}^{0,0}(f)\|_{L^{\frac{2}{1-\frac{2\delta}{1-\theta}}}(\R_+)L^\infty(\B^d_1)}\lesssim \| f\|_{W^{k-2,\frac{2}{1-\frac{2\delta}{1-\theta}}}(\B^d_1)}
\end{align*}
and 
\begin{align*}
\|\dot{T}_{k-1,9}^{0,0}(f)\|_{L^{\frac{2}{1-\frac{2\delta}{1-\theta}}}(\R_+)L^\infty(\B^d_1)}\lesssim \| f\|_{W^{k-1,\frac{2}{ 1-\frac{2\delta}{1-\theta}}}(\B^d_1)}
\end{align*}
remain to be shown. By mimicking the computations used to bound $T_{k,9}^{n,1}$ on concludes that 
$$
|T_{k-1,9}^{0,0}(f)(\rho,\tau)|\lesssim_\varepsilon \langle\tau \rangle^{-2} \int_\rho^{\rho_1}\int_0^{t_1}\int_0^{t_2}\dots \int_0^{t_{k-2}}t_{k-1}^{1-\varepsilon} dt_{k-1} \dots d t_3 dt_2 |f^{(k-2)}(t_1)| dt_1
$$
which, for $\varepsilon$ small enough, implies that
\begin{align*}
\|T_{k-1,9}^{0,0}(f)\|_{L^{\frac{2}{1-\frac{2\delta}{1-\theta}}}(\R_+)L^\infty(\B^d_1)}&\lesssim_\varepsilon \int_0^1|f^{(k-2)}(t)|t^{k-1-\varepsilon} dt&\lesssim_\varepsilon \|f^{(k-2)}(t)t^{\frac{d-1}{2}}\|_{L^{\frac{2}{ 1-\frac{2\delta}{1-\theta}}}((0,1))}
\\
&\lesssim \| f\|_{W^{k-2,\frac{2}{1-\frac{2\delta}{1-\theta}}}(\B^d_1)}.
\end{align*}
A minor modification then shows 
\begin{align*}
\|\dot T_{k-1,\ell}^{0,0}(f)\|_{L^{\frac{2}{1-\frac{2\delta}{1-\theta}}}(\R_+)L^\infty(\B^d_1)}&\lesssim_\varepsilon \int_0^1|f^{(k-2)}(t)|t^{k-2-\varepsilon} \lesssim \| f\|_{W^{k-1,\frac{2}{1-\frac{2\delta}{1-\theta}}}(\B^d_1)}
\end{align*}
and we move to $T_{k-1,10}^{0,0} $. For this, we look at
\begin{align*}
H_{k-1,10}^0(f)(\rho,\lambda)&=\chi_\lambda(\rho)(1-\rho^2)^{\frac{s}{2}-\frac{3}{4}-\frac{\lambda}{2}}\rho^{\frac{1-d}{2}}b_1(\rho,\lambda)
\\
&\quad \times [1+\rho^2 e_3(\rho,\lambda)]
\int_\rho^{1}\int_0^{t_1}\int_0^{t_2}\dots \int_0^{t_{k-2}}(1-\chi_\lambda(t_{k-1}))
\frac{t_{k-1}^{\frac{d-1}{2}}\O(\langle\omega\rangle^{-\frac{3}{2}})}{(1-t_{k-1})^{s-\lambda-\frac12}}
\\
&\quad \times [1+e_2(t_{k-1},\lambda)] dt_{k-1} \dots d t_3 dt_2 f^{(k-2)}(t_1) dt_1
 + P_{k-1,10}(f)(\rho,\lambda).
\end{align*}
Now, we perform a number of integrations by parts to conclude 
\begin{align*}
&\quad\int_0^{t_1}\int_0^{t_2}\dots \int_0^{t_{k-2}}(1-\chi_\lambda(t_k))\frac{t_{k-1}^{\frac{d-1}{2}}\O(\langle\omega\rangle^{-\frac{3}{2}})[1+e_3(t_{k-1},\lambda)]}{(1-t_{k-1})^{s-\lambda-\frac12}}dt_{k-1} \dots d t_3 dt_2 
\\
&=\sum_{j=1}^{k-2} \int_0^{t_1}\int_0^{t_2}\dots \int_0^{t_{j}}\O(\langle\omega\rangle^{-k+\frac{1}{2}})\frac{\partial_{t_{j+1}}^{j}[(1-\chi_\lambda(t_{k-1}))t_{j+1}^{\frac{d-1}{2}}[1+e_3(t_{j+1},\lambda)]]}{(1-t_{j+1})^{s-\floor s +\frac 12-\lambda}}dt_{j+1} \dots d t_3 dt_2
\\
&\quad +  \O(\langle\omega\rangle^{-k+\frac{1}{2}})\frac{(1-\chi_\lambda(t_1))t_{1}^{\frac{d-1}{2}}[1+e_3(t_{1},\lambda)]]}{(1-t_{1})^{s-\floor s+\frac12-\lambda}}.
\end{align*}
So, the most difficult term to bound is given by
\begin{align*}
T_{k-1,10_1}^{0,0}(f)(\rho,\tau)&:=\int_\R e^{i \omega \tau}\chi_{\mu_0+i\omega}(\rho)(1-\rho^2)^{\frac{s}{2}-\frac{3}{4}-\frac{\mu_0+i\omega}{2}}\rho^{\frac{1-d}{2}}b_1(\rho,\mu_0+i\omega)
\\
&\quad \times \int_\rho^1
\O(\langle\omega\rangle^{-k+\frac{1}{2}})\frac{(1-\chi_\lambda(t_1))t_{1}^{\frac{d-1}{2}}[1+e_3(t_{1},\lambda)]]}{(1-t_{1})^{s-\floor s+\frac12-\lambda}} dt_1 d\omega.
\end{align*}
An application of Lemma \ref{osci3} shows
\begin{align*}
|T_{k-1,10_1}^{0,0}(f)(\rho,\tau)|&\lesssim \int_0^1|\tau-\log(1-\rho^2)+\log(1-t)|^{-\frac12}\langle\tau-\log(1-\rho^2)+\log(1-t)\rangle^{-1}
\\
&\quad \times \langle\tau+\log(1-t)\rangle^{-2}\frac{t^{\frac{d-1}{2}}}{(1-t)^{s-\floor s+\frac12-\mu_0}} |f^{(k-2})(t)| dt
\\
&= \int_0^\infty|\tau-\log(1-\rho^2)-y|^{-\frac12}\langle\tau-\log(1-\rho^2)-y\rangle^{-1}
\\
&\quad \times \langle\tau-y\rangle^{-1}\frac{(1-e^{-y})^{\frac{d-1}{2}}}{e^{y(s-\floor s+\frac12-\mu_0)}}e^{-y} |f^{(k-2}(1-e^{-y})| dt.
\end{align*}
Further, as $\mu_0=\theta -\frac{\delta}{1-\theta}$ an application of Hölder's inequality yields
\begin{align*}
|T_{k-1,10_1}^{0,0}(f)(\rho,\tau)|&\lesssim
\||.|^{-\frac{1}{2}} \langle.\rangle^{-1}\|_{L^{\frac{2(1-\theta)}{1-\theta+2\delta}}(\R)}
\\
&\quad\times \|\langle\tau-y\rangle^{-1}(1-e^{-y})^{\frac{d-1}{2}}e^{-y(\frac{1}{2}-\frac{\delta}{1-\theta})} f^{(k-2}(1-e^{-y})\|_{L^{\frac{2}{1-\frac{2\delta}{1-\theta}}}_y(\R_+)}
\\
& \lesssim
\|\langle\tau-y\rangle^{-1}(1-e^{-y})^{\frac{d-1}{2}}e^{-y(\frac{1}{2}-\frac{\delta}{1-\theta})} f^{(k-2}(1-e^{-y})\|_{L^{\frac{2}{1-\frac{2\delta}{1-\theta}}}_y(\R_+)}.
\end{align*}
Therefore, an application of Young's inequality yields
\begin{align*}
\|T_{k-1,10_1}^{0,0}(f)\|_{L^{\frac{2}{1-\frac{2\delta}{1-\theta}}}_y(\R_+)L^\infty(\B^d_1)}^{\frac{2}{1-\frac{2\delta}{1-\theta}}}&\lesssim \int_0^\infty \int_0^\infty |\langle\tau-y\rangle^{-2}(1-e^{-y})^{d-1}e^{-y} |f^{(k-2}(1-e^{-y})|^{\frac{2}{1-\frac{2\delta}{1-\theta}}} dy d\tau
\\
&
\lesssim \int_\R \langle\tau\rangle^{-2} d\tau \int_0^\infty (1-e^{-y})^{d-1}e^{-y} |f^{(k-2}(1-e^{-y})|^{\frac{2}{1-\frac{2\delta}{1-\theta}}} dy
\\
&\lesssim \|f^{(k-1)}\|_{L^{\frac{2}{1-\frac{2\delta}{1-\theta}}}(\B^d_1)}^{\frac{2}{1-\frac{2\delta}{1-\theta}}} \lesssim \|f\|_{W^{k-2,\frac{2}{1-\frac{2\delta}{1-\theta}}}(\B^d_1)}^{\frac{2}{1-\frac{2\delta}{1-\theta}}}.
\end{align*}
Now, to bound $\dot T_{k-1,10}$, one performs another integration by parts and argues likewise. Furthermore, as the remaining estimates follow by employing the same tools, we conclude this proof. 
\end{proof}
\begin{lem} \label{lem:strich3b}
Let $ 3\leq d\in \mathbb{N}$ and $1\leq s \in \mathbb{N}$ with $1\leq s < \frac{d}{2}$. Then, the estimates
\begin{align*}
\left\|\sum_{\ell=9}^{17}|.|^{-m}T_{s,\ell}^{n-m,1}f\right\|_{L^p(\R_+)L^q(\B^d_1)}\lesssim \| f\|_{W^{s-1,\frac{2}{1+2\delta}}(\B^d_1)}
\end{align*}
and 
\begin{align*}
\left\|\sum_{\ell=9}^{17}|.|^{-m}\dot{T}_{s,\ell}^{n-m,1}f\right\|_{L^p(\R_+)L^q(\B^d_1)}\lesssim \| f\|_{W^{s,\frac{2}{1+2\delta}}(\B^d_1)}
\end{align*}
hold for all  $n,m\in \mathbb{Z}$ with $0\leq n \leq s$, $0\leq m<n$,
 $p,q\in [\frac{2}{1 +2\delta},\infty]$ such that the scaling relation  $$\frac{1}{p}+\frac{d}{q}=d(\frac{1}{2} +\delta)-s+n$$ 
is satisfied, as well as all $f\in C^\infty_{rad}(\overline{\B^d_1})$.
 Furthermore, the estimates 
\begin{align*}
\left\|\sum_{\ell=9}^{17}|.|^{-m}T_{s,\ell}^{n-m,0}f\right\|_{L^p(\R_+)L^q(\B^d_1)}\lesssim \| f\|_{W^{s-1,\frac{2}{1-2\delta}}(\B^d_1)}
\end{align*}
and 
\begin{align*}
\left\|\sum_{\ell=9}^{17}|.|^{-m}\dot{T}_{s,\ell}^{n-m,0}f\right\|_{L^p(\R_+)L^q(\B^d_1)}\lesssim \| f\|_{W^{s,\frac{2}{1-2\delta}}(\B^d_1)}
\end{align*}
hold for the same range of $n,m$ and all $ p,q \in [\frac{2}{1-2\delta}, \infty ]  $ such that the scaling relation  $$\frac{1}{p}+\frac{d}{q}=d(\frac12-\delta)-s+n$$ is satisfied, as well as all $f\in C^\infty_{rad}(\overline{\B^d_1})$.
Lastly, in case $s=\frac{d-1}{2}$, one also has that 
\begin{align*}
\|T_{k,\ell}^{0,1}f\|_{L^{\frac{2}{1+2\delta}}(\R_+)L^\infty(\B^d_1)}&\lesssim \| f\|_{W^{s-1,\frac{2}{1+2\delta}}(\B^d_1)}
\\
\| \dot{T}_{k,\ell}^{0,1}f\|_{L^{\frac{2}{1+2\delta}}(\R_+)L^\infty(\B^d_1)}&\lesssim \| f\|_{W^{s,\frac{2}{1+2\delta}}(\B^d_1)}
\end{align*}
and
\begin{align*}
\|T_{k,\ell}^{0,0}f\|_{L^{\frac{2}{1-2\delta}}(\R_+)L^\infty(\B^d_1)}&\lesssim \| f\|_{W^{s-1,\frac{2}{1-2\delta}}(\B^d_1)}
\\
\| \dot{T}_{k,\ell}^{0,0}f\|_{L^{\frac{2}{1-2\delta}}(\R_+)L^\infty(\B^d_1)}&\lesssim \| f\|_{W^{s,\frac{2}{1-2\delta}}(\B^d_1)}
\end{align*}
for  and $\ell=9,\dots,17$ and all $f\in C^\infty_{rad}(\overline{\B^d_1})$.
\end{lem}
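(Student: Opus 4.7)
The plan is to follow the proof of Lemma \ref{lem:strich3a} almost verbatim, making only the adjustments needed for the integer regularity setting. Since $s\in\mathbb{N}$, we take $k=s$, and the two contour heights are $\mu_0=\delta$ and $\mu_1=-\delta$; consequently the weighted exponents $\frac{1}{2}+\frac{\delta}{\theta}$ and $\frac{1}{2}-\frac{\delta}{1-\theta}$ that appeared throughout the non-integer proof are simply replaced by $\frac{1}{2}+\delta$ and $\frac{1}{2}-\delta$, respectively. The decomposition $W_s^n(f)=\sum_{\ell=9}^{17}H_{s,\ell}^n$ and the associated operators $T_{s,\ell}^{n,a}$, $\dot{T}_{s,\ell}^{n,a}$ are defined exactly as before, now for $a=0,1$ and the same range $0\le n\le s$, $0\le m<n$ of the indices.

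First I would treat the operators $T_{s,9}^{n-m,a}$ and $\dot T_{s,9}^{n-m,a}$ by trading a small fractional power of $\rho$ and $t_k$ inside the support of the cutoff $\chi_\lambda$ for additional decay in $\langle\omega\rangle$, applying Lemma \ref{osci1}, and then absorbing the resulting weighted spatial factor into the Sobolev norm by means of Lemmas \ref{teclem2} and \ref{teclem3}; the only check needed is that $dr-k+n-r\neq 0$ for $r=\frac{1}{2}+\delta$, which holds for $\delta$ sufficiently small. Next, for $T_{s,10}^{n-m,a}$ and $\dot T_{s,10}^{n-m,a}$, I would first perform $k-1$ integrations by parts in the inner $t_k$-integral against the factor $(1-t_k)^{\lambda-s+1/2}$ to lift the decay in $\langle\omega\rangle$ to $\langle\omega\rangle^{-k-1/2}$, and then bound the resulting oscillatory integrals with Lemma \ref{osci3} and Lemma \ref{teclem4}, followed by Young's inequality in the $y=-\log(1-t)$ variable as in the proof of Lemma \ref{lem:strich3a}. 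The operators with $\ell=11,12,14,15,17$ are handled by the same combination of integration by parts, Lemmas \ref{osci3}, \ref{osci5}, \ref{osci6}, \ref{teclem4}, \ref{teclem5}, and the weighted norm Lemmas; none of these steps require any modification beyond the replacement of the weight exponents.

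The main obstacle, as in the non-integer case, is the operator $\dot T_{s,13}^{s,a}$, because differentiating $(1-\rho)^{s-\lambda-1/2}$ exactly $s$ times produces a boundary term that only decays quadratically in $\omega$. The remedy is to pair $H_{s,13}^s$ with $H_{s,16}^s$, carry out $s$ integrations by parts in the inner integrals, and exploit the explicit cancellation of the leading boundary contributions, exactly as performed for $H_{k,13}^k+H_{k,16}^k$ in the proof of Lemma \ref{lem:strich3a}. The cancellation depends only on the algebraic identities
\begin{align*}
\frac{\prod_{\ell=0}^{s-1}(s-\lambda-\tfrac12-\ell)}{(2s-2\lambda-1)\prod_{\ell=1}^{s}(s-\lambda-\tfrac12-\ell)}=\frac{1}{(2s-2\lambda-1)(s-\lambda-\tfrac12-s)}\cdot \prod_{\ell=0}^{s-1}\tfrac{s-\lambda-\tfrac12-\ell}{s-\lambda-\tfrac12-\ell-1}\cdot(s-\lambda-\tfrac12),
\end{align*}
which do not involve the condition $s\notin\mathbb{N}$ and therefore carry over directly. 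After the cancellation, the remaining terms decay cubically in $\omega$ and are again controlled by Lemma \ref{osci3}.

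Finally, the endpoint $L^{p}L^{\infty}$ estimates in the special case $s=\frac{d-1}{2}$ are obtained exactly as at the end of the proof of Lemma \ref{lem:strich3a}: the pointwise bound produced by Lemma \ref{osci1} (or Lemma \ref{osci3}) is of the form $\langle\tau\rangle^{-2}\rho^{s-\frac{d-1}{2}+1-\varepsilon}|f^{(s-1)}(\rho)|$ for $\ell=9$, and an analogous bound for the other values of $\ell$; then, noting that $\frac{d-1}{2}=s$ in this regime, the weight $\rho^{\frac{d-1}{2}}$ combines with $f^{(s-1)}$ to yield $\|f\|_{W^{s-1,2/(1\pm 2\delta)}(\B^d_1)}$ after an application of Hölder's inequality with the exponent $\frac{2}{1\pm 2\delta}$, precisely as in the closing arguments of Lemmas \ref{lem:strich1a}, \ref{lem:strichartz2a}, and \ref{lem:strich3a}. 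No new estimates on oscillatory integrals are required.
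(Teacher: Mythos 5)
The paper omits the proof of this lemma, presenting it (as for Lemma \ref{lem:strichartz2b}) as a routine integer-regularity analogue of Lemma \ref{lem:strich3a}; your proposal carries out exactly the adaptation the omission leaves implicit, and it is the correct reading. In particular, you correctly note that both contours now act on the single decomposition $W_s^n(f)=\sum_{\ell=9}^{17}H_{s,\ell}^n$ (rather than $W_k^n$ on one contour and $W_{k-1}^n$ on the other), that the relevant integrability exponents become $\tfrac12\pm\delta$, and that the substantive steps --- trading powers of $\rho$ and $t_k$ for $\omega$-decay near the origin, lifting decay by repeated integration by parts in the inner $t_k$-integral for $\ell=10$, the cancellation between $H_{s,13}^s$ and $H_{s,16}^s$ at top derivative order, and the $L^p L^\infty$ endpoint --- go through unchanged.

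One correction worth flagging: the displayed algebraic identity is false as written. Setting $a=s-\lambda-\tfrac12$, the left-hand side telescopes to
\begin{align*}
\frac{\prod_{\ell=0}^{s-1}(a-\ell)}{(2a)\prod_{\ell=1}^{s}(a-\ell)}=\frac{1}{2(a-s)}=\frac{-1}{2\lambda+1},
\end{align*}
whereas your right-hand side evaluates to $\tfrac{a}{2(a-s)^2}$, so the two sides differ by the factor $\tfrac{a}{a-s}$. The correct simplification is the first equality above, which indeed holds for every real $s$ and hence for integer $s$, so your conclusion --- that the cancellation is algebraic and carries over --- is right even though the intermediate identity you quote is not. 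It is also worth mentioning explicitly (which your blanket statement absorbs but does not highlight) that the case $n=s$ for $\ell=11$ requires the same auxiliary change of variables $\rho=1-e^{y}$ as in Lemma \ref{lem:strich3a}, since all $s$ derivatives can land on $(1-\rho)^{s-\lambda-\frac12}$; this is covered by your appeal to the same combination of oscillatory-integral lemmas, but a reader following your sketch should be aware the step is not entirely uniform.
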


To prove Theorem \ref{thm:strichartz} we first need a proper definition of admissible Strichartz indices. 
\begin{defi}
Let $d\geq 3$ and $s\notin \mathbb{N}$ with $1\leq s<\frac{d}{2}$. Then, we call the triple $(p_1,q_1,n_1)$, with $ \ceil s =k \geq n_1\in \mathbb{N}$ and $p_1,q_1\in [\frac{2}{1+2\frac{\delta}{\theta}},\infty],$ $ \ceil s$-Strichartz admissible, provided that
$$\frac{1}{p_1}+\frac{d}{q_1}=d(\frac{1}{2}+\frac{\delta}{\theta})-k+n_1.$$
Similarly, the triple $(p_0,q_0,n_0)$ with $k-1\geq n_0\in \mathbb{N}$ and $p_0,q_0\in [\frac{2}{1-\frac{2\delta}{1-\theta}},\infty]$ is called $ \floor s$-Strichartz admissible, provided that
$$\frac{1}{p_0}+\frac{d}{q_0}=d(\frac{1}{2}-\frac{\delta}{1-\theta})-s+1+n_0.$$

In case $s \in \mathbb{N}$, the triple $(p,q,n)$ with $s\geq n \in \mathbb{N}$ is called $ s^+$-Strichartz admissible, provided that
\begin{equation}\label{eq:defi ad}
\frac{1}{p}+\frac{d}{q}=d(\frac{1}{2}-\delta)-s+n
\end{equation}
and $ s^-$-Strichartz admissible if, instead of \eqref{eq:defi ad}, the relation
\begin{equation*}
\frac{1}{p}+\frac{d}{q}=d(\frac{2}{1+2\delta})-s+n
\end{equation*}
is satisfied.
Finally, any triple $(p,q,n)$ as in the Lemma \ref{Strichart} is called $s$-admissible.
\end{defi}
The following remark highlights some important properties of these triples.
\begin{rem} \label{rem:1}
Let $d \geq 3$ and $1\leq s < \ceil s\leq \frac{d}{2}$ be fixed. Then, for any $s$-Strichartz admissible triple $(p,q,n)$, with $q\neq \infty$ and where $n\in \mathbb{N}$ is such that $(1-\theta) n_0 +\theta n_0$ with $n_0\geq  n_1 \in \mathbb{N}$, there exists a $\ceil s$-Strichartz admissible triple $(p_1,q_1,n_1),$ as well as a $\floor s$-Strichartz admissible triple $(p_0,q_0,n_0),$ such that
\begin{align*}
\frac{1-\theta}{p_0}+\frac{\theta}{p_1}=\frac{1}{p}, \quad \frac{1-\theta}{q_0}+\frac{\theta}{q_1}=\frac{1}{q},\quad (1-\theta) n_0+ \theta n_1 =n.
\end{align*}
This follows from
\begin{align*}
&\quad \frac{1-\theta}{p_0}+\frac{\theta}{p_1}+ d\left( \frac{1-\theta}{q_0}+\frac{\theta}{q_1}\right)
\\
&=
(1-\theta )\left[d\left(\frac{1}{2}-\frac{\delta}{1-\theta}\right)-k+1+n_0\right]+\theta \left[d\left(\frac{1}{2}+\frac{\delta}{\theta}\right)-k+n_1\right]
\\
&=\frac{d}{2}-s+ (1-\theta) n_0 +\theta n_1. 
\end{align*} 
Moreover, if an $s$ admissible triple is of the form $(p,\infty,0)$, which implies $s\geq \frac{d-1}{2}$, then the additional $L^p L^\infty$ estimates from Lemmas \ref{lem:strich1a}, \ref{lem:strichartz2a}, and \ref{lem:strich3a} ensure that we also find triples $ (p_1,\infty,0), (p_0,\infty,0)$ that interpolate into the desired space.
Furthermore, in the integer case, the analogous statement holds as well.
\end{rem}
Further, to deal with the isolated eigenvalues, we will rely on the following result.
\begin{lem}\label{lem:proj}
Let $H$ be a Hilbert space. Then, for any densely defined
operator $T:D(T)\subset H \to H$ with finite rank, there exists a
dense subset $X \subset H$ with $X\subset D(T)$ and a bounded linear operator $\widehat{T}:H \to H$ such that
\begin{equation*}
T|_X=\widehat{T}|_X.
\end{equation*}
\end{lem}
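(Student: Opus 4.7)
The plan is to reduce the construction of $\widehat{T}$ to extending finitely many linear functionals, and then to establish a simultaneous density statement via Hahn--Banach. Since $T$ has finite rank, write $R:=\operatorname{rg}(T)=\mathrm{span}\{e_1,\dots,e_n\}$, so that $Tf=\sum_{i=1}^n \phi_i(f)\,e_i$ for uniquely determined linear functionals $\phi_i:D(T)\to\C$. The basis must be chosen with care: I would arrange it so that for some $0\le k\le n$ the first $k$ functionals $\phi_1,\dots,\phi_k$ are bounded on $D(T)$ in the $H$-norm, while the remaining $\phi_{k+1},\dots,\phi_n$ are \emph{jointly unbounded}, meaning that no non-trivial linear combination $\sum_{i>k} c_i\phi_i$ is bounded. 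This is a purely linear-algebraic construction: inside the $n$-dimensional space $R^*$ one singles out the subspace of those $e^*$ for which $f\mapsto e^*(Tf)$ is $H$-bounded, picks a complement, and takes the basis of $R$ dual to any basis of $R^*$ adapted to this splitting.

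Next I would extend each bounded $\phi_i$ (for $i\le k$) to a functional $\widehat\phi_i\in H^*$ by continuity and density of $D(T)$, set $\widehat\phi_i:=0$ for $i>k$, and define
\[
\widehat{T} f := \sum_{i=1}^n \widehat\phi_i(f)\, e_i,
\]
which is manifestly bounded from $H$ to $H$. The natural candidate for the dense set is $X:=D(T)\cap\bigcap_{i>k}\ker\phi_i$. On $X$ the two operators agree by construction: the terms with $i>k$ drop out of $Tf$ by the definition of $X$, while on $D(T)\supset X$ we have $\phi_i=\widehat\phi_i$ for $i\le k$.

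The substantive step, and the one I expect to be the main obstacle, is to show that $X$ is dense in $H$. I plan to argue by contradiction using Hahn--Banach: if $X$ were not dense in $H$, there would exist a non-zero $\psi\in H^*$ vanishing on $X$, whose restriction $\psi|_{D(T)}$ is then a bounded linear functional on $D(T)$ that annihilates $\bigcap_{i>k}\ker\phi_i$. By the standard fact that any linear functional vanishing on the joint kernel of finitely many linear functionals lies in their linear span, this forces $\psi|_{D(T)}=\sum_{i>k} c_i\phi_i$ for some scalars $c_i\in\C$. The left-hand side is bounded, but by our choice of basis the only bounded linear combination of $\phi_{k+1},\dots,\phi_n$ is the trivial one, so all $c_i=0$ and $\psi$ vanishes on the dense subspace $D(T)$, hence on $H$, a contradiction. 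The heart of the argument is thus the initial basis selection: without ensuring that the unbounded $\phi_i$'s are linearly independent \emph{modulo} the bounded functionals, the Hahn--Banach step would not close, and one could not simultaneously zero out all the unbounded contributions on a set dense in the full space $H$.
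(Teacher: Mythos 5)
Your proof is correct, and it is complete where the paper itself is not: the paper merely cites Lemma 4.2 of \cite{DonWal23} for this statement, so there is no proof in the text to compare against. The key steps all check out. Writing $Tf=\sum_{i=1}^n\phi_i(f)e_i$, taking the subspace $V\subset R^*$ of linear forms $v$ for which $v\circ T$ is $H$-bounded, extending a basis of $V$ to a basis of $R^*$, and passing to the dual basis of $R$ does deliver coefficient functionals $\phi_1,\dots,\phi_k$ that are bounded while $\phi_{k+1},\dots,\phi_n$ admit no nontrivial bounded linear combination. The set $X=D(T)\cap\bigcap_{i>k}\ker\phi_i$ is a subspace on which $T$ and $\widehat T$ agree, and the density argument closes: if $\overline X\neq H$, a nonzero $\psi\in H^*$ vanishes on $X$, its restriction to $D(T)$ is a bounded functional annihilating $\bigcap_{i>k}\ker\phi_i$, hence by the standard dependence-of-functionals lemma $\psi|_{D(T)}=\sum_{i>k}c_i\phi_i$; boundedness forces all $c_i=0$, so $\psi$ vanishes on the dense set $D(T)$ and therefore on $H$, a contradiction. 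One cosmetic remark: since $H$ is a Hilbert space, the Hahn--Banach step can be replaced by the observation that $\overline X\neq H$ yields a nonzero $g\in X^\perp$ and $\psi=\langle\cdot,g\rangle$; this is marginally more elementary and stays within the Hilbert-space toolkit, but the Hahn--Banach version you gave is equally valid and has the benefit of extending verbatim to the Banach-space setting.
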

\begin{proof}
See Lemma 4.2 in \cite{DonWal23}.
\end{proof}
Applying this Lemma first to the projection $\Qf$ (viewed as a densely defined operator of finite rank in the $\mathcal{H}^s$ universe) and then to the restriction of $\Pf$ to the constructed dense subset, yields the following. 
\begin{lem}\label{lem:dense}
There exists a dense subset $X\subset \mathcal{H}^s$ and bounded linear operators $\widehat \Qf, \widehat \Pf :\mathcal{H}^s \to \mathcal{H}^s$ such that
\begin{align*}
\Qf|_X=\widehat{\Qf}|_X \text{ and } \Pf|_X=\widehat{\Pf}|_X.
\end{align*}
\end{lem}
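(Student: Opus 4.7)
The strategy is to iterate Lemma \ref{lem:proj} twice, exactly as the paragraph preceding the statement suggests. The setup is this: both $\Qf$ and $\Pf$ are a priori defined as spectral projections of $\Lf$ living in the $\mathcal{H}^{\ceil s}$ universe, but since each has finite rank and its image consists of generalized eigenfunctions which, by Lemma \ref{reg:eigenfunctions}, are smooth and in particular lie in $\mathcal{H}^s$, we can regard them as densely defined finite-rank operators on $\mathcal{H}^s$ (with natural domain, say, $\mathcal{H}^{\ceil s}$, which is dense in $\mathcal{H}^s$).

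First, I would apply Lemma \ref{lem:proj} with $H = \mathcal{H}^s$ and $T = \Qf$. This produces a dense subset $X_1 \subset \mathcal{H}^s$ contained in the domain of $\Qf$ and a bounded linear operator $\widehat{\Qf}: \mathcal{H}^s \to \mathcal{H}^s$ such that
\begin{equation*}
\Qf|_{X_1} = \widehat{\Qf}|_{X_1}.
\end{equation*}

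Next, I would consider the restriction $\Pf|_{X_1}$, which is still densely defined on $\mathcal{H}^s$ (since $X_1$ is dense) and still of finite rank (since $\Pf$ itself is). A second application of Lemma \ref{lem:proj}, this time to $T = \Pf|_{X_1}$, yields a dense subset $X \subset X_1$ and a bounded linear operator $\widehat{\Pf}: \mathcal{H}^s \to \mathcal{H}^s$ with $\Pf|_X = \widehat{\Pf}|_X$. Because $X \subset X_1$, the identity $\Qf|_X = \widehat{\Qf}|_X$ is automatically inherited, and $X$ is the required common dense subset.

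There is no real obstacle here; the only point worth checking is that $\Pf$ and $\Qf$ genuinely define finite-rank densely defined operators on $\mathcal{H}^s$, which follows from the continuous dense embedding $\mathcal{H}^{\ceil s} \hookrightarrow \mathcal{H}^s$ together with the fact, supplied by Lemma \ref{reg:eigenfunctions}, that the ranges of both projections are finite-dimensional subspaces of smooth functions and hence sit inside $\mathcal{H}^s$. Everything else is a direct invocation of Lemma \ref{lem:proj}.
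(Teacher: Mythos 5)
Your proof is correct and follows precisely the strategy the paper sketches in the paragraph directly preceding the lemma: apply Lemma~\ref{lem:proj} once to $\Qf$ to obtain $X_1$ and $\widehat{\Qf}$, then a second time to $\Pf|_{X_1}$ to obtain $X\subset X_1$ and $\widehat{\Pf}$, with the $\Qf$-identity surviving the restriction. Your added remark verifying via Lemma~\ref{reg:eigenfunctions} and the embedding $\mathcal{H}^{\ceil s}\hookrightarrow\mathcal{H}^s$ that both projections genuinely define densely defined finite-rank operators on $\mathcal{H}^s$ is a useful clarification the paper leaves implicit, but the argument is the same.
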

We finally come to the proof of Theorem \ref{thm:strichartz}.
\begin{proof}[Proof of Theorem \ref{thm:strichartz}]

Let $\ff \in X$ and set $\widetilde \ff = (\I-\Qf)(\I-\Pf)\ff$ and assume that $s$ is not an integer.
Further, let $(p,q,n)$ be $s$-Strichartz admissible and let $n$ in addition be such that $ \theta n_1+ (1-\theta)n_0=n$ for $n_1, n_0 \in \mathbb{N}$. Further, let $(p_1,q_1,n_1)$ and $(p_0,q_0,n_0)$ be the $\ceil s$, respectively $\floor s$ admissible Strichartz triples such that
\begin{align*}
\frac{1-\theta}{p_0}+\frac{\theta}{p_1}=\frac{1}{p}, \quad \frac{1-\theta}{q_10}+\frac{\theta}{q_1}=\frac{1}{q},\quad (1- \theta) n_0+\theta n_1  =n.
\end{align*}
Then, by construction, and Lemmas \ref{lem:strich1a}, \ref{lem:strichartz2a}, and \ref{lem:strich3a} we know that
\begin{align*}
&\quad \|[e^{-\mu_1\tau}(\Sf(\tau)-\Sf_0(\tau))\widetilde \ff]_1\|_{L^{p_1}_\tau (\R_+) W^{n_1,q_1}(\B^d_1)}
\\
&\lesssim 
\sum_{m=0}^{n_1-1}\left\|\sum_{j=1}^{k-1}\sum_{\ell=1}^{8}|.|^{-m}T_{j,\ell}^{n_1-m,1}(f_1+f_2) + |.|^{-m}\dot T_{j,\ell}^{n_1-m,1}f_1\right\|_{L^{p_1}(\R_+)L^{q_1}(\B^d_1)}
\\
&\quad +\sum_{m=0}^{n_1-1}\left\|\sum_{\ell=9}^{17}|.|^{-m}T_{k,\ell}^{n_1-m,1}(f_1+f_2) + |.|^{-m}\dot T_{k,\ell}^{n_1-m,1}f_1\right\|_{L^{p_1}(\R_+)L^{q_1}(\B^d_1)}
\\
&\lesssim \|\widetilde \ff\|_{W^{k,\frac{2}{1+2\frac{\delta}{\theta}}}\times W^{k-1,\frac{2}{1+2\frac{\delta}{\theta}}}(\B^d_1)}
\end{align*}
and
\begin{align*}
&\quad \|[e^{-\mu_0\tau}(\Sf(\tau)-\Sf_0(\tau))\widetilde \ff]_1\|_{L^{p_0}_\tau (\R_+) W^{n_0,q_0}(\B^d_1)}
\\
&\lesssim \sum_{m=0}^{n_0-1}\left\|\sum_{j=1}^{k-2}\sum_{\ell=1}^{8}|.|^{-m}T_{j,\ell}^{n_0-m,1}(f_1+f_2) + |.|^{-m}\dot T_{j,\ell}^{n_0-m,1}f_1\right\|_{L^{p_0}(\R_+)L^{q_0}(\B^d_1)}
\\
&\quad +\sum_{m=0}^{n_0-1}\left\|\sum_{\ell=9}^{17}|.|^{-m}T_{k-1,\ell}^{n_0-m,1}(f_1+f_2) + |.|^{-m}\dot T_{k-1,\ell}^{n_0-m,1}f_1\right\|_{L^{p_0}(\R_+)L^{q_0}(\B^d_1)}
\\
&\lesssim \|\widetilde \ff\|_{W^{k-1,\frac{2}{1-2\frac{\delta}{1-\theta}}}\times W^{k-2,\frac{2}{1-2\frac{\delta}{1-\theta}}}(\B^d_1)}.
\end{align*}
Thus, as $\theta \mu_1 +(1-\theta) \mu_0=0$, an application of proposition \ref{prop:interpolation} yields 
\begin{align*}
\|[(\Sf(\tau)-\Sf_0(\tau))\widetilde \ff]_1\|_{L^{p}_\tau (\R_+) W^{n,q}(\B^d_1)}\lesssim \|\widetilde \ff\|_{\mathcal{H}^s}
\end{align*}
which, combined with the free Strichartz estimates from Lemma \ref{Strichart}, implies

\begin{align*}
\|[\Sf(\tau)\widetilde \ff]_1\|_{L^{p}_\tau (\R_+) W^{n,q}(\B^d_1)}\lesssim \|\widetilde \ff\|_{\mathcal{H}^s}.
\end{align*}
Furthermore, 
\begin{align*}
\|\widetilde \ff\|_{\mathcal{H}^s}=\|(\I-\Qf)(\I-\Pf) \ff\|_{\mathcal{H}^s}=\|(\I-\widehat\Qf)(\I-\widehat \Pf)  \ff\|_{\mathcal{H}^s}\lesssim \| \ff\|_{\mathcal{H}^s}.
\end{align*}
Recall, from Lemma \ref{reg:eigenfunctions} that $\gf\in rg \Qf \implies g_1 \in L^\infty(\B^d_1)\cap W^{n,\frac{2d}{d+2n-2s+1}}(\B^d_1)$ for all $1\leq n\leq k-1$. Therefore, as the range of $\Qf$ is finite dimensional we have that
\begin{align*}
|[\Qf (\I-\Pf) \ff]_1\|_{W^{n,q}(\B^d_1)}\lesssim \|\Qf (\I-\Pf) \ff\|_{\H^{\ceil s}}&\lesssim  \|\Qf (\I-\Pf) \ff\|_{\H^{ s}}=\|\widehat \Qf (\I-\widehat \Pf) \ff\|_{\H^{ s}}\lesssim \| \ff\|_{\H^{ s}}
\end{align*}
for all values of $n$ and $q$ with which we are concerned.
Furthermore, as the range of $\Qf$ is contained in the union of finitely many generalised eigenspaces, all corresponding to eigenvalues with negative real part, we conclude that there exists an $\varepsilon>0$ such that
\begin{align*}
\|[\Sf(\tau)\Qf(\I-\Pf)\ff]_1\|_{L^{p}_\tau (\R_+) W^{n,q}(\B^d_1)}&\lesssim \| e^{-\varepsilon \tau}[\Qf(\I-\Pf) \ff]_1\|_{L^{p}_\tau (\R_+) W^{n,q}(\B^d_1)}
\\
&\lesssim |[\Qf (\I-\Pf) \ff]_1\|_{W^{n,q}(\B^d_1)}.
\end{align*}
Consequently, we conclude that
\begin{align*}
\|[\Sf(\tau)(\I-\Pf) \ff]_1\|_{L^{p}_\tau (\R_+) W^{n,q}(\B^d_1)}\lesssim \|\widetilde \ff\|_{\mathcal{H}^s}
\end{align*}
for all $\ff \in X$. Therefore, as $\Pf$ agrees with $\widehat \Pf$ on $X$, we obtain the existence of a bounded operator with finite rank $\widehat{\Pf}$ such that
\begin{align*}
\|[\Sf(\tau)(\I-\widehat\Pf)\ff]_1\|_{L^{p}_\tau (\R_+) W^{n,q}(\B^d_1)}&\lesssim  \|\ff\|_{\H^s}
\end{align*}
for all $\ff \in X$. Hence the desired homogeneous estimates follow from a density argument. The inhomogeneous ones are then a consequence of Minkowski's inequality, as exhibited in the proof of Proposition 2.2 in \cite{Don17}. In the same fashion, one obtains the estimate 
\begin{align*}
\|[\Sf(\tau)(\I-\Pf)\ff]_1\|_{L^\infty_\tau(\R_+)H^s(\B^d_1)}\lesssim \|(\I-\Pf)\ff\|_{H^s\times H^{s-1}(\B^d_1)}.
\end{align*}
Furthermore, by construction, the second component of $\Sf(\tau)\widetilde \ff$ is given by
\begin{align*}
\left[\partial_\tau+\rho\partial_\rho+\frac{d-2s}{2}\right][\Sf(\tau)\widetilde \ff(\rho)]_1.
\end{align*} 
Moreover, given that $\partial_\tau$ essentially amounts to multiplying by $\lambda$, the operators one obtains this way are comparable to the $[\lambda+\rho\partial_\rho+\frac{d-2s}{2}]T^{n}_{k,\ell}(f)(\rho,\lambda)$. Thus, one readily obtains
\begin{align*}
\|[\Sf(\tau)(\I-\widehat \Pf)\ff]_2\|_{L^\infty_\tau(\R^+)H^{s-1}(\B^d_1)}\lesssim \|(\I-\widehat \Pf)\ff\|_{H^s\times H^{s-1}(\B^d_1)}.
\end{align*}
In the integer case, one argues likewise and we conclude the proof of Theorem \ref{thm:strichartz}.
\end{proof}

\section{Blowup stability}
Establishing our result on optimal blowup stability is now a simple task and we begin by recalling the explicit form of the ODE Blowup 
$$
u^T(t):=\left(\frac{3}{4}\right)^{\frac{1}{4}}(T-t)^{-\frac12}
$$
which, transformed to similarity variables, is just the constant function 
$$
\Psi_*=\begin{pmatrix}
\left(\frac{3}{4}\right)^{\frac{1}{4}}
\\
\frac{1}{2} \left(\frac{3}{4}\right)^{\frac{1}{4}}
\end{pmatrix}.
$$
If we now linearise the nonlinearity $\N_0(\uf)$ 
that, in accordance with our transformations in section 2, is (at least as an formal expression) given by
$$
\Nf_0(\uf)=\begin{pmatrix}
0\\
u_1^5
\end{pmatrix}
$$
 around $\Psi_*$, we obtain a potential operator $\Lf'$ given by 
 \begin{align}\label{def:lf}
 \Lf' \uf:= \begin{pmatrix}
 0
 \\
 \frac{15}{4} u_1
 \end{pmatrix}
 \end{align}
 and a nonlinearity given by
 \begin{align*}
 \Nf(\uf)=\begin{pmatrix}
 0
 \\
 \sum_{j=2}^5 c_j u^j_1
 \end{pmatrix}
 \end{align*}
  for some positive constants $c_j$. Finally, we remark, that in accordance with the scaling of the quintic wave equation, the space we work in is given by 
  \begin{align*}
  H^{\frac{d-1}{2}}\times H^{\frac{d-3}{2}}(\B^d_1).
  \end{align*}
 \begin{lem}
 The point spectrum $\sigma_p$ of the operator $\Lf= \overline{\Lf_0+\Lf'}$ with $\Lf'$ defined by Eq.~\eqref{def:lf} satisfies
 \begin{align*}
 \sigma_p(\Lf)\subset \{z\in \C: \Re z<0\} \cup \{1\}.
\end{align*}  
Furthermore, $1$ is a simple eigenvalue and an eigenfunction is given by
$$
\gf=\begin{pmatrix}
2\\ 
3
\end{pmatrix}.
$$
 \end{lem}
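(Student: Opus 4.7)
The plan is to first verify the easy part by direct substitution and then to reduce the eigenvalue problem to a single hypergeometric-type ODE whose admissible solutions can be classified explicitly. Plugging $\gf=(2,3)^T$ and $\lambda=1$ into the system
\begin{align*}
\lambda f_1(\rho)+\rho f_1'(\rho)+\tfrac{1}{2}f_1(\rho)-f_2(\rho)&=0,\\
\lambda f_2(\rho)-f_1''(\rho)-\tfrac{d-1}{\rho}f_1'(\rho)+\rho f_2'(\rho)+\tfrac{3}{2}f_2(\rho)-\tfrac{15}{4}f_1(\rho)&=0
\end{align*}
gives $2\cdot 1+1-3=0$ and $3+\tfrac{9}{2}-\tfrac{15}{2}=0$, so $\gf$ is indeed an eigenfunction at $\lambda=1$. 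Moreover, the first equation shows that any eigenfunction is uniquely recovered from its first component via $f_2=\rho f_1'+\tfrac{1+2\lambda}{2}f_1$, so the task reduces to studying the scalar ODE \eqref{eq:genspec0} with $V\equiv \tfrac{15}{4}$ and $G_\lambda=0$.

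Next I would argue that any point-spectrum element must correspond to an $f_1\in H^{\lceil s\rceil}(\B^d_1)$ solution that is regular both at $\rho=0$ and $\rho=1$. The Frobenius indices at $\rho=0$ are $\{0,2-d\}$, forcing the analytic branch, while the indices at $\rho=1$ are $\{0,\,s-\tfrac{1}{2}-\lambda\}$; regularity together with the assumption $\Re\lambda\geq 0$ forces the analytic branch there as well (as in Lemma \ref{reg:eigenfunctions}). Hence eigenvalues in $\{\Re\lambda\geq 0\}$ are precisely the values of $\lambda$ at which the two one-dimensional spaces of analytic solutions at the two singular endpoints coincide, i.e.\ at which the connection coefficient $c_{2,4}(\lambda)$ vanishes.

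The scalar ODE is of hypergeometric type: after the substitution $z=\rho^2$ and an appropriate power factor in $z$ the equation becomes a Gauss hypergeometric equation with parameters $a,b,c$ that are explicit affine functions of $\lambda$ and $d$. In these coordinates the analytic solution near $0$ is $u(\rho)={}_2F_1(a,b;c;\rho^2)$, and the standard connection formula expresses it in terms of solutions analytic at $\rho=1$ with coefficients that are explicit ratios of Gamma functions. The eigenvalue condition reduces to an equation of the form $\Gamma(\cdots)\Gamma(\cdots)/[\Gamma(\cdots)\Gamma(\cdots)]=0$, which forces one of the upstairs arguments to be a non-positive integer; a short case check then shows that in $\{\Re\lambda\geq 0\}$ the only solution is $\lambda=1$, at which the series terminates to produce the constant $f_1\equiv 2$. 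Simplicity of $\lambda=1$ follows by verifying that the generalised eigenvalue equation $(1-\Lf)\hf=\gf$ has no solution in $H^{\lceil s\rceil}\times H^{\lceil s\rceil -1}(\B^d_1)$: one reduces it to an inhomogeneous hypergeometric ODE, constructs the explicit solution via variation of constants, and reads off that regularity at $\rho=1$ fails because the non-analytic branch has a nonzero coefficient.

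The main obstacle is the step of ruling out all other eigenvalues in the closed right half plane. This boils down to an explicit analysis of when a product of Gamma factors, evaluated on affine combinations of $\lambda$ and $d$, can vanish; the argument needs to be carried out uniformly in the dimension $d\geq 3$ and the regularity $s=\tfrac{d-1}{2}$. Aside from this hypergeometric computation, which is essentially the same as the one performed in earlier works (see e.g.\ \cite{Don17,DonSch17}) and can be adapted to our parameter range, every other step is a direct computation or a routine invocation of the results from Sections 3 and 4.
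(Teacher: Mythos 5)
Your proposal follows the same route as the paper's own proof: verify $\gf=(2,3)^T$ at $\lambda=1$ by direct substitution, reduce the eigenvalue problem to a scalar second-order ODE of hypergeometric type, classify eigenvalues via the vanishing of a connection coefficient expressed in Gamma factors, and delegate the detailed Gamma analysis and the simplicity argument to prior work (the paper cites Lemmas 4.10 and 4.11 of \cite{DonSch17}). One small correction: since $\Gamma$ has no zeros, a ratio $\Gamma(\cdot)\Gamma(\cdot)/[\Gamma(\cdot)\Gamma(\cdot)]$ vanishes when one of the \emph{denominator} (downstairs) arguments is a non-positive integer, hitting a pole of $\Gamma$, not a numerator argument as you wrote.
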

 \begin{proof}
 Note that, as before, we can reduce the eigenvalue equation  $(\lambda-\Lf)\uf=0$ to a second order ODE. Explicitly, the first component of any eigenfunction needs to satisfy
\begin{align*}
(\rho^2-1)u''(\rho)&+\left((2\lambda+3)\rho-\frac{d-1}{\rho}\right)u_1'(\rho)
\\
&
+\frac{1+2\lambda}{4}(2\lambda+3)u_1(\rho)-\frac{15}{4}u_1(\rho)=0.
\end{align*} 
This equation can be transformed into a hypergeometric equation. Then, one establishes that the only eigenvalue with nonnegative real part is given by $1$, by adapting the arguments from the proof of Lemma 4.10 in \cite{DonSch17}. To show that $1$ is indeed simple, one modifies the considerations in Lemma 4.11 in \cite{DonSch17} in a straightforward way.
\end{proof}
Consequently, the only task that remains is to establish control over the nonlinearity. 
To put the tediously involved estimates on $\Nf$ into a compact expression, we let $A$ be the set of all admissible triples $(p,q,n)$ i.e. all numbers $p,q\in \R$ and $n\in \mathbb N_0$ with $0 \leq n\leq \frac{d-1}{2}$, 
$p\in [2,\infty]$ and $q\in [2,\frac{d}{n}]$ that satisfy
$$
 \frac{1}{p}+\frac{d}{q}=\frac{1}{2}+n
$$
and
\begin{align*}
\frac{1}{p}+\frac{d-1}{2q}&\leq \frac{d-1}{4}.
\end{align*}
Moreover, we set 
\begin{align*}
\|.\|_\X:= \sup_{(p,q,n)\in A} \|.\|_{L^p_\tau(\R_+)W^{n,q}(\B^d_1)}+\|.\|_{L^\infty_\tau(\R_+)H^{\frac{d-1}{2}}(\B^d_1)}
\end{align*}
 and let $\X$ be the completion of $C^\infty_{c_{rad}}(\R_+\times \overline{\B^d_1})$ with respect to $ \|.\|_\X$.

\begin{lem}\label{lem:nonlinear1}
The nonlinearity $\Nf$ satisfies the estimates
\begin{align*}
\int_0^\infty \|\N((\phi(\sigma),0))\|_{\mathcal{H}} d\sigma \lesssim \|\phi\|_\X^2+\|\phi\|_\X^5
\end{align*} 
and 
\begin{align*}
\int_0^\infty \|\N((\phi(\sigma),0))-\N((\phi(\sigma),0))\|_{\mathcal{H}} d\sigma \lesssim \left[\|\phi\|_\X+\|\phi\|_\X^{4}+\|\psi\|_\X+\|\psi\|_\X^{4}\right]\|\phi-\psi\|_{\X}
\end{align*}
for all $\phi \in C^\infty_c(\R_+\times \overline{\B^d_1})$.
\end{lem}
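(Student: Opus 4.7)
My plan is to reduce both inequalities to time-integrated product estimates in the Strichartz-type spatial norms encoded in $\X$. Since $\Nf((\phi,0)) = (0, \sum_{j=2}^{5} c_j \phi^j)$, the first bound amounts to controlling $\int_0^\infty \|\phi(\sigma,\cdot)^j\|_{H^{(d-3)/2}(\B^d_1)}\,d\sigma$ for each $j \in \{2,\dots,5\}$. I would apply a radial fractional Leibniz / Kato--Ponce type inequality on $\B^d_1$ (using a Sobolev extension to $\R^d$ if necessary, as in the proof of Lemma~\ref{Strichart}) to obtain
\[
\|\phi^j\|_{H^{(d-3)/2}(\B^d_1)} \lesssim \|\phi\|_{L^{q_0}(\B^d_1)}^{\,j-1}\,\|\phi\|_{W^{(d-3)/2,q_1}(\B^d_1)},
\]
with exponents $q_0, q_1$ satisfying the scaling relation $(j-1)/q_0 + 1/q_1 = 1/2$, and then apply Hölder in $\sigma$ with exponents $p_0, p_1$ satisfying $(j-1)/p_0 + 1/p_1 = 1$.

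The key point of the plan is then to choose $(p_0,q_0,0)$ and $(p_1,q_1,(d-3)/2)$ so that both triples lie in the admissible set $A$, i.e.\ verify (i) the Strichartz scaling relations $\tfrac{1}{p_i} + \tfrac{d}{q_i} = \tfrac{1}{2} + n_i$, (ii) the wave admissibility condition $\tfrac{1}{p_i} + \tfrac{d-1}{2q_i} \le \tfrac{d-1}{4}$, and (iii) $p_i \ge 2$, $q_i \le d/n_i$. Adding the $j$ scaling identities recovers exactly the scaling of the quintic nonlinearity, so such a choice is always possible; for the endpoint $q_0 = \infty$ (which can arise when $s = (d-1)/2$) I would instead use the auxiliary $L^p L^\infty$ bound built into $\|\cdot\|_\X$. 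This gives
\[
\int_0^\infty \|\phi^j\|_{H^{(d-3)/2}} \, d\sigma \lesssim \|\phi\|_\X^{\,j},
\]
and summing over $j = 2,\dots,5$ yields the first estimate.

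For the difference estimate I would use the algebraic identity $a^j - b^j = (a-b)\sum_{\ell=0}^{j-1} a^\ell b^{j-1-\ell}$, combined with the same fractional Leibniz rule: one places $(\phi-\psi)$ in the factor carrying the derivatives (or distributes derivatives using Kato--Ponce and then uses the triangle inequality), and the remaining $j-1$ factors, each a copy of $\phi$ or $\psi$, into $L^{q_0}$-type Strichartz norms. Hölder in time again leaves a product $\|\phi-\psi\|_\X \cdot (\|\phi\|_\X + \|\psi\|_\X)^{j-1}$, which after summation over $j$ gives the stated form.

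The main technical obstacle is verifying the admissibility conditions uniformly across all dimensions $d \ge 3$ and all powers $j \in \{2,\dots,5\}$, and in particular checking the wave-admissibility inequality at the endpoints, since in low dimensions the natural Hölder exponent $q_0$ can be forced close to or equal to $\infty$; in those boundary cases I would rely on the dedicated $L^p L^\infty$ control included in $\|\cdot\|_\X$ and on the Sobolev embedding $W^{(d-3)/2, q_1} \hookrightarrow L^{q_0}$ to redistribute derivatives. Everything else is a routine application of Hölder's inequality together with the fractional product rule, so no further structural ingredients beyond Theorem~\ref{thm:strichartz} are required.
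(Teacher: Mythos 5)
Your overall plan (fractional product rule followed by Hölder in time and Strichartz admissibility) is in the same spirit as the paper, but the central exponent-counting claim is wrong. You write that ``adding the $j$ scaling identities recovers exactly the scaling of the quintic nonlinearity, so such a choice is always possible.'' This is true only for $j=5$. Imposing the spatial Hölder constraint $(j-1)/q_0+1/q_1=1/2$, the time Hölder constraint $(j-1)/p_0+1/p_1=1$, and the admissibility identities $1/p_0+d/q_0=1/2$ and $1/p_1+d/q_1=(d-2)/2$, and then combining the admissibility identities with weights $(j-1)$ and $1$, yields
\begin{align*}
1+\tfrac{d}{2}=(j-1)\cdot\tfrac{1}{2}+\tfrac{d-2}{2},
\end{align*}
which forces $j=5$. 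For $j\in\{2,3,4\}$ the four constraints are mutually inconsistent, so there is no admissible choice of $(p_i,q_i)$ that closes the time--space Hölder as you describe: the lower-order powers scale \emph{subcritically} in $H^{(d-1)/2}\times H^{(d-3)/2}(\B^d_1)$, and this slack does not resolve itself automatically. Your caveat about the $q_0=\infty$ endpoint is a separate and more minor issue; the incompatibility above is the real obstruction.

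The remedy, which your plan does not articulate (and which the paper itself leaves implicit, as it only writes out the $j=5$ term and calls the rest easier), is to exploit the boundedness of $\B^d_1$: place factors in admissible Strichartz triples whose Lebesgue exponents are strictly larger than the naive spatial Hölder exponents and then use $L^{q'}(\B^d_1)\hookrightarrow L^q(\B^d_1)$ for $q'\ge q$, or move some factors into the non-Strichartz $L^\infty_\tau H^{(d-1)/2}$ component of $\|\cdot\|_\X$. Aside from this, your technique differs from the paper's in a genuine but inessential way: the paper never invokes a fractional Leibniz rule. For odd $d$ it notes that $(d-3)/2\in\mathbb{N}$ and uses the classical integer Leibniz expansion into terms $\|\partial^{\alpha_1}\phi\cdots\partial^{\alpha_5}\phi\|_{L^2(\B^d_1)}$, distributing derivatives over all five factors; for even $d$ it first passes to $\|\phi^5\|_{W^{d/2-1,\,2d/(d+1)}(\B^d_1)}$ via a scaling-critical Sobolev embedding so that the derivative order is again an integer, and then repeats the integer Leibniz argument in $L^{2d/(d+1)}$. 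Your fractional Kato--Ponce route is workable, but it shifts the burden to justifying the product rule on the ball for radial Sobolev scales via extension, whereas the paper's parity trick stays entirely within classical differentiation.
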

\begin{proof}
Let $u\in C^\infty(\overline{\B^d_1})$ and note that
$$
\|\Nf((u,0))\|_{\mathcal{H}}\lesssim \sum_{j=2}^5\|u^j\|_{H^{\frac{d-3}{2}}(\B^d_1)}.
$$
Now, clearly the most complicated term to estimate is given by $\|u^5\|_{H^{\frac{d-3}{2}}(\B^d_1)}$ and so we showcase the general procedure with this expression. If $d$ is odd, we have that $\frac{d-3}{2}\in \mathbb{N}$ and we compute that
\begin{align*}
\|u^5\|_{H^{\frac{d-3}{2}}(\B^d_1)}&\lesssim \sum_{|\alpha_1|+\dots +|\alpha_5|\leq\frac{d-3}{2}} \|\partial^{\alpha_1} u\cdot\, \dots\, \cdot \partial^{\alpha_5} u\|_{L^{2}(\B^d_1)}
\end{align*}
for multiindices $\alpha_1,\dots, \alpha_5\in \mathbb{N}^d$.
The hardest terms to bound are given in case
$$|\alpha_1|+\dots +|\alpha_5|=\frac{d-3}{2}$$, so we will focus on this particular case.
Assume now, without loss of generality, that $|\alpha_1|\geq |\alpha_2|\geq\dots \geq |\alpha_5|$ and observe that $L^\infty(\R_+)W^{|\alpha_1|,\frac{2d}{1+2|\alpha_1|}}(\B^d_1)$ and $L^{4}(\R_+)W^{\frac{4d}{1+4|\alpha_j|}}(\B^d_1)$  are admissible Strichartz spaces. Hence, given that
\begin{align*}
\frac{1}{2}=\frac{2+4|a_1|}{4d}+\sum_{j=2}^5\frac{1+4|a_j|}{4d},
\end{align*}
we can use Hölder's inequality to conclude that
\begin{align*}
\int_0^\infty\|\phi(\sigma,.)^5\|_{\dot H^{\frac{d-3}{2}}(\B^d_1)}d \sigma &\lesssim \sum_{|\alpha_1|+\dots +|\alpha_5|=\frac{d-3}{2}} \|\psi\|_{L^\infty(\R_+)W^{|\alpha_1|,\frac{2d}{1+2|\alpha_1|}}(\B^d_1)}
\\
&\quad \times \|\psi\|_{L^{4}(\R_+)W^{|\alpha_2|,\frac{4d}{1+4|\alpha_2|}}(\B^d_1)}
\\
&\times \dots \times \|\psi\|_{L^{4}(\R_+)W^{|\alpha_5|,\frac{4d}{1+4|\alpha_5|}}(\B^d_1)}
\\
&\lesssim \|\psi\|_\X^5.
\end{align*}
We now turn to the case of even $d$. Here, we make use of the Sobolev embedding
\begin{align*}
H^{\frac{d-3}{2}}(\B^d_1)\subset W^{\frac{d}{2}-1,\frac{2d}{d+1}}(\B^d_1)
\end{align*}
to infer 
\begin{align*}
\|u^5\|_{H^{\frac{d-3}{2}}(\B^d_1)} &\lesssim \sum_{|\alpha_1|+\dots +|\alpha_5|\leq\frac{d}{2}-1} \|\partial^{\alpha_1} u\cdot\, \dots\, \cdot \partial^{\alpha_5} u\|_{L^{\frac{2d}{d+1}}(\B^d_1)}.
\end{align*}
This time, the most difficult terms to estimate are of course given in case $|\alpha_1|+\dots +|\alpha_5|=\frac{d}{2}-1$ .
Again, we assume that $|\alpha_1|\geq |\alpha_2|\geq\dots \geq |\alpha_p|$ and note that
\begin{align*}
\frac{d+1}{2d}= \frac{1+2|\alpha_1|}{2d}+\sum_{j=2}^5 \frac{1+4|\alpha_j|}{4d}.
\end{align*}
So, 
we can use Hölder's inequality as above to derive the desired estimate
\begin{align*}
\int_0^\infty \|\N((\phi(\sigma),0))\|_{\mathcal{H}} d\sigma \lesssim \|\phi\|_\X^2+\|\phi\|_\X^5.
\end{align*} 
Likewise, one establishes the local Lipschitz estimates and we conclude this proof.
\end{proof}

To proceed, let $\uf\in \mathcal{H}$. Then, for $\phi \in C^\infty_{c,rad}(\R_+\times \overline{\B^d_1})$ we define the mapping
\begin{align*}
\K_\uf (\phi)(\tau)=[\Sf(\tau) \uf]_1
+\int_0^\tau [\Sf(\tau-\sigma) \Nf((\phi(\sigma),0))]_1 d\sigma -\Cf_\uf(\phi)(\tau)
\end{align*}
where
\begin{align*}
\Cf_\uf (\phi)(\tau)=e^\tau \left[\Pf\left(\uf
+\int_0^\tau e^{-\sigma}\Nf((\phi(\sigma),0))d\sigma \right)\right]_1.
\end{align*}
Furthermore, we denote by $\X_\delta$ the closed ball of radius $\delta$ around $0$ in the $\X$ topology.
We now also state our precise definition of a solution.
\begin{defi}\label{def:solutionstrichartz}
Let
  \[ \Gamma^T:=\{(t,r)\in [0,T)\times [0,\infty): r\leq T-t\}. \]
  We say that $u: \Gamma^T\to \R$ is a Strichartz solution of
\[ \left(\partial_t^2-\partial_r^2-\frac{d-1}{r}\partial_r\right)
  u(t,r)= u(t,r)^5\]

if $\phi:=\Phi_1=[\Psi-(\tfrac 34)^{\frac14}(1,\tfrac{1}{2})]_1$, with
\[ \Psi(\tau,\rho):=
  \begin{pmatrix}
    \psi(\tau,\rho) \\ \left(\frac{1}{2}+\partial_\tau+\rho\partial_\rho\right)\psi(\tau,\rho)
  \end{pmatrix},\qquad \psi(\tau,\rho):=(Te^{-\tau})^{\frac{1}{2}}u(T-Te^{-\tau}, Te^{-\tau}\rho),
\]
  belongs to $\mathcal{X}$ and satisfies
\begin{align*}
\phi=\K_{\Phi(0)}(\phi)
\end{align*}
and $\Cf(\phi,\Phi(0))=\textup{\textbf{0}}$.
\end{defi}
\begin{lem}
The estimates 
\begin{align*}
\|\K_\uf(\phi)\|_\X \lesssim \|u\|_{\mathcal{H}}+\|\phi\|_\X^2+\|\phi\|_\X^5
\end{align*}
and 
\begin{align*}
\|\K_\uf(\phi)-\K_\uf (\psi)\|_\X \lesssim \left(\|\phi\|_\X+\|\phi\|_\X^4+\|\psi\|_\X+\|\psi\|_\X^4\right)\|\phi-\psi\|_\X
\end{align*}
hold for all $\uf \in \mathcal{H}$ and all $\phi,\psi\in C^\infty_{c,rad}(\R_+\times \overline{\B^d_1})$.
\end{lem}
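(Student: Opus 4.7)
The plan is to first perform a structural simplification of $\K_{\uf}$ that removes the unstable mode, and then to reduce the estimates to a direct application of Theorem \ref{thm:strichartz} together with Lemma \ref{lem:nonlinear1}. Since $1$ is a simple eigenvalue of $\Lf$ and the only one with non-negative real part, $\Pf$ is the rank one spectral projection onto $\mathrm{span}(\gf)$ with $\gf=(2,3)^\top$, and therefore $\Sf(\tau)\Pf\hf=e^{\tau}\Pf\hf$ for every $\hf\in\H$. Splitting $\uf=\Pf\uf+(\I-\Pf)\uf$ and $\Nf=(\I-\Pf)\Nf+\Pf\Nf$ in the definition of $\K_{\uf}$ and using this identity yields the cancellation
\begin{align*}
\K_{\uf}(\phi)(\tau)=\bigl[\Sf(\tau)(\I-\Pf)\uf\bigr]_1+\int_0^\tau\bigl[\Sf(\tau-\sigma)(\I-\Pf)\Nf((\phi(\sigma),0))\bigr]_1\,d\sigma,
\end{align*}
so that the unstable contribution carried by $\Cf_{\uf}$ exactly compensates the $e^\tau$-growth of the projected parts.

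Given this identity, I would fix an admissible triple $(p,q,n)\in A$ and treat the two summands separately. For the homogeneous part, Theorem \ref{thm:strichartz} directly provides
\begin{align*}
\bigl\|[\Sf(\tau)(\I-\Pf)\uf]_1\bigr\|_{L^p_\tau(\R_+)\dot W^{n,q}(\B^d_1)}\lesssim\|\uf\|_{\H},
\end{align*}
and the corresponding $L^\infty_\tau(\R_+)H^{(d-1)/2}(\B^d_1)$ bound on the first component is the stated energy estimate in Theorem \ref{thm:strichartz}. The endpoint $(p,q,n)=(\infty,2,(d-1)/2)$ relevant in the integer case $s=(d-1)/2$ is covered by this energy bound. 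For the inhomogeneous part I would invoke the second assertion of Theorem \ref{thm:strichartz}, letting $\tau_0\to\infty$ by monotone convergence, to obtain
\begin{align*}
\left\|\int_0^\tau[\Sf(\tau-\sigma)(\I-\Pf)\Nf((\phi(\sigma),0))]_1\,d\sigma\right\|_{L^p_\tau(\R_+)\dot W^{n,q}(\B^d_1)}\lesssim\int_0^\infty\|\Nf((\phi(\sigma),0))\|_{\H}\,d\sigma,
\end{align*}
while the $L^\infty_\tau H^{(d-1)/2}$ bound on the inhomogeneous piece follows from Minkowski's inequality combined with the energy bound on $\Sf(\tau-\sigma)(\I-\Pf)$. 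Taking the supremum over $(p,q,n)\in A$, adding up the two estimates, and appealing to the nonlinear bound $\int_0^\infty\|\Nf((\phi,0))\|_{\H}\,d\sigma\lesssim\|\phi\|_{\X}^2+\|\phi\|_{\X}^5$ from Lemma \ref{lem:nonlinear1} gives the first assertion.

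The Lipschitz estimate follows along exactly the same lines: in $\K_{\uf}(\phi)-\K_{\uf}(\psi)$ the purely linear term drops out, and the remaining inhomogeneous Duhamel term is estimated as above by $\int_0^\infty\|\Nf((\phi,0))-\Nf((\psi,0))\|_{\H}\,d\sigma$, which is controlled by the second bound in Lemma \ref{lem:nonlinear1}. The main technical point that requires a bit of care, but no new ideas, is ensuring that every triple in $A$ is indeed covered by one of the estimates provided by Theorem \ref{thm:strichartz}; concretely, triples with $0\le n\le s-1$ are covered by the first batch, triples with $n=\lfloor s\rfloor$ (relevant only when $d$ is even so that $s=(d-1)/2\notin\mathbb N$) are covered by the second batch of fractional-level estimates, and the unique triple with $n=s$ available in the integer case is the endpoint $(\infty,2,s)$ which is handled by the $L^\infty_\tau H^{s}$ bound. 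No further interpolation is needed beyond what is already contained in Theorem \ref{thm:strichartz}, so the whole argument is a bookkeeping exercise once the cancellation of $\Cf_{\uf}$ is in place.
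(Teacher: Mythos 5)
The claimed exact cancellation is where the proposal breaks down. You read $\Cf_\uf$ literally, with the inner integral running over $(0,\tau)$, so that $\Sf(\tau)\Pf\uf + \int_0^\tau \Sf(\tau-\sigma)\Pf\Nf\,d\sigma$ and $\Cf_\uf$ cancel term by term and the $\Pf$-component of $\K_\uf$ disappears. But that definition is a typo: the correction term must be $\Cf_\uf(\phi)(\tau)=e^\tau[\Pf(\uf+\int_0^\infty e^{-\sigma}\Nf((\phi(\sigma),0))d\sigma)]_1$. This is what makes $\Cf_\uf(\phi)$ a constant multiple of $e^\tau[\gf]_1$ (so that requiring $\Cf_\uf(\phi)=\mathbf 0$ is a single scalar condition, as is needed for Lemma \ref{lem:finallem} to select a unique $T^*$), and it is also the form used in all the cited precursors. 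With the corrected $\Cf_\uf$, the $\Pf$-part of $\K_\uf$ does not vanish; instead
\begin{align*}
\Pf\K_\uf(\phi)(\tau)=-\int_\tau^\infty e^{\tau-\sigma}[\Pf\Nf((\phi(\sigma),0))]_1\,d\sigma,
\end{align*}
and this term is genuinely there and must be estimated. The paper does this by noting that $\rg\Pf$ is one-dimensional and smooth, so $\|[\Pf\hf]_1\|_{W^{n,q}(\B^d_1)}\lesssim\|\hf\|_\H$, and then applying Young's inequality in $\tau$ with the $L^1$-kernel $e^{\tau-\sigma}\mathbf 1_{\{\sigma>\tau\}}$ to obtain $\|\Pf\K_\uf(\phi)\|_\X\lesssim\int_0^\infty\|\Nf((\phi(\sigma),0))\|_\H\,d\sigma$. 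Your proposal has no analogue of this step.

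Everything else in your argument is fine and matches the paper: the $(\I-\Pf)$-part of $\K_\uf$ is indeed handled by the homogeneous and inhomogeneous estimates of Theorem \ref{thm:strichartz} and the energy bound, the coverage of all triples in $A$ is checked correctly, the nonlinear integral is controlled by Lemma \ref{lem:nonlinear1}, and the Lipschitz estimate follows by the same decomposition. So the proposal is a bookkeeping exercise short of complete: you need to restore the tail term $\Pf\K_\uf$ and bound it as above, rather than assert that it cancels. It is worth internalizing why the cancellation cannot hold: the whole point of the correction is to replace the forward-in-time (exponentially growing) $\Pf$-Duhamel integral by a backward-in-time (exponentially decaying) one, not to remove it altogether; if it vanished identically, the compatibility condition $\Cf_\uf(\phi)=\mathbf 0$ would be vacuous and there would be nothing to tune $T^*$ against.
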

\begin{proof}
We split $\K_\uf$ into
\begin{align*}
(\I-\Pf)\K_\uf(\phi)(\tau):=[\Sf(\tau) (\I-\Pf)\uf]_1
+\int_0^\tau [\Sf(\tau-\sigma) (\I-\Pf)\Nf((\phi(\sigma),0))]_1 d\sigma
\end{align*}
and
\begin{align*}
\Pf\K_\uf(\phi)(\tau):=
\int_\tau^\infty [e^{\tau-\sigma}\Pf\Nf((\phi(\sigma),0))]_1 d\sigma.
\end{align*}
By employing Theorem \ref{thm:strichartz} and Lemma \ref{lem:nonlinear1}, we conclude that
\begin{align*}
\|(\I-\Pf)\K_\uf(\phi)\|_\X &\lesssim \|\uf \|_{\H}
+\int_0^\infty \|\Nf((\phi(\sigma),0))\|_{\H} d\sigma
\\
&\lesssim
\|\uf \|_{\H}+\|\phi\|_\X^2+\|\phi\|^5_\X.
\end{align*}
Note now, that the range of $\Pf$ is one dimensional, which implies the existence of a unique $\widetilde{\gf} \in \H$ such that
\begin{align*}
\Pf \ff=(\ff|\widetilde{\gf})_\H \gf
\end{align*}
for all $\ff\in \H$. Thus,
\begin{align*}
\|\Pf\K_\uf(\phi)(\tau)\|_{W^{n,q}(\B^d_1)} &\lesssim \int_\tau^\infty e^{\tau-\sigma} \|\Nf((\phi(\sigma),0))\|_{\H} d\sigma
\\
&=\int_\R e^{\tau-\sigma}1_{(-\infty,0)}(\tau-\sigma) \|\Nf((\phi(\sigma),0))\|_{\H} d\sigma
\end{align*}
for all $n\in \mathbb{N}$ and all $q\geq 1$. Therefore, Young's inequality implies
\begin{align*}
\|\Pf\K_\uf(\phi)\|_{L^p(\R_+)W^{n,q}(\B^d_1)}&\lesssim \|1_{(-\infty,0)}(\tau)e^\tau\|_{L^p_\tau(\R)} \int_0^\infty  \|\Nf((\phi(\sigma),0))\|_{\H} d\sigma 
\\
&\lesssim
\|\phi\|_\X^2+\|\phi\|^5_\X.
\end{align*}
In the same fashion, one obtains the desired Lipschitz estimate.
\end{proof}
An application of the contraction mapping principle yields the next Lemma.
\begin{lem}
There exists constants $\delta>0$ and $M>0$ such that the following holds. Let $\uf \in \mathcal{H}$ be such that $\|\uf\|_\H \leq \frac{\delta}{M}$. Then, the operator $\K_\uf$ extends to an operator from $\X_\delta $ to $ \X_\delta$ such that there exists a unique $\phi \in \X_\delta$ with
\begin{align*}
\K_\uf (\phi)=\phi.
\end{align*}
\end{lem}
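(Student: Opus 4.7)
The plan is to deduce the statement as a standard application of the Banach fixed point theorem, using the two estimates established in the preceding lemma as the sole analytic input. Let $C>0$ denote the implicit constant appearing in both
\[
\|\K_\uf(\phi)\|_\X \leq C\bigl(\|\uf\|_{\H}+\|\phi\|_\X^2+\|\phi\|_\X^5\bigr)
\]
and
\[
\|\K_\uf(\phi)-\K_\uf(\psi)\|_\X \leq C\bigl(\|\phi\|_\X+\|\phi\|_\X^4+\|\psi\|_\X+\|\psi\|_\X^4\bigr)\|\phi-\psi\|_\X.
\]
First I would fix $\delta>0$ small enough so that $C(\delta+\delta^4)\leq \tfrac12$ (which in particular forces $C(\delta^2+\delta^5)\leq \tfrac{\delta}{2}$), and then choose $M:=2C$. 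Under the assumption $\|\uf\|_{\H}\leq \delta/M$, the first estimate yields $\|\K_\uf(\phi)\|_\X\leq \tfrac{\delta}{2}+\tfrac{\delta}{2}=\delta$ for every $\phi\in C^\infty_{c,rad}(\R_+\times\overline{\B^d_1})$ with $\|\phi\|_\X\leq \delta$, and the second estimate shows that $\K_\uf$ is a $\tfrac12$-Lipschitz map on this dense subset of $\X_\delta$.

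Next I would extend $\K_\uf$ to all of $\X_\delta$. Since $\X$ is by definition the completion of $C^\infty_{c,rad}(\R_+\times\overline{\B^d_1})$ with respect to $\|\cdot\|_\X$, the Lipschitz estimate guarantees that $\K_\uf$ is uniformly continuous on the dense subset $\X_\delta\cap C^\infty_{c,rad}(\R_+\times\overline{\B^d_1})$, so it admits a unique continuous extension to $\X_\delta$. The quantitative bounds pass to the limit by continuity, so the extended map still satisfies $\K_\uf:\X_\delta\to \X_\delta$ with Lipschitz constant at most $\tfrac12$.

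Finally, the Banach fixed point theorem applied to the complete metric space $\X_\delta$ produces a unique $\phi\in \X_\delta$ solving $\K_\uf(\phi)=\phi$. I do not expect any genuine obstacle in this argument: all the hard analytic work has already been packaged into the two estimates from the previous lemma (which in turn rest on the Strichartz and nonlinear bounds of Theorem \ref{thm:strichartz} and Lemma \ref{lem:nonlinear1}). The only subtlety worth noting is that one must choose $\delta$ before $M$ so that the smallness of the nonlinear terms $\|\phi\|_\X^2+\|\phi\|_\X^5$ and of the Lipschitz coefficient $\|\phi\|_\X+\|\phi\|_\X^4+\|\psi\|_\X+\|\psi\|_\X^4$ are both absorbed; after that, $M$ is determined purely by the linear data term.
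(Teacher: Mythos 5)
Your argument is exactly the standard contraction-mapping proof the paper invokes without spelling out, so the approach matches. One small constant slip: with both $\phi,\psi\in\X_\delta$, the Lipschitz coefficient in the second estimate is bounded by $C\bigl(2\delta+2\delta^4\bigr)=2C(\delta+\delta^4)$, not $C(\delta+\delta^4)$, so your choice $C(\delta+\delta^4)\leq\tfrac12$ only yields a Lipschitz constant $\leq 1$ rather than $<1$; strengthening to, say, $C(\delta+\delta^4)\leq\tfrac14$ makes $\K_\uf$ a genuine $\tfrac12$-contraction, and the rest of your argument (density of $C^\infty_{c,rad}$ in $\X_\delta$, continuous extension, Banach fixed point) goes through unchanged.
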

\subsection{Variation of blowup time}
Our next task is to use our freedom in picking the blowup time $T$ to make correction term $\Cf_\uf$  vanish. To do this, we recall that, due to our transformations, the prescribed initial data are given by
\begin{align*}
\phi_1(0,\rho)&=\psi_1(0,\rho)-c_5=T^{\frac{1}{2}}f(T\rho)-c_5
\\
\phi_2(0,\rho)&=\psi_2(0,\rho)-\frac{1}{2}c_5=T^{\frac{3}{2}} g(T\rho)-\frac{1}{2}c_5
\end{align*}
with $c_5=\left(\frac{3}{4}\right)^{\frac{1}{4}}$.
Moreover, given that by assumption, our initial data lies close to that of $u^1$, we also recall that the initial data  of $u^1$ in similarity coordinates is of the form
\begin{align*}
\psi^1_1(0,\rho)&=T^{\frac{1}{2}}c_5,\qquad
\psi^1_1(0,\rho)=\frac{T^{\frac{3}{2}}}{2}c_5.
\end{align*}
Consequently, we can recast our initial data as
\begin{align*}
\begin{pmatrix}
 T^{\frac{1}{2}}f(T\rho)\\
 T^{\frac{3}{2}}g(T\rho)
\end{pmatrix}-c_5\begin{pmatrix}
T^{\frac12}
\\
\frac12 T^{\frac32}
\end{pmatrix}+c_5\begin{pmatrix}
T^{\frac12}
\\
\frac12 T^{\frac32}
\end{pmatrix}-
c_5\begin{pmatrix}
1
\\
\frac12 
\end{pmatrix}
\end{align*}
which naturally leads to defining the operator
\begin{align*}
\Uf:H^{\frac{d-1}{2}}\times H^{\frac{d-3}{2}}(\B^d_{1+\delta})\times [1-\delta,1+\delta]\to \H
\end{align*}
as 
\begin{align*}
\Uf(\vf,T)(\rho)=\begin{pmatrix}
 T^{\frac{1}{2}}v_1(T\rho)\\
 T^{\frac{3}{2}}v_2(T\rho)
\end{pmatrix}+
c_5\begin{pmatrix}
T^{\frac12}
\\
\frac12 T^{\frac32}
\end{pmatrix}- c_5\begin{pmatrix}
1
\\
\frac12 
\end{pmatrix}.
\end{align*}
Note that, $$\Phi(0)=U(f-c_5,g-\frac{1}{2}c_5).
$$
Moreover, $\Uf(.,T)$ is uniformly continuous for all  $T\in [\frac{1}{2},\frac{3}{2}]$ and satisfies
$U(\textup{\textbf{0}},1)=\textup{\textbf{0}}$.
\begin{lem}\label{lem:finallem}
There exist $\delta_0>0$ and $M>0$ such that for all $\delta\in (0,\delta_0)$ the following holds. Let $\vf\in H^{\frac{d-1}{2}}\times H^{\frac{d-3}{2}}(\B^d_{1+\delta})$ be such that
\begin{align*}
\|\vf\|_{H^{\frac{d-1}{2}}\times H^{\frac{d-3}{2}}(\B^d_{1+\delta})}\leq \frac{\delta}{M}.
\end{align*}
Then, there exists a unique time $T^*\in [1-\delta,1+\delta]$ and a unique function $\phi$ in $\X$ with
\begin{align*}
\K_{\Uf(\vf,T^*)}(\phi)=\phi, \qquad \Cf_{\Uf(\vf,T^*)}(\phi)=\textup{\textbf{0}}.
\end{align*}
\end{lem}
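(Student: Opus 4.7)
The strategy is to use the single time-translation parameter $T$ to kill the single unstable direction coming from the eigenvalue $\lambda=1$. Since $\mathrm{rg}\,\Pf$ is one-dimensional and spanned by $\gf=(2,3)^\top$, we may write $\Pf\uf=\alpha(\uf)\gf$ for a bounded linear functional $\alpha$ on $\H$ normalised by $\alpha(\gf)=1$, and the vanishing condition $\Cf_{\Uf(\vf,T)}(\phi_T)=\mathbf 0$ reduces (via the evaluation of the growing mode at $\tau=\infty$) to the single scalar equation
\begin{equation*}
F(\vf,T):=\alpha\!\left(\Uf(\vf,T)\right)+\int_0^\infty e^{-\sigma}\alpha\!\left(\Nf((\phi_T(\sigma),0))\right)d\sigma = 0,
\end{equation*}
which we aim to solve for $T\in[1-\delta,1+\delta]$.

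The first step is to verify, possibly after enlarging $M$ and shrinking $\delta_0$, that $\|\Uf(\vf,T)\|_\H\lesssim\delta/M$ uniformly for $T\in[1-\delta,1+\delta]$ whenever $\|\vf\|\leq\delta/M$, so that the preceding lemma supplies a unique fixed point $\phi_T\in\X_\delta$ of $\K_{\Uf(\vf,T)}$ for every such $T$. Revisiting the contraction argument with the $T$-dependence made explicit — using the linearity of $\Sf(\tau)\uf$ in $\uf$ together with the local Lipschitz estimate of Lemma \ref{lem:nonlinear1} — and the continuity of the map $T\mapsto\Uf(\vf,T)$ into $\H$, one obtains the uniform Lipschitz bound $\|\phi_T-\phi_{T'}\|_\X\lesssim|T-T'|$, from which continuity of $F(\vf,\cdot)$ on $[1-\delta,1+\delta]$ is immediate.

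The decisive computation is that $\Uf(\mathbf 0,1)=\mathbf 0$ and a direct differentiation gives
\begin{equation*}
\partial_T\Uf(\mathbf 0,T)\big|_{T=1}=c_5\begin{pmatrix}\tfrac12\\[2pt]\tfrac34\end{pmatrix}=\tfrac{c_5}{4}\gf,
\end{equation*}
so $\alpha(\partial_T\Uf(\mathbf 0,1))=c_5/4\neq 0$. Combining a Taylor expansion of $T\mapsto\Uf(\vf,T)$ around $T=1$, the continuity of $\alpha\circ\Uf$ in $\vf$, and the quadratic nonlinear bound $\int_0^\infty\|\Nf((\phi_T,0))\|_\H\,d\sigma\lesssim\|\phi_T\|_\X^2+\|\phi_T\|_\X^5\lesssim\delta^2$ from Lemma \ref{lem:nonlinear1}, we obtain
\begin{equation*}
F(\vf,T)=\tfrac{c_5}{4}(T-1)+R(\vf,T),\qquad |R(\vf,T)|\lesssim\tfrac{\delta}{M}+(T-1)^2+\delta^2,
\end{equation*}
with constants independent of $\delta$ and $M$, and the analogous estimate $|\partial_T F(\vf,T)-c_5/4|\lesssim\delta$ derived by differencing the fixed-point equation in $T$.

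Choosing $M$ large and $\delta_0$ small enough that $|R|<\tfrac{c_5}{8}\delta$ on $[1-\delta,1+\delta]$ and $|\partial_T F-c_5/4|<c_5/8$, the function $F(\vf,\cdot)$ takes strictly opposite signs at the two endpoints, so the intermediate value theorem supplies at least one $T^*\in(1-\delta,1+\delta)$ with $F(\vf,T^*)=0$. The strict monotonicity $\partial_T F\geq c_5/8$ forces uniqueness of $T^*$, and then $\phi=\phi_{T^*}$ is uniquely determined by the previous lemma. The main technical obstacle is the uniform Lipschitz control of $T\mapsto\phi_T$ (which also underlies the derivative estimate for $\partial_T F$): this requires differencing the fixed-point identity $\phi_T=\K_{\Uf(\vf,T)}(\phi_T)$ in $T$ and carefully propagating the Strichartz bounds of Theorem \ref{thm:strichartz} and the local Lipschitz estimates of Lemma \ref{lem:nonlinear1} through the nonlinear correction, so that the contraction constant remains strictly less than one uniformly in $T$.
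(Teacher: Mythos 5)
Your existence argument is essentially the standard one behind the cited Lemma 5.6 of \cite{Wal22}: reduce the vanishing of the correction term to a scalar equation $F(\vf,T)=0$, compute the crucial transversality coefficient
$\partial_T\Uf(\mathbf 0,T)\big|_{T=1}=c_5(\tfrac12,\tfrac34)^\top=\tfrac{c_5}{4}\gf$,
treat the $\vf$-dependent part and the nonlinear tail as $O(\delta/M)+O(\delta^2)$ perturbations, and invoke the intermediate value theorem. For this, continuity of $T\mapsto\Uf(\vf,T)$ --- which the paper explicitly asserts --- is all that is required, and your computation of the coefficient is correct.

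The uniqueness step, however, has a genuine gap. You claim the Lipschitz bound $\|\phi_T-\phi_{T'}\|_\X\lesssim|T-T'|$ and from it a derivative estimate $|\partial_T F-c_5/4|\lesssim\delta$, both of which require the map $T\mapsto\Uf(\vf,T)$ to be Lipschitz (indeed differentiable) from $[1-\delta,1+\delta]$ into $\H=H^{\frac{d-1}{2}}\times H^{\frac{d-3}{2}}(\B^d_1)$. That fails for a generic $\vf$ at this regularity: formally $\partial_T\bigl(T^{1/2}v_1(T\rho)\bigr)=\tfrac12 T^{-1/2}v_1(T\rho)+T^{1/2}\rho\,v_1'(T\rho)$, and the second summand lives only in $H^{\frac{d-3}{2}}$ when $v_1\in H^{\frac{d-1}{2}}$, so the $T$-derivative costs one spatial derivative and does not exist in the $\H$-topology. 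The paper accordingly asserts only \emph{uniform continuity} of $\Uf(\cdot,T)$ in $T$, not Lipschitz continuity, and refers the uniqueness of $T^*$ to Lemma 5.7 of \cite{Wal22}, which is a separate argument precisely because the naive monotonicity route is unavailable. To repair your proof you would need either to restrict to smoother $\vf$ and pass to the limit, to exploit the smoothness of the dual eigenfunction $\widetilde\gf$ to make sense of $\partial_T\alpha(\Uf(\vf,T))$ by moving the lost derivative onto $\widetilde\gf$, or to replace strict monotonicity by the contradiction-style uniqueness argument used in the cited reference.
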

\begin{proof}
The existences of $T^*$ and $\phi$ follow as Lemma 5.6 in \cite{Wal22} and their respective uniqueness as in Lemma 5.7 of \cite{Wal22}.
\end{proof}
With this, we come to the proof of the second theorem.
\begin{proof}[Proof of Theorem \ref{thm: stability}]
Let $\vf= \uf -(c_5,\frac{1}{2}c_5)\in H^{\frac{d-1}{2}}\times H^{\frac{d-3}{2}}(\B^d_{1+\delta}) $ be small enough to satisfy the assumptions of Lemma \ref{lem:finallem}, $\phi$ be the associated fixed point in $\X$ with vanishing correction term, and $T$ be the associated blow up time.
Then,
\begin{align*}
\delta^2&\geq \|\phi\|_{L^2(\R_+)L^\infty(\B^d_1)}^2=\|\psi-c_5\|_{L^2(\R_+)L^\infty(\B^d_1)}^2
\\
&=\int_0^T (T-t)^{-1}\|\psi(-\log(T-t)+\log T,.)-c_5\|_{L^\infty(\B^d_1)}^2dt
\\
&=\int_0^T (T-t)^{-1}\|\psi(-\log(T-t)+\log T,\frac{.}{T-t})-c_5\|_{L^\infty(\B^d_{T-t})}^2 dt
\\
&=\int_0^T \|u(t,.)-u^T(t)\|_{L^\infty(\B^d_{T-t})}^2 dt.
\end{align*}
Finally, since $u^T$ is constant in space the remaining estimates stated in the theorem follow likewise.
\end{proof}

\appendix

\section{Interpolation Theory}
In this section, we recall some of the basic notions of interpolation theory and prove the used interpolation Lemma.
This exposition follows the book ``Interpolation Spaces'' by J. Bergh and J. Löfström
\cite{BerLof12} and largely uses the same notation. Thus, for a tuple $(X_0,X_1)$  of Banach spaces, we construct another Banach space $(X_0+X_1,\|.\|_{X_0+X_1})$ with
\begin{align*}
\|x\|_{X_0+X_1}:=\inf_{x=x_0+x_1,x_j\in X_j, j=1,2}  ( \|x_0\|_{X_0}+\|x_1\|_{X_1})
\end{align*}
for $x\in X_0+ X_1$. Now, consider the strip $S:=\{ z\in \C:0\leq z\leq 1\}$ and the set $F(X_0,X_1)$ consisting of all continuous functions $f:S\to X_0+X_1$ that are analytic on the interior of $S$ and additionally satisfy that the map
$t\mapsto f(j+it)$, for $j=0,1$ is a continuous function from $\R$ to $X_j$ which tends to $0$ as $|t| \to \infty.$
Then, $F(X_0,X_1)$ is a vector space and by equipping it with the norm
\begin{align*}
\|f\|_{F(X_0,X_1)}:=\max\left \{ \sup_{t\in\R} \| f(it)\|_{X_0},
  \sup_{t\in\R} \|f(1+it)\|_{X_1}\right \}
\end{align*}
it becomes a Banach space, see Lemma \cite[p.~88, Lemma 4.1.1.]{BerLof12}.
Further, for $\theta \in (0,1)$, the interpolation functor $C_\theta$ is defined in the following way. Let $(X_0,X_1)_{[\theta]}=C_\theta(X_0,X_1)$ be the set of all $ x\in X_0+X_1$ for which there exists an $f \in F(X_0,X_1)$ with $f(\theta)=x$. For any such $x$, we set
\begin{align*}
\|x\|_{(X_0,X_1)_{[\theta]}}:=\inf\{ \|f\|_{F(X_0,X_1)},f\in F(X_0,X_1): f(\theta)=x \}.
\end{align*}
Then, $((X_0,X_1)_{[\theta]},\|.\|_{(X_0,X_1)_{[\theta]}})$ is a Banach space and $C_\theta$ is an exact interpolation functor of order $\theta$ (see \cite[p.~88, Theorem 4.1.2.]{BerLof12}). 
Moreover, for any given Sobolev norm $\|.\|_{W^{s,q}(\B^d_1)}$, with $s\geq 0$ and $1\leq q<\infty$ as well as $a\in \R$, we let $L^p(\R_+, e^{a\tau}d\tau)W^{s,q}(\B^d_1)$ with $1\leq p\leq\infty$ be the completion of $C^\infty_c (\R_+\times\overline{\B^d_1})$ with respect to the norm
\begin{align*}
\|f\|_{L^p(\R_+,e^{a\tau}d\tau) W^{s,q}(\B^d_1)}^p:=\int_{\R_+}\|f(\tau,.)\|_{W^{s,q}(\B^d_1)}^p e^{a\tau} d\tau.
\end{align*}
Note that according to \cite[p.~317, Subsection 4.3.1.1, Theorem 1]{Tri95} one has that
\begin{align*}
(W^{s_0,q_0}(\B^d_1),W^{s_1,q_1}(\B^d_1))_{[\theta]}=W^{s_{\theta},q_{\theta}}(\B^d_1)
\end{align*}
for  $1\leq p,q_0,q_1< \infty$ and $0\leq s_0,s_1<\infty$ where $s_{\theta}=(1-\theta)s_0+\theta s_1$, $\tfrac{1}{q_{\theta}}=\tfrac{1-\theta}{q_0}+\tfrac{\theta}{q_1}$.
Having concluded these preliminaries, we come to the desired interpolation result.

\begin{prop}\label{prop:interpolation}
Let  $1\leq p_0,p_1<\infty$, $1\leq q_0,q_1< \infty$, $0\leq s_0,s_1<\infty$, and $\mu_0, \mu_1\in \R$ be such that
\begin{align*}
\theta \mu_1 +(1-\theta) \mu_0=0.
\end{align*}
  Then,
\begin{align*}
\left(L^{p_0}(\R_+,e^{\mu_0 p_0\tau}d\tau) W^{s_0,q_0}(\B^d_1) ,L^{p_1}(\R_+,e^{\mu_1 p_1\tau}d\tau)W^{s_1,q_1}(\B^d_1) \right)_{[\theta]}=L^{p_{\theta}}(\R_+)W^{s_{\theta},q_{\theta}}(\B^d_1)
\end{align*}
where $s_\theta, p_\theta,$ and $q_\theta$ are such that 
$$s_\theta= (1-\theta)s_0+ \theta s_1 ,\qquad \frac{1}{p_\theta}=\frac{1-\theta}{p_0} +\frac{\theta}{p_1},\qquad \frac{1}{q_\theta}=\frac{1-\theta}{q_0}+\frac{\theta}{q_1}.
$$
Likewise, for $1\leq r\leq \infty$ one has that 
\begin{align*}
\left(L^{p_0}(\R_+,e^{\mu_0 p_0\tau}d\tau) L^r(\B^d_1) ,L^{p_1}(\R_+,e^{\mu_1 p_1\tau}d\tau)L^{\infty}(\B^d_1) \right)_{[\theta]}=L^{p_{\theta}}(\R_+)L^{r_{\theta}}(\B^d_1)
\end{align*}
with $p_0,p_1,$ and $p_\theta$ as above and $ \frac{1}{r_\theta}=\frac{1-\theta}{r}$.
\end{prop}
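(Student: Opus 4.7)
The plan is to reduce the assertion to the unweighted case by means of an analytic family of multiplication operators, and then to combine the classical interpolation theorem for vector-valued $L^p$ spaces with Triebel's identification of complex interpolants of Sobolev spaces. Set $X_j := L^{p_j}(\R_+, e^{\mu_j p_j \tau}d\tau)W^{s_j, q_j}(\B^d_1)$ and $\widetilde X_j := L^{p_j}(\R_+)W^{s_j, q_j}(\B^d_1)$ for $j=0,1$. By Fubini, the map $f \mapsto e^{\mu_j \tau}f$ is an isometric isomorphism $X_j \to \widetilde X_j$. To lift this to the level of interpolation spaces, I would introduce the entire family of scalar multipliers
\[
M_z f(\tau,x) := e^{[(1-z)\mu_0 + z \mu_1]\tau} f(\tau,x), \qquad z \in S,
\]
and show that $F \mapsto (z \mapsto M_z F(z))$ maps $F(X_0,X_1)$ isometrically onto $F(\widetilde X_0, \widetilde X_1)$. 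Since $|M_{j+it}| = e^{\mu_j \tau}$ is independent of $\Im z$, the boundary norms are preserved, and analyticity together with continuity up to $\partial S$ are inherited from the entirety of the scalar factor in $z$. Crucially, the hypothesis $(1-\theta)\mu_0 + \theta \mu_1 = 0$ gives $M_\theta = \mathrm{id}$, so the resulting isometry $(X_0,X_1)_{[\theta]} = (\widetilde X_0, \widetilde X_1)_{[\theta]}$ is realized by the identity map on representatives.

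Once we are in the unweighted setting, I would apply the classical Calderón--Lions identity for Banach-valued $L^p$ spaces (see \cite[Theorem 5.1.2]{BerLof12}), which states
\[
\bigl(L^{p_0}(\R_+) A_0,\,L^{p_1}(\R_+) A_1\bigr)_{[\theta]} = L^{p_\theta}(\R_+)\,(A_0,A_1)_{[\theta]}
\]
provided $A_0 \cap A_1$ is dense in each $A_j$. Taking $A_j = W^{s_j,q_j}(\B^d_1)$, the density requirement is met since $C_c^\infty(\B^d_1)$ sits inside both. Combining this with Triebel's identity $(W^{s_0,q_0}(\B^d_1), W^{s_1,q_1}(\B^d_1))_{[\theta]} = W^{s_\theta,q_\theta}(\B^d_1)$, already cited from \cite[Sec.~4.3.1.1]{Tri95}, closes the argument for the first assertion. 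The $L^\infty$-valued variant follows identically, substituting in the final step the standard complex interpolation identity $(L^r(\B^d_1), L^\infty(\B^d_1))_{[\theta]} = L^{r_\theta}(\B^d_1)$ with $1/r_\theta = (1-\theta)/r$, whose proof on a bounded domain only requires the density of $L^r \cap L^\infty$ in $L^r$.

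The main technical subtlety is the verification in the first step that $\widetilde F(z) := M_z F(z)$ genuinely belongs to $F(\widetilde X_0, \widetilde X_1)$, because the pointwise modulus of $M_z$ grows like $e^{\max(|\mu_0|,|\mu_1|)\tau}$ in the interior of $S$, and a priori $F(z)$ is only known to lie in the sum space $X_0+X_1$. I would handle this by working first with the dense subspace of Calderón-admissible functions of the form $F(z) = \sum_k e^{\delta z^2}\phi_k(z) u_k$ with $u_k \in X_0 \cap X_1$ and $\phi_k$ scalar analytic with sufficient decay, where $M_z F(z)$ is unambiguously defined and the isometric correspondence is transparent, and then extend by density using the isometry on the boundary. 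The remaining pieces --- verifying the density hypothesis for the vector-valued interpolation theorem and invoking the cited Sobolev interpolation results --- are entirely standard.
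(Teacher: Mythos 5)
Your proof is correct and takes a genuinely different route from the paper. The paper re-runs the argument of \cite[Theorem 5.1.2]{BerLof12} inside the weighted setting: on one side it explicitly constructs the analytic family $g(\tau)(z)=f(\tau)(z)e^{\varepsilon(z^2-\theta^2)}e^{(z\mu_1+(1-z)\mu_0)\tau}(\cdots)$, verifies the boundary estimates by hand, and on the other side it uses the Poisson-kernel bound followed by H\"older and Minkowski. You instead isolate the weight as an analytic family of multiplication operators $M_z=e^{[(1-z)\mu_0+z\mu_1]\tau}$, show $F\mapsto M_\cdot F(\cdot)$ is an isometry of $F(X_0,X_1)$ onto $F(\widetilde X_0,\widetilde X_1)$ --- which is precisely where the hypothesis $(1-\theta)\mu_0+\theta\mu_1=0$ enters, ensuring $M_\theta=\mathrm{id}$ --- and then invoke the unweighted Calder\'on--Lions theorem as a black box. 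Structurally this is the classical ``change of density'' reduction. Note in passing that the factor $e^{(z\mu_1+(1-z)\mu_0)\tau}$ appearing in the paper's construction of $g(\tau)(z)$ is exactly your $M_z$, so the two proofs are really unfolding the same exponential trick, but you cleanly package it once as an isomorphism of analytic function spaces, which makes the conclusion more transparent and avoids redoing the boundary/Poisson estimates. The only place your argument needs care is the one you correctly flag: for generic $F\in F(X_0,X_1)$, the interior values $M_zF(z)$ land in $\widetilde X_0+\widetilde X_1$ only after decomposition, so one must pass first to Calder\'on-admissible elements (where $M_zu_k$ is controlled because $\mu(\Re z)$ lies between $\mu_0$ and $\mu_1$, so for $\tau\ge0$ the weight is dominated by the larger endpoint weight) and then use \cite[Lemma 4.2.3]{BerLof12} to conclude that this dense subclass computes the interpolation norm. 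That fix is standard and closes the gap, so the proposal is sound.
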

\begin{proof}
This proposition essentially follows from the same considerations as \cite[p.~107, Theorem 5.1.2]{BerLof12}), which we illustrate here for the convenience of the reader. To simplify notation, we set $W_0=W^{s_0,q_0}(\B^d_1)$, $W_1=W^{s_1,q_1}(\B^d_1)$ and $p=p_{\theta}$. By construction, $C^\infty_c(\R_+\times \overline{\B^d_1})$ lies dense in $L^{p_0}(\R_+,e^{\mu_0 p_0\tau}d\tau)W_0\cap L^{p_1}(\R_+,e^{ \mu_1 p_1\tau}d\tau)W_1$. Thus, thanks to \cite[p.~91, Theorem 4.2.2]{BerLof12}, it is also a dense subset of
 $$\left(L^{p_0}(\R_+,e^{\mu_0 p_0\tau}d\tau) W_0,L^{p_1}(\R_+,e^{\mu_1 p_1\tau}d\tau)W_1\right)_{[\theta]}\quad \text{ and } \quad L^p(\R_+)(W_0,W_1)_{[\theta]}.$$ Consequently, it suffices to consider $C^\infty_c(\R_+\times \overline{\B^d_1}).$ We start with the inequality
\begin{align*}
\|u\|_{(L^{p_0}(\R_+,e^{\mu_0 p_0\tau}d\tau)W_0,L^{p_1}(\R_+,e^{\mu_1 p_1\tau}d\tau)W_1 )_{[\theta]}}\leq \|u\|_{L^p(\R_+)(W_0,W_1)_{[\theta]}}.
\end{align*}
Let $u\in C^\infty_c(\R_+\times \overline{\B^d_1})$ with $u \neq 0$. Then, for every
$\varepsilon>0$ and every $\tau\geq 0$, there exists an $f(\tau)\in F(W_0,W_1)$ with $f(\tau)(\theta)=u(\tau,.)$ and 
$$
\|f(\tau)\|_{F(W_0,W_1)}\leq (1+\varepsilon)\|u(\tau,.)\|_{(W_0,W_1)_{[\theta]}}.
$$
Set
$$
g(\tau)(z)=f(\tau)(z)e^{\varepsilon(z^2-\theta^2)}e^{(z\mu_1+(1-z)\mu_0)\tau}\left(\frac{\|u(\tau)\|_{(W_0,W_1)_{[\theta]}}}{\|u\|_{L^{p}(\R_+)(W_0,W_1)_{[\theta]}}}\right)^{p(\frac{1}{p_0}-\frac1{p_1})(\theta-z)}.
$$
Then, clearly $g(\tau)(\theta)=u(\tau,.)$. Further, one computes that
$$
p_0 p(\frac{1}{p_0}-\frac1{p_1})\theta=p-p_0.
$$ 
Consequently,
\begin{align*}
&\quad \|g(\tau)(it)\|_{L^{p_0}(\R_+,e^{-\mu_0 p_0\tau}d\tau)W_0}^{p_0} 
\\
&\leq e^{-p_0\varepsilon(t^2+\theta^2)}\int_{\R_+}\|f(\tau)(it)\|_{W_0}^{p_0}\left(\frac{\|u(\tau)\|_{(W_0,W_1)_{[\theta]}}}{\|u\|_{L^{p}(\R_+)(W_0,W_1)_{[\theta]}}}\right)^{p_0 p(\frac{1}{p_0}-\frac1{p_1})\theta} d\tau
\\
&\leq e^{-p_0\varepsilon(t^2+\theta^2)}(1+\varepsilon)^{p_0}\|u\|_{L^{p}(\R_+)(W_0,W_1)_{[\theta]}}^{-p_0 p(\frac{1}{p_0}-\frac1{p_1})\theta}\int_{\R_+}\|u(\tau,.)\|_{(W_0,W_1)_{[\theta]}}^{p} d\tau
\\
&=e^{-p_0\varepsilon(t^2+\theta^2)}(1+\varepsilon)^{p_0}\|u\|_{L^p(\R_+)(W_0,W_1)_{[\theta]}}^{p_0} 
\end{align*}
and similarly
\begin{align*}
\|g(\tau)(1+it)\|_{L^{p_1}(\R_+,e^{a p_1\tau}d\tau)W_1 }^{p_1} &\leq e^{p_1\varepsilon(1-t^2-\theta^2)}(1+\varepsilon)^{p_1}\|u\|_{L^p(\R_+)(W_0,W_1)_{[\theta]}}^{p_1}.
\end{align*}
Hence, as $\varepsilon>0$ was chosen arbitrarily, the claim follows.

For the other inequality, we invoke \cite[p.~93, Lemma 4.3.2]{BerLof12}, which states that any $f\in F(W_0,W_1)$ satisfies
\begin{align}\label{Eq:Poisson kernel}
\|f\|_{(W_0,W_1)_{[\theta]}}\leq \left(\frac{1}{1-\theta}\int_\R \|f(it)\|_{W_0} P_0(\theta,t) dt\right)^{1-\theta}\left(\frac{1}{\theta}\int_\R \|f(1+it)\|_{W_1} P_1(\theta,t) dt\right)^{\theta}
\end{align}
where 
$$
P_j(x+iy,t):=\frac{e^{-\pi(t-y)}\sin (\pi x)}{\sin(\pi x)^2+(\cos(\pi x)-e^{ij\pi-\pi (t-y)})^2}
$$
are the Poisson kernels of the strip $S$. 
Next, for $u\in C^\infty_c(\R_+\times \overline{\B^d_1})$ let $f(\tau)\in F(W_0,W_1)$ be such that $f(\tau)(\theta)=u(\tau,.)$. Then, \eqref{Eq:Poisson kernel}, Hölder's inequality, and the identity
$\frac 1p =\frac{1-\theta}{p_0}+\frac{\theta}{p_1}$ imply that
\begin{align*}
\|u\|_{L^p(\R_+)(W_0,W_1)_{[\theta]}}&\leq \bigg\| \int_{\R_+} \bigg[\left(\frac{1}{1-\theta}\int_\R \|f(\tau)(it)\|_{W_0} P_0\left(\theta,t\right) dt\right)^{1-\theta}
\\
&\quad \times \left(\frac{1}{\theta}\int_\R \|f(\tau)(1+it)\|_{W_1} P_1\left(\theta,t\right) dt\right)^{\theta}\bigg\|_{L^p_\tau(\R_+)}
\\
&\leq \theta^{-\theta}(1-\theta)^{\theta-1}\left\|e^{\mu_0\tau}\int_\R \|f(\tau)(it)\|_{W_0} P_0\left(\theta,t\right) dt \right\|_{L^{p_0}_\tau(\R_+)}^{1-\theta}
\\
&\quad\times \left\| e^{\mu_1\tau}\int_\R\|f(\tau)(1+it)\|_{W_1} P_1\left(\theta,t\right) dt\right\|_{L^{p_1}_\tau(\R_+)}^{\theta}.
\end{align*}
Moreover, an application of Minkowski's inequality shows that
\begin{align*}
\left\|\int_\R \|f(\tau)(it)\|_{W_0} P_0\left(\theta,t\right) dt e^{\mu_0\tau}\right\|_{L^{p_0}_\tau(\R_+)}&\leq \int_\R \left\|\|f(\tau)(it)\|_{W_0} e^{\mu_0\tau}\right\|_{L^{p_0}_\tau(\R_+)} P_0\left(\theta,t\right) dt
\\
&\leq\sup_{t\in \R}\|f(\tau)(it)\|_{L^{p_0}(\R_+,e^{\mu_0 p_0 \tau} d\tau) W_0}
\\
&\quad\times \int_\R P_0\left(\theta,t\right) dt
\end{align*}
and analogously one estimates the second factor.
Observe now, that a primitive (with respect to $t$) of 
$$
P_0(\theta,t)=
\sin (\pi \theta) \frac{e^{-\pi t}}{\sin(\pi \theta)^2+(\cos(\pi \theta)-e^{-\pi t})^2} 
$$
is given by
$$
\frac{1}{\pi}\arctan\left(\frac{\cos(\pi \theta)-e^{-\pi t}}{\sin (\pi \theta)}\right).
$$
Therefore,
\begin{align*}
\int_\R P_0 \left(\theta,t\right) dt = 1-\theta.
\end{align*}
Likewise, 
$$
-\frac{1}{\pi}\arctan\left(\frac{\cos(\pi \theta)+e^{-\pi t}}{\sin (\pi \theta)}\right)
$$
is a primitive of
$$
P_1(\theta,t)=
\sin (\pi \theta) \frac{e^{-\pi t}}{\sin(\pi \theta)^2+(\cos(\pi \theta)-e^{i\pi -\pi t})^2} 
$$
which implies that
\begin{align*}
\int_\R P_1 \left(\theta,t\right) dt =\theta.
\end{align*}
Thus,
\begin{align*}
\|u\|_{L^p(\R_+)(W_0,W_1)_{[\theta]}} &\leq \sup_{t\in \R}\|f(\tau)(it)\|_{L^{p_0}(\R_+,e^{\mu_0 p_0\tau} d\tau) W_0}^{1-\theta}\sup_{t\in \R}\|f(\tau)(1+it)\|_{L^{p_1}(\R_+,e^{\mu_1\tau} d\tau) W_1}^{\theta}
\\
&\leq \|f\|_{F(L^p(\R_+,e^{\mu_0 p_0\tau}d\tau)W_0, L^p(\R_+,e^{\mu_1\tau}d\tau)W_1)}.
\end{align*}
\end{proof}
\section{Upgraded regularity of the Resolvent in the integer case}
In this section we prove Lemma \ref{lem:extendres}. We start with a definition of the Sobolev-Slobodeckij spaces.
\begin{defi}
Let $s\in (0,\infty)\setminus \mathbb{N}$. Then, for any open set $U\subset \R^d$ and any $f\in C^\infty(\overline{U})$ we define the Sobolev-Slobodeckij seminorm $[.]_{H^{s}(U)}$ as 
\begin{align*}
[f]_{H^{s}(U)}^2=\sum_{a_1+\dots+a_d=\floor s} \int_{U}\int_{U} \frac{|\partial_{x_1}^{a_1}\dots \partial_{x_d}^{a_d}f(x)-\partial_{y_1}^{a_1}\dots \partial_{y_d}^{a_d}f(y)|^2}{|x-y|^{d+2(s-\floor s)}}dx dy.
\end{align*}
Furthermore, we define the space $H^{s}_{rad}(\B^d_1)$ as the completion of $C^\infty_{rad}(\B^d_1)$ with respect to the norm
\begin{align*}
\|.\|_{H^s(\B^d_1)}^2:= \|.\|_{H^{\floor s}(\B^d_1)}^2+[.]_{H^{s}(\B^d_1)}^2.
\end{align*}

\end{defi}
For us, the following bound will be important.
\begin{lem}
The estimate \begin{align*}\label{lem:apB1}
\|f\|_{H^s(\B^d_1)}\lesssim \|f\|_{H^s(\B^d_{\frac{1}{2}})}^2+\|f\|_{H^s(\frac{1}{2},1))}^2
\end{align*}
holds for all  $f \in C^\infty_{rad}(\B^d_1)$. Consequently, it also holds for all $f\in H^s_{rad}(\B^d_1)$.
\end{lem}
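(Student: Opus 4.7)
The plan is to treat the integer part and the Slobodeckij part of the norm separately. First, writing $\|f\|_{H^s(\B^d_1)}^2 = \|f\|_{H^{\floor s}(\B^d_1)}^2 + [f]_{H^s(\B^d_1)}^2$, I would split the domain of integration $\B^d_1 = \B^d_{1/2} \sqcup (\B^d_1 \setminus \overline{\B^d_{1/2}})$. The integer part is local, so it splits additively, and on the annulus $\{1/2 < |x| < 1\}$ the Jacobian factor $r^{d-1}$ is bounded above and below, so by passing to the radial profile $F(r)=f(x)$ (and using that chain-rule corrections of the form $\partial_{x_i}^a$ acting on radial functions only produce additional factors of $r^{-k}$ which are bounded on $(1/2,1)$), the integer norm on the annulus is equivalent to $\|F\|_{H^{\floor s}((1/2,1))}$, which is what the notation $\|f\|_{H^{\floor s}((1/2,1))}$ refers to.

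For the Slobodeckij seminorm with exponent $\theta = s - \floor s \in (0,1)$ and a fixed multi-index $\alpha$ with $|\alpha| = \floor s$, I would decompose
\begin{align*}
\int_{\B^d_1}\int_{\B^d_1} \frac{|\partial^\alpha f(x) - \partial^\alpha f(y)|^2}{|x-y|^{d+2\theta}}\,dx\,dy = I_1 + 2 I_{\text{cross}} + I_2,
\end{align*}
where $I_1$ restricts both variables to $\B^d_{1/2}$, $I_2$ restricts both to the annulus, and $I_{\text{cross}}$ has $x \in \B^d_{1/2}$, $y \in \B^d_1 \setminus \overline{\B^d_{1/2}}$. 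The term $I_1$ is directly bounded by $[f]_{H^s(\B^d_{1/2})}^2$, and $I_2$ is handled by the radial reduction: with $x = r\omega$, $y = \rho \sigma$ for $\omega,\sigma \in S^{d-1}$ and $r,\rho \in (1/2,1)$, the denominator satisfies $|x-y|^2 = r^2+\rho^2-2r\rho\,\omega\!\cdot\!\sigma$, and after splitting the angular integration into $|\omega-\sigma|\lesssim |r-\rho|$ (diagonal) and its complement (where $|x-y|\gtrsim 1$), a short calculation bounds $I_2$ by $\|F\|_{H^s((1/2,1))}^2$ up to lower-order integer-norm contributions.

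The main obstacle is $I_{\text{cross}}$, since for $|x|\to 1/2^-$ and $|y|\to 1/2^+$ the kernel $|x-y|^{-d-2\theta}$ is singular. My approach is to introduce the radial projection $y^* := y/(2|y|)$ of $y$ onto $\partial\B^d_{1/2}$, which satisfies $|y - y^*| = |y| - 1/2 \leq |x-y|$ and therefore $|x - y^*| \leq 2|x-y|$. Apply $|a-c|^2 \leq 2|a-b|^2 + 2|b-c|^2$ with $b = \partial^\alpha f(y^*)$ to split
\begin{align*}
I_{\text{cross}} \lesssim \int\!\!\int \frac{|\partial^\alpha f(x) - \partial^\alpha f(y^*)|^2}{|x-y|^{d+2\theta}}\,dx\,dy + \int\!\!\int \frac{|\partial^\alpha f(y^*) - \partial^\alpha f(y)|^2}{|x-y|^{d+2\theta}}\,dx\,dy.
\end{align*}
In the first integral, since $x,y^*\in\overline{\B^d_{1/2}}$ and $|x-y^*|\leq 2|x-y|$, one changes variables in $y$ (keeping $y^*$ fixed as a function of the direction of $y$) and absorbs the integration over the radial variable of $y$; this bounds the first piece by $[f]_{H^s(\B^d_{1/2})}^2$ up to multiplicative constants. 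In the second integral, $y^*$ and $y$ are colinear and the difference $\partial^\alpha f(y^*) - \partial^\alpha f(y)$ is purely a radial quantity; carrying out the $x$ integration first gives $\int_{\B^d_{1/2}} |x-y|^{-d-2\theta}\,dx \lesssim (|y|-1/2)^{-2\theta}$ (by comparison with a half-space integral near the boundary point $y^*$), and the remaining integral in $y$ reduces, via $dy = \rho^{d-1}d\rho\,d\sigma$, to a one-dimensional expression
\begin{align*}
\int_{S^{d-1}}\int_{1/2}^1 \frac{|\partial^\alpha F(\rho,\sigma) - \partial^\alpha F(1/2,\sigma)|^2}{(\rho - 1/2)^{2\theta}}\,d\rho\,d\sigma,
\end{align*}
which is precisely (up to integer norm corrections from differentiating the radial ansatz) the one-sided trace-type quantity controlled by $\|F\|_{H^s((1/2,1))}^2$ via the standard characterisation of 1D Slobodeckij seminorms near an endpoint. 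Combining all three contributions and noting that all constants are uniform yields the asserted estimate on $C^\infty_{rad}(\B^d_1)$, and the extension to $H^s_{rad}(\B^d_1)$ is immediate by density.
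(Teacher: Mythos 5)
Your overall strategy is genuinely different from the paper's. After the (common) split into integer and Slobodeckij parts, the paper does \emph{not} estimate the annulus seminorm directly: it notes that for radial $f$ the integer-level norms $\|.\|_{H^{\lceil s\rceil}_{rad}(\B^d_1\setminus \B^d_{1/2})}$ and $\|.\|_{H^{\lfloor s\rfloor}_{rad}(\B^d_1\setminus \B^d_{1/2})}$ are equivalent to the one-dimensional ones $\|.\|_{H^{\lceil s\rceil}((1/2,1))}$, $\|.\|_{H^{\lfloor s\rfloor}((1/2,1))}$, and then invokes complex interpolation $H^s = [H^{\lceil s\rceil}, H^{\lfloor s\rfloor}]_\theta$ on both sides to obtain the fractional equivalence. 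Your direct polar-coordinate calculation for $I_2$ is an honest substitute for that, more elementary but with more bookkeeping; that part of your argument is fine in principle. The paper also simply asserts the decomposition $[f]_{H^s(\B^d_1)}^2 \lesssim [f]_{H^s(\B^d_{1/2})}^2 + [f]_{H^s(\B^d_1\setminus\B^d_{1/2})}^2$ without handling the cross term, so your attempt to make the cross term explicit addresses a point the paper leaves implicit.

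However, the cross-term treatment has a real gap. Inserting $\pm\partial^\alpha f(y^*)$, where $y^* = y/(2|y|)$, and then integrating out the radial variable of $y$ leaves a point evaluation $\partial^\alpha f(y^*)$, i.e.\ $F^{(\lfloor s\rfloor)}(1/2)$ up to lower-order corrections, as an essential ingredient of both resulting pieces (in the first piece it survives the $\rho$-integration and produces a trace-type quantity $\int_{\B^d_{1/2}}\int_{S^{d-1}} |x-\sigma/2|^{-(d+2\theta-1)}|\partial^\alpha f(x)-\partial^\alpha f(\sigma/2)|^2\,d\sigma\,dx$, and in the second piece it appears explicitly in the claimed one-dimensional integral). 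But with $\theta := s-\lfloor s\rfloor < 1/2$ — which is exactly the regime this lemma is used in ($\theta = 1/100$ in Lemma B.3) — the top-order radial derivative $F^{(\lfloor s\rfloor)}$ lies only in $H^\theta$ of a one-dimensional interval, which does \emph{not} embed in $C^0$; consequently $F^{(\lfloor s\rfloor)}(1/2)$ is not controlled by $\|F\|_{H^s((1/2,1))}$, nor by $\|f\|_{H^s(\B^d_{1/2})}$, and the triangle-inequality split does not close. (Your argument would be salvageable if $\theta > 1/2$, precisely because then the trace exists.) For $\theta < 1/2$ the cleaner way to control $I_{\mathrm{cross}}$ is not to exploit the difference structure at all: bound $|g(x)-g(y)|^2 \lesssim |g(x)|^2+|g(y)|^2$, integrate the kernel to produce weights $\operatorname{dist}(\cdot,\partial\B^d_{1/2})^{-2\theta}$ on each side, and invoke the fractional Hardy inequality, which for $\theta<1/2$ holds without any trace or vanishing condition and gives exactly the $H^\theta$-norm control you need.
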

\begin{proof}
The estimate 
\begin{align*}
\|f\|_{H^{\floor s }(\B^d_1)}^2\lesssim \|f\|_{H^{\floor{s}}(\B^d_{\frac{1}{2}})}^2+\|f\|_{H^{\floor s}(\frac{1}{2},1))}^2
\end{align*}
is immediate. Thus, given that 
\begin{align*}
[f]_{H^s(\B^d_1)}^2\lesssim [f]_{H^s(\B^d_{\frac{1}{2}})}^2+[f]_{H^s(\B^d_1\setminus \B^d_{\frac{1}{2}})}^2,
\end{align*}
the only thing that remains to be shown is the estimate
\begin{align*}
[f]_{H^s(\B^d_1\setminus \B^d_{\frac{1}{2}})}^2\lesssim \|f\|_{H^s(\frac{1}{2},1))}^2.
\end{align*}
To see this, one first notes that 
\begin{align*}
\|.\|_{H^{\ceil s}_{rad}(\B^d_1\setminus \B^d_{\frac{1}{2}})}\simeq \|\|_{H^{\ceil s}(\frac{1}{2},1))}\text{ and } \|\|_{H^{\floor s}_{rad}(\B^d_1\setminus \B^d_{\frac{1}{2}})}\simeq \|\|_{H^{\floor s}(\frac{1}{2},1))}.
\end{align*}
Thus, as \begin{align*}
H^{s}(\tfrac{1}{2},1))=[H^{\ceil s}(\tfrac{1}{2},1))H^{\floor s}(\tfrac{1}{2},1))]_\theta \text{ and } H^{s}(\B^d_1\setminus \B^d_{\frac{1}{2}})=[H^{\ceil s}(\B^d_1\setminus \B^d_{\frac{1}{2}}),H^{\floor s}(\B^d_1\setminus \B^d_{\frac{1}{2}})]_\theta
\end{align*}
for some appropriately chosen $\theta\in (0,1)$, the claim follows.
\end{proof}
\begin{lem}\label{lem:fraclem}
Let $p\in (0,1)$ and $g\in C^1([0,1])$. Then $f(x):=(1-x)^pg(x)$ is an element of $H^s((0,1))$ provided $s<p$.
\end{lem}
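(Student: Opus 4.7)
Since $p < 1$ the hypothesis $s < p$ forces $s \in [0,1)$, so $\lfloor s\rfloor = 0$ and it suffices to show that $f \in L^2((0,1))$ and that the Sobolev--Slobodeckij seminorm
\[
[f]_{H^s((0,1))}^2 = \int_0^1\!\!\int_0^1 \frac{|f(x)-f(y)|^2}{|x-y|^{1+2s}}\,dx\,dy
\]
is finite (the case $s=0$ being trivial and otherwise excluded by the definition).

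The plan is to split the difference according to
\[
f(x)-f(y) = (1-x)^p\bigl[g(x)-g(y)\bigr] + g(y)\bigl[(1-x)^p-(1-y)^p\bigr],
\]
so that $[f]_{H^s((0,1))}^2 \lesssim I_1 + I_2$ with
\[
I_1 := \int_0^1\!\!\int_0^1 \frac{(1-x)^{2p}\,|g(x)-g(y)|^2}{|x-y|^{1+2s}}\,dx\,dy, \qquad
I_2 := \|g\|_{L^\infty}^2 \int_0^1\!\!\int_0^1 \frac{|(1-x)^p-(1-y)^p|^2}{|x-y|^{1+2s}}\,dx\,dy.
\]
For $I_1$, the $C^1$-regularity of $g$ gives $|g(x)-g(y)|\leq \|g'\|_{L^\infty}|x-y|$, and $(1-x)^{2p}\leq 1$; hence
\[
I_1 \lesssim \int_0^1\!\!\int_0^1 |x-y|^{1-2s}\,dx\,dy < \infty
\]
since $s < 1$. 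For $I_2$, I would use the elementary subadditivity inequality $|a^p - b^p|\leq |a-b|^p$ valid for all $a,b\geq 0$ and $p\in(0,1]$, applied to $a=1-x$, $b=1-y$, yielding $|(1-x)^p-(1-y)^p|^2\leq |x-y|^{2p}$ and therefore
\[
I_2 \lesssim \int_0^1\!\!\int_0^1 |x-y|^{2p-1-2s}\,dx\,dy,
\]
which converges precisely when $2p-1-2s > -1$, i.e.\ $s<p$. Combined with the trivial bound $\|f\|_{L^2((0,1))}\leq \|g\|_{L^\infty}$, this proves $f\in H^s((0,1))$.

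The only step requiring any thought is the treatment of the boundary singularity in $I_2$; everything else is a matter of bookkeeping. The key ingredient there is the concavity-based inequality $|a^p-b^p|\leq |a-b|^p$, which converts a pointwise Hölder estimate into the exact integrability threshold $s<p$ and explains why the statement is sharp: for $s\geq p$ the integral $I_2$ diverges for generic $g$ (e.g.\ $g\equiv 1$), so this is the natural regularity cutoff.
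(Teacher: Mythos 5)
Your proof is correct and uses essentially the same approach as the paper: split $f(x)-f(y)$ into a term controlled by the $C^1$-regularity of $g$ and a term controlled by the H\"older continuity of $t\mapsto t^p$, then use the subadditivity bound $|a^p-b^p|\leq |a-b|^p$ to identify the integrability threshold. The only issue is your closing sharpness remark: it is false. For $g\equiv 1$, the substitution $u=1-x$, $v=1-y$, $u=vt$ gives
\[
\int_0^1\!\!\int_0^1 \frac{|(1-x)^p-(1-y)^p|^2}{|x-y|^{1+2s}}\,dx\,dy
= 2\int_0^1 v^{2p-2s}\,dv\int_0^1\frac{(1-t^p)^2}{(1-t)^{1+2s}}\,dt,
\]
which converges precisely when $s<p+\tfrac12$, not when $s<p$. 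The bound $|a^p-b^p|\leq|a-b|^p$ is only saturated near the boundary $x,y\to 1$; away from it the mean value theorem gives the much better $|a^p-b^p|\lesssim|a-b|$, and the integral over that region is harmless. So the threshold $s<p$ in the lemma is sufficient but not sharp, even for constant $g$. This does not affect the validity of your proof of the stated result.
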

\begin{proof}
We compute that
\begin{align*}
[f]_{W^{s,q}(\B^d_1)}^q& =\int_0^1\int_0^1 \frac{|(1-x)^pg(x)-(1-y)^pg(y)|^q}{|x-y|^{1+sq}}dx dy
\\
&\lesssim \int_0^1\int_0^1\frac{|(1-x)^p-(1-y)^p|^q}{|x-y|^{1+sq}}dx dy+
\int_0^1\int_0^1\frac{|g(x)-g(y)|^q}{|x-y|^{1+sq}}dx dy
\\
&\lesssim \int_0^1\int_0^1\frac{1}{|x-y|^{1+sq-pq}}dx dy
\end{align*}
where the last step follows from the Hölder continuity of the function $x^p$ and the regularity of $g$. Hence, the claim follows, as by assumption $1+(s-p)q<1$.
\end{proof}
Now, we can at last come to the proof of Lemma \ref{lem:extendres}.

\begin{proof}[Proof of Lemma \ref{lem:extendres}]
We start by noting that, if $\Rm (f)(.,\lambda) \in H^{s+\frac{1}{100}}(\B^d_1)$, then, it is the unique such solution, as otherwise $\lambda$ would be an eigenvalue of $\Lf$.
Hence, only the regularity of $\Rm (f)(.,\lambda)$ needs to be studied.
For this, one readily computes that $\Rm (f)(.,\lambda) \in C^{s+1}((0,1))$, as all the involved functions are smooth on that open interval. Near $0$, one again uses the explicit forms of $u_0$ and $u_1$ to readily conclude that
$ \Rm (f)(.,\lambda)\in H^{s+1}(\B^d_1)
$
for all $\lambda\in \widehat{S}$.
Near 1, we once more rewrite
\begin{align*}
-u_0(\rho,\lambda)U_{1,j}(\rho,\lambda) 
+u_1(\rho,\lambda)U_{0,j}(\rho,\lambda) = 
-u_2(\rho,\lambda)U_{1,j}(\rho,\lambda) 
+u_1(\rho,\lambda)U_{2,j}(\rho,\lambda)
\end{align*}
Then, by our choice of $\kappa_1(f)$ we infer that we can rewrite  
$ 
 u_2(\rho,\lambda)[U_{1,1}(\rho,\lambda) f(\rho)-\kappa_1(f)(\lambda)]
$
as 
\begin{align*}
&\quad u_2(\rho,\lambda)[U_{1,1}(\rho,\lambda) f(\rho)-\kappa_1(f)(\lambda)]
\\
&=u_2(\rho,\lambda) \frac{\int_\rho^1 f'(t) dt}{\sqrt{2s-2\lambda-1}}\int_0^1 \frac{(1-t)^{\ceil s -s- \frac12+\lambda}}{\prod_{j=1}^{\floor s}(\lambda+\frac{1}{2}+j-s)} 
\partial_t^{\ceil s-1}\left(\frac{t^{d-1}u_1(t,\lambda)}{(1+t)^{s-\lambda-\frac12}}\right) dt 
\\
&\quad+
\frac{u_2(\rho,\lambda)f(\rho)}{\sqrt{2s-2\lambda-1}}\int_\rho^1 \frac{(1-t)^{\ceil s-s-\frac12+\lambda}}{\prod_{j=1}^{\ceil s-1}(\lambda+\frac{1}{2}+j-s)} 
\partial_t^{\ceil s-1}\left(\frac{t^{d-1}u_1(t,\lambda)}{(1+t)^{s-\lambda-\frac12}}\right) dt 
\\
&\quad +
\frac{u_2(\rho,\lambda)\int_\rho^1 f'(t) dt}{\sqrt{2s-2\lambda-1}}\sum_{j=1}^{\ceil s-2}\lim_{\rho \to 0}\partial_\rho^j\left(\frac{\rho^{d-1}u_1(\rho,\lambda)}{(1+\rho)^{s-\lambda-\frac12}}\right) \prod_{\ell=1}^{j+1}\frac{1}{\lambda+\frac{1}{2}+\ell-s}
\\
&=:I_1(\rho,\lambda)+I_2(\rho,\lambda)+I_3(\rho,\lambda).
\end{align*}
By scaling one now infers that $I_2(\rho,\lambda)$ is smooth at $\rho=1$, while 
\begin{align*}
I_1(\rho,\lambda)+I_3(\rho,\lambda)=(1-\rho)^{s-\lambda+\frac12} g(\rho,\lambda)
\end{align*}
where $g$ is smooth at $\rho=1$. Moreover, 
$ \partial_\rho^s (1-\rho)^{s-\lambda+\frac12} g(\rho,\lambda)= (1-\rho)^{-\lambda+\frac12} h(\rho,\lambda)$
for some smooth $h$. Consequently, by Lemma \ref{lem:fraclem}, one has that 
$$
 \|u_2(\rho,\lambda)[U_{1,1}(\rho,\lambda) f(\rho)-\kappa_1(f)(\lambda)]\|_{H^{s+\frac{1}{100}}(\frac{1}{2},1))}<\infty
$$
for all stated values of $\lambda$.
By bounding the remaining terms in the same fashion, one establishes that 
$$
\Rm(f)(.,\lambda)\in H^{s+\frac{1}{100}}(\frac{1}{2},1)).
$$
Thus, the claim follows from Lemma \ref{lem:fraclem}.
\end{proof}

\bibliography{references}
\bibliographystyle{plain}
\end{document}